\documentclass[12pt]{amsart}
\usepackage{amsmath}
\usepackage{amssymb,float}
\usepackage[colorlinks=true]{hyperref}
\usepackage[all]{xy}
\usepackage[toc,page,title,titletoc,header]{appendix}
\usepackage{xcolor}
\usepackage{tikz-cd}
\usepackage{graphicx}
\usepackage{epigraph}
\usepackage{mathtools}
\usepackage{bbm}

\begin{document}
	\pdfoutput=1
	\theoremstyle{plain}
	\newtheorem{thm}{Theorem}[section]
	\newtheorem*{thm1}{Theorem 1}
	
	\newtheorem*{thmM}{Main Theorem}
	\newtheorem*{thmA}{Theorem A}
	\newtheorem*{thm2}{Theorem 2}
	\newtheorem{lemma}[thm]{Lemma}
	\newtheorem{lem}[thm]{Lemma}
	\newtheorem{cor}[thm]{Corollary}
	\newtheorem{pro}[thm]{Proposition}
	\newtheorem{prop}[thm]{Proposition}
	\newtheorem{variant}[thm]{Variant}
	\newtheorem{fact}{Fact}
	\theoremstyle{definition}
	\newtheorem{notations}[thm]{Notations}
	\newtheorem{rem}[thm]{Remark}
	\newtheorem{rmk}[thm]{Remark}
	\newtheorem*{rmkU}{Remark}
	\newtheorem{rmks}[thm]{Remarks}
	\newtheorem{defi}[thm]{Definition}
	\newtheorem{exe}[thm]{Example}
	\newtheorem{claim}[thm]{Claim}
	\newtheorem{ass}[thm]{Assumption}
	\newtheorem{prodefi}[thm]{Proposition-Definition}
	\newtheorem{que}[thm]{Question}
	\newtheorem{con}[thm]{Conjecture}
	
	\newtheorem{exa}[thm]{Example}
	\newtheorem*{assa}{Assumption A}
	\newtheorem*{algstate}{Algebraic form of Theorem \ref{thmstattrainv}}
	
	\newtheorem*{dmlcon}{Dynamical Mordell-Lang Conjecture}
	\newtheorem*{condml}{Dynamical Mordell-Lang Conjecture}
	\newtheorem*{congb}{Geometric Bogomolov Conjecture}
	\newtheorem*{congdaocurve}{Dynamical Andr\'e-Oort Conjecture for curves}
	
	\newtheorem*{pdd}{P(d)}
	\newtheorem*{bfd}{BF(d)}

	\newtheorem*{probreal}{Realization problems}
	\numberwithin{equation}{section}
	\newcounter{elno}                
	\def\points{\list
		{\hss\llap{\upshape{(\roman{elno})}}}{\usecounter{elno}}}
	\let\endpoints=\endlist
	\newcommand{\Zhuchao}[1]{ \sf $\clubsuit\clubsuit\clubsuit$ Zhuchao : [#1]}
	\newcommand{\SH}{\rm SH}
	\newcommand{\Cov}{\rm Cov}
	\newcommand{\Tan}{\rm Tan}
	\newcommand{\res}{\rm res}
	\newcommand{\Om}{\Omega}
	\newcommand{\om}{\omega}
	\newcommand{\La}{\Lambda}
	\newcommand{\la}{\lambda}
	\newcommand{\mc}{\mathcal}
	\newcommand{\mb}{\mathbb}
	\newcommand{\surj}{\twoheadrightarrow}
	\newcommand{\inj}{\hookrightarrow}
	\newcommand{\zar}{{\rm zar}}
	\newcommand{\Exc}{{\rm Exc}}
	\newcommand{\Mod}{{\rm Mod}}
	\newcommand{\an}{{\rm an}}
	\newcommand{\red}{{\rm red}}
	\newcommand{\codim}{{\rm codim}}
	\newcommand{\Supp}{{\rm Supp\;}}
	\newcommand{\Leb}{{\rm Leb}}
	\newcommand{\rank}{{\rm rank}}
	\newcommand{\geom}{{\rm geom}}
	\newcommand{\Ker}{{\rm Ker \ }}
	\newcommand{\Pic}{{\rm Pic}}
	\newcommand{\Der}{{\rm Der}}
	\newcommand{\Div}{{\rm Div}}
	\newcommand{\Hom}{{\rm Hom}}
	\newcommand{\Corr}{{\rm Corr}}
	\newcommand{\im}{{\rm im}}
	\newcommand{\Spec}{{\rm Spec \,}}
	\newcommand{\Nef}{{\rm Nef \,}}
	\newcommand{\Frac}{{\rm Frac \,}}
	\newcommand{\Sing}{{\rm Sing}}
	\newcommand{\sing}{{\rm sing}}
	\newcommand{\reg}{{\rm reg}}
	\newcommand{\Char}{{\rm char\,}}
	\newcommand{\Tr}{{\rm Tr}}
	\newcommand{\ord}{{\rm ord}}
	\newcommand{\bif}{{\rm bif}}
	\newcommand{\AS}{{\rm AS}}
	\newcommand{\loc}{{\rm loc}}
	\newcommand{\FS}{{\rm FS}}
	\newcommand{\CE}{{\rm CE}}
	\newcommand{\PCE}{{\rm PCE}}
	\newcommand{\WR}{{\rm WR}}
	\newcommand{\PR}{{\rm PR}}
	\newcommand{\TCE}{{\rm TCE}}
	\newcommand{\diam}{{\rm diam\,}}
	\newcommand{\id}{{\rm id}}
	\newcommand{\NE}{{\rm NE}}
	\newcommand{\Gal}{{\rm Gal}}
	\newcommand{\Min}{{\rm Min \ }}
	\newcommand{\Hol}{{\rm Hol \ }}
	\newcommand{\Rat}{{\rm Rat}}
	\newcommand{\Hdim}{{\rm Hdim}}
	\newcommand{\dist}{{\rm dist}}
	\newcommand{\FL}{{\rm FL}}
	\newcommand{\fm}{{\rm fm}}
	\newcommand{\vol}{{\rm vol}}
	
	\newcommand{\Max}{{\rm Max \ }}
	\newcommand{\Alb}{{\rm Alb}\,}
	\newcommand{\Aff}{{\rm Aff}\,}
	\newcommand{\GL}{{\rm GL}\,}        
	\newcommand{\PGL}{{\rm PGL}\,}
	\newcommand{\Bir}{{\rm Bir}}
	\newcommand{\Bif}{{\rm Bif}}
	\newcommand{\Aut}{{\rm Aut}}
	\newcommand{\topo}{{\rm top}}
	\newcommand{\End}{{\rm End}}
	\newcommand{\Per}{{\rm Per}\,}
	\newcommand{\Preper}{{\rm Preper}\,}
	\newcommand{\Preim}{{\rm Preim}\,}
	\newcommand{\ie}{{\it i.e.\/},\ }
	\newcommand{\niso}{\not\cong}
	\newcommand{\nin}{\not\in}
	\newcommand{\soplus}[1]{\stackrel{#1}{\oplus}}
	\newcommand{\by}[1]{\stackrel{#1}{\rightarrow}}
	\newcommand{\longby}[1]{\stackrel{#1}{\longrightarrow}}
	\newcommand{\vlongby}[1]{\stackrel{#1}{\mbox{\large{$\longrightarrow$}}}}
	\newcommand{\ldownarrow}{\mbox{\Large{\Large{$\downarrow$}}}}
	\newcommand{\lsearrow}{\mbox{\Large{$\searrow$}}}
	\renewcommand{\d}{\stackrel{\mbox{\scriptsize{$\bullet$}}}{}}
	\newcommand{\dlog}{{\rm dlog}\,}    
	\newcommand{\longto}{\longrightarrow}
	\newcommand{\vlongto}{\mbox{{\Large{$\longto$}}}}
	\newcommand{\limdir}[1]{{\displaystyle{\mathop{\rm lim}_{\buildrel\longrightarrow\over{#1}}}}\,}
	\newcommand{\liminv}[1]{{\displaystyle{\mathop{\rm lim}_{\buildrel\longleftarrow\over{#1}}}}\,}
	\newcommand{\norm}[1]{\mbox{$\parallel{#1}\parallel$}}
	\newcommand{\boxtensor}{{\Box\kern-9.03pt\raise1.42pt\hbox{$\times$}}}
	\newcommand{\into}{\hookrightarrow}
	\newcommand{\image}{{\rm image}\,}
	\newcommand{\Lie}{{\rm Lie}\,}      
	\newcommand{\CM}{\rm CM}
	\newcommand{\Ma}{\mathbf{M}}
	\newcommand{\Teich}{\rm Teich\;}
	\newcommand{\genus}{{\rm genus}}
	\newcommand{\gonality}{{\rm gonal}}
	\newcommand{\sext}{\mbox{${\mathcal E}xt\,$}}  
	\newcommand{\shom}{\mbox{${\mathcal H}om\,$}}  
	\newcommand{\coker}{{\rm coker}\,}  
	\newcommand{\sm}{{\rm sm}}
	\newcommand{\pgcd}{\text{pgcd}}
	\newcommand{\trd}{\text{tr.d.}}
	\newcommand{\tensor}{\otimes}
	\newcommand{\hotimes}{\hat{\otimes}}
	
	\newcommand{\CH}{{\rm CH}}
	\newcommand{\tr}{{\rm tr}}
	\newcommand{\e}{\rm SH}
	
	\renewcommand{\iff}{\mbox{ $\Longleftrightarrow$ }}
	\newcommand{\supp}{{\rm supp}\,}
	\newcommand{\esssup}{{\rm ess\,sup}}
	\newcommand{\ext}[1]{\stackrel{#1}{\wedge}}
	\newcommand{\onto}{\mbox{$\,\>>>\hspace{-.5cm}\to\hspace{.15cm}$}}
	\newcommand{\propsubset}
	{\mbox{$\textstyle{
				\subseteq_{\kern-5pt\raise-1pt\hbox{\mbox{\tiny{$/$}}}}}$}}
	\newcommand{\sA}{{\mathcal A}}
	\newcommand{\sB}{{\mathcal B}}
	\newcommand{\sC}{{\mathcal C}}
	\newcommand{\sD}{{\mathcal D}}
	\newcommand{\sE}{{\mathcal E}}
	\newcommand{\sF}{{\mathcal F}}
	\newcommand{\sG}{{\mathcal G}}
	\newcommand{\sH}{{\mathcal H}}
	\newcommand{\sI}{{\mathcal I}}
	\newcommand{\sJ}{{\mathcal J}}
	\newcommand{\sK}{{\mathcal K}}
	\newcommand{\sL}{{\mathcal L}}
	\newcommand{\sM}{{\mathcal M}}
	\newcommand{\sN}{{\mathcal N}}
	\newcommand{\sO}{{\mathcal O}}
	\newcommand{\sP}{{\mathcal P}}
	\newcommand{\sQ}{{\mathcal Q}}
	\newcommand{\sR}{{\mathcal R}}
	\newcommand{\sS}{{\mathcal S}}
	\newcommand{\sT}{{\mathcal T}}
	\newcommand{\sU}{{\mathcal U}}
	\newcommand{\sV}{{\mathcal V}}
	\newcommand{\sW}{{\mathcal W}}
	\newcommand{\sX}{{\mathcal X}}
	\newcommand{\sY}{{\mathcal Y}}
	\newcommand{\sZ}{{\mathcal Z}}
	\newcommand{\A}{{\mathbb A}}
	\newcommand{\B}{{\mathbb B}}
	\newcommand{\C}{{\mathbb C}}
	\newcommand{\D}{{\mathbb D}}
	\newcommand{\E}{{\mathbb E}}
	\newcommand{\F}{{\mathbb F}}
	\newcommand{\G}{{\mathbb G}}
	\newcommand{\HH}{{\mathbb H}}
	\newcommand{\LL}{{\mathbb L}}
	\newcommand{\J}{{\mathbb J}}
	\newcommand{\M}{{\mathbb M}}
	\newcommand{\N}{{\mathbb N}}
	\renewcommand{\P}{{\mathbb P}}
	\newcommand{\Q}{{\mathbb Q}}
	\newcommand{\R}{{\mathbb R}}
	\newcommand{\T}{{\mathbb T}}
	\newcommand{\U}{{\mathbb U}}
	\newcommand{\V}{{\mathbb V}}
	\newcommand{\W}{{\mathbb W}}
	\newcommand{\X}{{\mathbb X}}
	\newcommand{\Y}{{\mathbb Y}}
	\newcommand{\Z}{{\mathbb Z}}
	\newcommand{\ch}{{\mathbbm {1}}}
	\newcommand{\bk}{{\mathbf{k}}}
	
	\newcommand{\bp}{{\mathbf{p}}}
	\newcommand{\ep}{\varepsilon}
	\newcommand{\bbk}{{\overline{\mathbf{k}}}}
	\newcommand{\Fix}{\mathrm{Fix}}
	
	\newcommand{\tor}{{\mathrm{tor}}}
	\renewcommand{\div}{{\mathrm{div}}}
	
	\newcommand{\trdeg}{{\mathrm{trdeg}}}
	\newcommand{\Stab}{{\mathrm{Stab}}}
	
	\newcommand{\OK}{{\overline{K}}}
	\newcommand{\ok}{{\overline{k}}}
	
	\newcommand{\cf}{[c.f. ?]}
	\newcommand{\jy}{jy:}
	\title[]{Genus and Gonality of Small Curves, Dynamical Uniform Boundedness, and Bifurcation}
	
	\author{Zhuchao Ji}
	
	\address{Institute for Theoretical Sciences, Westlake University, Hangzhou 310030, China}
	
	\email{jizhuchao@westlake.edu.cn}
	
	\author{Junyi Xie}

	
	\address{Beijing International Center for Mathematical Research, Peking University, Beijing 100871, China}
	
	\email{xiejunyi@bicmr.pku.edu.cn}

	
	\date{\today}

	\bibliographystyle{alpha}
	
	\begin{abstract}
		We prove the Gonality Conjecture in arithmetic dynamics: for any non-isotrivial one-parameter algebraic family of rational maps on $\P^1_\C$, the gonality of distinct dynatomic curves tends to infinity. More generally, outside the flexible Latt\`es family, every small sequence of horizontal curves has gonality tending to infinity, and its genus grows superlinearly with its degree over the parameter curve. 	We also obtain higher-dimensional analogues under natural bifurcation and multiplier-genericity hypotheses. 
		
	As applications, we prove uniform boundedness results for iterated preimages over number fields and geometric uniform boundedness results for preperiodic points over function fields.
		
		The proof combines arithmetic equidistribution, woven currents, and bifurcation theory; the bifurcation mechanism is what forces the growth of genus and gonality.
	\end{abstract}
	
	\maketitle
	
	\tableofcontents

	\section{Introduction}
	
	\subsection{Genus and gonality of small curves}
	Let \begin{align*}
		f:\La\times \P^N&\to \La\times \P^N \\
		(t,z)&\mapsto (t,f_t(z))
	\end{align*}
	be an algebraic family of endomorphisms over $\C$ parametrized by  an irreducible quasi-projective $\La$, where $f_t:\P^N\to \P^N$ are endomorphisms of fixed algebraic degree $d\geq 2$ over $\C$. The integer $d$ is called the \emph{algebraic degree} of $f$. Such a family is called a {\em one-parameter} algebraic  family of endomorphisms of $\P^N$ over $\C$. A one-parameter algebraic family  is called {\em isotrivial} if for every $t\in \La(\C)$, $f_t$ is conjugate to the same fixed endomorphism.
	
	\medskip 
	Let $\pi_1:\La\times \P^N\to \La$ be the canonical projection. Let $\Gamma$ be a Zariski closed one-dimensional subvariety  in $\La\times \P^N$.  We call $\Gamma$  a {\em horizontal curve} if $\Gamma$ is  irreducible and $\pi_1|_\Gamma:\Gamma\to \La$ is dominant.  The following are two types of examples of dynamically meaningful horizontal curves.

	{\em Dynatomic curves:} If $n>m\geq 0$ are two integers, the equation $f^n(t,z)=f^m(t,z)$ defines an algebraic curve in $\La\times \P^N$. A {\em dynatomic curve} is by definition any irreducible component of such curves.  In other words, a dynatomic curve is a family of preperiodic points.

	{\em Preimage curves:}  Let $\Gamma_0$ be a fixed horizontal curve. If $n$ is a non-negative integer,   $f^{-n}(\Gamma_0)$ is an algebraic curve in $\La\times \P^N$. A {\em preimage curve} is by definition any irreducible component of such curves.  In other words, a preimage curve is a family of iterated preimage points. 
	
	\medskip
	When $f$ is isotrivial, the geometry of dynatomic curves is very simple. Up to a finite base change $\La'\to \La$, we may assume that $f$ is trivial; then every dynatomic curve is isomorphic to $\La$ by the canonical projection $\pi_1$.   When $f$ is non-isotrivial,  we expect that the complexity of the geometry of dynatomic curves reflects the complexity of the family of dynamical systems. 
	
	The guiding question of this paper is to make this expectation quantitative.  We ask whether dynamically defined horizontal curves, such as dynatomic curves and preimage curves, must acquire large genus and gonality as soon as the family genuinely varies.  More generally, we study horizontal curves which are small for the canonical dynamical height.  In this form, dynatomic curves and preimage curves are treated by the same principle: arithmetic smallness in a non-isotrivial dynamical family should force geometric complexity.
	
	In the case $N=1$, there  have been substantial works on the geometry of dynatomic curves and preimage curves, most of them focusing on the unicritical family $f_t(z)=z^d+t$ and the flexible Latt\`es family, while little is known for general one-parameter families.  We refer the reader to the survey paper \cite[Section 9 and 10]{benedetto2019current} for some references. Here a {\em Latt\`es map} is by definition an endomorphism of $\P^1$ that is semi-conjugate to an endomorphism of an elliptic curve. A {\em flexible Latt\`es family} is by definition a non-isotrivial one-parameter algebraic family of Latt\`es maps. 
	
	\medskip
	
	Let $\Gamma$ be an irreducible quasi-projective curve over $\C$.  We define $\genus(\Gamma)$ to be the geometric {\em genus} of any projective compactification of $\Gamma$. We define the {\em gonality} $\gonality(\Gamma)$ to be the minimal degree of a dominant rational map from $\Gamma$ to $\P^1$. We have $\gonality(\Gamma)\leq \lfloor (\genus(\Gamma)+3)/2\rfloor$, where $\lfloor \cdot \rfloor$ is the floor function. In particular, if a sequence of irreducible quasi-projective curves $\Gamma_n$ satisfies $ \gonality(\Gamma_n) \to +\infty,$ then $ \genus(\Gamma_n) \to +\infty.$
	
	\begin{fact}\label{fact:gonalityimage}
		Let $\phi:C_1\to C_2$ be a non-constant morphism between irreducible quasi-projective curves over $\C$ of degree $e$. Then
		$$\gonality(C_2)\leq \gonality(C_1)\leq e\,\gonality(C_2).$$
	\end{fact}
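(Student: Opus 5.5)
We may pass to smooth projective models throughout. A non-constant morphism of irreducible quasi-projective curves extends uniquely to a finite morphism $\bar\phi:\bar C_1\to\bar C_2$ between the smooth projective models, and this extension has the same degree $e$. Gonality is a birational invariant, so it can be computed on $\bar C_1$ and $\bar C_2$. It therefore suffices to prove both inequalities for a finite morphism $\phi:C_1\to C_2$ of degree $e$ between smooth projective curves.

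The upper bound is immediate. Choose a dominant map $g:C_2\to\P^1$ with $\deg g=\gonality(C_2)$. Then $g\circ\phi:C_1\to\P^1$ is dominant of degree $e\cdot\gonality(C_2)$, which gives $\gonality(C_1)\le e\,\gonality(C_2)$.

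For the lower bound, let $h:C_1\to\P^1$ have degree $k=\gonality(C_1)$. The goal is a dominant map $C_2\to\P^1$ of degree at most $k$. We view $h$ as a rational function in $\C(C_1)$, which is a degree $e$ extension of $\C(C_2)$ via $\phi^*$. Let $P(T)=T^e+a_1T^{e-1}+\dots+a_e$ be the characteristic polynomial of multiplication by $h$, so that $a_i\in\C(C_2)$.
\begin{itemize}
\item If some $a_i$ is non-constant, we bound its degree. The coefficient $a_i$ is an elementary symmetric function of the values of $h$ on the fibres of $\phi$, so its polar divisor is bounded by $\phi_*$ of the polar divisor of $h$. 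This gives $\deg a_i\le k$, but only when $i=1$.
\item For a general $i$ the degree of $a_i$ can reach $ik$, so the characteristic-polynomial approach does not directly give the bound $k$ for $i>1$.
\end{itemize}

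We therefore switch to a cleaner argument, the standard one via the image curve. Consider the morphism $(\phi,h):C_1\to C_2\times\P^1$ and let $D$ be its image, an irreducible curve. The projection $D\to\P^1$ is dominant, since $h$ is non-constant, and its degree divides $k$, so it is at most $k$. This alone does not bound the gonality of $C_2$, so the image-curve construction also falls short.

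The correct classical tool is the pushforward of the pencil. Take the divisor $D_1=h^*(\infty)$, which has degree $k$ and satisfies $h^0(D_1)\ge 2$. The norm map $\mathrm{Nm}_\phi:\C(C_1)^*\to\C(C_2)^*$ carries the linear family $\{h-\lambda\}_{\lambda\in\C}$ to the family $\{\mathrm{Nm}(h-\lambda)\}$. Writing $\mathrm{Nm}(h-\lambda)=(-1)^eP(\lambda)$, the divisor of each of these functions is $\phi_*(h^*\lambda)-\phi_*(h^*\infty)$. All these functions therefore lie in $L(\phi_*D_1)$, and $\phi_*D_1$ has degree $k$. If they all differed by constants, then $\phi_*h^*\lambda$ would be constant in $\lambda$, which is impossible: as $\lambda$ varies, $h^*\lambda$ sweeps out all of $C_1$, so its pushforward sweeps out all of $C_2$. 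Hence $h^0(C_2,\phi_*D_1)\ge2$, and a non-constant element of $L(\phi_*D_1)$ gives a dominant map $C_2\to\P^1$ of degree at most $k$. Thus $\gonality(C_2)\le\gonality(C_1)$.

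The main obstacle is the last step: showing that the functions $\mathrm{Nm}(h-\lambda)$ span a linear system of dimension at least $1$. The divisor computation $\operatorname{div}\mathrm{Nm}(g)=\phi_*\operatorname{div}g$ handles this.
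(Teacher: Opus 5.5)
Your final argument is correct and follows the paper's route. The upper bound comes from composing with a gonal function on $C_2$, and the lower bound from norms along $\C(C_1)/\C(C_2)$. Your use of the whole family $\mathrm{Nm}(h-\lambda)\in L(\phi_*h^*\infty)$, rather than a single norm, handles a case the paper's one-line justification leaves implicit: the norm of a given gonal function can be constant (e.g.\ $h=(z-1)/(z+1)$ under $z\mapsto z^2$ has norm $1$).

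You should delete the discarded exploratory bullets, because their claim is false. Since $\mathrm{Nm}(h-\lambda)=(-1)^eP(\lambda)$, a Vandermonde argument expresses every coefficient $a_i$ as a $\C$-linear combination of finitely many $\mathrm{Nm}(h-\lambda_j)$, so $a_i\in L(\phi_*h^*\infty)$. Hence $\deg a_i\le k$ for every non-constant $a_i$, and the characteristic-polynomial route you abandoned actually works.
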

	This is a standard property of gonality; see for instance \cite[Chapter III]{ACGH85}. Indeed, the left inequality follows by taking the norm of a gonal function on $C_1$ along the extension $\C(C_1)/\C(C_2)$, and the right inequality follows by composing $\phi$ with a gonal function on $C_2$.

	\subsection{Gonality Conjecture}
	An aim of our paper is to solve the following Gonality Conjecture. 
	\begin{con}\label{conj:gonality}
		For any non-isotrivial one-parameter algebraic family of endomorphisms of $\P^1$ over $\C$, if $\Gamma_n$ is a sequence of distinct dynatomic curves, then $\gonality(\Gamma_n) \to +\infty.$
	\end{con} 
	The above Gonality Conjecture is the one-dimensional case of \cite[Conjecture 10.13 (c)]{benedetto2019current}. It implies that $\genus(\Gamma_n) \to +\infty$, which implies the one-dimensional case of \cite[Conjecture 10.13 (d)]{benedetto2019current}.

	Previously, this conjecture was only known  for  the  unicritical family $f_t(z)=z^d+t$ by Doyle-Poonen \cite{MR4065068}, and  for the  flexible Latt\`es family by Nguyen-Saito \cite{nguyen1996d}.
	%

	\medskip
	
	We actually solve  a  more general version of this conjecture, see Theorem \ref{thm: 1-dim Gonality} below. To formulate our result, we need the notion of {\em Moriwaki height.} This notion was first introduced by Moriwaki in \cite{MR1779799}. In our paper, we use the version developed by Yuan--Zhang \cite{yuan2021} in the framework of adelic line bundles.
	
	Let $f$ be a non-isotrivial one-parameter algebraic family of endomorphisms of $\P^N$ of degree $d\geq 2$ over $\C$. Let $K$ be a subfield of $\C$ which is finitely generated over $\Q$ such that $f$ is over $K$. Following Yuan--Zhang \cite{yuan2021} (see Section \ref{Sec_mwheight} for details), for every polarization $\overline{H}\in \widehat{\Pic}(K/\Z)_{\rm nef,\Q}$, we can define the canonical Moriwaki height of every horizontal curve $\Gamma$ in $\La\times \P^N$ over $\overline K$, denoted by $\hat{h}^{\overline{H}}_f(\Gamma)\in \R_{\geq 0}$, which satisfies the following two properties:
	\smallskip
	
	(1) $\hat{h}^{\overline{H}}_f(f(\Gamma))=d\hat{h}^{\overline{H}}_f(\Gamma)$;
	\smallskip
	
	(2)({\em Northcott property:}) If $\overline{H}$ is big and nef, then for each $D\geq 0$ and  $B\geq 0$, the number of horizontal curves $\Gamma$ over $\overline{K}$ with  $[K(\Gamma):K]\leq D$ and $\hat{h}^{\overline{H}}_f(\Gamma)\leq B$ is finite. 
	
	\smallskip
	
	As a consequence of the  above two properties,  when 
	$\overline{H}$ is big, 
	$\hat{h}^{\overline{H}}_f(\Gamma)=0$ 
	if and only if $\Gamma$ is a dynatomic curve.  If $\Gamma$ is a preimage curve, $f^n(\Gamma)=\Gamma_0$, 
	then $\hat{h}^{\overline{H}}_f(\Gamma)=d^{-n} \hat{h}^{\overline{H}}_f(\Gamma_0)$. In particular, if  $(\Gamma_n)_{n\geq 1}$ is a sequence of distinct dynatomic curves or preimage 
	curves, then $\hat{h}^{\overline{H}}_f(\Gamma_n)\to 0$. 
	Following Yuan--Zhang \cite[Section 5.4.1]{yuan2021}, a sequence  $\Gamma_n, n\geq 0$ is called \emph{small for $f$} if for any  $\overline{H}\in\widehat{\mathrm{Pic}}(K/\Z)_{\mathrm{nef,\Q}}$,  $\hat{h}_{f}^{\overline{H}}(\Gamma_n)$ converges to $0.$
	Any sequence of dynatomic or preimage curves is small for $f$.

	The following  Theorem \ref{thm: 1-dim Gonality} and Theorem \ref{thm: high-dim Genus} are our first main results. These results in particular hold when $(\Gamma_n)_{n\geq 1}$ is a sequence of dynatomic curves or preimage curves.
	
	\begin{thm}[Gonality  Conjecture for horizontal  curves with small heights]\label{thm: 1-dim Gonality}
		Let $f$ be a non-isotrivial one-parameter algebraic family of endomorphisms of $\P^1$ of degree $d\geq 2$ over $\C$, which is not the flexible Latt\`es family. Let $K$ be any subfield of $\C$ which is finitely generated over $\Q$ such that $f$ is over $K$.   Let $(\Gamma_n)_{n\geq 1}$ be a sequence of distinct horizontal curves over $\overline K$ that is small for $f$. Then
		$$\gonality(\Gamma_n) \to +\infty;$$
		and
		$$\frac{\genus(\Gamma_n)}{\deg(\Gamma_n)} \to +\infty.$$

	\end{thm}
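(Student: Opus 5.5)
We first reduce the gonality statement to a statement about low-degree maps to $\P^1$, and then argue by contradiction using equidistribution. By Fact~\ref{fact:gonalityimage} and the inequality $\gonality\le\lfloor(\genus+3)/2\rfloor$, the two conclusions are handled separately. The second conclusion does not imply the first directly, since $\deg(\Gamma_n)$ may grow. For the gonality, suppose after passing to a subsequence that $\gonality(\Gamma_n)\le g$ for all $n$. Then each $\Gamma_n$ admits a dominant map $\phi_n:\Gamma_n\to\P^1$ of degree $\le g$. Consider the incidence $\Gamma_n\subset \La\times\P^1$ together with $\phi_n$. Each point $s\in\P^1$ yields a fiber $\phi_n^{-1}(s)$, which is a divisor of degree $\le g$ on $\Gamma_n$. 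Its image in $\La\times\P^1$ is a $0$-cycle of bounded degree, while $\pi_1(\Gamma_n)$ sweeps out $\La$ with degree $\deg(\Gamma_n)\to\infty$ (Northcott forces the degrees to grow).

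The main tool is arithmetic equidistribution of small horizontal curves in the Yuan--Zhang framework. Because $(\Gamma_n)$ is small, the normalized currents $\frac{1}{\deg\Gamma_n}[\Gamma_n]$ on $\La\times\P^1$ converge to the fibered bifurcation-type current $\widehat T=\widehat T_f$ (the relative Green current). Its slices over $t$ are the maximal entropy measures $\mu_{f_t}$, and non-isotriviality makes $T_{\bif}=\pi_{1*}(\widehat T^2)\neq 0$. Lift the data to the space of degree-$\le g$ maps: the family $\{\phi_n\}$ gives, after normalizing, a family of $\le g$ holomorphic graphs $s\mapsto(t_i(s),z_i(s))$. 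In the woven-current language, $\frac{1}{\deg}[\Gamma_n]$ is then an average of a family of holomorphic disks parametrized by $\P^1$, each a union of at most $g$ sheets. Passing to limits, we obtain that $\widehat T$ is a woven current whose pieces are holomorphic curves obtained as limits of the components of the fibres of $\phi_n$ transported along $s$.

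The key step, which we expect to be the main obstacle, is to show that these limit disks must be dynamically rigid. They are limits of small curves, so points on them are, in the limit, marked points passively following the dynamics, i.e.\ non-bifurcating. On the other hand, $\widehat T$ charges the bifurcation locus, where such passive graphs cannot exist. More precisely, one uses that the marked point $z_i(s)$ along a passive disk has $dd^c$ of $G_{f_t}(z_i)$ vanishing, while the mass of $\widehat T\wedge\pi_1^*\omega$ on the bifurcation locus is positive. The same mechanism must rule out the flexible Latt\`es case, where such rigid structure does exist via the elliptic curve; this is why Latt\`es is excluded. We would try first to use Montel/normal family arguments on the $\le g$-valued graphs to get a contradiction with the density of repelling cycles bifurcating (McMullen's rigidity: non-isotrivial non-Latt\`es families have bifurcations).

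For genus over degree, we apply Riemann--Hurwitz to $\pi_1|_{\Gamma_n}:\Gamma_n\to\La$:
\[
2\genus(\Gamma_n)-2 \ge \deg(\Gamma_n)\,(2\genus(\La)-2)+\#\text{ramification}.
\]
It then suffices to show that the ramification per unit degree tends to infinity. Equidistribution again identifies $\frac{1}{\deg}\,(\text{ramification divisor})$ with a measure related to the critical points of $\widehat T$'s slices. The ramification of $\pi_1|_{\Gamma_n}$ is governed by coincidences of sheets, which near bifurcation parameters happen at a rate that is unbounded (infinitely many sheets collide in small parameter discs since $\mu_{f_t}$ moves non-holomorphically there). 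We would quantify this by a local lower bound: over a small disc $D$ meeting $\supp T_{\bif}$, the number of branch points of $\Gamma_n$ is at least $\epsilon(M)\deg\Gamma_n$ for any prescribed $M$ sheets-per-disc threshold, giving $\genus/\deg\to\infty$.
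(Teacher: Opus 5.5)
There is a genuine gap. Your outline has the right ingredients (equidistribution, a woven structure forced by bounded complexity, passivity, McMullen), but the step that carries the proof is missing in both halves.

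For the genus statement, you try to show directly that the ramification of $\pi_1|_{\Gamma_n}$ per unit degree blows up over discs meeting $\supp T_{\bif}$. The asserted bound of $\epsilon(M)\deg(\Gamma_n)$ branch points is the theorem restated, not proved. Nothing in the Yuan--Zhang theorem sees the ramification divisor either.

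The argument has to run contrapositively. Suppose $\genus(\Gamma_n)\le C\deg(\Gamma_n)$. Smallness gives $\vol\asymp\deg$ (Proposition \ref{prop: deg-vol}), hence a genus--volume relation. Riemann--Hurwitz (Lemma \ref{bad components}) then shows that, over a subdivision into $r$-cubes, all but $O(r^2)$ of the mass consists of graphs of bounded volume. Bishop compactness then produces uniformly woven currents $S_r\le S$ with $\Ma((S-S_r)\wedge\pi_1^*T_{\bif})\to 0$ (Theorem \ref{thm: strongly approximable}). Your Riemann--Hurwitz count for $\pi_1$ could feed exactly this contrapositive argument, but not a direct lower bound.

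For gonality the situation is worse. The branches $s\mapsto (t_i(s),z_i(s))$ of $\phi_n^{-1}$ have unbounded degree, so ``limits of fibres of $\phi_n$ transported along $s$'' have no compactness. Riemann--Hurwitz is also useless on $\Gamma_n$ itself, since its genus is unbounded. The missing idea is to regard $s\mapsto\phi_n^{-1}(s)$ as a single rational curve in the symmetric product $X^g/S_g$ (Lemma \ref{lem:lift}) and run the genus-zero approximation there. You then come back by pulling back through the finite quotient $X^g\to X^g/S_g$ and pushing forward by the first projection (Theorem \ref{thm:sa gonality}). Making that pullback continuous on bounded-volume cycle spaces is itself non-trivial, because the quotient is singular.

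The contradiction is also misstated. Passive graphs do exist over bifurcation parameters: any persistently preperiodic marked point, e.g.\ a persistent fixed point, is passive everywhere. What is incompatible with $\Bif(f)\neq\emptyset$ is an \emph{equilibrium graph}, i.e.\ a passive graph passing through $\mu_t$-typical points for a dense set of $t$. Birkhoff averages of its iterates give an equilibrium web, hence stability (Ma\~n\'e--Sad--Sullivan--Lyubich, Berteloot--Bianchi--Dupont; Theorem \ref{thm:wovenbifur}). Producing such a leaf over a bifurcation parameter needs $S_r\wedge T_f=0$, $S_r\wedge R_t\le\mu_t$ and $S_r\wedge\pi_1^*T_{\bif}>0$ simultaneously. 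This is exactly why the woven approximation must be controlled in mass against $\pi_1^*T_{\bif}$.

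Your equidistribution input is also wrong. Yuan--Zhang only gives $[O(V_n)]\wedge\omega_1^m/\deg\to T_f\wedge\omega_1^m$, and in the number-field case smallness gives $S\wedge T_f=0$ for any limit $S$. So $S=T_f$ would force $T_f\wedge T_f=0$, which contradicts your formula $T_{\bif}=\pi_{1*}(T_f\wedge T_f)\neq 0$. In fact the bifurcation current is $\pi_{1*}(T_f\wedge[C_f])$.

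Moreover, $K$ may have positive transcendence degree. Then equidistribution only holds on a spread-out family over a number field. You must slice at a generic $\theta$ near $\theta_0$ and check that $\Bif(f_\theta)\neq\emptyset$ persists. You must also approximate the slice by specializations at fully transcendental points, which preserve genus, gonality and degree.

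Finally, Northcott bounds $[K(\Gamma):K]$, not $\deg(\Gamma)$, so it does not give $\deg(\Gamma_n)\to\infty$. This is harmless, since that claim is not needed.
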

	
	Here $\deg(\Gamma_n)$ is the degree of the canonical projection  map $\pi_1|_{\Gamma_n}:\Gamma_n\to \La$.

	If $\Gamma_n$ is a sequence of dynatomic curves and $f$ is the flexible Latt\`es family, Nguyen-Saito\cite{nguyen1996d} proved that $\gonality(\Gamma_n) \to +\infty.$  Thus  we can prove Conjecture \ref{conj:gonality}  by combining Nguyen-Saito's result and Theorem \ref{thm: 1-dim Gonality}. 
	
	Thus the one-dimensional picture is essentially complete for dynatomic curves.  Outside the flexible Latt\`es family, Theorem \ref{thm: 1-dim Gonality} proves a stronger statement for all small horizontal curves; in the flexible Latt\`es dynatomic case, the required gonality growth is supplied by Nguyen--Saito.
	
	Let us also emphasize the second conclusion of Theorem \ref{thm: 1-dim Gonality}.  The divergence of $\genus(\Gamma_n)/\deg(\Gamma_n)$ reflects a genuinely dynamical source of ramification.  This behavior is different from the towers naturally arising from abelian schemes: over the good reduction locus such covers are typically \'{e}tale, and the possible ramification is confined to a fixed bad set; Riemann--Hurwitz then gives at most linear genus growth in the degree.  In our setting, bifurcation creates new geometric complexity along the family, and this is what forces the normalized genus to diverge.
	
		%
		%
		
		
		\medskip
		The proof of Theorem \ref{thm: 1-dim Gonality} relies  on one-dimensional bifurcation theory  developed by Ma\~n\'e--Sad--Sullivan \cite{mane1983dynamics}, Lyubich \cite{lyubich1983some,lyubich1984an} and DeMarco \cite{demarco2003dynamics}.  Our next goal is to extend Theorem \ref{thm: 1-dim Gonality} to higher dimension.  A bifurcation theory for families of endomorphisms of $\P^N$ over $\C$ for $N\geq 2$ was developed by Bassanelli-Berteloot \cite{bassanelli2007bifurcation} and  by Berteloot--Bianchi--Dupont \cite{berteloot2018dynamical}. The {\em stable set} of a family of endomorphisms of $\P^N$ is by definition the locus where the Lyapunov exponent function is pluriharmonic, and  the {\em bifurcation set} is the complement of the stable set. See Section \ref{section:bifur} for more details. 
		
		The higher-dimensional bifurcation theory is not yet as complete as its one-dimensional counterpart. In dimension one, stability theory, together with McMullen's rigidity theorem, gives a strong description of stable non-isotrivial families: apart from the flexible Latt\`es phenomenon, stability forces isotriviality. For endomorphisms of $\P^N$, $N\geq 2$, no comparable classification of stable families is currently known. In dimensions $N\geq 3$, current higher-dimensional bifurcation theory typically requires a genericity condition on periodic multipliers, excluding persistent resonances and non-diagonalizability. This is the origin of the periodically generic condition in Assumption A. We also discuss in Sections \ref{sec:height} and \ref{sec:assumptionA} an adelic-line-bundle viewpoint on this rigidity problem, where stability is related to the degeneracy of the bifurcation class on moduli.
		
		We need some additional technical assumptions. 
		Let $g$ be an endomorphism of $\P^N$ of degree $d\geq 2$ over $\C$.   Let $x$ be a $n$-periodic point of $g$, and let $w_1,\dots, w_N$ be the eigenvalues of the differential $dg^n(x)$. We say that $x$ is {\em resonant} if  there is a relation $w_1^{m_1}\cdots w_N^{m_N}=w_j$, where the $m_j$ are non-negative integers with $m_1+\cdots +m_N\geq 2$. We say that  $x$ is {\em non-diagonalizable} if  $dg^n(x)$ is not diagonalizable as a matrix.  
		
		Throughout the paper, a Hermitian manifold is always assumed to be {\bf connected}. We say that a holomorphic family $f$ of endomorphisms of $\P^N$ of degree $d\geq 2$ parametrized by a Hermitian manifold $\La$ is {\em periodically generic} if the upper density of $n$-periodic points of $f$ that are not persistently resonant or persistently non-diagonalizable along $\La$ is positive  as $n\to +\infty$.  More precisely, let $\sP_n$ be the set of irreducible $n$-periodic subvarieties $V$ in $\La\times \P^N(\C)$ such that there exists $x\in V$ which is non-resonant and diagonalizable.  Thus $f$ is periodically generic if 
		$$\limsup_{n\to +\infty}d^{-Nn}\sum_{V\in \sP_n} \deg V>0.$$

		By definition, if there exists $t\in \La$ such that  the upper  density of $n$-periodic points of $f_t$  that are not resonant and diagonalizable is positive as $n\to +\infty$, then $f$  is periodically generic. 
		
		%
		%
		
		\begin{defi}[Assumption A]
			Let $f$ be a holomorphic family of endomorphisms of $\P^N$ of degree $d\geq 2$ parametrized by a Hermitian manifold. We say that $f$ satisfies {\em Assumption A} if the following hold:

			(1) The bifurcation set of $f$ is non-empty;
			
			(2) If $N\geq 3$, we further assume that $f$ is periodically generic.
		\end{defi}
		
		We note that the assumption that $f$ has non-empty bifurcation set implies that  $f$ is non-isotrivial. 
		
		\medskip

		Let $f$ be a one-parameter algebraic family of endomorphisms of $\P^N$ over $\C$, parametrized by $\La$. A sequence of horizontal curves $(\Gamma_n)_{n\geq 1}$ is called {\em generic} if for any proper subvariety $H\subset \La\times \P^N$, there are only finitely many $\Gamma_n$ contained in $H$. In particular, when $N=1$, a distinct sequence of horizontal curves  is generic. 
		
		The following  result extends Theorem \ref{thm: 1-dim Gonality} to higher dimension.
		
		\begin{thm}[Genus and gonality of  horizontal curves with small heights for families of endomorphisms of $\P^N$]\label{thm: high-dim Genus}
			Let $f$ be a one-parameter algebraic family of endomorphisms of $\P^N$ over $\C$ satisfying Assumption A.  Let $K$ be any subfield of $\C$ which is finitely generated over $\Q$ such that $f$ is over $K$.   Let $(\Gamma_n)_{n\geq 1}$ be a generic sequence of horizontal curves over $\overline K$ that is small for $f$. Then
			$$\gonality(\Gamma_n) \to +\infty;$$
			and 
			$$\frac{\genus(\Gamma_n)}{\deg(\Gamma_n)} \to +\infty.$$
			
		\end{thm}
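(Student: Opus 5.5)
We will deduce both conclusions from a single mechanism: the currents of integration along $\Gamma_n$, suitably normalized, equidistribute towards a canonical current on $\La\times\P^N$, and bifurcation makes this current incompatible with bounded gonality or linear genus growth. Concretely, we set $[\Gamma_n]/\deg(\Gamma_n)$ and use the arithmetic equidistribution theorem of Yuan--Zhang for small sequences over the finitely generated field $K$. Genericity of $(\Gamma_n)$ together with smallness gives, after possibly passing to a subsequence and taking Galois orbits over $K$, weak convergence
$$\frac{1}{\deg \Gamma_n}[\Gamma_n]\longrightarrow \widehat T_f^{N}$$
on compact subsets of $\La\times\P^N$. Here $\widehat T_f$ is the fibered Green current, so the limit restricts on each fiber to the equilibrium measure $\mu_{f_t}$. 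For this step we need the Moriwaki heights for arbitrary nef $\overline H$. They let us test against archimedean places of $K$ with arbitrary transcendental parameters, so the equidistribution holds at the complex place given by $K\subset\C$.

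For gonality we argue by contradiction. Suppose $\gonality(\Gamma_n)\le g$ along a subsequence, and take maps $\phi_n:\Gamma_n\to\P^1$ of degree $\le g$. We then view $\Gamma_n$ as the image of a $g$-valued ``woven'' family. On a small ball $B\subset\La$ meeting the bifurcation locus, the branches of $\Gamma_n$ over $B$ define graphs $\gamma:B\to\P^N$. Because $\phi_n$ has bounded degree, these graphs organize into families of bounded complexity. Compactness in the sense of Dujardin's woven currents then gives a limiting woven current whose leaves are local holomorphic graphs, with uniformly bounded area on compacts coming from the bounded gonality. The limit current $\widehat T_f^N$ restricted over $B$ would therefore be a laminar (woven) current made of holomorphic graphs over $B$ lying in the fiberwise Julia set, i.e.\ a structural ``equilibrium web''. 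By the theory of Berteloot--Bianchi--Dupont (in dimension one, Ma\~n\'e--Sad--Sullivan and DeMarco), such an equilibrium web over $B$ forces stability on $B$. This contradicts Assumption A(1), namely that the bifurcation set is nonempty. When $N\ge3$, the equivalence between the existence of an equilibrium web and stability uses the periodic-genericity hypothesis. In the one-dimensional Theorem \ref{thm: 1-dim Gonality}, the analogous contradiction uses that a stable non-isotrivial family is flexible Latt\`es.

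For the genus we use Riemann--Hurwitz for $\pi_1|_{\Gamma_n}$. It gives $2\genus(\Gamma_n)-2\ge \deg(\Gamma_n)(2\genus(\La)-2)+R_n$, where $R_n$ is the number of ramification points counted with multiplicity. It therefore suffices to show $R_n/\deg\Gamma_n\to\infty$. If instead $R_n\le C\deg\Gamma_n$, then over a small ball $B$ most of the $\deg\Gamma_n$ local sheets of $\Gamma_n$ are unramified. More precisely, the proportion of sheets meeting a branch point over a ball of radius $r$ can be controlled on average by $C$ times the measure of the ball. We then shrink $B$ and pass to a diagonal subsequence so that the unramified proportion tends to $1$. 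The unramified sheets are genuine holomorphic graphs over $B$. The limit of the normalized currents is then again a uniformly laminar current by graphs, and we get the same contradiction with bifurcation.

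The main obstacle will be the step extracting a non-degenerate web of graphs from bounded gonality. The sheets of $\Gamma_n$ over $B$ need not be unramified. We have to use the degree-$g$ maps $\phi_n$, for instance by pulling back $\mathcal O(1)$ to bound the area of the fibers, to get uniform control in a Bishop-type compactness argument, and we must make sure no mass escapes to the vertical or singular part. A second delicate point is that the limit must charge the entire equilibrium current, not just a part of it. This is needed to invoke the characterization of stability by equilibrium webs, and there the genericity of the sequence is essential.
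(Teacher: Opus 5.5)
Your proposal has genuine gaps. The most serious are in the gonality half and in the final step to stability.

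For gonality, bounded gonality gives no control on the ramification of $\pi_1|_{\Gamma_n}$, since the genus may still be huge. So nothing justifies the claim that the branches of $\Gamma_n$ over a ball in the base ``organize into families of bounded complexity''. Pulling back $\mathcal O(1)$ by $\phi_n$ also gives no area bound on local branches. The missing idea is the symmetric-product device of Theorem~\ref{thm:sa gonality}, which proceeds in three steps.
\begin{itemize}
\item A gonal map of degree $l$ gives $t\mapsto\phi_n^{-1}(t)$, which is a \emph{rational} curve in $X^l/S_l$. Genus-zero curves trivially satisfy the genus--volume relation, so Dujardin's bad-component count (Riemann--Hurwitz for linear projections) produces uniformly woven approximants there.
\item These approximants are lifted to $X^l$ by the pullback for finite quotients, which is continuous on bounded-volume spaces.
\item They are then pushed forward locally by the first projection. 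This gives a uniformly woven $\tilde S\le S_\theta$ with $\tilde S\wedge \pi_1^*T_{\rm bif}>0$.
\end{itemize}
The leaves of $\tilde S$ are arbitrary analytic curves, not graphs. Theorem~\ref{thm:wovenbifur} handles this by moving to a nearby bifurcation parameter, where a leaf crosses the fibres transversally. This is possible because the bifurcation set has no isolated points.

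For the last step, you require the limit current to be \emph{entirely} woven by graphs over a fixed ball, so that it is an equilibrium web. Your own genus construction cannot deliver this. The normalized ramification measures of $\pi_1|_{\Gamma_n}$ may converge to a measure charging every open set meeting $\Bif(f)$; think of parabolic parameters on dynatomic curves. So over any fixed ball only part of the mass is laminated.

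Full lamination is also unnecessary. The paper only needs a woven subcurrent $S_r\le S$ with $S_r\wedge R_s>0$ at some $s\in\Bif$. Since $S\wedge T_f=0$ and $S\wedge R_t\le\mu_t$, after localizing, almost every leaf is a graph $\gamma$ with $[\Gamma]\wedge T_f=0$ passing through $\mu_t$-typical points for $t$ in a countable dense set. Birkhoff averages of $f^n\circ\gamma$ then give an equilibrium web (Lemma~\ref{lem:equigraph}), and Proposition~\ref{prop: high-dim stab} yields stability at $s$.

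Two further points need care.
\begin{itemize}
\item The condition $S\wedge T_f=0$ must be derived from smallness, via vanishing of the geometric height (Proposition~\ref{prop:two height}, Corollary~\ref{cor: geometricheightanalytic}). Graphs that merely lie in the Julia sets do not give an equilibrium web; you need normality of the iterates along the leaves.
\item For $K$ of positive transcendence degree, Yuan--Zhang only gives $S_n\wedge\omega_1^m\to T_f^N\wedge\omega_1^m$ on the spread-out total space. It does not give current convergence to $T_f^N$ on the fixed fibre $\La$. The paper instead slices at an almost-every fully transcendental $\theta$ near $\theta_0$. There Assumption A persists, and specializations preserve genus, degree and gonality.
\end{itemize}

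Your Riemann--Hurwitz count over the base is a reasonable substitute for the paper's linear projections in the genus case. Because the test current $\pi_1^*T_{\rm bif}$ is pulled back from the base, control of the horizontal mass suffices. But it only works once combined with the equilibrium-graph step.
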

		Theorems \ref{thm: 1-dim Gonality} and \ref{thm: high-dim Genus} are proved in Section \ref{sec:gonality-proof}.
		\medskip
		
		\begin{rmk}
			When $N\geq 2$, the assumption ``$(\Gamma_n)_{n\geq 1}$ is a generic sequence" cannot be replaced by ``$(\Gamma_n)_{n\geq 1}$ is a distinct sequence" due to the possible existence of  periodic isotrivial subvariety of positive dimension.
			
		\end{rmk}
		\medskip
		

		A {\em Latt\`es map} on $\P^N$ is by definition an endomorphism of $\P^N$ over $\C$  that is semi-conjugate (by a finite morphism $\pi:A\to \P^N$) to an endomorphism of an abelian variety $A$.  The Latt\`es locus containing all Latt\`es maps is a proper subvariety in $\End_d(\P^N)$.   
		
		A Latt\`es map is called {\em split} if it is semi-conjugate  (by a finite morphism $\pi:(\P^1)^N\to \P^N$)  to an endomorphism $g:(\P^1)^N\to (\P^1)^N$, such that $g=(h,\dots, h)$ is a split map, where  $h:\P^1\to \P^1$ is a one-dimensional Latt\`es map. 
		
		
		We have the following examples of one-parameter algebraic families of endomorphisms of $\P^N$ satisfying Assumption A for every $N\geq 1$.
		\begin{thm}\label{thm: example2}
			Let $f$ be a one-parameter algebraic family of endomorphisms of $\P^N$ of degree $d\geq 2$ over $\C$ which is not a Latt\`es family.  If there is a parameter $t\in \La(\C)$ such that $f_t$ is a split Latt\`es map, then $f$ satisfies Assumption A. 
			
		\end{thm}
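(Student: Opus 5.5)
We have to establish two things: that the bifurcation set of $f$ is non-empty, and, when $N\geq 3$, that $f$ is periodically generic. For the second point we use the remark after the definition: it is enough to find one parameter $t$ such that a positive upper density of $n$-periodic points of $f_t$ is non-resonant and diagonalizable. We take the parameter $t_0$ at which $f_{t_0}$ is a split Latt\`es map.

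\emph{Periodic genericity at $t_0$.} Let $\pi:(\P^1)^N\to\P^N$ be the semi-conjugacy with $g=(h,\dots,h)$, and let $h$ be covered by an isogeny-type map $z\mapsto \alpha z+\beta$ on an elliptic curve $E$. Away from the ramification locus of $E^N\to(\P^1)^N\to\P^N$ the covering maps are local biholomorphisms. A periodic point of $f_{t_0}$ lifts to a periodic point of the lifted map on $E^N$, possibly after composing with an element of the finite deck group. At a lifted point, $df^n$ is conjugate to $\sigma\circ(\alpha^n\,\mathrm{Id})$, where $\sigma$ is a finite-order linear map coming from the deck group; after passing to a bounded power, it is a scalar $\alpha^{n}$-type matrix. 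Such a matrix is diagonalizable. It is resonant only if $w^{m}=w$ with $|m|\geq 2$, which forces $|w|=1$ or $w=0$; this is impossible since $|\alpha|^2=d>1$.

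The deck-group twist is the main obstacle. For generic periodic points outside the (lower-dimensional) branch locus, the eigenvalues are $\zeta_j\alpha^{n}$ with roots of unity $\zeta_j$. A resonance $\prod w_i^{m_i}=w_j$ then forces $|\alpha|^{n(|m|-1)}=1$, which is again impossible. Since all eigenvalues have modulus $d^{n/2}>1$, no resonance can occur at all, and diagonalizability follows because the differential is $\alpha^n$ times a finite-order (hence diagonalizable) matrix. Periodic points in the branch locus number $O(d^{(N-1)n})$, whereas $f_{t_0}$ has about $d^{Nn}$ periodic points, so the good points have density one.

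\emph{Non-empty bifurcation.} Suppose, for contradiction, that $f$ is stable on all of $\La$. We invoke the higher-dimensional theory of Berteloot--Bianchi--Dupont: in a stable family, repelling cycles move holomorphically, and the multipliers of periodic points are locally constant, at least for the periodic points generic in the sense just established. At $t_0$, every $n$-periodic point then has eigenvalues of modulus exactly $d^{n/2}$, and by stability this equality persists for all $t$. We then apply the Berteloot--Dupont characterization of Latt\`es maps: the Lyapunov exponent satisfies $L(f_t)\geq N\log\sqrt d$, with equality iff $f_t$ is Latt\`es. Since $L$ is pluriharmonic and equals $\frac N2\log d$ at $t_0$, and $L\geq \frac N2\log d$ everywhere, the minimum principle for pluriharmonic functions forces $L\equiv\frac N2\log d$ on the connected $\La$. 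Hence every $f_t$ is Latt\`es, contradicting the hypothesis that $f$ is not a Latt\`es family. This part is clean; the delicate step remains the multiplier analysis at $t_0$, in particular controlling the twisting by the deck group of $\pi$.
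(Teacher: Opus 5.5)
Your argument has the same two-part structure as the paper's proof. First, $\Bif(f)\neq\emptyset$ follows from the Berteloot--Dupont inequality $L(t)\geq \frac N2\log d$ (equality iff $f_t$ is Latt\`es) together with the minimum principle for the harmonic function $L$ on the connected curve $\La$. Second, periodic genericity follows from a multiplier computation at the split Latt\`es parameter $t_0$ together with a density-one count.

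In the bifurcation part, you should delete the sentences claiming that in a stable family the multipliers of periodic points are locally constant, and that every $n$-periodic point of $f_{t_0}$ has eigenvalues of modulus exactly $d^{n/2}$, with this persisting in $t$. Both claims are false.
\begin{itemize}
\item Repelling cycles move holomorphically in a stable family, but their multipliers genuinely vary. Already for $z^2+c$ with $c$ in the main cardioid, the repelling fixed point has multiplier $2-\lambda$, where $\lambda$ ranges over the disk.
\item At $t_0$, periodic points in the branch locus have eigenvalues such as $(\zeta\alpha^n)^e$, where $e\geq 2$ is a ramification index.
\end{itemize}
Nothing downstream uses these claims. Pluriharmonicity of $L$, the bound $L\geq\frac N2\log d$, and equality at $t_0$ already force $L\equiv\frac N2\log d$, so every $f_t$ is Latt\`es. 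That is exactly the paper's argument.

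For periodic genericity your route differs slightly from the paper's, and is cleaner.
\begin{itemize}
\item You lift all the way to $E^N$, where the map is $z\mapsto\alpha z+\beta$ coordinatewise. Off the branch locus $\mathcal B$ of $E^N\to\P^N$, all eigenvalues therefore have modulus $d^{n/2}$, and non-resonance is automatic.
\item The paper lifts only to $(\P^1)^N$ and asserts that all periodic points of $g=(h,\dots,h)$ are non-resonant. That assertion fails at points with a coordinate in the postcritical set of $h$. For a $\wp$-type Latt\`es map, a generic coordinate has multiplier $\pm\alpha^n$ while a ramified fixed coordinate has multiplier $\alpha^{2n}$, so $\lambda_1^2=\lambda_2$.
\item Such points have density zero, so the paper's conclusion survives. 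Your lift excludes them automatically.
\end{itemize}

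Two small repairs are needed in your version.
\begin{itemize}
\item \emph{The Galois assumption.} Your deck-group step presumes that $E^N\to\P^N$ is Galois, which the definition of split Latt\`es does not give. Instead, note that $F^n$ maps the finite fiber over $x$ into itself, so $F^{kn}(y)=y$ for some lift $y$. Hence $(df^n_{t_0}(x))^k$ is conjugate to $\alpha^{kn}\,\mathrm{Id}$. An invertible matrix with a nonzero scalar power is diagonalizable, here with eigenvalues $\zeta_j\alpha^n$.
\item \emph{The count.} Your bound $O(d^{(N-1)n})$ on periodic points in $\mathcal B$ is asserted without proof. You only need $o(d^{Nn})$. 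This follows from Briend--Duval equidistribution of periodic points toward $\mu_{f_{t_0}}$, since $\mathcal B$ is closed and $\mu_{f_{t_0}}(\mathcal B)=0$; this is the input the paper uses.
\end{itemize}
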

		
		\begin{prop}
			\label{prop:very-general-assumptionA}
			Let $f$ be a non-isotrivial one-parameter algebraic family of endomorphisms of $\P^N$ of degree $d\geq 2$ over $\C$. Assume that the image of the moduli map of $f$ passes through a very general point of $\sM_{d,N}$. Then $f$ satisfies Assumption A.
		\end{prop}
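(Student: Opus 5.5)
Assumption A has two parts, and I will verify them separately: (1) the bifurcation set is non-empty, and (2) when $N\geq 3$, the family is periodically generic. The guiding principle is that a very general point of $\sM_{d,N}$ avoids every countable union of proper Zariski closed subsets of moduli. So it suffices to show that each of the two properties fails only on such a countable union, intersected with the image of the moduli map of $f$.

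For (2), the key tool is the remark following the definition of periodic genericity: the family is periodically generic as soon as some single parameter $t$ has positive upper density of non-resonant, diagonalizable periodic points. For each $n$, I will look at the locus $R_n\subset \End_d(\P^N)$ of maps having some $n$-periodic point that is resonant or non-diagonalizable. This locus is a countable union of Zariski closed $\PGL_{N+1}$-invariant subsets. Indeed, resonance is a countable union of algebraic conditions, one for each exponent vector $(m_1,\dots,m_N)$ and index $j$, imposed on the multiplier spectrum. Non-diagonalizability is contained in the closed condition of a repeated eigenvalue.

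Each $R_n$ is proper. To see this, I will exhibit one endomorphism all of whose periodic points are non-resonant and diagonalizable. The candidate is a suitable perturbation of a split map. Alternatively, I can use that the multipliers of an $n$-cycle vary nontrivially, so that $R_n$ cannot contain all of $\End_d(\P^N)$; for this I would cite the known fact that the multiplier spectrum generically has distinct eigenvalues satisfying no multiplicative relations. A very general point $f_{t_0}$ in the image then lies outside $\bigcup_n R_n$. Hence every periodic point of $f_{t_0}$ is non-resonant and diagonalizable, and since $f_{t_0}$ has about $d^{Nn}$ periodic points of period $n$ counted with multiplicity, the density is $1$. This gives (2). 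The points are counted with multiplicity, but I expect that multiple periodic points are themselves excluded by the same genericity.

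For (1), I argue by contradiction. Suppose the family $f$ is stable on all of $\La$. By Berteloot--Bianchi--Dupont, stability (with periodic genericity, which we now have) implies that no repelling cycle becomes attracting along $\La$, and more strongly that the multipliers of all periodic points stay constant in modulus, with repelling cycles moving holomorphically. I would then use the rigidity-type statement that the multipliers of a stable algebraic family are constant, since a bounded holomorphic function on an algebraic curve is constant after compactification. So the multiplier spectrum map is constant along $\La$.

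On the other hand, for a very general point the multiplier spectrum determines the conjugacy class up to finitely many choices, in the style of McMullen's theorem extended to $\P^N$ by Ji--Xie and Gauthier--Taflin--Vigny, away from the Lattès locus. A very general point is not Lattès. The moduli map would then have finite image, which contradicts non-isotriviality.

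I expect the main obstacle to be this last rigidity input: deducing constant multipliers from stability, and then isotriviality from constant multipliers, in higher dimension. I would invoke the multiplier spectrum rigidity theorem for non-Lattès endomorphisms of $\P^N$ together with the Berteloot--Bianchi--Dupont equivalence.
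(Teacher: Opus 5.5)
Your overall strategy for periodic genericity matches the paper's: bad loci are countable unions of algebraic conditions on moduli, and a very general point avoids them. However, both parts of your argument have a gap, and the one in the bifurcation part is serious.

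\textbf{Periodic genericity.} For each closed piece of $R_n$ to be proper, you need a map with \emph{no} resonant or non-diagonalizable $n$-periodic point, and you never produce one.
\begin{points}
\item The natural candidate, a split Latt\`es map, does not work. Take the map induced on $\P^N\cong(\P^1)^N/S_N$ by $(h,\dots,h)$, and let $y$ be an $n$-periodic point of $h$ with multiplier $\lambda$. In the coordinates $e_1,\dots,e_N$ (elementary symmetric functions of a local coordinate), $df_t^n$ at the image of $(y,\dots,y)$ is triangular with diagonal $\lambda,\lambda^2,\dots,\lambda^N$. So $w_1^2=w_2$ holds, for every $n$.
\item Replacing non-diagonalizability by the closed repeated-eigenvalue condition makes this worse. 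Off the branch locus, the eigenvalues of a split Latt\`es map at periodic points are $\zeta_i\alpha^n$ with $\zeta_i$ in a finite group of roots of unity, so they repeat. The split map therefore lies in your enlarged bad locus for all large $n$.
\item ``A suitable perturbation'' would need a transversality argument that you do not give.
\end{points}
The fix is to ask for less. Periodic genericity only needs positive density, and that is exactly what the split Latt\`es map provides. The paper fixes $0<\eta<1$ and, for each $n$, considers the locus where fewer than $\eta d^{Nn}$ fixed points of $f^n$ are diagonalizable and non-resonant. This locus is contained in a countable union of algebraic subsets. These subsets are proper for large $n$, because at the split Latt\`es parameter of Theorem \ref{thm: example2} the good proportion tends to $1$.

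\textbf{Non-empty bifurcation locus.} Here the chain of implications does not close for $N\geq 2$.
\begin{points}
\item Stability does not make multipliers constant in modulus; already for $N=1$, multipliers vary over a hyperbolic component. What Berteloot--Bianchi--Dupont give is that (asymptotically most) $J$-repelling cycles move holomorphically and stay repelling.
\item Your compactification argument then makes constant only the eigenvalues of cycles that stay repelling (or attracting) on all of $\La$, where $1/w_i$ (or $w_i$) is bounded.
\item It says nothing about saddle cycles. These are abundant when $N\geq 2$, and for them neither $w_i$ nor $1/w_i$ is bounded a priori.
\end{points}
So you do not get constancy of the full multiplier spectrum morphism, which is what generic finiteness or injectivity results for the multiplier spectrum are about. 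You would need a rigidity theorem in which constancy of the multipliers of a positive-density family of periodic curves alone forces isotriviality, and none is cited.

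The paper uses a different mechanism. The geometric bifurcation class $\widetilde{L}_{\bif}$ on $\sM_{d,N}$ is big, because its top self-intersection is the total mass of the bifurcation measure, which is positive. Writing $\widetilde{L}_{\bif}\geq A+E$ on a model, with $A$ ample and $E$ effective, every curve $C$ outside a proper Zariski closed set $Z$ satisfies $\deg_{\widetilde{L}_{\bif}}(C)\geq A\cdot C'>0$. This means positive bifurcation mass along $C$ (Proposition \ref{prop:general-not-stable}). This route uses neither periodic genericity nor any multiplier rigidity, and it already holds at a general, not just very general, point.
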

		
		Theorem \ref{thm: example2} and Proposition \ref{prop:very-general-assumptionA} are proved in Section \ref{sec:assumptionA}.
		
		Theorem \ref{thm: 1-dim Gonality} and \ref{thm: high-dim Genus} have consequences toward the Uniform Boundedness Conjecture for iterated preimages and preperiodic points for one parameter families. The link is provided by a theorem of Frey: a curve with infinitely many algebraic points of bounded degree must have bounded gonality.  Thus gonality growth turns into uniform boundedness once one organizes preperiodic points or iterated preimages into suitable horizontal curves.
		
		\subsection{Uniform Boundedness Conjecture for  iterated preimages over number fields}
		
		Let $g$ be an endomorphism of $\P^N$ over a number field. Let $a\in \P^N(\overline{\Q})$.  
		Let $$\Preim(g,a):=\left\{x\in \P^N(\overline{\Q}):g^n(x)=a\;\text{for some}\; n\geq 0\right\}.$$
		
		Let $D\geq 1$. We are interested in the set of rational  iterated preimages of $a$ with degree bounded by $D$:
		$$\Preim(g,a,D):=\left\{x\in\Preim(g,a): [\Q(x):\Q]\leq D\right\}.$$

		By standard properties of the canonical height associated to $g$, for fixed $g$ and $a$, the cardinality of $\Preim(g,a,D)$ is finite.  It is a  natural problem to see how this cardinality  depends on $g$ and $a$. This problem is reminiscent of the classical problem of bounding torsion points on abelian varieties.
		
		Let $\End_{d,N}$ be the space of all degree $d$ endomorphisms of $\P^N$ over $\C$. The group $\PGL_{N+1}(\C)= \Aut(\P^N(\C))$ acts on $\End_{d,N}(\C)$ by conjugacy. The geometric quotient $\sM_{d,N}:=\End_{d,N}/\PGL_{N+1}$ is the (coarse) \emph{moduli space},  which is a complex affine variety.  Let $\pi:\End_{d,N} \to  \sM_{d,N}$ be the canonical projection.  If $f$ is an algebraic family of degree $d$ endomorphisms of $\P^N$ over $\C$, parametrized by a quasi-projective variety $\La$, then the canonical projection  $\pi$ induces a morphism $\pi_\La:\La\to \sM_{d,N}$. We say the family $f$ is {\em maximal varying} if $\pi_\La:\La\to \sM_{d,N}$ is quasi-finite.  If $\dim \La=1$, then maximal varying is equivalent to non-isotrivial. By a {\em marked point}, we mean a morphism $a:\La\to \P^N$, $t\mapsto a_t$. 
		
		For $N=1$, a  slightly weaker version of the  following conjecture was proposed by A. Levin \cite{MR2993960}.  Instead of for iterated preimages, a  similar Uniform Boundedness Conjecture for preperiodic points was proposed by Morton-Silverman \cite{morton1994rational}.  Fakhruddin \cite{MR1995861} showed that Morton-Silverman's conjecture \cite{morton1994rational} implies the Uniform Boundedness Conjecture for  torsion points on abelian varieties. 
		\begin{con}[Uniform Boundedness for  iterated preimages]\label{con: ubc for preimage}
			Let $f$ be a maximal varying algebraic family of degree $d$ endomorphisms of $\P^N$ parametrized by a quasi-projective variety $\La$ and let $a$ be a marked point, both over a number field. Then for each $D\geq 1$, there exists $C=C(f,a, D)>0$ and  a proper subvariety $V_{f,a}$ in $\La\times \P^N$ over $\overline{\Q}$ depending only on $(f,a)$, such that 
			$$\#\left(\Preim(f_t,a_t, D)\setminus V_{f,a}(\overline{\Q})\right)\leq C$$
			for every $t\in \La(\overline \Q)$ such that $[\Q(t):\Q]\leq D$.  When $N=1$, we require that  $V_{f,a}=\emptyset$.
		\end{con}

		\medskip
		
		As a consequence of Theorem \ref{thm: 1-dim Gonality}, we completely solve Conjecture \ref{con: ubc for preimage} for one-parameter families of endomorphisms of $\P^1$, i.e. when $N=1$ and $\dim \La=1$.  
		
		\begin{thm}\label{thm:1-dim preimages}
			Let $f$ be a non-isotrivial one-parameter algebraic family of degree $d$ endomorphisms of $\P^1$ parametrized by a quasi-projective curve $\La$ and let $a$ be a marked point, both over a number field. Then for each $D\geq 1$,  there exists $C=C(f,a, D)>0$ such that 
			$$\#\Preim(f_t,a_t, D)\leq C$$
			for every $t\in \La(\overline \Q)$ such that $[\Q(t):\Q]\leq D$.
		\end{thm}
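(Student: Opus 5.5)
The plan is to argue by contradiction and reduce to Theorem \ref{thm: 1-dim Gonality} using Frey's theorem: a curve over a number field with infinitely many points of degree $\leq D'$ has gonality at most $2D'$. Fix $D$. Suppose there are parameters $t_k\in\La(\overline\Q)$ with $[\Q(t_k):\Q]\leq D$ and $\#\Preim(f_{t_k},a_{t_k},D)\to\infty$. The first step is to enlarge the field $K$ of definition of $f$ and $a$ to a number field. We then replace $\Gamma_0$ by the graph of $a$ and consider the preimage curves, namely the irreducible components $\Gamma$ of $f^{-n}(\Gamma_0)$ for $n\geq 0$. Each $x\in\Preim(f_{t_k},a_{t_k},D)$ with $f^n_{t_k}(x)=a_{t_k}$ gives a point $(t_k,x)$ on some component of $f^{-n}(\Gamma_0)$. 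This point has degree at most $D^2$ over $\Q$, since $[\Q(t_k,x):\Q]\leq[\Q(t_k):\Q][\Q(x):\Q]$; we may also bound it by $D\cdot D$ with care. We must treat separately the finitely many $t$ where $f_t$ degenerates or where fibers of $\pi_1|_{\Gamma}$ are singular; this is harmless since those loci are finite, and for fixed $t$ the set $\Preim$ is finite by height considerations. We also need that, for a given $t$, the fiber over $t$ of the union of all preimage curves is all of $\Preim(f_t,a_t)$, up to the finitely many exceptional $t$.

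The key step is to show that only finitely many preimage curves can carry points of degree $\leq D^2$ at all; it is not enough for each of them to carry only finitely many. By Theorem \ref{thm: 1-dim Gonality}, the preimage curves form a sequence of distinct horizontal curves. Any such sequence is small, since $\hat h(\Gamma)=d^{-n}\hat h(\Gamma_0)$. Hence the gonality tends to infinity; flexible Lattès families are excluded below. So for all but finitely many preimage curves $\Gamma$, we have $\gonality(\Gamma)>2D^2$. Frey then gives only finitely many points of degree $\leq D^2$ on each such $\Gamma$, but that is not uniform. To get uniformity, we use the morphism $f:\Gamma\to f(\Gamma)$. A point of degree $\leq D^2$ on $\Gamma$ maps to a point of degree $\leq D^2$ on $f(\Gamma)$, and so on down the chain to $\Gamma_0$. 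Choose $n_0$ so that every preimage curve of level $\geq n_0$ has gonality $>2D^2$. Take the finitely many curves at level exactly $n_0$. On each of them the set of points of degree $\leq D^2$ is finite (Frey), and hence so is the union $S$. Any point of level $n\geq n_0$ with degree $\leq D^2$ maps under $f^{n-n_0}$ into $S$. Its $t$-coordinate therefore lies in the finite set $\pi_1(S)$. Conversely, any point of level $<n_0$ lies over $t$ in a fiber of $f^{-n_0}(\Gamma_0)$, which has cardinality at most $\deg(f^{-n_0}(\Gamma_0)\to\La)$. So the count is bounded by this constant for $t\notin\pi_1(S)$. For the finitely many $t\in\pi_1(S)\cup\{\text{bad }t\}$, we use finiteness of $\Preim(f_t,a_t,D)$. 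Taking the maximum gives $C$.

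The remaining case is the flexible Lattès family, where Theorem \ref{thm: 1-dim Gonality} does not apply. There we pass to the elliptic curve $E\to\La$ with multiplication map $[m]$ and reduce to gonality growth of the preimage curves $[m]^{-n}(\tilde\Gamma_0)$. This is the main obstacle. Nguyen–Saito covers dynatomic (torsion) curves, not general preimages. I would try to show directly that the components of $[m]^{-n}(\tilde\Gamma_0)$, for a non-torsion section, are covers of $\La$ whose gonality grows. I would attempt this via Fact \ref{fact:gonalityimage}, comparing them to torsion curves through a translation isogeny, possibly after a base change. Alternatively, if $a$ is a torsion section, the preimages are torsion and Nguyen–Saito applies directly. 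If $a$ is non-torsion, I would argue via the Mordell–Weil/Silverman specialization bound. That bound shows that $m^n$-division points of a non-torsion section have degree growing uniformly in $n$ over fibers of bounded degree, and this gives the bound.
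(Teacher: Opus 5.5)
For families that are not flexible Latt\`es, your argument is correct and uses the same ingredients as the paper's proof: Theorem \ref{thm: 1-dim Gonality}, Frey (Lemma \ref{lem:frey}) with $B=D^2$, pushing points down by $f^{n-n_0}$ to a fixed depth, and Northcott on finitely many fibers. The paper argues by contradiction, showing $\mathcal S_M$ is infinite for every $M$. Your version is the direct contrapositive and even yields an explicit shape for $C$.

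Two bookkeeping repairs are needed.
\begin{itemize}
\item ``Level'' must mean exact depth. For curves, use the components of $f^{-m}(\Gamma_a)$ not contained in $\bigcup_{j<m}f^{-j}(\Gamma_a)$, which is the paper's $\sE_m$. For points, use the minimal $n$ with $f_t^n(x)=a_t$. Otherwise, when $\Gamma_a$ is periodic, $\Gamma_a$ itself (possibly of small gonality) is a component of $f^{-n_0}(\Gamma_a)$ for infinitely many $n_0$, and Frey gives nothing at level $n_0$. With exact depths, $f^{n-n_0}$ sends a point of exact depth $n$ to a point of exact depth $n_0$, which lies on $\sE_{n_0}$.
\item Points of depth $<n_0$ are bounded by $\sum_{j<n_0}d^j$. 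They are not bounded by a fiber of $f^{-n_0}(\Gamma_0)$, since they need not lie on it.
\end{itemize}

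The genuine gap is the flexible Latt\`es case. It is part of the statement, and you do not actually prove it.

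Your torsion subcase is fine. If $\Gamma_a$ is preperiodic, all exact preimage curves are dynatomic, and Nguyen--Saito \cite{nguyen1996d} feeds into the same descent argument.

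The non-torsion subcase does not go through as proposed. Silverman's specialization theorem only gives $\hat h_{E_t}(P_t)/h(t)\to \hat h_E(P)$, hence $\hat h(Q)=m^{-2n}\hat h(P_t)$ whenever $m^nQ=\pm P_t$. To conclude that such $Q$ cannot have bounded degree once $n$ is large, uniformly in $t$, you would need a uniform Lang-type lower bound $\hat h_{E_t}(Q)\geq c(D)\,h(t)$ for non-torsion points of bounded degree in the family. You neither prove nor cite such a bound.

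Your alternative, comparing a division curve $Z_n=\{Q: m^nQ=P\}$ with torsion curves via translation and Fact \ref{fact:gonalityimage}, also fails. The comparison passes through $Z_n\times_{\La}Z_n$ and loses a factor of order $\deg(Z_n/\La)$, which grows with $n$, so it gives no gonality growth.

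The paper closes this case by citing Ingram \cite{MR2870098}, who proved uniform boundedness of iterated preimages for the one-parameter flexible Latt\`es family. Without that result or an equivalent argument, your proof covers only non-Latt\`es families and preperiodic marked points in the Latt\`es case.
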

		\medskip
		For $N\geq 1$ and $\dim \La=1$, as a consequence of Theorem \ref{thm: high-dim Genus}, we solve  Conjecture \ref{con: ubc for preimage}  under the assumption that $f$ satisfies Assumption A and the marked point has Zariski dense forward orbit.
		\begin{thm}\label{thm:high-dim preimages}
			Let $N\geq 1$. Let $f$ be a one-parameter algebraic family of degree $d$ endomorphisms of $\P^N$ parametrized by a quasi-projective curve $\La$ and let $a$ be a marked point, both over a number field. Assume $f$ satisfies Assumption A. Let $\Gamma_a\subset \La\times \P^N$ be the graph of $a$. Assume that the forward orbit $\{f^n(\Gamma_a)\}_{n\geq 0}$ is Zariski dense in $\La\times \P^N$. Then for each $D\geq 1$,  there exists $C=C(f,a, D)>0$  such that 
			$$\#\Preim(f_t,a_t, D)\leq C$$
			for every $t\in \La(\overline \Q)$ such that $[\Q(t):\Q]\leq D$.
		\end{thm}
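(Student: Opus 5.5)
We argue by contradiction and combine Theorem \ref{thm: high-dim Genus} with Frey's theorem: a curve over a number field with infinitely many points of degree $\le D$ has gonality at most $2D$.

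\textbf{Setup.} Enlarge the number field $k$ so that $f$, $\La$ and $a$ are defined over $k$. Let $\Gamma_0=\Gamma_a$. Every $x\in \Preim(f_t,a_t)$ with $f_t^n(x)=a_t$ gives a point $(t,x)$ lying on some irreducible component $\Gamma$ of $f^{-n}(\Gamma_0)$. All such components are preimage curves. Some components may be vertical, i.e. contained in fibers over points of $\La$ where $\Gamma_0$ meets the exceptional behaviour of $f$. These form a finite set of $t$, since vertical components of $f^{-n}(\Gamma_0)$ lie over the finitely many $t$ with $a_t$ non-generic, which should not occur for a family of endomorphisms. So for all but finitely many $t$ every point comes from a horizontal preimage curve, and each exceptional $t$ contributes finitely many points by the usual Northcott argument for fixed $f_t$.

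\textbf{Step 1: the horizontal preimage curves form a generic sequence.} Enumerate the horizontal preimage curves $\{\Gamma_m\}$ over $\overline{k}$. They are small for $f$, since $\hat h(\Gamma)=d^{-n}\hat h(\Gamma_0)$. They are also generic, which we prove as follows.

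1. Suppose infinitely many $\Gamma_m$ lie in a proper subvariety $H$; we may take $H$ irreducible with these curves Zariski dense in $H$.
2. Each curve with $f^n(\Gamma_m)=\Gamma_0$ has image $f^n(H)$ containing $\Gamma_0$.
3. Use the dynamics of subvarieties: the iterates $f^n(H)$ are proper subvarieties of bounded degree type. Applying $f$ to all the curves in $H$ shows that $\Gamma_0$ lies in a proper $f$-invariant (or periodic) Zariski closed set, namely the union of the finitely many components of the limit of $f^{n}(H)$.
4. That contradicts the Zariski density of the forward orbit of $\Gamma_a$.

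Making step 3 rigorous is the main obstacle. Since $f$ is finite, $f^n(H)$ has the same dimension as $H$. One route is to note that the forward orbit of $\Gamma_0$ is contained in $\bigcup_{j}f^{j}(\overline{\bigcup_n f^{n}(H)})$, and to show this union stays proper. I would first try a dynamical Mordell–Lang/Zariski-density type argument: take the Zariski closure $Z$ of $\bigcup_{n\ge0} f^n(H)$. If $Z$ is the whole space, then infinitely many distinct $f^n(H)$ occur, all of dimension $\dim H$, and each contains $\Gamma_0$. Their intersection is then a proper subvariety containing $\Gamma_0$ and backward invariant in a suitable sense, which yields a proper invariant set containing the orbit. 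If this fails, I would restrict to the preimage curves landing in $H$ with bounded $n$.

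\textbf{Step 2: gonality and Frey.} By Theorem \ref{thm: high-dim Genus}, $\gonality(\Gamma_m)\to\infty$. So all but finitely many $\Gamma_m$ satisfy $\gonality(\Gamma_m)>2D\cdot D'$, where $D'$ bounds the field of definition degree after a Galois-orbit adjustment. Note that $\Gamma_m$ need not be over $k$; the Galois conjugates are again preimage curves, and $[k(\Gamma_m):k]$ enters the degree count. We use Fact \ref{fact:gonalityimage} on the union of Galois conjugates and apply Frey over the field of definition $k_m$. For points $(t,x)$ with $[\Q(t):\Q]\le D$ and $[\Q(x):\Q]\le D$, the point has degree $\le D^2$ over $\Q$. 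So each of those $\Gamma_m$ has only finitely many such points.

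The remaining finitely many curves $\Gamma_m$ with small gonality matter only through their points, and these must be bounded uniformly in $t$. A point of $\Gamma_m$ over $t$ has at most $\deg(\Gamma_m)$ preimages in the fiber, so these curves contribute at most $\sum_{\text{small }m}\deg(\Gamma_m)$ points.

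\textbf{Counting.} The curves with large gonality contribute finitely many points in total, so every $t$ gets at most a bounded number from them. Together with the finitely many exceptional fibers, this gives the constant $C(f,a,D)$.

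The delicate part of the counting is checking that the large-gonality curves really produce a finite total set of low-degree points. There are infinitely many such curves, so per-curve finiteness is not enough. I would handle this via the Northcott property: low-degree points on $\Gamma_m$ with $n$ large have bounded height $\hat h_{f_t}(x)=d^{-n}\hat h_{f_t}(a_t)$. I would combine this with the uniform lower bound on gonality by applying Frey's bound over all curves of gonality $>2D^2$ simultaneously, so that only finitely many curves can carry any degree-$\le D^2$ points at all. Given $\gonality(\Gamma_m)\to\infty$ and heights, this should reduce to the finite-curve case.
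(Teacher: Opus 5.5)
The overall shape of your argument matches the paper: contradiction, Frey with $B=D^2$, smallness $\hat h(\Gamma)=d^{-n}\hat h(\Gamma_a)$, and Theorem \ref{thm: high-dim Genus}. However, two steps do not work as written.

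\textbf{Step 1 is stronger than needed and is not proved.} You try to show that the family of \emph{all} horizontal preimage curves is generic. If a proper irreducible $H$ contained infinitely many of them, then $H$ would carry a Zariski-dense set of points whose canonical height tends to $0$. Ruling this out for non-preperiodic $H$ is a Bogomolov/Manin--Mumford-type rigidity statement that is not available here, and your sketch of step 3 does not supply it. The degrees of $f^n(H)$ are not bounded. Only the specific iterates $f^{n_m}(H)$, with $n_m$ the depth of some curve in $H$, contain $\Gamma_0$. Nothing makes their intersection invariant in any sense.

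The paper avoids this entirely and only controls a single backward chain:
\begin{itemize}
\item Let $\sB_m$ be the finite set of \emph{bad} exact preimage curves of depth $m$, namely those whose model carries infinitely many points of degree $\le D^2$ over $K$. By Frey these satisfy $\gonality\le 2D^2$, and $f(\sB_m)\subset\sB_{m-1}$.
\item If $\sB_m\neq\emptyset$ for infinitely many (hence all) $m$, K\"onig's lemma gives a chain $x_m\in\sB_m$ with $f(x_m)=x_{m-1}$ and $x_0=a_F$.
\item Weak limits of the Birkhoff averages $\frac1n\sum_{i<n}\delta_{x_i}$ on $\P^N_{\overline F}$, with the constructible topology, are $f$-invariant. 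By Xie's theorem they are therefore combinations of periodic-orbit measures.
\item The Zariski closure $Z$ of a charged periodic orbit is clopen. Hence it contains some $x_i$, hence $a_F$, hence everything by the density hypothesis.
\item So every limit is the Dirac mass at the generic point. A diagonal extraction then gives a generic subsequence of the chain with bounded gonality, contradicting Theorem \ref{thm: high-dim Genus}.
\end{itemize}
This is exactly where the Zariski-density hypothesis enters, and it yields only $\sB_m=\emptyset$ for $m$ large, not genericity of all preimage curves.

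\textbf{Your counting has a gap, as you partly acknowledge.} Frey gives finiteness of low-degree points on \emph{each} high-gonality curve. It does not say such curves carry none, so infinitely many curves may still contribute infinitely many points. Northcott also cannot be applied across varying $t$, since neither the height of $t$ nor $\hat h_{f_t}(a_t)$ is bounded.

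The missing device is descent to a fixed depth. Let $\mathcal S_m$ be the set of points $(t,z)$ of exact depth $m$ with $[K(t,z):K]\le D^2$.
\begin{itemize}
\item If no uniform bound exists, then $\bigcup_{m\ge M}\mathcal S_m$ is infinite for every $M$, because a fiber has at most $1+d^N+\dots+d^{N(M-1)}$ points of depth $<M$.
\item The map $f^{m-M}$ sends this union into $\mathcal S_M$ with finite fibers, by Northcott for the \emph{single} map $f_t$ over a fixed $(t,y)$.
\item So $\mathcal S_M$ is infinite, and one of the finitely many components of $\sE_M$ carries infinitely many points of degree $\le D^2$. That is, $\sB_M\neq\emptyset$.
\end{itemize}
Choosing $M$ with $\sB_M=\emptyset$, from the previous paragraph, gives the contradiction.

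Your extra factor $D'$ is unnecessary: over the field of definition of $\Gamma$, degrees relative to $K$ can only drop, so Frey still gives $\gonality(\Gamma)\le 2D^2$.
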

		
		Theorems \ref{thm:1-dim preimages} and \ref{thm:high-dim preimages} are proved in Section \ref{sec:uniform-boundedness-proof}.
		
		\begin{rem}
			(1) The assumption that $f$ is non-isotrivial in Theorem \ref{thm:1-dim preimages} cannot be removed.  Let $g$ be an endomorphism of $\P^1$ over a number field. Let $f$ be the trivial one-parameter algebraic family parametrized by $\P^1$, i.e. $f_t=g$ for every $t\in \P^1(\overline{\Q})$.   Let $a:\P^1\to \P^1$ be the identity morphism.  Let $t_0\in \P^1(\overline{\Q})$ with infinite $g$-orbit.  Let $D:=[\Q(t):\Q]$.  For $n\geq 0$,  let $t_n:=g^n(t_0)$. Since $\left\{t_0,\dots, t_n\right\}\subset \Preim(f_{t_n},a_{t_n}, D)$,  we have $\#\Preim(f_{t_n},a_{t_n}, D)\geq n+1$. This implies that the constant $C$ in Theorem \ref{thm:1-dim preimages} does not exist.
			
			(2) The Zariski dense orbit assumption in Theorem \ref{thm:high-dim preimages} is used to remove the exceptional subvariety from the conclusion. Without this assumption, when $N\geq 2$, one should in general allow an exceptional subvariety due to the possible existence of periodic isotrivial subvarieties of positive dimension.
			
		\end{rem}
		
		\medskip

		\subsection{The Geometric Uniform Boundedness Conjecture}

		Let $B$ be an irreducible quasi-projective curve over $\C$, and let $k=\C(B)$ be the corresponding complex function field.  Throughout the paper, by a complex function field we mean a function field over a complex quasi-projective curve.  We define $\gonality(k):=\gonality(B)$.  An endomorphism $f$  on $\P^N$ over $k$ can be naturally seen as a one-parameter algebraic family of endomorphisms of $\P^N$, parametrized by a Zariski open subset of  $B$.  We say that $f$ is {\em isotrivial} if the associated one-parameter algebraic family is isotrivial. For each $D\geq 1$, we define $$\Preper(f,D):=\left\{x\in \P^N(\overline{k}): x\;\text{is $f$-preperiodic,}\;\gonality(k(x))\leq D\right\}.$$ 
		
		Theorem \ref{thm: 1-dim Gonality} and Theorem \ref{thm: high-dim Genus} have consequences on the Geometric Uniform Boundedness Conjecture (GUBC). The original Uniform Boundedness Conjecture proposed by Morton-Silverman \cite{morton1994rational} is for number fields, the following is a function field analogue of Morton-Silverman's conjecture. 
		\begin{con}[Geometric Uniform Boundedness Conjecture]\label{conj: UBC}
			Given integers $D\geq 1$, $N\geq 1$ and $d\geq 2$, there exists $C=C(d,N,D)>0$ such that if $f$ is a degree $d$ non-isotrivial endomorphism of $\P^N$ over  a complex  function field $k$, then there exists a proper subvariety $V_{f,D}\subset \P^N$ over $\overline{k}$ such that 
			$$\#\left(\Preper(f,D)\setminus V_{f,D}(\overline{k})\right)\leq C.$$
			When $N=1$, we require that  $V_{f,D}=\emptyset$.
		\end{con}
		\begin{rmkU}
			In view of Fact \ref{fact:gonalityimage}, if $\Preper(f,D)$ is non-empty, then one automatically has $\gonality(k)\leq D$. Thus, in the above conjecture, one only needs to consider complex function fields $k$ of gonality at most $D$.
		\end{rmkU}

		When $N\geq 2$, in general $V_{f,D}$ can be non-empty due to the possible existence of periodic isotrivial subvariety of positive dimension.
		
		\medskip
		
		Conjecture \ref{conj: UBC} was previously only known for two special one-parameter families, namely the unicritical family $f_t(z)=z^d+t$ and the flexible Latt\`es family.   
		
		Let us be more precise about {\bf the meaning of saying that GUBC holds along a one-parameter algebraic family.} Let $\sM_{d,N}$ be the moduli space of degree $d$ endomorphisms of $\P^N$ over $\C$. Let $S\subset \sM_{d,N}$ be an irreducible algebraic curve over $\C$.  Let $f$ be a degree $d$ non-isotrivial endomorphism of $\P^N$ over a complex function field $k=\C(B)$. After shrinking $B$, the canonical projection $\pi$ induces a canonical morphism $\pi_B:B\to \sM_{d,N}$. We say that $f$ {\em lies over} $S$ if $\pi_B(B)$ is a Zariski open subset of $S$.
		

		\begin{defi}
			Let $S\subset \sM_{d,N}$ be an irreducible algebraic curve over $\C$.    We say that the GUBC holds along $S$ if, for every integer $D\geq 1$, there exists $C=C(D,S)>0$ such that if $f$ is a degree $d$ non-isotrivial endomorphism of $\P^N$ over a complex function field $k$ which lies over $S$, then there exists a proper subvariety $V_{f,D}\subset \P^N$ over $\overline{k}$ such that 
			$$\#\left(\Preper(f,D)\setminus V_{f,D}(\overline{k})\right)\leq C.$$
			
			Moreover when $N=1$, we require that   $V_{f,D}=\emptyset$.
		\end{defi}
		
		As a consequence of Theorem \ref{thm: 1-dim Gonality}, we establish GUBC for every one-parameter algebraic family of endomorphisms of $\P^1$.
		
		\begin{thm}[One-parameter Geometric Uniform Boundedness for endomorphisms of $\P^1$]\label{thm: 1-dim UBC}
			The GUBC holds along every irreducible algebraic curve $S\subset \sM_{d,1}$ over $\C$.
		\end{thm}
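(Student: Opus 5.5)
The plan is to reduce to Theorem \ref{thm: 1-dim Gonality} (and Nguyen--Saito in the flexible Latt\`es case) via a gonality argument on dynatomic curves over a fixed base.

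\textbf{Setting up a universal family.} Fix $S$ and $D$. After a finite base change $\La\to S$, with $\La$ an irreducible quasi-projective curve, there is a one-parameter algebraic family $f_\La$ of endomorphisms of $\P^1$ whose moduli map is $\La\to S$, finite onto a Zariski open subset. Such a lift exists, for instance, by taking a curve in $\End_{d,1}$ dominating $S$. This family is non-isotrivial and is defined over a finitely generated field $K\subset\C$.

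Now let $f$ over $k=\C(B)$ lie over $S$, and let $x\in\Preper(f,D)$. The map $B\to S$ and the family $f$ are related to $f_\La$ through the fibre product $B'$ of $B$ and $\La$ over $S$. After a further finite cover, $f$ and $f_\La$ become conjugate by some $\phi\in\PGL_2$ over the function field. The degree of this further cover is bounded in terms of $d$ alone, since automorphism groups of degree $d$ rational maps have bounded order. Consequently the preperiodic point $x$ corresponds to a point on some dynatomic curve $\Gamma$ of $f_\La$. Using Fact \ref{fact:gonalityimage} twice, the curve $\Gamma$ receives a dominant map from the curve $\overline{\{x\}}$ after a base change of bounded degree. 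The key point to verify is that the degree of $B'\to B$ is bounded: it is at most $\deg(\La\to S)$ times a constant depending on $d$. This gives $\gonality(\Gamma)\leq C_1(S,d)\,D$.

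\textbf{Applying the gonality theorem.} By Theorem \ref{thm: 1-dim Gonality}, applied to any enumeration of the distinct dynatomic curves (which form a small sequence), only finitely many dynatomic curves of $f_\La$ have gonality at most $C_1D$. If $f_\La$ is the flexible Latt\`es family, we use Nguyen--Saito instead. Each of these finitely many curves has finite degree over $\La$. Hence every $x\in\Preper(f,D)$ lies over one of finitely many curves $\Gamma_1,\dots,\Gamma_m$, which depend only on $(S,D)$.

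\textbf{Counting.} Over the generic point of $B'$, each $\Gamma_i$ contributes at most $\deg(\Gamma_i\to\La)$ points. Transporting these back to $\P^1(\overline k)$ via the conjugacy and the bounded-degree base change, the total count is bounded by
\[
C=\sum_i \deg(\Gamma_i)
\]
times the bounded multiplicity coming from the choices of conjugacy and lift. This constant depends only on $S$, $D$ and $d$.

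\textbf{Main obstacle.} The delicate step is the first one: making the correspondence between $f$ and the fixed family $f_\La$ uniform. The argument needs the fibre product and twist (conjugacy) data to have degree bounded independently of $B$. Points of $S$ with extra automorphisms may force twists, and $B\to S$ may have arbitrarily large degree, so it must be checked that large degree of $B\to S$ does not inflate the bound. It does not, because the gonality estimate only uses the map from the curve $\overline{\{x\}}$ to $\Gamma$, which is dominant, via the left inequality of Fact \ref{fact:gonalityimage}.
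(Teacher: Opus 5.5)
Your proposal is correct and follows essentially the same route as the paper. Both arguments lift $S$ to a curve $\La\subset\End_{d,1}$ and pass to a component of $B\times_S\La$ followed by a conjugacy cover, with total degree over $B$ at most $\deg(\La\to S)\cdot M_d$ by Levy's bound. They then bound $\gonality(\Gamma_x)\leq qD$ using both inequalities of Fact \ref{fact:gonalityimage}, get finiteness of such dynatomic curves from Theorem \ref{thm: 1-dim Gonality} together with Nguyen--Saito in the flexible Latt\`es case, and count points in the generic fibre. The extra multiplicity factor in your count is unnecessary, since a single fixed conjugacy already gives an injection, but it is harmless.
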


		%
		%
		%
		
		Since the two conditions in  Assumption A are invariant under conjugacy, one can also define Assumption $A$ for irreducible algebraic curve $S\subset \sM_{d,N}$. As a consequence of Theorem \ref{thm: high-dim Genus}, we have
		
		\begin{thm}[One-parameter Geometric Uniform Boundedness for endomorphisms of $\P^N$]\label{thm: high-dim UBC}
			
			Let  $S\subset \sM_{d,N}$ be an  irreducible algebraic curve over $\C$ such that $S$ satisfies Assumption $A$. Then the GUBC holds along $S$. 
		\end{thm}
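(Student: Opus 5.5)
We deduce Theorem \ref{thm: high-dim UBC} from Theorem \ref{thm: high-dim Genus} through a Frey-type argument. The crux is to make the constant independent of the function field $k$ lying over $S$.

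\textbf{Setting up the family.} Fix $S$ and $D$. Choose a finite cover $\La\to S^{\circ}$ of a Zariski open subset of $S$ over which a family $f:\La\times\P^N\to\La\times\P^N$ exists whose moduli map is the cover (after base change the family is defined up to conjugacy). Since Assumption A is conjugacy invariant, and since bifurcation and periodic genericity are local properties preserved by finite base change, this family satisfies Assumption A. Any non-isotrivial $f'$ over $k=\C(B)$ lying over $S$ becomes, after a finite extension of $k$ of degree bounded in terms of $S$ alone, conjugate to the pullback of $f$ along a non-constant map $B'\to\La$. The gonality of the fields involved changes by a bounded factor by Fact \ref{fact:gonalityimage}. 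So it suffices to bound preperiodic points $x$ of the pulled-back family with $\gonality(k'(x))\le D'$, where $D'$ depends only on $D$ and $S$.

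\textbf{Curves attached to points.} Each such $x$ is a section over $B'$, namely a map $B'_x\to\La\times\P^N$ with $\gonality(B'_x)\le D'$. Its image is a dynatomic curve $\Gamma_x$, which is horizontal because $B'\to\La$ is dominant. Fact \ref{fact:gonalityimage} gives $\gonality(\Gamma_x)\le\gonality(B'_x)\le D'$.

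\textbf{Applying the genus theorem.} All dynatomic curves have height zero. Any sequence of distinct ones is small, and any generic such sequence has gonality tending to infinity by Theorem \ref{thm: high-dim Genus}. This implies the following: there are a finite set $\mathcal F$ of dynatomic curves and a proper subvariety $Z\subset\La\times\P^N$ such that every dynatomic curve of gonality $\le D'$ lies in $\mathcal F$ or in $Z$.

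To see this, argue by contradiction via a noetherian induction. Let $Z$ be the Zariski closure of the union of all the infinitely many such curves, taken as follows: if the set of such curves not contained in a given proper subvariety is always infinite, we can extract a generic sequence, contradicting the theorem. Concretely, the Zariski closure of the set of these curves must be a proper subvariety up to finitely many exceptions; for $N=1$ genericity is just distinctness, so $Z=\emptyset$.

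Set $V_{f,D}$ to be the generic fibre of $Z$, pulled back to $\overline k$. A point $x\notin V_{f,D}$ gives $\Gamma_x\in\mathcal F$. The number of such $x$ is at most the number of points over $\overline{k}$ whose associated curve is a given $\Gamma\in\mathcal F$, which is at most $\sum_{\Gamma\in\mathcal F}\deg(\Gamma\to\La)$, since the generic fibre of $\Gamma$ has $\deg\Gamma$ geometric points. This is a constant $C(D,S)$.

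\textbf{Main obstacle.} The hardest step is making the reduction uniform. Two points need care. First, passing from an arbitrary $f'$ over $k$ to the universal family over $\La$ must cost only a bounded extension degree, even in the presence of automorphism groups. Second, the exceptional subvariety $Z$ must be proper and independent of $k$; the noetherian extraction of a generic sequence from Theorem \ref{thm: high-dim Genus} must be done carefully to guarantee this.
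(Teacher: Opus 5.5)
Your proposal is correct and follows essentially the same route as the paper. The common steps are: a bounded-degree base change reducing to a fixed family $F$ over $\La$; properness of the Zariski closure of all dynatomic curves of gonality at most $D_S$, by diagonal extraction of a generic sequence and Theorem \ref{thm: high-dim Genus}; and taking $V_{f,D}$ to be the pulled-back generic fibre. The step you left as an ``obstacle'' is handled in the paper by bounding the base-change degree by $q_0M_{d,N}$, using $B\times_S\La$ together with Levy's uniform bound on $|\Aut|$ for the relative conjugacy scheme. The only cosmetic difference is that the paper takes the whole closure as the exceptional set, so $\mathcal F=\emptyset$ and $C(D,S)=0$, whereas your $\mathcal F\cup Z$ split also gives $V_{f,D}=\emptyset$ for $N=1$, a case the paper covers separately in Theorem \ref{thm: 1-dim UBC}.
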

		
		Theorems \ref{thm: 1-dim UBC} and \ref{thm: high-dim UBC} are proved in Section \ref{sec:uniform-boundedness-proof}.
		


		\subsection{Previous results}
		
		\subsubsection{Results on dynatomic curves and the Gonality Conjecture.}
		For the unicritical family $f_t(z)=z^d+t$, the genus and topology of periodic and preperiodic dynatomic curves were studied by Bousch \cite{bousch1992}, Morton \cite{morton1996certain}, and Gao \cite{MR3460633}. In particular, Morton obtained genus estimates for periodic dynatomic curves, and Gao computed the genera of preperiodic dynatomic curves. The gonality growth for the unicritical family was proved by Doyle--Poonen \cite{MR4065068}, using these geometric inputs together with finite-field arguments and the Castelnuovo--Severi inequality. The flexible Latt\`es family was treated by Nguyen--Saito \cite{nguyen1996d}. By contrast, the mechanism in this paper is bifurcational: in a non-isotrivial dynamical family, bifurcation forces geometric complexity. Arithmetic equidistribution and woven currents are used to connect this bifurcation mechanism with the geometry of small curves.

		\subsubsection{Results for the Uniform Boundedness Conjecture for iterated preimages(=Conjecture \ref{con: ubc for preimage}).}
		All previous results for Conjecture \ref{con: ubc for preimage} concern one-parameter families of endomorphisms of $\P^1$.
		\begin{points}
			\item When $f$ is the quadratic polynomial family $f_t(z)=z^2+t$ and  $a$ is a constant marked point, Faber et al \cite{MR2480563} proved Conjecture \ref{con: ubc for preimage} for all but finitely many $a\in \overline \Q$.
			
			\item Ingram \cite{MR2870098} proved Conjecture \ref{con: ubc for preimage} for the one-parameter flexible Latt\`es family on $\P^1$.
			
			\item In the setting of abelian varieties over number fields,  Cadoret--Tamagawa \cite[Theorem 1.1]{MR2897693} proved the uniform boundedness of $\ell$-primary torsion points for abelian varieties over a fixed base curve. Ji--Song--Xie \cite[Theorem 1.6]{ji2026geometric} gave a geometric proof of Cadoret--Tamagawa's theorem and extended it to the setting of iterated preimages of the $\times \ell$ map.
			
			\item   A. Levin \cite{MR2993960} and Ingram \cite{MR3064415} obtained partial results for Conjecture \ref{con: ubc for preimage}  for certain one-parameter families of endomorphisms of $\P^1$  when the number of places of bad reduction is controlled.
			
			\item Explicit bounds on the number of rational iterated preimages for the quadratic polynomial family $f_t(z)=z^2+t$  were studied by Faber \cite{MR2607018}, Faber--Hutz--Stoll \cite{MR2854215}, Hutz--Hyde--Krause \cite{MR2905234} and Sano \cite{sano2025number}. Sano \cite{sano2025number} also obtained explicit bounds for the family $f_t(z)=z^d+t$, $d\geq 3$.
		\end{points}
		%
		%
		\subsubsection{Results for the Geometric Uniform Boundedness Conjecture for preperiodic points(=Conjecture \ref{conj: UBC}).}
		
		

		All of the following results concern endomorphisms of $\P^1$, except the results for abelian varieties:
		
		\begin{points}
			\item Nguyen-Saito  \cite{nguyen1996d} proved that GUBC holds along  the flexible Latt\`es family. 
			
			\item  Doyle-Poonen \cite{MR4065068} proved that GUBC holds along  the unicritical family $f_t(z)=z^d+t$.
			They also obtained parallel results over function fields of positive characteristic. 
			
			\item Also in the positive characteristic case, Doyle--Faber \cite{MR4693952} recently extended Doyle--Poonen's result to some other one-parameter families. 
			
			\item Assuming the $abcd$ Conjecture (a strong form of the $abc$ conjecture), Looper \cite{looper2021dynamical} \cite{looper2021uniform} proved Conjecture \ref{conj: UBC} in the set of polynomials over complex function fields. 
			
			\item Benedetto \cite{MR2339471}, Canci \cite{MR3441644} and Canci-Paladino\cite{MR3556260} obtained partial results of Conjecture \ref{conj: UBC} concerning the case when the number of the places of bad reduction of $f$ is uniformly bounded from above.
			
			\item In the setting of abelian varieties over complex function fields, Ji--Song--Xie \cite[Theorem 1.4]{ji2026geometric} proved that the genus of a generic sequence of horizontal curves with small Moriwaki heights over a fixed base curve goes to infinity. This extends previous results of Cadoret--Tamagawa \cite[Theorem 1.2]{MR2897693}, \cite[Theorem 1.3]{MR2842079}. Bakker--Tsimerman proved the Geometric Torsion Conjecture for abelian varieties with real multiplication \cite{MR3825605}. Remarkably, Looper--Yap recently completely solved the Geometric Torsion Conjecture for abelian varieties \cite{LooperYap2026uniform}.


		\end{points}
		
		\subsubsection{Results for the Uniform Boundedness Conjecture for preperiodic points over number fields.}
		This is the number-field analogue of Conjecture \ref{conj: UBC}.
		\begin{con}[Morton-Silverman \cite{morton1994rational}]
			Let $d\geq 2$, $N\geq 1$. Let $k$ be a number field.    Then  there exists  a constant
			$C = C(d, N, [k:\Q])>0$ such that  if $f$ is  an endomorphism of $\P^N$ of degree $d$ over $k$,  then
			$$\Preper(f,k)\leq C,$$ 
			where
			$$\Preper(f,k):=\left\{x\in \P^N(k): x\;\text{is $f$-preperiodic}\right\}.$$
			
		\end{con}
		All known results for the above conjecture are for endomorphisms of $\P^1$.
		\begin{points}
			\item Using  deep results of Mazur \cite{MR482230}, Kamienny \cite{MR1172689}, and Merel \cite{MR1369424}, one can show that the above conjecture holds for the set of Latt\`es maps on $\P^1$.
			
			
			\item Assuming the $abcd$ Conjecture (a strong form of the $abc$ conjecture), Looper \cite{looper2021dynamical} \cite{looper2021uniform} proved that the above conjecture holds for the set of polynomials on $\P^1$.
			
			\item Besides the above two results, many other works provide evidence of Morton-Silverman's conjecture.  We recommend \cite{MR4693952} and \cite{benedetto2019current} for a survey of those results.
		\end{points}

		\medskip
		
		\subsection{Strategy of the proof}
		
		We first explain the proof of Theorem \ref{thm: 1-dim Gonality}, which contains the main ideas of the paper.  The argument is by contradiction.  Suppose that there is a small sequence of distinct horizontal curves $\Gamma_n$ for which the desired genus or gonality growth fails.  Arithmetic equidistribution connects the smallness of the curves with the relative dynamical Green current. This provides the arithmetic input through which bifurcation enters the argument.
		
		The main analytic idea is to bring the language of woven currents into arithmetic dynamics.  The limiting objects produced by the curves are positive closed currents, but the assumptions of bounded normalized genus or bounded gonality force these limits to remember how they were built from curves.  A uniformly woven current is, roughly speaking, an integral of currents of integration over analytic curves with uniformly bounded geometry.  We also use weaker forms of this structure, obtained by approximation or by taking images of uniformly woven currents.  The point of this language is that it turns a numerical bound on curves into an internal geometric structure of the limiting currents.
		
		The genus and gonality parts produce this structure in different ways.  For the genus statement, the argument is carried out directly on $\La\times \P^1$: if $\genus(\Gamma_n)/\deg(\Gamma_n)$ does not tend to infinity, then genus-volume estimates give a weak woven-type structure for the limiting currents.  Our work builds on woven current theory, originally developed by Bedford--Lyubich--Smillie \cite{Bedford-Lyubich-Smillie}, and its further developments due to Cantat \cite{cantat01}, Dujardin \cite{dujardin2003laminar, dujardin04, Dujardin2012, Dujardin2013}, De Th\'elin \cite{dethelin04, dethelin08} and Dinh \cite{dinh05}. See also \cite{dujardin23} for a nice  introduction of woven currents theory. 
		
		For the gonality statement, one first passes through symmetric products.  A bounded-gonality map gives families of effective zero-cycles, and the resulting geometry yields an analogous weak woven-type structure after pushing back to the original family, see Theorem \ref{thm:sa gonality}.  Theorem \ref{thm:sa gonality} is of independent interest in the theory of woven currents: we relax the bounded genus assumption to bounded gonality. 
		
		 In both cases, this weak woven structure produces normal families of curves in the total  space $\La\times \P^N$.
		
		The actual source of complexity is bifurcation.  In a non-isotrivial one-dimensional family which is not flexible Latt\`es, normality of the family of curves produced above is incompatible with the motion of repelling cycles on the bifurcation locus.  Using the stability theory of Ma\~n\'e--Sad--Sullivan and Lyubich, together with McMullen's rigidity theorem, this incompatibility rules out the bounded normalized genus and bounded gonality possibilities.  This proves Theorem \ref{thm: 1-dim Gonality}.
		
		The higher-dimensional theorem follows the same philosophy, but the role of one-dimensional stability theory is replaced by the bifurcation theory for endomorphisms of $\P^N$ and the multiplier-genericity condition in Assumption A.  The uniform boundedness results for iterated preimages are then obtained from the gonality theorems by Frey's theorem, one works with exact preimage curves and then applies Northcott's theorem on a fixed fiber. For preperiodic points over function fields, the same gonality input gives the required uniform bound, with an exceptional subvariety in higher dimension.
		
		\medskip
		
		\subsection{Organization of the paper}
		
		In Sections \ref{sec:wovencurrents} and \ref{sec:woven11} we develop the analytic theory of woven currents needed in the proof. Section  \ref{sec:wovencurrents}  establishes the basic compactness and decomposition properties. Sections \ref{sec:woven11} study the limiting behavior of woven currents arising from algebraic curves and intersection theory. In Section \ref{section:bifur} we recall the bifurcation theory. In Section \ref{sec:slicing} we recall the slicing theory.  In Section \ref{sec:height} we recall the height theory for polarized dynamical systems over finitely generated fields, following Yuan--Zhang, and discusses the adelic viewpoint on bifurcation classes. In Section \ref{sec:gonality-proof} we prove the gonality and genus-growth theorems (Theorem \ref{thm: 1-dim Gonality} and \ref{thm: high-dim Genus}). Section \ref{sec:uniform-boundedness-proof} derives the uniform boundedness results over number fields and complex function fields (Theorem \ref{thm:1-dim preimages}, \ref{thm:high-dim preimages}, \ref{thm: 1-dim UBC} and \ref{thm: high-dim UBC}). Finally, In Section \ref{sec:assumptionA} we  prove the examples and genericity statements related to Assumption A (Theorem \ref{thm: example2} and Proposition \ref{prop:very-general-assumptionA}).

		\subsection*{Acknowledgement}
		The authors would like to thank Fran\c{c}ois Bertel-oot, Fabrizio Bianchi, Romain Dujardin, and Jit Wu Yap for the comments on the first version of the paper.  Zhuchao Ji would like to thank Beijing International Center for Mathematical Research in Peking University for the invitation. The authors thank the AI assistant Xiaozhua for assistance during the writing process, and Liang Xiao and Shuai Chen for providing the AI-agent environment and for resolving related technical issues. Zhuchao Ji is supported by National Key R\&D Program
		of China (No.2025YFA1018300), NSFC Grant (No.12401106), and ZPNSF Grant (No.XHD24A0201). Junyi Xie is supported by NSFC Grant (No.12271007).
		\medskip
		
		\subsection*{AI disclosure}
		The authors used the AI assistant Xiaozhua, running in the OpenClaw environment with OpenAI models, for language polishing, LaTeX editing, reference checking, and preliminary proofreading. All mathematical arguments and the final text were reviewed and approved by the authors, who take full responsibility for the content of the paper.
		\medskip

		\section{Uniformly woven currents}\label{sec:wovencurrents}
		\subsection{Basic properties of uniformly woven currents}\label{sec:basicwoven}
		
		In this subsection, we consider positive closed currents that admit a decomposition as an integral of closed irreducible analytic subvarieties with uniformly bounded volume from above on any compact subset, which are called {\em uniformly woven currents}.
		
		\medskip

		Let $(X,\omega)$ be a Hermitian manifold of dimension $N$.  The volume of an analytic subvariety $V$ ( resp. more generally when $S$ is a positive current) of $X$ is computed with respect to $\omega$ and is denoted by $\vol(V)$ (resp. $\vol(S)$).  For a subset $K\subset X$, we denote $\Ma_K(S)$ the volume of the positive current $S$ on $K$. 
		
		\medskip
		
		Let $K_1\subset K_2\subset \cdots$ be an exhaustion by compact sets of $X$, i.e. $K_i\subset K_{i+1}^\circ$ and $\cup_{i=1}^\infty K_i=X$, where $K^\circ$ is the interior of $K$.   We fix this exhaustion  once for all. 
		
		\medskip
		
		Let $0\leq p\leq N$.  For a compact subset $K\subset X$, let $\sD^p(K)$ be the Banach space of continuous $(p,p)$-forms with support contained in $K$, where the norm is the supremum norm.  Let $\sD^p(X)$ be the space of continuous $(p,p)$-forms with compact support, i.e. $\sD^p(X)=\cup_{i=1}^\infty \sD^p(K_i)$.  Via the continuous embedding $j_n: \sD^p(K_n)\to \sD^p(K_{n+1})$, $\sD^p(X)$ is a  {\em strict LB space} (i.e. strict inductive limit of an increasing sequence of Banach spaces), see \cite[Page 58]{schaefer1999topological}. Let $\sD'_p(X)$ be the dual space of $\sD^p(X)$.
		
		Let $\sC_p$ be the space of positive closed  bidimension $(p,p)$-currents. Since positive currents has order $0$ \cite[Proposition 1.14]{Demailly},  $\sC_p$  is  a  subset of  $\sD'_p(X)$.  In the following we discuss the metrizability of certain subset of the space of $\sC_p$. 
		
		Let $M_i>0, i\geq 1$ be a sequence of real numbers. Let $\sC_{p,M_i}$ be the  subset of $\sC_p$ containing elements $S$ such that $\Ma_{K_i} S\leq M_i$.  Then
		$$\tilde{\sC_p}:=\cap_{i=1}^\infty\; \sC_{p,M_i}$$ is pointwise bounded, i.e. for every $\Phi\in \sD^p(X)$, the set $$\left\{\left\langle S,\Phi\right\rangle: S\in \tilde{\sC_p}  \right\}$$
		is bounded in $\C$.  For a strict LB space $E$, a pointwise bounded subset in the dual space $E'$ is equicontinuous \cite[Page 83, Theorem 4.2]{schaefer1999topological}. Thus $\tilde{\sC_p}$ is equicontinuous.   For a separable topological vector space $E$, the weak-* topology on an equicontinuous subset of  $E'$ is metrizable \cite[Page 87 Theorem 4.7]{schaefer1999topological}.  Since $\sD^p(X)$ is separable, we get
		
		\begin{prop}\label{prop:metrizable}
			The weak-* topology on $\tilde{\sC_p}$ is metrizable. 
		\end{prop}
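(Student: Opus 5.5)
The plan is to reduce the statement to the two functional-analytic facts recalled just before it. We then check that the hypotheses of those facts hold in our setting. The three ingredients are: the structure of $\sD^p(X)$ as a strict LB space, separability of $\sD^p(X)$, and pointwise boundedness of $\tilde{\sC_p}$.

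\emph{Pointwise boundedness and equicontinuity.} Fix $\Phi\in \sD^p(X)$. Then $\supp\Phi\subset K_i$ for some $i$. Since every $S\in\tilde{\sC_p}$ is positive, it has order zero. We can bound $|\langle S,\Phi\rangle|\leq C\,\|\Phi\|_\infty\,\Ma_{K_i}(S)\leq C\,\|\Phi\|_\infty M_i$. The constant $C$ depends only on $\omega$ and $N$; it comes from comparing an arbitrary continuous $(p,p)$-form with $\|\Phi\|_\infty\,\omega^p$ in terms of the trace measure. This gives pointwise boundedness. By the cited Banach--Steinhaus-type result for strict LB spaces (\cite[Theorem 4.2, p.~83]{schaefer1999topological}), $\tilde{\sC_p}$ is then an equicontinuous subset of $\sD'_p(X)$. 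In fact the estimate above gives directly that the restriction to each $\sD^p(K_i)$ is uniformly bounded in operator norm. This is equicontinuity on each step of the inductive limit, and hence equicontinuity for the inductive limit topology.

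\emph{Separability.} Each Banach space $\sD^p(K_i)$ is separable. Its elements are continuous forms supported in a compact subset of a manifold. Using finitely many charts and a partition of unity, such a form reduces to finitely many continuous compactly supported coefficient functions on $\R^{2N}$. This space is separable, for example by Stone--Weierstrass with rational polynomials times cutoff functions. A countable union of countable dense sets gives a countable set that is dense in $\sD^p(X)$ for the inductive limit topology. Then \cite[Theorem 4.7, p.~87]{schaefer1999topological} says that on an equicontinuous subset of the dual of a separable space, the weak-$*$ topology is metrizable. Concretely, the metric is
$$d(S,T)=\sum_k 2^{-k}\min\bigl(1,|\langle S-T,\Phi_k\rangle|\bigr),$$
where $(\Phi_k)$ is the countable dense family.

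\emph{Main obstacle.} The only point needing care is separability in the inductive limit topology, as opposed to separability of each step $\sD^p(K_i)$. The subtlety is that convergence in $\sD^p(X)$ is stronger than convergence in sup norm. I expect the cleanest route is to bypass separability of $\sD^p(X)$ and argue directly with the metric $d$ above, choosing $(\Phi_k)$ to be a countable family that is sup-norm dense in each $\sD^p(K_i)$. Suppose $\langle S_\alpha-S,\Phi_k\rangle\to 0$ for all $k$, and let $\Phi\in\sD^p(K_i)$. Approximate $\Phi$ by some $\Phi_k\in\sD^p(K_i)$. The uniform bound $|\langle S,\Phi-\Phi_k\rangle|\leq CM_i\|\Phi-\Phi_k\|_\infty$ then gives weak-$*$ convergence on $\Phi$. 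Conversely, weak-$*$ convergence clearly implies convergence in $d$. So $d$ induces the weak-$*$ topology on $\tilde{\sC_p}$.
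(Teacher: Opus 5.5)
Your proposal is correct and follows the paper's route: pointwise boundedness from the mass bounds $M_i$, then equicontinuity via \cite[Theorem 4.2, p.~83]{schaefer1999topological}, then metrizability from separability of $\sD^p(X)$ via \cite[Theorem 4.7, p.~87]{schaefer1999topological}. The ``obstacle'' you flag is not a real one, because the inclusions $\sD^p(K_i)\hookrightarrow\sD^p(X)$ are continuous, so a union of countable sets dense in each $\sD^p(K_i)$ is already dense in $\sD^p(X)$; your explicit-metric argument nevertheless works and is a valid self-contained substitute for Theorem 4.7.
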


		\medskip
		
		We now focus our attention on analytic subvarieties with bounded volume on compact subsets, which are subsets of  $\tilde{\sC_p}$.
		\begin{defi}
			An  {\em effective analytic cycle $V$ of pure dimension $p$}  is a finite summation 
			$$V=\sum_W n_W  W,$$
			where $n_W\in \Z_{\geq 1}$ and each $W$ in the summation  is a closed irreducible analytic subvariety of $X$ of dimension $p$.
		\end{defi}
		
		We denote $\supp V:=\cup_W W$ the support of $V$, which is a (maybe reducible) closed  analytic subvariety of $X$ of pure dimension $p$.  We define $\vol (V):=\sum_W n_W \vol(W)$.   We denote $[V]$ the  current induced by $V$. 
		
		\medskip

		\begin{defi}
			Let $M_i>0, i\geq 1$ be a sequence of real numbers. A {\em bounded volume space} of dimension $p$, denoted by $\sV_p$,  is a collection of effective analytic cycles $V$ of pure dimension $p$ such that  for every $i\geq 1$,  $$\Ma_{K_i}(V)\leq M_i.$$
		\end{defi}

		Here we specifically emphasize that by our definition, the zero current also belongs to $\sV_p$.  
		\medskip

		%
		
		

		{\bf The topology on $\sV_{p}$:}  Since  $\sV_{p}$ can be naturally seen as a subset of   $\tilde{\sC_{p}}$, we let the topology on  $\sV_{p}$ be the  subspace topology induced by the weak-* topology on  $\tilde{\sC_{p}}$.     With this topology, $\sV_{p}$  is a metrizable space since  $\tilde{\sC_{p}}$ is a metrizable space.

		%
		
		
		Recall that the  Hausdorff topology on subsets of $X$  is defined by the following basis of neighborhoods: $\sU_{K,\ep}(A)$ consists of subsets $B$ such that the set $B\cap K$ is contained in the $\ep$-neighborhood of $A\cap K$, and the other way around, where $K\subset X$ is a compact subset.

		Let $K\subset X$ be any fixed  compact subset.  Let $\sV_{p,K}$ be the closed subset of $\sV_{p}$ containing elements $V$ such that $\supp V\cap K\neq \emptyset$.
		
		We have the following  Bishop's theorem \cite{bishop1964conditions}, see  \cite[Section 16.1, Proposition 1]{MR1111477} for details. Recall that on a metrizable space, compactness and sequential compactness are equivalent. 
		\begin{thm}\label{thm: Bishop}
			
			The space  $\sV_{p}$ is a compact  metrizable space.  For each compact subset $K\subset X$,  $\sV_{p,K}$ is compact. Moreover if  $V_n\to V$ in $\sV_{p}$, then $\supp(V_n)\to \supp V$ in the sense of  Hausdorff topology.
			
		\end{thm}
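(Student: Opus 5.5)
The argument has three parts: metrizability, sequential compactness of $\sV_p$ via Bishop's theorem, and the Hausdorff convergence of supports. Metrizability is already given by Proposition \ref{prop:metrizable}, since $\sV_p\subset\tilde{\sC_p}$ carries the subspace topology. It therefore suffices to prove sequential compactness, that is, that every sequence $V_n\in\sV_p$ has a subsequence converging weak-* to some $V\in\sV_p$.

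\textbf{Step 1: a weak limit current.} The masses $\Ma_{K_i}(V_n)\le M_i$ are uniformly bounded on each $K_i$. By Banach--Alaoglu together with a diagonal extraction over the exhaustion, a subsequence $[V_n]$ converges weak-* to a positive closed current $S$. By lower semicontinuity of mass on open sets, and since $K_i\subset K_{i+1}^\circ$, we should get $\Ma_{K_i}(S)\le M_i$ after a small adjustment. I would check this carefully: weak limits only control mass on open sets. One remedy is to use the bound $\Ma_{K_{i}^\circ}$ and interpret the definition accordingly.

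\textbf{Step 2: the limit is a cycle, and supports converge.} This is the heart of Bishop's theorem, and I expect it to be the main obstacle. Pass to a further subsequence so that $\supp V_n\cap K_i$ converges in the Hausdorff topology on each $K_i$; this is possible because the space of closed subsets of a compact set is compact. Call the limit closed set $A$. The uniform local volume bound, combined with the monotonicity of Lelong numbers (every point of a $p$-dimensional cycle has density at least $1$, so a ball of radius $r$ centered on $\supp V_n$ carries mass at least $c\,r^{2p}$), gives two facts:
\begin{itemize}
\item every point of $A$ lies in $\supp S$, since mass cannot escape;
\item $A$ has locally finite $2p$-dimensional Hausdorff measure.
\end{itemize}
Bishop's theorem then says that $A$ is an analytic set of pure dimension $p$. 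By Siu's theorem or the support theorem, $S$ is a positive closed current of bidimension $(p,p)$ supported on $A$, so $S=\sum c_W[W]$ with $c_W\ge0$. The coefficients $c_W$ are generic Lelong numbers. These are integers, because on a local chart projecting $W$ generically to $\C^p$, the slice multiplicities of $V_n$ are integers converging to $c_W$. Components with $c_W=0$ would contradict the density lower bound, so $A=\supp S$. This yields both $S=[V]$ with $V\in\sV_p$ and $\supp V_n\to\supp V$. The case $A=\emptyset$ corresponds to the zero cycle and is allowed.

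\textbf{Step 3: the closed pieces.} Since the topology is metrizable, the final statement follows for the original sequence from uniqueness of limits along subsequences. For $\sV_{p,K}$, the condition $\supp V\cap K\ne\emptyset$ is preserved under limits by Hausdorff convergence of supports on a compact neighborhood of $K$. This uses the density lower bound again, which prevents points of $\supp V_n$ near $K$ from vanishing in the limit. Hence $\sV_{p,K}$ is closed in the compact space $\sV_p$, and therefore compact.
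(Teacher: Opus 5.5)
\textbf{Verdict.} The paper does not prove this statement; it cites Bishop's theorem and Chirka's book. Your outline is the standard proof behind that citation, so the route is the same.

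\textbf{Step~1 cannot be closed as stated.} Read literally, with the volume bounds imposed on the compact sets $K_i$, the statement is false for a general exhaustion, so no ``small adjustment'' works. Take $X=\C^2$ with the flat metric, $p=1$ and $K_i=\overline{i\D}\times\overline{i\D}$. Choose $M_1$ smaller than the area of $\overline{\D}\times\{1\}$ and $M_i$ large for $i\geq 2$. The lines $V_n=\{z_2=1+1/n\}$ miss $K_1$, so they lie in $\sV_1$. But $[V_n]\to[\{z_2=1\}]$, whose mass on $K_1$ exceeds $M_1$. Since the weak-* topology is Hausdorff, no subsequence converges in $\sV_1$, so $\sV_1$ is not compact. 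Your remedy is therefore necessary, not optional: impose the bounds on the open sets $K_i^\circ$. Lower semicontinuity of mass on open sets then puts the limit back in $\sV_p$. Uniform bounds on every compact set persist because $K_{i-1}\subset K_i^\circ$. The later uses of bounded volume spaces in the paper only need such locally uniform bounds.

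\textbf{Two further readings are forced by similar examples, and your Step~2 proves the statement under them.} First, with $M_1$ large the same lines converge to $\{z_2=1\}$ in $\sV_1$. Yet for $K=\{(0,1)\}$ one has $\supp V_n\cap K=\emptyset\neq\supp V\cap K$. So the paper's Hausdorff topology must be read locally: $B\cap K\subset A_\ep$ and $A\cap K\subset B_\ep$, with $A_\ep$ the $\ep$-neighbourhood. Accordingly, in Step~2 you should take the Kuratowski limit of the closed sets $\supp V_n$ in $X$. Limits of the traces $\supp V_n\cap K_i$ can miss points of $\supp V\cap\partial K_i$. Second, $V_n=\sum_{k\leq n}\{z_2=k\}$ shows that the limit is in general only a locally finite sum, so effective cycles must be allowed to be locally finite.

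\textbf{On the analyticity of $A$.} This is exactly the content of Bishop's limit theorem, which you are citing just as the paper does. It is not a consequence of locally finite $\mathcal{H}^{2p}$-measure of a closed set. If you want Step~2 self-contained, use Siu's theorem. Your density estimate gives $\nu(S,x)\geq 1$ at every $x\in A$, and $\supp S\subset A$, so $A=\{\nu(S,\cdot)\geq 1\}$. Hence $A$ is analytic of dimension at most $p$. It has pure dimension $p$ because a positive closed current of bidimension $(p,p)$ supported on an analytic set of dimension $<p$ vanishes.

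The remaining parts of Steps~2 and~3 are fine: the support theorem, integrality via local projection degrees, and closedness of $\sV_{p,K}$.
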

		
		It  is clear that  if $K_1\subset K_2\subset \cdots$ is an exhaustion by compact sets of $X$, then $\sV_{p,K_1}\subset \sV_{p,K_2}\cdots$ is an exhaustion by compact sets of $\sV_{p}$.

		\medskip
		
		\begin{defi}\label{defi:irreduciblespace}
			Let $\sV^*_{p,K}$ be the subset of $\sV_{p,K}$ containing all elements $V=\sum n_W W$ such that:
			
			\smallskip
			
			(1) there is only one irreducible component $W_0$ such that $W_0\cap K\neq \emptyset$;
			
			\smallskip
			(2) moreover $n_{W_0}=1$.  	
		\end{defi}

		By Theorem \ref{thm: Bishop}, $\sV^*_{p,K}$ is an open subset of $\sV_{p,K}$. 
		
		\medskip

		The spaces $\sV_p$, $\sV_{p,K}$ and $\sV^*_{p,K}$  are  locally compact Polish spaces (a Polish space is a separable completely metrizable space).  In this class of spaces, measure theory exhibits very favorable behavior in the following sense:
		
		\smallskip
		
		(1) On  a Polish space, a finite Borel measure is Radon \cite[Page 107, Theorem 17.11]{Kechris1995};
		
		\smallskip
		
		(2) On a  locally compact  Hausdorff space, the Riesz representation theorem holds.

		\medskip
		
		We will frequently make use of the two properties stated above throughout this article. For simplicity,  a positive measure on a topological space will always mean a Borel measure in this article.

		\begin{defi}[Uniformly woven current]\label{defi:uniformwoven}
			A positive closed bidimension $(p,p)$ current $S$ on $X$ is called {\em uniformly woven}  if there is a bounded volume space $\sV_p$  such that we can write
			
			$$S=\int_{\sV_{p}} [V_a] \;d\nu(a),$$
			where each $V_a$ is an element in $\sV_{p}$ and $\nu$ is a finite measure on $\sV_p$. 
		\end{defi}

		\medskip
		
		A quick application of the measure theory on $\sV^*_{p,K}$  is the following:
		
		\begin{prop}\label{prop:irreducible}
			
			Let $S$ be a bidimension $(p,p)$   uniformly woven current on $X$,
			$$S=\int_{\sV_{p}} [V_a] \;d\nu(a).$$ 
			Let $K\subset X$ be a compact subset. Then there exist  a finite measure $\mu$ on $\sV^*_{p,K}$ such that for every continuous  test form $\Phi$ satisfying  $\supp \Phi\subset K$, we have 
			$$\left\langle S,\Phi\right\rangle =\int_{\sV^*_{p,K}}\left\langle  [V_a],\Phi\right\rangle  \;d\mu(a).$$
			
		\end{prop}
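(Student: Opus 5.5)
The plan is to discard the cycles that do not meet $K$, and then replace each remaining cycle by the sum of its irreducible components that meet $K$, splitting the measure accordingly. Only cycles in $\sV_{p,K}$ contribute when $\supp\Phi\subset K$: if $\supp V_a\cap K=\emptyset$ then $\langle [V_a],\Phi\rangle=0$. Hence
$$\langle S,\Phi\rangle=\int_{\sV_{p,K}}\langle [V_a],\Phi\rangle\,d\nu(a).$$
Since $\sV_{p,K}$ is closed, hence Borel, restricting $\nu$ to it is harmless. Each $V\in\sV_{p,K}$ is a locally finite sum of irreducible components. Its volume on a compact neighbourhood $K'$ of $K$ is bounded, and by Lelong's lower bound every component meeting $K$ carries volume at least a fixed constant $c>0$ on $K'$. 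Therefore the number of components meeting $K$, counted with multiplicity, is at most some $m$ independent of $V$. Write $V|_K:=\sum_{W\cap K\neq\emptyset} n_W W$, so that $\langle [V],\Phi\rangle=\sum_{W\cap K\ne\emptyset} n_W\langle [W],\Phi\rangle$.

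Next I define a map sending $V$ to the finite list of its components meeting $K$, each repeated $n_W$ times. Each irreducible $W\subset \supp V$ with $W\cap K\neq\emptyset$ is an element of $\sV^*_{p,K}$: it is its own unique component meeting $K$, with multiplicity one, and its volume on each $K_i$ is bounded by that of $V$, so it lies in the same bounded volume space. I then set
$$\mu:=\int_{\sV_{p,K}}\Big(\sum_{W\subset V,\,W\cap K\neq\emptyset} n_W\,\delta_{W}\Big)\,d\nu(V).$$
This is a positive measure on $\sV^*_{p,K}$ with total mass at most $m\,\nu(\sV_{p,K})<\infty$. The desired identity then follows from Fubini-type unwinding once $\mu$ is shown to be well defined. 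By Riesz representation, it suffices to check that $V\mapsto \sum n_W g(W)$ is Borel measurable for every continuous compactly supported $g$ on $\sV^*_{p,K}$.

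The main obstacle is this measurability, since the decomposition into irreducible components is not continuous under Bishop convergence: components can merge or split in the limit. I would handle it by stratifying $\sV_{p,K}$ according to the number $j\le m$ of components meeting $K$, counted with multiplicity. For each $j$, consider the continuous sum map $(W_1,\dots,W_j)\mapsto W_1+\dots+W_j$ on $(\sV^*_{p,K})^j$ modulo permutations, and pass to a compactified version using Theorem \ref{thm: Bishop} (compactness of $\sV_{p,K}$). The image of each stratum is then a Borel set, in fact $\sigma$-compact, and the map is injective on unordered tuples by uniqueness of the decomposition into components.

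The component map on each stratum is the inverse of a continuous injection between Polish spaces. By the Lusin–Souslin theorem it is Borel. Summing $g$ over the components is therefore Borel, so $\mu$ is a well-defined finite Borel measure, hence Radon on the Polish space $\sV^*_{p,K}$. Finally, applying the defining formula of $\mu$ to $g(W)=\langle [W],\Phi\rangle$, which is continuous in $W$ for the weak-* topology, yields
$$\langle S,\Phi\rangle=\int_{\sV^*_{p,K}}\langle [V_a],\Phi\rangle\,d\mu(a).$$
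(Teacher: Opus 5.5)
Your architecture is the paper's: drop the cycles missing $K$, bound $\eta_K(V)=\sum_{W\cap K\neq\emptyset}n_W$ uniformly by Lelong's lower bound against the volume bound, stratify by $\eta_K$, reduce to Borel measurability of $\psi_\chi(V)=\sum_{W\cap K\neq\emptyset}n_W\,\chi(W)$, and conclude by Riesz. The difference is the measurability step. The paper asserts that $\psi_\chi$ is continuous on each stratum $\{\eta_K=n\}$; you instead realize the component map as the inverse of a continuous injection and invoke Lusin--Souslin. \textbf{As written, that step fails.}

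First, the sum map on $(\sV^*_{p,K})^j/S_j$ is not injective. An element of $\sV^*_{p,K}$ has exactly one component meeting $K$, but it may carry further components disjoint from $K$. So $(W_1+Z,W_2)$ and $(W_1,W_2+Z)$ have the same sum; uniqueness of decomposition gives injectivity only on tuples of \emph{irreducible} cycles. Second, even on irreducible tuples, the image consists of cycles all of whose components meet $K$. A general $V$ in the stratum may also have components disjoint from $K$, so it is not in the image and your inverse is undefined there. Passing first to $V\mapsto V|_K$ merely relocates the measurability problem. Third, sums of $j$ elements of $\sV_p$ need not obey the bounds $M_i$, so the target must be a bounded volume space with enlarged bounds.

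The repair is as follows. Let $\mathcal I_K\subset\sV^*_{p,K}$ be the set of irreducible cycles meeting $K$. Use the map $(\{W_1,\dots,W_j\},Z)\mapsto W_1+\dots+W_j+Z$ on $(\mathcal I_K)^j/S_j\times\{\eta_K=0\}$, restricted to tuples whose sum lies in $\sV_p$. It is continuous and injective, with image exactly $\{V\in\sV_p:\eta_K(V)=j\}$. For Lusin--Souslin you still need $\mathcal I_K$ to be Borel. This holds because $\eta_{K'}$ is upper semicontinuous for every compact $K'$: by Lelong and Bishop, a limit of components meeting $K'$ is a nonzero cycle with a component meeting $K'$. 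Then $\mathcal I_K=\{\eta_K=1\}\cap\bigcap_i\{\eta_{K'_i}\le 1\}$ for an exhaustion $(K'_i)$.

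With this fix the component map itself is Borel, so $\mu$ is an honest pushforward and the final identity holds for all bounded Borel functions on $\sV^*_{p,K}$. Note that $W\mapsto\langle[W],\Phi\rangle$ is bounded continuous but not compactly supported on $\sV^*_{p,K}$. If you kept only the Riesz formula on $C_c$, you would need the paper's dominated-convergence extension.

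Once repaired, your route is also sturdier than continuity on strata, which can fail through exactly the splitting you anticipated. Take $K$ a small closed ball around $(0,1)\in\C^2$. The conics $\{xy=\epsilon\}$ meet $K$ and converge to $\{x=0\}+\{y=0\}$, all with $\eta_K=1$. But $\psi_\chi(\{xy=\epsilon\})=\chi(\{xy=\epsilon\})\to\chi(\{x=0\}+\{y=0\})$, which differs from $\psi_\chi$ of the limit, namely $\chi(\{x=0\})$, for suitable $\chi\in C_c(\sV^*_{p,K})$.
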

		\begin{proof}
			
			Let $C_c(\sV^*_{p,K})$ be the space of  continuous functions on $\sV^*_{p,K}$ with compact support.  Let $\chi\in C_c(\sV^*_{p,K})$. Then $\chi$ naturally induces  a   function  $\psi_\chi$ on $\sV_{p}$: for each $V\in \sV_{p}$, there is a unique way to write $$V=\sum_{i\in I}  n_i W_i+\sum_{j\in J}  n_j Z_j$$ as a finite summation of irreducible subvarieties, where each $W_i$ satisfies $W_i\cap K\neq \emptyset$, and each $Z_j$ satisfies $Z_j\cap K= \emptyset$. Define
			\begin{equation}\label{eqn:psi}
				\psi_\chi(V):=\sum_{i\in I}  n_i \chi(W_i). 
			\end{equation}  
			
			Let $U\Subset X$ be a relatively compact open subset of $X$ such that $K\subset U$. By Lelong's theorem \cite[Page 189, Proposition 1]{MR1111477},  there exists $c_U>0$ such that every irreducible closed analytic subvariety $W$ in $U$ satisfying $W\cap K\neq \emptyset$ has $\Ma_U(W)\geq c_U$. On the other hand, by the definition of the bounded volume space $\sV_p$, there exists $M_U<+\infty$ such that $\Ma_U(V)\leq M_U$ for every $V\in\sV_p$. Hence in the summation (\ref{eqn:psi}), we have $\sum_{i\in I} n_i\leq M_U/c_U$, so $\sum_{i\in I} n_i$  is uniformly bounded from above. This implies $\psi_\chi$ is a bounded  function. We denote  $\eta_K(V)$  by  $\eta_K(V):=\sum_{i\in I} n_i$, which is a non-negative integer.

			We claim that $\psi_\chi$  is a Borel measurable function.  For each $n\geq 0$, let $\sV^{(n)}_{p,K}$ be the subset of $\sV_{p}$ containing all elements $V$ such that $\eta_K(V)=n$.   Then each $\sV^{(n)}_{p,K}$ is a Borel measurable subset, and we can write $\sV_p$ as a disjoint union of Borel measurable subsets as 
			$$\sV_p=\sqcup_{n=0}^\infty \sV^{(n)}_{p,K}.$$
			In fact by Lelong's theorem,  $\sV^{(n)}_{p,K}=\emptyset$ when $n$ is large enough. To show $\psi_\chi$  is a Borel measurable function, it suffices to show $\psi_\chi$  restricted on each $\sV^{(n)}_{p,K}$  is  continuous.  Let $V_m\in \sV^{(n)}_{p,K}$, $m\geq 0$ be a sequence such that $V_m\to \tilde{V}\in \sV^{(n)}_{p,K}$.  Let $\tilde{W}$ be an irreducible component of $\tilde{V}$ such that $\tilde{W}\cap K\neq \emptyset$, let $n_{\tilde{W}}$ be its multiplicity.  Let $\Omega$ be a sufficiently small neighborhood of $\tilde{W}$ in $X$.  Let $F_m$ be the set  containing all irreducible components $W$ of $V_m$ such that $W\subset \Omega$.   Let $F'_m\subset F_m$ be the subset containing elements $W$ such that $W\cap K\neq \emptyset$. Let $F''_m:=F_m\setminus F'_m$.  For $m$ large enough, since  multiplicity does not decrease under limits, we have 
			$$\sum_{W\in F'_m} n_W\leq \sum_{W\in F_m} n_W \leq n_{\tilde{W}},$$
			where $n_W$ is the multiplicity of $W$ in $V_m$.
			
			On the other hand, since all $V_m$ and $\tilde{V}$ are in $\sV^{(n)}_{p,K}$,  for $m$ large enough we have
			$$\sum_{\tilde{W}}\sum_{W\in F'_m} n_W= \sum_{\tilde{W}}n_{\tilde{W}}=n,$$
			and
			$$V_m=\sum_{\tilde{W}}\sum_{W\in F'_m} n_W W,$$
			where the summation is taken over all irreducible component of $\tilde{W}$ of $\tilde{V}$ such that $\tilde{W}\cap K\neq \emptyset$.
			
			As a consequence, we have 
			$$\sum_{W\in F'_m} n_W= n_{\tilde{W}}.$$
			Thus for any sequence $W_m$ such that $W_m\in F'_m$, we have 
			$$W_m\to \tilde{W}$$
			in the space $\sV^*_{p,K}$.  This implies $\psi_\chi(V_m)\to \psi_\chi(\tilde{V})$. Hence $\psi_\chi$ is continuous on  $\sV^{(n)}_{p,K}$. This implies $\psi_\chi$  is Borel measurable on $\sV_p$. 
			

			\smallskip
			Since $\psi_\chi$  is a bounded Borel measurable function, $\psi_\chi$ is integrable with respect to $\nu$, i.e.  $\int |\psi_\chi |\;d\nu<+\infty$.  The map $\chi\mapsto \int \psi_\chi \;d\nu$  defines a positive linear functional on $C_c(\sV^*_{p,K})$. Since $\sV^*_{p,K}$   is a locally compact Hausdorff space, by Riesz representation theorem, there exists a unique Radon measure $\mu$ on $\sV^*_{p,K}$  such  that  for every $\chi\in C_c(\sV^*_{p,K})$,  
			\begin{equation}\label{eqn:2.1}
				\int \chi \;d\mu= \int \psi_\chi \;d\nu. 
			\end{equation}
			
			We first show that $\mu$ is finite. Indeed, the integer-valued function
			$$\eta_K(V):=\sum_{i\in I} n_i$$
			is uniformly bounded on $\sV_p$, by the Lelong lower bound for the volume of each irreducible component meeting $K$ and by the uniform volume upper bound in the bounded volume space. If $\chi\in C_c(\sV^*_{p,K})$ satisfies $0\leq \chi\leq 1$, then $0\leq \psi_\chi\leq \eta_K$. Hence
			$$\int_{\sV^*_{p,K}}\chi\,d\mu=\int_{\sV_p}\psi_\chi\,d\nu\leq \left(\sup_{\sV_p}\eta_K\right)\nu(\sV_p)<+\infty.$$
			Since $\sV^*_{p,K}$ is locally compact and $\sigma$-compact, we can select a monotonically increasing sequence of continuous functions with compact support $\chi_n$ such that $$\mu(\sV^*_{p,K})=\lim_{n\to+\infty}\int_{\sV^*_{p,K}}\chi_n\,d\mu,$$ which implies that $\mu(\sV^*_{p,K})<+\infty$.
			
			We now extend (\ref{eqn:2.1}) from compactly supported continuous functions to bounded continuous functions on $\sV^*_{p,K}$. Let $\chi$ be such a function. Choose continuous functions $\rho_m\in C_c(\sV^*_{p,K})$ such that $0\leq \rho_m\leq 1$ and $\rho_m\to 1$ pointwise. Applying (\ref{eqn:2.1}) to $\rho_m\chi$ and letting $m\to+\infty$, the dominated convergence theorem gives
			$$\int_{\sV^*_{p,K}}\chi\,d\mu=\int_{\sV_p}\psi_\chi\,d\nu,$$
			where on the right hand side we use the same definition of $\psi_\chi$ as in (\ref{eqn:psi}); the domination follows from $|\psi_{\rho_m\chi}|\leq \|\chi\|_\infty\eta_K$.
			
			Let $\Phi$ be a bidegree $(p,p)$ continuous test form on $X$ such that $\supp\Phi\subset K$. The function
			$$\chi_\Phi(W):=\left\langle [W],\Phi\right\rangle,\qquad W\in \sV^*_{p,K},$$
			is bounded and continuous. Since $\Phi$ is supported in $K$, for every $V\in\sV_p$ we have
			$$\psi_{\chi_\Phi}(V)=\left\langle [V],\Phi\right\rangle.$$
			Therefore
			$$\left\langle S,\Phi\right\rangle=\int_{\sV_p}\left\langle [V_b],\Phi\right\rangle\,d\nu(b)=\int_{\sV^*_{p,K}}\left\langle [V_a],\Phi\right\rangle\,d\mu(a).$$
			This completes the proof.
			
		\end{proof}
		%

		\begin{rmk}
			In Proposition \ref{prop:irreducible}, if we let $K=\overline{\Omega}$, where $\Omega\Subset X$ is a relatively compact open subset, then the restriction $S|_\Omega$ satisfies 	$$S|_\Omega=\int_{\sV^*_{p,K}} [V_a]  \;d\mu(a).$$
		\end{rmk}


		Let $S$ be a positive closed $(p,p)$-current on $X$. The {\em trace measure} of $S$ is by definition the measure $\sigma_S:=S\wedge \omega^{N-p}$. Let $R$ be a positive closed $(1,1)$-current on $X$. On a local chart, we can write $R=dd^c u$, where $u$ is a p.s.h. function. If $u\in L^1_{loc}(\sigma_S)$, then we define $dd^c u\wedge S:=dd^c(uS)$. If on each  local chart, the local potential of $R$ satisfies  the integrability condition, then we can globally define $R \wedge S$.
		
		\begin{defi}\label{def:admissible}
			In this case we say that $R\wedge S$ is {\em admissible}.	
		\end{defi} 
		If $R$ has bounded local potential, then $R\wedge S$ is always admissible.

		\medskip
		Let $\sM(\sV_{p})$ be the space of all   positive measures on $\sV_{p}$ with mass smaller or equal to one. Recall that the weak-* topology on  $\sM(\sV_{p})$ is defined by the weakest topology such that for every continuous function $\phi$ on $\sV_{p}$ with compact support, the map $\nu\mapsto \int \phi \;d\nu$ is continuous. We have the following result.
		
		\begin{prop}\label{prop: limit}
			Let $X$ be a  Hermitian manifold of dimension $N$.  Let $\nu_n$ be a sequence of  measures in $\sM(\sV_{p})$  such that $\nu_n\to \nu$ in the weak-*  sense. Let $S_n$ and $S$ be the corresponding uniformly woven currents.  Let $T$ be a positive closed $(1,1)$-current on $X$ with continuous local potential. Then $S_n\wedge T\to S\wedge T$ in the sense of currents.
		\end{prop}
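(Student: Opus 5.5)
We test against a smooth compactly supported form and reduce to weak-* convergence of measures on $\sV_p$. Fix a smooth test form $\Phi$ of bidegree $(p-1,p-1)$ with $\supp\Phi\subset K$, $K$ compact. Since $T$ has continuous local potentials, $T\wedge[V]$ is admissible for every $V\in\sV_p$. Admissibility, Fubini, and closedness of each $[V_a]$ give
$$\langle S_n\wedge T,\Phi\rangle=\int_{\sV_p}\langle [V_a]\wedge T,\Phi\rangle\,d\nu_n(a),$$
and the same formula holds for $S$ and $\nu$. To justify this, write $T=dd^c u$ on a chart with $u$ continuous, so that $\langle S\wedge T,\Phi\rangle=\langle S,u\,dd^c\Phi\rangle$ after a partition of unity; this is linear in $S$ and integrates against $\nu$. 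It therefore suffices to show that
$$F(V):=\langle [V]\wedge T,\Phi\rangle$$
is a bounded continuous function on $\sV_p$ that vanishes outside the compact set $\sV_{p,K}$. Then $\int F\,d\nu_n\to\int F\,d\nu$ follows from weak-* convergence.

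For boundedness, we use a partition of unity $\{\theta_j\}$ subordinate to finitely many charts covering $K$, with continuous potentials $u_j$. This gives $F(V)=\sum_j\langle [V], u_j\,dd^c(\theta_j\Phi)\rangle$ plus lower-order terms coming from the $\theta_j$. More precisely, $\langle [V]\wedge dd^c u_j,\theta_j\Phi\rangle=\langle [V],u_j\,dd^c(\theta_j\Phi)\rangle$ since $[V]$ is closed. The form $u_j\,dd^c(\theta_j\Phi)$ is continuous with compact support in $K$. Because $\Ma_{K}(V)$ is uniformly bounded on $\sV_p$, $|F|$ is uniformly bounded.

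For continuity, each map $V\mapsto\langle[V],\Psi\rangle$ with $\Psi$ a continuous compactly supported $(p,p)$-form is continuous for the weak-* topology on $\tilde{\sC_p}$. The subtle point is that this topology is defined by duality with continuous forms $\sD^p(X)$, not only smooth ones. That is exactly why continuity of the potential is needed, and why the paper set up $\sD^p$ with continuous forms. Hence $F$ is continuous.

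Finally, $F(V)=0$ whenever $\supp V\cap K=\emptyset$. The measures $\nu_n$ have mass at most $1$, and $\sV_p$ is compact by Theorem~\ref{thm: Bishop}, so $F$ is compactly supported (indeed $\sV_p$ itself is compact). Therefore $\int F\,d\nu_n\to\int F\,d\nu$, which gives $S_n\wedge T\to S\wedge T$. Convergence against smooth test forms suffices, since all currents involved have locally uniformly bounded mass.

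I expect the main obstacle to be the Fubini/linearity step for the wedge product with a uniformly woven current, i.e.\ checking that $(\int[V_a]\,d\nu)\wedge T=\int [V_a]\wedge T\,d\nu$. I would handle it directly through the local formula $dd^c(uS)$, using that $u$ is continuous and hence bounded on compacts, so all integrals are absolutely convergent.
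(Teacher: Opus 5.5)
Your proposal is correct and follows essentially the same route as the paper: you write $\langle S_n\wedge T,\Phi\rangle=\int\langle[V]\wedge T,\Phi\rangle\,d\nu_n$, move $dd^c$ onto the test form through a continuous local potential so that $V\mapsto\langle [V],u\,dd^c\Phi\rangle$ is continuous on $\sV_p$, and use Bishop's compactness to handle the support condition. The differences are cosmetic: you use an explicit partition of unity, where in fact no ``lower-order terms'' arise because $dd^c$ falls entirely on $\theta_j\Phi$; you invoke compactness of all of $\sV_p$ rather than of the subset of cycles meeting $\supp\Phi$; and you spell out the Fubini step that the paper simply asserts.
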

		\begin{proof}
			We need to show for every real smooth $(p-1,p-1)$-form $\Omega$ with compact support, we have $\langle S_n\wedge T, \Omega\rangle \to \langle S\wedge T, \Omega\rangle$. Since  $$\langle S\wedge T, \Omega\rangle=\int_{\sV_{p}} \langle [V_a]\cap T, \Omega\rangle\; d\nu(a),$$
			and by the similar formula for $\langle S_n\wedge T, \Omega\rangle$, we only need to show that the function $I_{T,\Omega}: \sV_{p}\to \R$, $V\mapsto  \langle [V]\wedge T, \Omega\rangle$ is continuous with compact support.  First we show $I_{T,\Omega}$ has compact support. Let $\sU\subset \sV_{p}$ be the subset of elements  $V$ such that $\supp V\cap \supp \Omega\neq \emptyset$.   Since $\supp \Omega$ is compact, by Theorem \ref{thm: Bishop}, $\sU$ is compact.  It is clear that for $v\notin \sU$, $I_{T,\Omega}(V)=0$. This implies that $I_{T,\Omega}$ has compact support.  It remains to show $I_{T,\Omega}$  is continuous. Let $V_n\to V$ in $\sV_{p}$.  The problem is local, so we assume $T=dd^c u$, where $u$ is a continuous p.s.h. function. Then $$I_{T,\Omega}(V_n)=\langle [V_n]\wedge dd^c u, \Omega\rangle=\langle dd^c (u[V_n]) , \Omega\rangle=\langle [V_n], u\;dd^c\Omega\rangle.$$
			Similarly $I_{T,\Omega}(V)=\langle [V], u\;dd^c\Omega\rangle.$ Since $u$ is continuous and $V_n\to V$ in $\sV_{p}$, we have $I_{T,\Omega}(V_n)\to I_{T,\Omega}(V)$, hence $I_{T,\Omega}$  is continuous, which finishes the proof.
		\end{proof}
		\medskip
		
		The following result holds for the intersections of integral of positive closed currents, in particular it holds for  the intersections of uniformly woven currents. The case $N=2$ was discussed in \cite[Lemma 2.7]{diller2011dynamics}.
		
		\begin{prop}\label{prop: uniform intersection}
			Let $(X,\omega)$ be a Hermitian manifold of dimension $N$, $N\geq 2$.  Let $S$ be a positive closed $(p,p)$-current  on $X$ and let $R$ be a positive closed $(1,1)$-current on $X$ such that $S\wedge R$ is admissible.  Assume that $S$ and $R$ admit decompositions $S=\int S_a\;d\nu(a)$ and $R=\int R_b\;d\la(b)$. Here $\nu$ (resp. $\la$) is a  positive measure on the space of positive closed $(p,p)$-currents  (resp. $(1,1)$-currents) on $X$, and each $S_a$   (resp. $R_b$) is a  positive closed $(p,p)$-current  (resp. $(1,1)$-current) on $X$. Then for $\nu\tensor\la$-a.e. $(a,b)$, $S_a\wedge R_b$ is admissible, and we have
			
			\begin{equation}\label{3.1}
				S\wedge R=\int (S_a\wedge R_b) \; d(\nu\tensor\la)(a,b).
			\end{equation}
		\end{prop}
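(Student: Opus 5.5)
The statement is local, so I would fix a small coordinate ball $B$ on which $R=dd^c u$ for a p.s.h. function $u$, and work with local potentials of the pieces $R_b$. The natural choice is a normalized potential: fix a concentric smaller ball $B'\Subset B$. For each $b$, take $u_b$ to be the Newton/Green-type potential $u_b(x)=\int_{B} G(x,y)\,R_b\wedge\omega^{N-1}(y)$ plus a harmonic correction. Concretely, with $G$ the (negative) Green kernel of $B$, I would take $u_b:=\int G(\cdot,y)\,\Delta$-type convolution of the trace measure of $R_b$ restricted to $B$. Then $dd^c u_b=R_b$ on $B'$ up to a pluriharmonic error that depends measurably and linearly on $b$. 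Linearity of this construction in $R_b$ gives
$$u=\int u_b\,d\la(b)+h$$
on $B'$, with $h$ pluriharmonic, hence smooth. Measurability of $(b,x)\mapsto u_b(x)$ follows since $u_b(x)$ is a monotone limit of integrals of continuous kernels against $R_b$.

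The key step is integrability. Each $u_b\le 0$ after subtracting a constant bounded by the mass of $R_b$ on $B$, which is $\la$-integrable because $R$ has finite mass there. Hence each $-u_b$ is a nonnegative function. Admissibility of $S\wedge R$ says $u\in L^1_{loc}(\sigma_S)$, and $\sigma_S=\int \sigma_{S_a}\,d\nu(a)$. By Tonelli applied to the nonnegative function $-u_b(x)$ on the product of $(a,b)$ with $x$,
$$\int\!\!\int\!\left(\int_{K} -u_b\, d\sigma_{S_a}\right) d\nu(a)\,d\la(b)=\int_K -(u-h)\,d\sigma_S<\infty .$$
Therefore for $\nu\otimes\la$-a.e. $(a,b)$ we get $u_b\in L^1(K,\sigma_{S_a})$, which is admissibility of $S_a\wedge R_b$ on $B'$. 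Covering $X$ by countably many such balls and intersecting the full-measure sets gives global admissibility a.e.

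For the formula, I would test against a smooth compactly supported form $\Omega$ in $B'$:
$$\langle S\wedge R,\Omega\rangle=\int u\,S\wedge dd^c\Omega .$$
Write $S\wedge dd^c\Omega$ as a difference of positive measures dominated by $C\sigma_S$. Substitute $u=\int u_b\,d\la+h$ and $S=\int S_a\,d\nu$. Fubini is justified by the absolute integrability just proven, since $|u_b|$ is integrable against $C\sigma_{S_a}\,d\nu\,d\la$. This yields $\int\!\!\int\langle S_a\wedge R_b,\Omega\rangle\,d\nu\,d\la$, together with the term $\int h\,S\wedge dd^c\Omega$. That term is handled the same way, because $dd^c h$ is absorbed into the decomposition $R_b$ by construction. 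A partition of unity then globalizes the identity (\ref{3.1}).

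The main obstacle is building potentials $u_b$ that depend measurably on $b$, are uniformly sign-controlled, and sum exactly to $u$. Without this, Tonelli cannot be applied, and a.e. admissibility cannot be extracted from admissibility of the sum. I would first try the Green-kernel construction on a ball sketched above, checking carefully that the harmonic corrections are uniformly bounded on $B'$ in terms of the mass of $R_b$ on $B$.
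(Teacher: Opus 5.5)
Your overall architecture is the same as the paper's. You take nonpositive potentials $u_b$ of the pieces $R_b$, depending linearly on $R_b$ with $\|u_b\|_{L^1(B')}\lesssim \Ma(R_b)$, so that $\int u_b\,d\la(b)$ is a potential of $R$ up to a pluriharmonic term. Tonelli applied to $-u_b\geq 0$ against $\sigma_{S_a}\,d\nu(a)\,d\la(b)$ then gives a.e.\ admissibility, and applying $dd^c$ to $uS=\int u_bS_a\,d(\nu\tensor\la)$ gives the integral formula. This measure-theoretic part is fine. Your single Tonelli on the triple product is a slightly cleaner version of the paper's two-step argument (first $\nu$-a.e.\ $a$, then $\la$-a.e.\ $b$). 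The gap is exactly in the step you single out as the main obstacle: the construction you propose for $u_b$ does not produce potentials of $R_b$.

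In dimension $N\geq 2$, the trace measure of a closed $(1,1)$-current determines a potential only modulo \emph{harmonic} functions, not pluriharmonic ones. For instance, take the unit ball and $R_b=dd^c|z_1|^2$. The Green potential of its trace, normalized so that the traces agree, is $g=(|z|^2-1)/N$. Then $dd^c g-R_b=dd^c\big(|z|^2/N-|z_1|^2\big)\neq 0$, and the function in brackets is harmonic but not pluriharmonic. For $R_b=[z_1=0]$, the Green potential of the trace is not even p.s.h. So your claim that $dd^c u_b=R_b$ up to a pluriharmonic error is false. As a result, $u-\int u_b\,d\la$ has non-zero $dd^c$, and the leftover term $\int h\,S\wedge dd^c\Omega$ does not vanish. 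The assertion that $dd^c h$ is ``absorbed into $R_b$ by construction'' has nothing behind it. Your closing remark about bounding ``harmonic corrections'' does not fix this either. You never say how to choose a correction whose $dd^c$ is the right one, linearly and measurably in $b$.

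What you need is a potential operator that acts on the whole current, not only on its trace. The paper imports it: by Ben Messaoud's integral-kernel construction (used also by Diller--Dujardin--Guedj and Dinh--Sibony), every positive closed $(1,1)$-current $T$ of finite mass on $B$ has a canonical potential $u_T$ on $B'$. It satisfies $dd^c u_T=T$, $u_T\leq 0$ and $\|u_T\|_{L^1(B')}\leq C\,\Ma(T)$, and it is linear in $T$.

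Alternatively, you can repair your own construction. Write $T=i\sum T_{j\bar k}\,dz_j\wedge d\bar z_k$. Closedness gives $\sum_l\partial_l\bar\partial_l T_{j\bar k}=\partial_j\bar\partial_k\big(\sum_l T_{l\bar l}\big)$. Hence $E_b:=dd^c(G\sigma_b)-R_b$, with $G\sigma_b$ normalized to have the same trace as $R_b$, is a closed $(1,1)$-form with harmonic, hence smooth, coefficients. These coefficients are bounded on a smaller ball by $C\,\Ma_B(R_b)$. A fixed linear homotopy operator writes $E_b=dd^c w_b$ with $\sup|w_b|\leq C\,\Ma_B(R_b)$; for instance, use the Poincar\'e lemma and then a linear $\bar\partial$-solution on a polydisc. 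Then $u_b:=G\sigma_b-w_b-C\,\Ma_B(R_b)$ are genuine nonpositive potentials, linear and measurable in $b$. With either choice, the rest of your argument goes through as written.
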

		\begin{proof}
			The result is local, so we can assume that $S$ and $R$ are positive closed currents on the unit ball $B\subset \C^N$ with finite mass.
			
			
			We first show that for $\nu$-a.e. $a$, $S_a\wedge R$ is admissible. Let $u$ be a potential of $R$, i.e. $dd^c u=R$ on $B$. Let $B'\subset \subset B$ be a smaller ball.  By our assumption, $\int_{B'} |u| \;d\sigma_S<+\infty$, where $\sigma_S$ is the trace measure. Since $\sigma_S$ admits a decomposition $\sigma_S=\int \sigma_{S_a} \;d\nu(a)$, for $\nu$-a.e. $a$, we have $\int_{B'} |u| \;d\sigma_{S_a}<+\infty$. This implies that  for $\nu$-a.e. $a$, $S_a\wedge R$ is admissible. 
			
			Let $B'\subset\subset  B$ be a smaller ball compactly contained in $B$. By a classical method of constructing p.s.h. functions via integral kernels \cite[Section 2]{messaoud2000operateur} (see also  \cite[Lemma 2.8]{diller2011dynamics}, \cite[Theorem 2.7]{dinh2008dynamics}), there is a constant $C=C(B')>0$ such that for every positive closed $(1,1)$-current $T$ on $B$ with finite mass, there is a canonical p.s.h. function $u$ on $B'$ such that:
			\begin{points}
				\item $dd^c u=T$ on $B'$;
				\item $u\leq 0$ on $B'$;
				\item $||u||_{L^1(B')}\leq C\Ma(T)$,
			\end{points}
			where $\Ma(T)$ is the mass of $\sigma_T$. We call $u$ the canonical potential of $T$ on $B'$. 
			
			Next we show that  if $S_a\wedge R$ is admissible, then for $\la$-a.e. $b$, $S_a\wedge R_b$ is admissible. Let $u_b$ be the  canonical potential of $R_b$ on $B'$.  Let $u:=\int u_b\;d\la(b)$.  Then $u$  is a well-defined non-positive p.s.h. function on $B'$ such that $dd^c u=R$ on $B'$. Since by our assumption $\int -u\;d\sigma_{S_a}<+\infty$, for $\la$-a.e. $b$, we have $\int -u_b\;d\sigma_{S_a}<+\infty$.  This implies that  for $\la$-a.e. $b$, $S_a\wedge R_b$ is admissible. 
			
			The above discussion  implies that  for $\nu\tensor\la$-a.e. $(a,b)$, $S_a\wedge R_b$ is admissible. 
			
			Finally we show the integral formula (\ref{3.1}) holds. On $B'$ we have 
			
			$$u S=\int (u_b S_a) \; d(\nu\tensor\la)(a,b).$$
			Apply $dd^c$ to the both sides we have 
			$$S\wedge R=\int (S_a\wedge R_b) \; d(\nu\tensor\la)(a,b)$$ on $B'$. Since this holds on each ball $B'\Subset B$, we get the desired formula (\ref{3.1}) holds on $B$, which  finishes the proof.
		\end{proof}
		
		\medskip

		\subsection{Uniformly woven currents on a complex analytic space}
		
		In this subsection we discuss uniformly woven currents on a (maybe singular) complex analytic space.  
		
		A complex analytic space is as defined in \cite[Definition 5.2]{Demailly}. Examples include all quasi-projective varieties over $\C$.

		Let $X$ be an irreducible complex analytic space of dimension $N$. Smooth functions and forms on $X$ are understood on $X^\reg$ and are required, near every singular point and after a local embedding $j:X\xhookrightarrow[\loc.]{}\C^l$, to be restrictions of smooth functions and forms on an open subset of $\C^l$. A {\em Hermitian form} on $X$ is defined similarly: it is a Hermitian $(1,1)$-form on $X^\reg$ which, under such a local embedding, is the restriction of a Hermitian form on an ambient open subset. A complex analytic space together with a Hermitian form is called a {\em Hermitian analytic space}. These notions are independent of the choice of local embedding; see \cite[Section 1]{MR813252}. The volume of subvarieties in $X$ is computed with respect to this Hermitian form.
		
		Currents and the operators $d,d^c$ and $dd^c$ on $X$ are defined by duality; we refer to \cite{MR813252} for the standard background. A function $u:X\to \R\cup\{-\infty\}$ is p.s.h. if locally, under a local embedding into some $\C^l$, it is the restriction of a p.s.h. function on the ambient space. A positive closed $(1,1)$-current $R$ on $X$ has {\em continuous} (resp. {\em bounded}) local potential if locally $R=dd^c u$ for a continuous (resp. bounded) p.s.h. function $u$ on $X$. Let $S$ be a positive closed bidimension $(p,p)$-current on $X$, $1\leq p\leq N-1$. If $u$ is locally integrable with respect to the trace measure of $S$, we define
		$$S\wedge R:=dd^c(uS),$$
		which is independent of the choice of local potential $u$ and is a positive closed bidimension $(p-1,p-1)$ current.

		Let $\sO_X$ be the ring of holomorphic functions on $X$.  We call $X$ {\em Stein}  if $X$ can be embedded into a Stein manifold as a closed analytic subvariety. Assume $X$ is Stein.   An open subset $D\subset X$ is called {\em Runge} if $\sO_X$ is dense in $\sO_U$ with respect to the compact-open topology. 
		
		Based on Coltoiu's \cite{coltoiu1991traces} construction of the extension of p.s.h. function, we show that for a bounded p.s.h. function on $X$,  locally there always exists an extension that remains a bounded p.s.h.  function.
		
		\begin{prop}\label{prop:extendpsh}
			Let $u$ be a bounded p.s.h. function on $X$. Then for every $x\in X$, there exists a local embedding $j:X\xhookrightarrow[\loc.]{}U$ near $x$, where $U$ is an open subset of $\C^l$, and a bounded p.s.h. function $\tilde{u}$ on $U$, such that $\tilde{u}|_{U\cap X}=u.$  
		\end{prop}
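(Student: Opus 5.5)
The plan is to reduce to Coltoiu's extension theorem for p.s.h. functions on Stein spaces, and then convert the bounded-below extension into a bounded one by taking a maximum with a suitable function.

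\emph{Step 1: localize.} Fix $x\in X$ and choose a local embedding of a neighborhood $W$ of $x$ as a closed analytic subvariety of a ball $B\subset\C^l$. Shrinking, we may assume the following.
\begin{points}
\item $W=X\cap B$ is Stein, being closed in a Stein manifold.
\item $u$ is p.s.h. on $W$ with $m\leq u\leq M$.
\end{points}
Replacing $u$ by $u-M-1$, we may assume $-C\leq u\leq -1$.

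\emph{Step 2: apply Coltoiu.} Take a smaller ball $B'\Subset B$ centered at $x$, so that $X\cap B'$ is Runge in $X\cap B$. Coltoiu's theorem \cite{coltoiu1991traces} then gives a p.s.h. function $v$ on $B'$ (or on a Stein neighborhood of $X\cap B'$ in $B'$, which we shrink to a ball) with $v|_{X\cap B'}=u$. In general $v$ is not bounded: it is bounded above after shrinking, by upper semicontinuity, but it need not be bounded below.

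\emph{Step 3: fix the lower bound.} Set
$$\tilde u:=\max\bigl(v,\ -C-1+A\,\phi\bigr),$$
where $\phi\leq 0$ is a continuous p.s.h. function on a small ball $U\ni x$ with $\phi=0$ on $X\cap U$ and $\phi<0$ off $X$. Then $\tilde u$ is p.s.h., as a maximum of p.s.h. functions. It is bounded below by $-C-1+A\inf\phi$ and bounded above because $v$ is. On $X\cap U$ the second term equals $-C-1<u$, so $\tilde u=u$ there.

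The function $\phi$ can be built from defining functions $h_1,\dots,h_k$ of $X$ near $x$, for instance
$$\phi=\log\Bigl(\sum|h_i|^2\Bigr)/\bigl(\text{large}\bigr).$$
This is however $-\infty$ on $X$, the opposite of what Step 3 requires. So the naive choice fails, and the hard part of the argument is concentrated here.

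\emph{The main obstacle.} One needs the barrier to be large near $X$ and small away from it. The fix is to use a different combination:
$$\tilde u:=\max\Bigl(v,\ -C-1\Bigr)\quad\text{is not enough},$$
since it need not be $\leq v$ near $X$ in a way that preserves $u$. It does equal $u$ on $X$, however, because $u\geq -C>-C-1$ there. It is also p.s.h. and bounded. So in fact the simple cutoff
$$\tilde u:=\max(v,-C-1)$$
already works: it is p.s.h., bounded below by $-C-1$ and above by $\sup_U v$, and it restricts to $\max(u,-C-1)=u$ on $X\cap U$. The logarithmic barrier is therefore unnecessary.

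Hence the real content is entirely Step 2, namely Coltoiu's extension. What has to be checked there is that the extension is p.s.h. on an honest open set of $\C^l$ containing $x$. Upper boundedness after shrinking then follows from upper semicontinuity, and Step 3 finishes the proof.
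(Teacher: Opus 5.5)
Your final argument is correct, but delete the abandoned barrier $-C-1+A\phi$ and the sentence claiming $\max(v,-C-1)$ ``is not enough''. The truncation $\tilde u:=\max(v,-C-1)$ on a ball $U\Subset B'$ is p.s.h., bounded below by $-C-1$, bounded above by upper semicontinuity of $v$ on $\overline U$, and equal to $u$ on $X\cap U$ because $u\geq -C$. The Runge condition in Step 2 is also unnecessary.

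This is a different route from the paper's. The paper does not truncate a black-box extension; it re-runs Coltoiu's construction. After normalizing $u>0$ on $X\subset\D^l$, it takes the Hartogs-type set $\{(z,w)\in X\times\D:\log|w|+u(z)<0\}$, which is Runge in $X\times\D$. Coltoiu's theorem on traces of Runge domains extends it to a Runge, hence pseudoconvex, $\Omega\subset\D^l\times\D$. The paper then sets $\tilde u(z)=-\log\delta_\Omega(z,0)$, where $\delta_\Omega$ is the distance to $\partial\Omega$ in the $w$-direction. This gives $\tilde u=u$ on $X$, since the fiber over $z\in X$ is the disc of radius $e^{-u(z)}$. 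The lower bound comes for free from $\Omega\subset\D^l\times\D$, which forces $\delta_\Omega\leq 1$. The upper bound again comes from upper semicontinuity, after shrinking to a neighborhood of $x$ on which $(z,0)\in\Omega$.

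So the paper builds the lower bound into the geometry of $\Omega$, while you get it from a one-line truncation that works for any local extension. Your approach is shorter. In fact, with the paper's own definition of p.s.h. on a singular space (locally the restriction of an ambient p.s.h. function), the function $v$ of your Step 2 exists by definition. Coltoiu is then not needed at all, and the proposition reduces to your truncation step. The paper's construction earns its keep only if one starts from an intrinsic notion of plurisubharmonicity on $X$, where producing an ambient extension is the actual content.
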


		\begin{proof}
			We construct $\tilde{u}$ by the method of Coltoiu's \cite{coltoiu1991traces}. Without loss of generality we may assume that $X$ is a closed analytic subvariety in $\D^l\subset \C^l$ with $x=0$, where $\D$ is the unit disk in $\C$. Then $X$ is a Stein space since $\D^l$ is Stein.  By adding a constant on $u$, we may further assume  that $u>0$ on $X$.  Consider the following open subset $B\subset X\times \D$: 
			$$B:=\left\{(z,w)\in X\times \D: \log|w|+u(z)<0\right\},$$
			Since $ \log|w|+u(z)$ is a p.s.h. function on $X\times \D$, $B$ is Runge in $X\times \D$. 
			
			
			By \cite[Theorem 3]{coltoiu1991traces}, there exists a Runge open subset $\Omega\subset \D^l\times \D$ with $\Omega\cap (X\times \D)=B$.  For any point $(z,w)\in \Omega$, consider the distance $\delta_\Omega(z,w)$ to the boundary $\partial \Omega$ along the $w$-direction. Since $\Omega$ is Runge in $\D^l\times \D$ (hence Stein), the function $-\log \delta_\Omega$ is p.s.h. on $\Omega$ \cite{hormander1990introduction}.   For any $z\in \D^l$, set 
			$$\tilde{u}(z):=-\log \delta_\Omega(z,0).$$
			It is clear that $(z,0)\in \Omega$.  Then $\tilde{u}$ is a p.s.h. function on $\D^l$.     Since $\Omega\cap (X\times \D)=B$, from the definition of $\tilde{u}$ we get $\tilde{u}=u$ on $X$.  Since $\delta_\Omega(z,0)< 1$ for every $z\in \D^l$, $\tilde{u}>0$ on $\D^l$.  Since a p.s.h. function is bounded above on compact subsets,   $\tilde{u}$ is a bounded p.s.h. function on $\frac{1}{2}\D^l$.  This completes the proof by setting $U:=\frac{1}{2}\D^l$. 
		\end{proof}

		\medskip
		
		On a  Hermitian analytic space $X$, the bounded volume space $\sV_p$, $\sV_{p,K}$ and $\sV^*_{p,K}$ are defined as in Section \ref{sec:basicwoven}, where we replace the Hermitian manifold $X$ by the Hermitian analytic space $X$.  The definition of uniformly woven currents on $X$  is also defined following the same principle, see Definition \ref{defi:uniformwoven} .

		\medskip
		\subsection{Pullback of uniformly woven currents by a  holomorphic finite quotient map}
		
		In general, it is a delicate matter to define the pullback operater acting on positive closed currents under a holomorphic map between complex analytic spaces, especially in the presence of singularities. We will provide a well-defined notion of pullback operater for uniformly woven currents  in the case of a holomorphic finite quotient map.  
		
		On the other hand, the pushforward of current by a proper (i.e. the preimage of a compact subset is compact) holomorphic  map is always well-defined. In fact, via duality, it suffices to define the pullback of a smooth form with compact support by a proper holomorphic map, which is always a smooth form with compact support.
		
		\medskip

		%
		%
		%
		%
		%
		%
		%
		%
		%
		%
		%

		%

		Let $(Y,\omega_Y)$ be a Hermitian manifold of dimension $N$. Denote $\Aut(Y)$ the group of holomorphic automorphisms of $Y$.  Let $G$ be a finite subgroup of $\Aut(Y)$.  Let $$Z:=Y/G$$be the quotient space, which is an irreducible normal complex analytic space of dimension $N$, see \cite{MR84174}.  Let $\pi:Y\to Z$ be the quotient map.  Then $\pi$ is a finite map (i.e. proper with finite fibers). The topological degree $\deg \pi$ is equal to $|G|$. Let $\omega_Z$ be a Hermitian form on $Z$ so that $(Z,\omega_Z)$ becomes a Hermitian analytic space. 
		
		Let $0\leq p\leq N$.  Let $V$ be a $p$-dimensional closed  irreducible analytic subvariety of $Z$. Thus $\pi^{-1}(V)$ is a maybe reducible $p$-dimensional closed analytic subvariety of $Y$.  Assume $\pi^{-1}(V)=\sum_{a\in F} W_a$, where $F$ is a finite set and each $W_a$ is a $p$-dimensional closed  irreducible analytic subvariety of $Y$.  Thus $G$ acts transitively on $F$.   Let $G_a$ be the stabilizer of $W_a$, which is a finite subgroup of $\Aut(W_a)$.
		
		\begin{lem}\label{lem:stabilizer}
			Let $W$ be an irreducible complex analytic space, let $G$ be a finite subgroup of the holomorphic automorphism group $\Aut(W)$.  For each $x\in W$, let $G_x$ be the stabilizer of $x$.  Then there exists a unique subgroup $H$ of $G$ such that
			$$\left\{x\in W: G_x=H\right\}$$
			is a Zariski open subset. 
		\end{lem}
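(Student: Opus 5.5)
The plan is to define $H$ as the pointwise stabilizer of $W$ (the kernel of the action) and show the set of points with stabilizer exactly $H$ is Zariski open. For each $g\in G$ I will set $\Fix(g):=\{x\in W: g(x)=x\}$. This is a closed analytic subvariety of $W$: near any point, after a local embedding, it is the preimage of the diagonal under the holomorphic map $x\mapsto (x,g(x))$ into $W\times W$, and the diagonal is analytic. I will then let
$$H:=\{g\in G:\ \Fix(g)=W\},$$
the subgroup of elements acting trivially on $W$; it is clearly a subgroup, since the composition and inverse of maps fixing $W$ pointwise fix $W$ pointwise.

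Next I claim
$$\{x\in W: G_x=H\}=W\setminus \bigcup_{g\in G\setminus H}\Fix(g).$$
Every $x$ satisfies $H\subset G_x$. Moreover $G_x=H$ exactly when no $g\notin H$ fixes $x$, that is, when $x\notin \Fix(g)$ for all $g\in G\setminus H$. For $g\notin H$, the set $\Fix(g)$ is a proper closed analytic subset of $W$. The union over the finite set $G\setminus H$ is therefore a closed analytic subset, and it is proper because $W$ is irreducible and a finite union of proper analytic subsets of an irreducible space is proper. Its complement is thus a nonempty Zariski open subset, in the analytic Zariski topology whose closed sets are analytic subsets. This settles existence.

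For uniqueness, suppose $H'$ is another subgroup with $U':=\{G_x=H'\}$ Zariski open. I will read "Zariski open" as nonempty Zariski open, as is implicit in the statement. An irreducible space cannot contain two disjoint nonempty Zariski open sets, since the complement of their intersection would be the union of two proper analytic subsets. Hence $U'$ meets $\{G_x=H\}$, and at a common point $x$ we get $H'=G_x=H$.

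The only step needing care is the analyticity of $\Fix(g)$ and the irreducibility argument on a possibly singular space. I expect both to be routine: the first follows from a local embedding together with the closedness of the diagonal, and the second from the standard fact that a proper analytic subset of an irreducible space is nowhere dense.
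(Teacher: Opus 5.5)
Your proposal is correct and follows essentially the same route as the paper: take $H$ to be the kernel of the action, identify $\{x\in W: G_x=H\}$ with $W\setminus\bigcup_{g\notin H}\Fix(g)$, and get uniqueness from irreducibility. You also make explicit the convention that ``Zariski open'' means nonempty, which the paper's ``uniqueness is clear'' leaves implicit. This convention is genuinely needed: a subgroup that occurs as no stabilizer would otherwise give the empty set, which is trivially Zariski open.
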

		\begin{proof}
			The uniqueness is clear since $W$ is irreducible. We will show that  such subgroup exists. Let 
			$$H:=\left\{g\in G: g(x)=x\;\;\text{for every}\;x\in W\right\}.$$
			Then $H$ is a subgroup of $G$.  For each $g\notin H$, the set
			$$\Fix (g):=\left\{x\in W: g(x)=x\right\}$$
			is a closed proper analytic subvariety of $W$.  Then we have 
			$$\left\{x\in W: G_x=H\right\}=W\setminus \cup_{g\notin H} \Fix(g)$$
			is Zariski open in $W$. 
		\end{proof}
		
		\medskip

		Come back to our setting.  The finite group $G_a$ acts holomorphically on $W_a$.  Let $H_a$ be the subgroup of $G_a$ given in Lemma \ref{lem:stabilizer}.  Since $G$ acts  transitively on $F$, all $H_a$ are conjugate in $G$,  therefore $H_a$ has constant cardinality.    Let $\eta_V:=|H_a|$. We may thus define:
		\begin{defi}[Pullback of a closed  analytic subvariety] \label{defi:pullbacksubvariety}
			The pullback $\pi^* V$ of a $p$-dimensional closed  irreducible analytic subvariety of $Z$ is an  effective analytic cycle of pure dimension $p$, such that 
			$$\pi^*V:=\sum_{a\in F} \eta_V W_a.$$
			
			If $V=\sum_W n_W$ is an effective cycle of pure dimension $p$ in $Z$, we define
			$$\pi^*V:=\sum_W n_W \pi^*(W),$$ which is  an effective cycle of pure dimension $p$ in $Y$.
			
		\end{defi}

		\medskip
		The following proposition is crucial.
		\begin{prop}\label{prop:pullbackunique}
			Let $V$ be an effective analytic cycle of pure dimension $p$ in $Z$.  Then $\pi^*V$ is the unique effective analytic cycle $R$  of pure dimension $p$ such that:
			\smallskip
			
			(1) $\pi_*([R])=\deg \pi \;[V]$;
			\smallskip
			
			(2) $g_*([R])=[R]$ for every $g\in G$.
		\end{prop}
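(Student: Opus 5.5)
By linearity of both conditions in $V$ and of the definition of $\pi^*$, it suffices to treat $V$ irreducible. Both directions are then reduced to a degree computation and to the transitivity of the $G$-action on the components $W_a$.

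\emph{Existence.} For $g\in G$, $g$ permutes the set $\{W_a\}_{a\in F}$, since $\pi^{-1}(V)$ is $G$-invariant and $g$ maps irreducible components to irreducible components. All coefficients of $\pi^*V$ equal $\eta_V$, so $g_*[\pi^*V]=[\pi^*V]$, which is (2). For (1), note that $\pi_*[W_a]=\deg(\pi|_{W_a}:W_a\to V)\,[V]$. This degree can be computed at a generic point $x\in W_a$, chosen outside the fixed loci of the elements of $G\setminus H_a$ and outside the pairwise intersections of the $W_b$. The fiber of $\pi$ over $\pi(x)$ is the orbit $G\cdot x$, which has $|G|/|H_a|$ points, hence $|G|/(|H_a|\,|F|)$ points lying on $W_a$ after using transitivity on $F$. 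Here one should check that the points of $G\cdot x$ on $W_a$ are exactly $G_a\cdot x$, of cardinality $|G_a|/|H_a|$, and that $|G_a|=|G|/|F|$. Away from the branch locus $\pi$ is a local biholomorphism on $W_a$ near such points, and generically $x$ has stabilizer $H_a$ in $G$, not just in $G_a$. This last point holds because an element fixing $x$ stabilizes the unique component $W_a$ through $x$. Hence $\deg(\pi|_{W_a})=|G_a|/|H_a|$, and
\[
\pi_*[\pi^*V]=\sum_{a\in F}\eta_V\,\frac{|G_a|}{|H_a|}[V]=|F|\,|G_a|\,[V]=|G|\,[V]=\deg\pi\,[V].
\]

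\emph{Uniqueness.} Let $R=\sum_W n_W W$ satisfy (1) and (2). By (1), $\pi_*[R]$ is supported on $\supp V$. Since $\pi$ is finite, $\pi(\supp R)\subset \supp V$, so every component of $R$ lies in $\pi^{-1}(\supp V)$ and, being $p$-dimensional, is one of the $W_a$. Thus we may write $R=\sum_a m_a W_a$ with $V$ irreducible. By (2) and the transitivity of $G$ on $F$, all $m_a$ are equal to some $m$. Applying the existence computation gives $\pi_*[R]=m\,(|G|/\eta_V)[V]$. Comparing with (1) forces $m=\eta_V$, so $R=\pi^*V$. For reducible $V$, a component $W_a$ may lie over two different components of $V$ only if those components coincide, so the decomposition separates by components of $V$ and the argument applies to each piece.

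The main obstacle is the local degree computation $\deg(\pi|_{W_a})=|G_a|/|H_a|$, carried out on a possibly singular $W_a$ inside a possibly singular quotient. I would handle it at a generic smooth point of $W_a$ where the stabilizer is exactly $H_a$ and which lies on no other $W_b$, using Lemma~\ref{lem:stabilizer} to guarantee that such points form a dense Zariski open set. Near such a point, $\pi|_{W_a}$ factors through the free action of $G_a/H_a$, hence is unramified of that degree onto its image.
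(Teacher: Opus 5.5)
Your proof is correct and follows the paper's own argument. You reduce to irreducible $V$, compute $\deg(\pi|_{W_a})=|G_a|/|H_a|$ by orbit--stabilizer at a generic point, use transitivity of $G$ on $F$ to make the multiplicities constant, and compare with $|G|=|F|\,|G_a|$ to force the multiplicity $\eta_V$. You are more explicit than the paper on two points it merely asserts: that $\pi^*V$ itself satisfies (1) and (2), and that the points of the generic fiber lying on $W_a$ form a $G_a$-orbit with stabilizer $H_a$.
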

		\begin{proof}
			Let $V=\sum_{i=1}^n n_i V_i$, where each $V_i$ is a $p$-dimensional closed  irreducible analytic subvariety of $Z$.   Let $R$ be an effective analytic cycle $R$  of pure dimension $p$ satisfying the above two properties. Then we can decompose $R=\sum_{i=1}^n R_i$, where each $R_i$  is an  effective analytic cycle of pure dimension $p$ such that $\supp R_i=\pi^{-1}(V_i)$.   Then each $R_i$ still satisfies the above two properties.  Thus without loss of generality, we may assume $V$ is a $p$-dimensional closed  irreducible analytic subvariety of $Z$. 
			
			Let $R$ be an effective analytic cycle $R$  of pure dimension $p$ satisfying the above two properties. Let $F$ be the set of all irreducible components of $\pi^{-1}(V)$. Property (1) implies that we can write $R=\sum_{a\in F} \phi_a W_a$, where $\phi_a\in \Z_{\geq 0}$.  For each $a\in F$, 
			$$\pi_*([W_a])=\deg(\pi|_{W_a})\; [V],$$
			where $\deg(\pi|_{W_a})$ is the topological degree of $\pi|_{W_a}$, which is equal to the length of the $G_a$-orbit of a generic point $x\in W_a$.     Since $G$ acts   transitively on $F$, all $G_a$ are conjugate in $G$,  therefore $G_a$ has constant cardinality. Assume $L_V:=|G_a|\in \Z_{\geq 1}$.  By the orbit–stabilizer theorem, 
			$$\deg(\pi|_{W_a})=\frac{|G_a|}{|H_a|}=\frac{L_V}{\eta_V},$$
			where $H_a,\eta_V$ are given in Lemma \ref{lem:stabilizer}.
			
			Again since $G$ acts   transitively on $F$, property (2) implies that $\phi_a$ is a constant.  Assume $\phi_a\equiv \phi\in \Z_{\geq 1}$.  Thus property (1) gives 
			$$\pi_*([R])=\sum_{a\in F} \phi \deg(\pi|_{W_a})\; [V]=|G| \; [V],$$
			which is equivalent to 
			\begin{equation}\label{eqn:property1}
				\phi\cdot |F|\cdot \frac{L_V }{\eta_V} =|G|.
			\end{equation}

			Since $G$ acts  transitively on $F$,   By the orbit–stabilizer theorem we have $|G|=L_V|F|$, thus (\ref{eqn:property1}) is equivalent to $\phi=\eta_V$. Thus we must have $R=\sum_{a\in F} \eta_V W_a=\pi^*V$,  which finishes the proof. 
		\end{proof}

		%

		\medskip

		If  $\dim V=0$, one can pullback a point. We can thus define pushforward of a function via duality:
		
		\begin{defi}[Pushforward of a function]\label{defi:pushforwardfunction}
			The pushforward of a function $\phi:Y\to \R$ is a function  $\pi_* \phi:Z\to \R$  given by
			
			$$ \pi_* \phi(x):=\sum_{y\in \pi^{-1}(x)} \eta_y  \phi(y),$$
			where $\pi^*x=\sum_{y\in \pi^{-1}(x)}\eta_y\, y$ is the effective zero-cycle given by Definition \ref{defi:pullbacksubvariety}.
		\end{defi}
		
		We have the following basic property of pushforward of a function.
		\begin{prop}\label{prop:pushforwardcontinuous}
			If $\phi:Y\to \R$ is a continuous function, then $\pi_*\phi$ is a continuous function on $Z$. 
		\end{prop}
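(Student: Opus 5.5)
The plan is to first compute the multiplicities $\eta_y$ explicitly and then use the local structure of the quotient. For $x\in Z$, the fiber $\pi^{-1}(x)$ is a single $G$-orbit $\{y_1,\dots,y_k\}$. Take $W=\{y_i\}$ in Lemma \ref{lem:stabilizer}; then $G_a$ is the stabilizer $G_{y_i}$ and $H_a=G_{y_i}$. Hence $\eta_y=|G_y|$, and
$$\pi_*\phi(x)=\sum_{y\in\pi^{-1}(x)}|G_y|\,\phi(y)=\sum_{g\in G}\phi(g\cdot y_0)$$
for any $y_0\in\pi^{-1}(x)$, by the orbit–stabilizer theorem. This identity is also consistent with Proposition \ref{prop:pullbackunique}: the total multiplicity is $|G|=\deg\pi$, and the cycle is $G$-invariant.

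So the task reduces to showing that $x\mapsto\sum_{g\in G}\phi(g y_0)$, for $y_0\in\pi^{-1}(x)$, is continuous on $Z$.

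1. The function $\Phi:=\sum_{g\in G}\phi\circ g$ is continuous on $Y$, since each $g$ is a homeomorphism. It is also $G$-invariant, so it descends to a function $\bar\Phi$ on $Z$ with $\bar\Phi\circ\pi=\Phi$. By the identity above, $\bar\Phi=\pi_*\phi$.

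2. The quotient map $\pi:Y\to Z=Y/G$ is a topological quotient map. It is also open, because for $U$ open the set $\pi^{-1}(\pi(U))=\bigcup_{g} gU$ is open. The complex structure on $Z$ from \cite{MR84174} induces the quotient topology.

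3. A $G$-invariant continuous function on $Y$ therefore induces a continuous function on $Z$: for $O\subset\R$ open, $\bar\Phi^{-1}(O)=\pi(\Phi^{-1}(O))$, which is open because $\pi$ is open.

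For a more hands-on version of step 3, argue by sequences. If $x_n\to x$, the properness of $\pi$ lets us choose lifts $y_n\to y_0$ along a subsequence. Then $\Phi(y_n)\to\Phi(y_0)$, and since every subsequence has such a further subsequence, the full sequence $\pi_*\phi(x_n)$ converges to $\pi_*\phi(x)$.

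The only step needing care is the identification $\eta_y=|G_y|$ in the zero-dimensional case. Concretely, one must check that the "generic stabilizer" subgroup $H_a$ of Lemma \ref{lem:stabilizer} for a point is the full stabilizer, which holds because the point is fixed by all of $G_a$. Once this is settled, continuity is just the standard fact that $G$-invariant continuous functions descend through the open quotient map.
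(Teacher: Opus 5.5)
Your proof is correct, but it takes a different route from the paper's.

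\textbf{The paper's route.} The paper never computes the multiplicities $\eta_y$. It writes $\pi_*\phi(z_n)=\langle[\pi^*z_n],\phi\rangle$ and cuts $\phi$ off near $\pi^{-1}(z)$. It then proves $[\pi^*z_n]\to[\pi^*z]$ weak-$*$ by compactness and uniqueness. The zero-cycles have constant mass $|G|$, and any subsequential limit $\mu$ is an effective zero-cycle satisfying $\pi_*\mu=\deg\pi\,\delta_z$ and $g_*\mu=\mu$ for all $g\in G$. Hence $\mu=[\pi^*z]$ by the uniqueness part of Proposition~\ref{prop:pullbackunique}.

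\textbf{Your route.} You identify $\eta_y=|G_y|$, reading $G_a$ as the stabilizer in $G$ and $H_a$ as the subgroup fixing the point. This is the only reading compatible with the total mass $|G|$ in Proposition~\ref{prop:pullbackunique} and in the paper's own proof. You then get the explicit formula $(\pi_*\phi)\circ\pi=\sum_{g\in G}\phi\circ g$. Continuity follows because a continuous $G$-invariant function descends through the open quotient map $Y\to Y/G$, and Cartan's analytic structure lives on that topological quotient.

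\textbf{Comparison.} Your argument is more elementary and yields the explicit identity $\pi^*(\pi_*\phi)=\sum_{g\in G}g^*\phi$. The paper's argument needs no description of the multiplicities, only the characterization in Proposition~\ref{prop:pullbackunique}. It is the zero-dimensional case of the same compactness--uniqueness mechanism the paper later uses to prove continuity of the pullback operator $\Pi$ in Proposition~\ref{prop:continuouspullback}. There, for positive-dimensional cycles, no such explicit pointwise formula is available.
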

		\begin{proof}
			Let $z_n\to z$ in $Z$. We need to show $\pi_*\phi(z_n)\to  \pi_*\phi(z)$.  By definition, $\pi_*\phi(z_n)=\left\langle [\pi^*{z_n}], \phi \right\rangle$ and  $\pi_*\phi(z)=\left\langle [\pi^*z], \phi \right\rangle$, where $[\pi^*{z_n}]$ (resp. $[\pi^*z]$) are associated  finite positive measures supported on $\pi^{-1}(z_n)$ (resp. $\pi^{-1}(z)$).  We can multiply $\phi$ by a cut off function supported on a compact neighborhood of 
			$\pi^{-1}(z)$ to make $\phi$ compactly supported, without affecting the value of $\left\langle [\pi^*{z_n}], \phi \right\rangle$ and $\left\langle [\pi^*z], \phi \right\rangle$.  Thus it suffices to show $[\pi^*{z_n}]\to [\pi^*z]$ in the weak-* sense.  Assume by contradiction that $[\pi^*{z_n}]\not\to [\pi^*z]$. Since the masses of $\pi^*{z_n}$ are constant ($=|G|$),  by passing to a subsequence we may assume  $\pi^*{z_n}\to \mu\neq [\pi^*z]$ in the weak-* sense.

			For each $n\geq 1$, we have $\pi_*([\pi^*{z_n}])=\deg\pi\; [z_n]$, and $g_*([\pi^*{z_n}])=[\pi^*{z_n}]$ for every $g\in G$.   Since $\pi^*{z_n}\to \mu\neq [\pi^*z]$ in the weak-* sense,   $\mu$ is a finite sum of Dirac masses satisfying $\pi_*(\mu)=\deg\pi\; \delta_z$, and $g_*\mu=\mu$ for every $g\in G$.  By Proposition \ref{prop:pullbackunique}, we have $\mu=[\pi^*{z}]$, which is a contradiction.  Thus $\pi_*\phi$ is a continuous function on $Z$. 
		\end{proof}
		\medskip

		Apply Chern-Levine-Nirenberg inequality and Proposition \ref{prop:extendpsh}, we get:
		
		\begin{lem}\label{lem:volumeestimate}
			Let $T$ be a positive closed $(1,1)$-current on $Z$ with bounded local potential.   Let $U\Subset Z$ be an open subset. Let $K\subset U$ be a compact subset.  Then there exists a constant $C=C(U,K,T)>0$ such that  for every positive closed  bidimension $(p,p)$ current $S$ on $Z$, we have
			$$\Ma_K(S\wedge T^p)\leq C\;\Ma_U(S\wedge \omega_Z^p).$$
		\end{lem}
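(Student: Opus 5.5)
The plan is to reduce to a local statement and then apply the Chern--Levine--Nirenberg inequality on an ambient open subset of some $\C^l$. By compactness of $K$, it suffices to prove the estimate near each point $x\in K$, with $K$ replaced by a small compact neighborhood $K_x$ of $x$ and $U$ replaced by a small open neighborhood $U_x\subset U$. A finite subcover of $K$ by the interiors of the $K_x$ then yields the global constant, as the sum of the local constants.

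Fix $x\in K$. Since $T$ has bounded local potential, we may write $T=dd^c u$ near $x$ with $u$ bounded p.s.h. on $Z$ near $x$. By Proposition \ref{prop:extendpsh}, there are a local embedding $j:Z\hookrightarrow W$ near $x$, with $W\subset\C^l$ open, and a bounded p.s.h. function $\tilde u$ on $W$ with $\tilde u|_{W\cap Z}=u$. Shrinking, we choose open sets $U_x$ and $U'_x\Subset W$ with $x\in U'_x$ and $U'_x\cap Z\subset U_x\subset U$, and a compact neighborhood $K_x\subset U'_x$ of $x$. We may assume the Hermitian form $\omega_Z$ is the restriction of a Hermitian form on $W$ comparable to the Euclidean form $\beta=dd^c|z|^2$; constants depend only on this local data.

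Push $S$ forward to $j_*S$, a positive closed bidimension $(p,p)$ current on $W$ supported on $Z$. Then $j_*(S\wedge T^p)=j_*S\wedge(dd^c\tilde u)^p$, because the intersection is defined by the recursive formula $dd^c(\tilde u\,\cdot)$ and $\tilde u$ restricts to $u$ on the support. Now apply the Chern--Levine--Nirenberg inequality on $W$ for bounded p.s.h. functions, iterating $p$ times via a chain of nested compacts $K_x=L_0\subset L_1\subset\cdots\subset L_p\subset U'_x$:
$$\Ma_{K_x}\bigl(j_*S\wedge(dd^c\tilde u)^p\bigr)\leq C\,\|\tilde u\|_{L^\infty(U'_x)}^p\,\Ma_{L_p}(j_*S\wedge\beta^p).$$
Since $\beta$ is comparable to $\omega_Z$ on $U_x$, the right side is bounded by $C'\,\Ma_U(S\wedge\omega_Z^p)$.

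The main point to check is that the intersection $S\wedge T^p$ defined intrinsically on $Z$ agrees with the ambient product after pushforward. This requires the equality $j_*(u S)=\tilde u\, j_*S$ at each step of the recursion. That equality holds because $j_*S$ is supported in $Z$ and the inductively defined products $S\wedge T^k$ are currents of order zero with $\tilde u$ bounded. Once this identification is made, the argument is just the standard CLN estimate.
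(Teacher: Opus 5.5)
Your proposal is correct and follows essentially the same route as the paper: localize, use Proposition \ref{prop:extendpsh} to extend the local potential of $T$ to a bounded p.s.h. function on an ambient open subset of $\C^l$, and then apply the Chern--Levine--Nirenberg inequality there. You also spell out two steps the paper leaves implicit under ``the problem is local'': the finite-cover reduction over $K$, and the identification $j_*(S\wedge T^p)=j_*S\wedge (dd^c\tilde u)^p$.
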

		
		\begin{proof}
			The problem is local, so we may assume that $Z$ is embedded in an open subset $ \Omega \subset \C^l$ as a closed analytic subvariety and $T=dd^c u$, where $u$ is a p.s.h. function on $\Omega$. We may further assume  that there exists a Hermitian form $\omega$ on $\Omega $ with $\omega|_Z=\omega_Z$.  By Proposition \ref{prop:extendpsh} we may further assume that  $u$ is bounded in $\Omega$.    For a compact subset $K\subset \Omega$,   by Chern-Levine-Nirenberg inequality \cite[Page 146]{Demailly}, there is a constant $C=C(\Omega, K)>0$ such that
			\begin{align*}
				\Ma_K(S\wedge (dd^c u)^p)\leq C||u||^p_{\infty, U} \Ma_U(S\wedge \omega^p).
			\end{align*}
			This completes the proof.
		\end{proof}
		\medskip
		
		Let $L_1\subset L_2\subset \cdots$ be an exhaustion by compact sets of $Z$, i.e. $L_i\subset L_{i+1}^\circ$ andf $\cup_{i=1}^\infty L_i=Z$.   Define $K_i:= \pi^{-1}(L_i)$. Thus $K_1\subset K_2\subset \cdots$ is an exhaustion by compact sets of $Y$. We fix these  two exhaustions by compact sets.    Bounded volume space are defined with respect to those exhaustions by compact sets.   The following Proposition is important. 
		
		\begin{prop}\label{prop:continuouspullback}
			
			Assume in addition that $(Y,\omega_Y)$ is K\"ahler.  Let $\sV_{p,Z}$ be a $p$-dimensional bounded volume space of $Z$.  Then there exists a $p$-dimensional bounded volume space $\sV_{p,Y}$ of $Y$ such that $\pi^*V\in \sV_{p,Y}$ for every $V\in \sV_{p,Z}$. Moreover the pullback operator
			$$\Pi: \sV_{p,Z}\to \sV_{p,Y}, \;V\mapsto \pi^*V$$
			is continuous. 
		\end{prop}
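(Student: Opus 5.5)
Two things have to be shown: a uniform local volume bound for the pulled-back cycles $\pi^*V$, which produces $\sV_{p,Y}$, and continuity of $\Pi$.

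\textbf{Volume bound.} We construct a Kähler form on $Z$ with continuous local potentials out of $\omega_Y$. Averaging over $G$, we may assume $\omega_Y$ is $G$-invariant, since averaging a Kähler form over a finite group of automorphisms gives a Kähler form. Locally on $Y$ write $\omega_Y=dd^c\varphi$ with $\varphi$ smooth and strictly psh. The function $\pi_*\varphi$ of Definition \ref{defi:pushforwardfunction} is continuous by Proposition \ref{prop:pushforwardcontinuous}. It is also psh on $Z$: off the branch locus it is a sum of psh functions along the local inverse branches, it is continuous, and psh functions extend across analytic subsets. Because the local potentials need only be chosen near fibres, we use $\varphi$ near each $\pi^{-1}(z)$ and obtain a positive closed $(1,1)$-current $T:=\pi_*\omega_Y$ on $Z$ with continuous, in particular bounded, local potentials.

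For an effective cycle $R$ of dimension $p$ on $Y$, the projection formula gives
$$\Ma_{K_i}(R)=\int_{K_i}[R]\wedge\omega_Y^p\leq \int_{L_i}\pi_*\big([R]\wedge\omega_Y^p\big).$$
Take $R=\pi^*V$. This is $G$-invariant and satisfies $\pi_*[\pi^*V]=|G|\,[V]$ by Proposition \ref{prop:pullbackunique}. On the dense set $\pi^{-1}(Z^{\reg})$ minus the branch locus, $\pi$ is a local biholomorphism, and there $\pi_*([\pi^*V]\wedge\omega_Y^p)$ equals $|G|\,[V]\wedge T^p$ up to a factor depending only on $p$ and $|G|$. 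Since neither side charges the branch locus, the two agree everywhere. Lemma \ref{lem:volumeestimate}, applied with $U=L_{i+1}^\circ$ and $K=L_i$, then bounds $\Ma_{K_i}(\pi^*V)$ by $C_i\,\Ma_{L_{i+1}}(V)\leq C_i M_{i+1}$. We define $\sV_{p,Y}$ using these constants.

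\textbf{Continuity.} Both spaces are metrizable, so it suffices to check sequences. Let $V_n\to V$ in $\sV_{p,Z}$. By Theorem \ref{thm: Bishop} the space $\sV_{p,Y}$ is compact, so after passing to a subsequence $\pi^*V_n\to R$ in $\sV_{p,Y}$, with $R$ an effective cycle. We show $R=\pi^*V$; since every subsequence then has the same limit, the whole sequence converges.

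Pushforward by the proper map $\pi$ is weak-* continuous, because $\pi^*$ of a compactly supported continuous test form is again such a form. Hence
$$\pi_*[R]=\lim_n \pi_*[\pi^*V_n]=\lim_n |G|\,[V_n]=|G|\,[V].$$
Each $g_*$ is continuous as well, so $g_*[R]=[R]$ for every $g\in G$. The uniqueness part of Proposition \ref{prop:pullbackunique} then forces $R=\pi^*V$. This is the same argument as in Proposition \ref{prop:pushforwardcontinuous}.

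\textbf{Main obstacle.} The delicate step is the first one. Bounding the volume of $\pi^*V$ by the $T^p$-volume of $V$ needs $T$ to have bounded potentials on the singular space $Z$, which is what allows Lemma \ref{lem:volumeestimate} (and hence Proposition \ref{prop:extendpsh}) to be applied. It also needs the identity $\pi_*([\pi^*V]\wedge\omega_Y^p)=c\,[V]\wedge T^p$ to be justified across the branch locus. The Kähler hypothesis on $Y$ is used precisely to obtain local psh potentials for $\omega_Y$ that push forward to continuous psh potentials. If proving plurisubharmonicity of $\pi_*\varphi$ near singular points turns out to be awkward, I would fall back on bounding the $\omega_Y$-volume via a local embedding of $Z$ together with the Chern--Levine--Nirenberg inequality.
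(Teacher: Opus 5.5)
Your route is the paper's: push $\omega_Y$ forward to $T=\pi_*\omega_Y$ and get continuous psh local potentials $\pi_*\varphi$ from Proposition~\ref{prop:pushforwardcontinuous}. The paper quotes a reference for plurisubharmonicity across the singular locus, which is exactly the point you flag. You then compare $\Ma_{K_i}(\pi^*V)$ with $\Ma_{L_i}([V]\wedge T^p)$, apply Lemma~\ref{lem:volumeestimate}, and get continuity from compactness of $\sV_{p,Y}$ plus the uniqueness in Proposition~\ref{prop:pullbackunique}, as the paper does. Averaging $\omega_Y$ over $G$ is a harmless refinement. It makes the comparison an exact proportionality for every $p$. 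Without it the comparison is in general only an inequality $\pi_*([\pi^*V]\wedge\omega_Y^p)\leq [V]\wedge T^p$ for $p\geq 2$, which is all Lemma~\ref{lem:volumeestimate} needs.

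The one step that fails as written is the justification that the two measures agree because they agree off the branch locus and ``neither side charges the branch locus''. The proposition must hold for every $V\in\sV_{p,Z}$, and a component of $V$ may lie entirely inside the branch locus $\pi(\{y:G_y\neq 1\})$. This locus contains $Z^{\sing}$; in the application $Z=X^m/S_m$ it is the image of the big diagonal. For such $V$ both measures are carried by the branch locus, so agreement off it proves nothing.

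The fix is to argue generically along $V$ rather than generically on $Z$. Take $V$ irreducible and choose a proper analytic subset $A_V\subsetneq V$ off which $V$ is smooth and $\pi^{-1}(V)_{\red}\to V$ is unramified. Over $V\setminus A_V$:
\begin{itemize}
\item every fibre has $|G|/\eta_V$ points, each with stabilizer of order $\eta_V$;
\item the local sheets satisfy $\sigma_j=g_j\circ\sigma_1$ with $g_j\in G$;
\item hence $(\pi_*\varphi)|_V=\eta_V\sum_j\varphi\circ\sigma_j$.
\end{itemize}
By $G$-invariance, on $V\setminus A_V$ one has $[V]\wedge T^p=|G|^p(\sigma_1^*\omega_Y)^p|_V$, while $\pi_*([\pi^*V]\wedge\omega_Y^p)=\eta_V\cdot\frac{|G|}{\eta_V}(\sigma_1^*\omega_Y)^p|_V$. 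Neither measure charges $A_V$. For $[V]\wedge T^p$, this holds because it is the Monge--Amp\`ere measure of the continuous psh function $(\pi_*\varphi)|_V$ on the $p$-dimensional space $V$ (pass to a desingularization). For $\pi_*([\pi^*V]\wedge\omega_Y^p)$, it holds because this is the volume measure of a $p$-dimensional cycle. This gives $\pi_*([\pi^*V]\wedge\omega_Y^p)=|G|^{1-p}[V]\wedge T^p$ for all $V$, and the rest of your proof goes through. The paper itself simply asserts this projection identity without justification.
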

		\begin{proof}
			
			Let $T:=\pi_*(\omega_Y)$. Then $T$ is a positive closed $(1,1)$-current on $Z$.  Let $u$ be a local potential of $\omega_Y$.  By Proposition \ref{prop:pushforwardcontinuous}, $\pi_*u$ is continuous.  By \cite[Corollaire 1.12 and Proposition 1.13]{MR813252},  $\pi_*u$ is a p.s.h. function on an open subset of $Z$ and $\pi_*u$ is a local potential of $T$, i.e. $dd^c (\pi_* u)=T$ locally.  Thus $T$ has continuous local potential.

			Let $M'_i>0$ in the definition of $\sV_{p,Z}$ such that for every element $V\in \sV_{p,Z}$,  $\Ma_{L_i} V\leq M'_i$.   Since $T$ has continuous local potential, its local potentials are bounded on every relatively compact open subset. Hence Lemma  \ref{lem:volumeestimate} applies on $L_{i+1}$, and for each $i\geq 1$ there exists a constant $C_i>0$ such that  for every $V\in \sV_{p,Z}$, 
			\begin{equation}\label{eqn:2.5}
				\Ma_{K_i} ( [\pi^*V]\wedge \omega_Y^p)=\Ma_{L_i} ( [V]\wedge T^p)\leq C_i \Ma_{L_{i+1}} ( [V]\wedge \omega_Z^p)\leq C_iM'_{i+1}.
			\end{equation}
			Let $M_i:=C_iM'_{i+1}$.  We define $\sV_{p,Y}$  as the set of all effective analytic cycles $V$ of pure dimension $p$  such that $\Ma_{K_i} V\leq M_i$.  Then by (\ref{eqn:2.5}),  $\pi^*V\in \sV_{p,Y}$ for every $V\in \sV_{p,Z}$. 
			
			It remains to show that the pullback operator $\Pi:  \sV_{p,Z}\to \sV_{p,Y}$ is continuous.  Since $\sV_{p,Y}$ and  $\sV_{p,Z}$ are both metric spaces, we need to show that   for any sequence $V_n$ in $\sV_{p,Z}$ such that $V_n\to V\in \sV_{p,Z}$,   $\pi^*(V_n)\to \pi^*V$ in $\sV_{p,Y}$.  By Theorem \ref{thm: Bishop}, $\sV_{p,Y}$ is compact. Let $R$ be a subsequential limit of $\pi^*(V_n)$ in $\sV_{p,Y}$. It suffices to show that $R=\pi^*V$.  By Proposition \ref{prop:pullbackunique}, for each $n\geq 1$, 
			
			(1) $\pi_*([\pi^*(V_n)])=\deg\pi\;[V_n]$;
			
			\smallskip
			
			(2) $g_*([\pi^*(V_n)])=[\pi^*(V_n)]$ for every $g\in G$.
			
			\smallskip
			
			Since the pushforward operater $\pi_*$  and $g_*$ are continuous, we get:
			\smallskip
			
			(i) $\pi_*([R])=\deg \pi \;[V]$;
			\smallskip
			
			(ii) $g_*([R])=[R]$ for every $g\in G$.
			
			\smallskip

			Then by the uniqueness part of Proposition \ref{prop:pullbackunique}, we have $R=\pi^*V$. This completes the proof. 
		\end{proof}

		\medskip
		
		For a finite measure $\mu$ on $\sV_{p,Z}$, the pushforward $\Pi_*\mu$ is a finite measure on $\sV_{p,Y}$ since  $\Pi$ is continuous (Proposition \ref{prop:continuouspullback}).   Thus any woven representation of a uniformly woven current on $Z$ admits a natural lift to a uniformly woven current on $Y$.
		
		\begin{defi}\label{defi:pullbackwoven}
			Assume in addition that $(Y,\omega_Y)$ is K\"ahler.  Let 
			$$S=\int_{\sV_{p,Z}} [V_a]\;d\mu(a)$$
			be a uniformly woven current on $Z$.  For this woven representation of $S$, we define its lifted current on $Y$ by
			$$\pi^*_{\mu}S:=\int_{\sV_{p,Y}} [V_b] \;d(\Pi_*\mu)(b),$$
			where $\Pi: \sV_{p,Z}\to \sV_{p,Y}$ is the pullback operator and $\sV_{p,Y}$ is given in Proposition \ref{prop:continuouspullback}. 
		\end{defi}

		\begin{rmk}
			The lifted current $\pi^*_\mu S$ is defined here relative to the chosen woven representation of $S$. We do not use representation-independence in this paper; the above construction is sufficient for our applications.
			We will address the question of the independence of this woven representation in future work.
		\end{rmk}

		\medskip

		\section{Bidimension (1,1) woven currents and intersection theory}\label{sec:woven11}
		
		\subsection{Currents that are approximated by algebraic curves satisfying genus-volume relation}\label{subsection:3.1}
		
		In this subsection, we consider positive closed bidimension $(1,1)$ currents in a quasi-projective variety $X$ over $\C$   that can be approximated by  effective algebraic cycles of pure dimension one satisfying a genus-volume relation.  Dujardin \cite{dujardin2003laminar}, \cite{dujardin04} developed an intersection theory for woven currents of bidimension $(1,1)$ in two dimensions, and in \cite{Dujardin2012} he generalized this two-dimensional result to higher dimensions. The results we present in this section can be regarded as an extension of Dujardin's work: while \cite{Dujardin2012} considers currents that arise as limits of rational curves, in this section we treat currents that are limits of curves satisfying a genus–volume relation. Our method of proof follows that of Dujardin.


		\begin{defi}[Genus-volume relation]\label{defi:genus-volume}
			Let $(V_n)_{n\geq 1}$ be a sequence of effective algebraic cycles of pure dimension one in $\P^N$ over $\C$.  We say that $(V_n)_{n\geq 1}$ satisfies the {\em genus-volume relation}  if there is a constant $C>0$ such that for every $n\geq 1$, we have 
			$$\genus (V_n)\leq C\;\vol(V_n).$$
		\end{defi}
		Here for $V=\sum_W n_W W$ an effective algebraic cycle of pure dimension one, where each $W$ is an  irreducible algebraic curve in $\P^N$ and $n_W\in \Z_{\geq 1}$,  we define $\genus(V):=\sum_W  n_W \genus (W)$.  By genus  of $W$ we mean  the geometric genus of any projective compactification of $W$. The volume is computed using Fubini-Study form on $\P^N(\C)$. 
		\medskip

		%
		%
		%

		%
		
		%
		
		Let $I$ be a linear subspace of $\P^N(\C)$ of dimension $N-2$, and let $V$ be a projective line in $\P^N(\C)$ such that $V\cap I=\emptyset$. Let $\pi:\P^N(\C)\setminus I\to V\cong\P^1(\C)$ be the projection defined as follows: for $x \in \P^N(\C)\setminus I$, let $H_x$ be the unique hyperplane passing through $x$ and $I$; then $\pi(x)$ is the unique point in $H_x\cap V$. 
		
		Let $\Gamma$ be an irreducible algebraic curve in $\P^N(\C)$ of genus $g$ such that $\Gamma\cap I=\emptyset$.   Let $U$ be a simply connected open subset of $\P^1(\C)$, and let $W$ be an irreducible component of $\Gamma\cap \pi^{-1}(U)$.  We say that $W$ is a {\em good component} if the map $\pi:  W\to U$ is biholomorphic. Equivalently, this means that $W$ is a graph over $U$.  Otherwise $W$ is called a {\em bad component}. For a bad component $W$, the multiplicity $m(W,U)$ is by definition the topological degree of $\pi: W\to U$. We denote by $b(\Gamma, U)$ the number of bad components in $\Gamma$ over $U$ counted with multiplicity, i.e. $b(\Gamma, U)=\sum_{W} m(W,U)$, where the summation takes over all irreducible components $W$ of $\Gamma\cap \pi^{-1}(U)$.

		\begin{lem}\label{bad components}
			Let $\Gamma$ be an irreducible algebraic curve in $\P^N(\C)$ of genus $g$.  Let $I$ be a linear subspace of $\P^N(\C)$ of dimension $N-2$ such that $\Gamma\cap I=\emptyset$.  let $(U_i)_{1\leq i\leq l}$ be a disjoint collection of simply connected open subset of $\P^1(\C)$. Then 
			\begin{equation*}
				\sum_{i=1}^l b(\Gamma,U_i)\leq 4\vol(\Gamma)+4g-4,
			\end{equation*}
			where $\vol(\Gamma)$ is computed with respect to the Fubini-Study form  $\omega_{\P^N}$.
		\end{lem}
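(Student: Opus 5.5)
The plan is to apply Riemann--Hurwitz to the finite map $\pi|_\Gamma$ from the normalization of $\Gamma$ to $\P^1$, and then count bad components through branch points.

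Let $\nu:\tilde\Gamma\to\Gamma$ be the normalization, a compact Riemann surface of genus $g$. Since $\Gamma\cap I=\emptyset$, the projection $\pi$ restricts to a morphism $\Gamma\to V\cong\P^1$, hence to a finite holomorphic map $\phi=\pi\circ\nu:\tilde\Gamma\to\P^1$. Its degree $e$ equals $\#(\Gamma\cap H)$ for a generic hyperplane $H\supset I$, that is, $e=\deg\Gamma=\vol(\Gamma)$ (Wirtinger, with Fubini--Study normalized so that lines have volume one). Riemann--Hurwitz then gives
$$\sum_{p\in\tilde\Gamma}(e_p-1)=2g-2+2e,$$
so the total ramification $R:=\sum_p(e_p-1)$ is at most $2\vol(\Gamma)+2g-2$.

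Next I would bound $b(\Gamma,U_i)$ by local ramification over $U_i$. Fix a simply connected $U=U_i$. The irreducible components $W$ of $\Gamma\cap\pi^{-1}(U)$ correspond, after normalization, to connected components $\tilde W$ of $\phi^{-1}(U)$, each a proper branched cover $\tilde W\to U$ of degree $m=m(W,U)$. If $\tilde W\to U$ is unramified, it is a covering of a simply connected domain, so $m=1$; moreover $\nu$ is then an injective immersion on $\tilde W$ (being a section of $\pi$ it is injective, and $\phi$ is unramified), so $W$ is a graph over $U$, i.e. good. One technical point to check is that $W$ is irreducible and a graph even if singular points of $\Gamma$ lie above $U$. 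Since $\pi$ is injective on a graph and each component is handled separately, this should be fine. Hence every bad component has some ramification. Applying Riemann--Hurwitz to the proper map $\tilde W\to U$, where $\chi(U)=1$, gives
$$\chi(\tilde W)=m-\sum_{p\in\tilde W}(e_p-1).$$
Because $\tilde W$ is connected and open, $\chi(\tilde W)\le 1$. Thus $m-1\le\sum_{p\in\tilde W}(e_p-1)$. Since $W$ is bad, this sum is at least $1$, so
$$m\le 2\sum_{p\in\tilde W}(e_p-1).$$
If the $U_i$ are not relatively compact, the same count works after exhausting by compact sub-domains.

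Summing over the bad components over each $U_i$, and using that the $U_i$ are disjoint, so the fibres over them are disjoint sets of ramification points, gives
$$\sum_i b(\Gamma,U_i)\le 2R\le 4\vol(\Gamma)+4g-4.$$

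The main obstacle is the Euler characteristic step for a possibly non-compact $\tilde W$ with infinitely generated topology; this needs properness of $\tilde W\to U$. Since $U$ is open, properness holds because $\phi$ is finite, so $\tilde W\to U$ is a finite branched cover of a disc-like domain and $\chi$ is well defined. That makes the inequality valid.
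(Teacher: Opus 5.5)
Your proof is correct and follows the paper's route: global Riemann--Hurwitz for $\pi\circ\nu$ on the normalization, then a local bound on the degree of each bad component over a simply connected $U_i$, summed using that the $U_i$ are disjoint. The paper cites Dujardin for the local inequality $m(W,U_i)\le\sum_{x\in R\cap\hat W}e_x$ and then uses $e_x\le 2(e_x-1)$. You instead derive the slightly sharper bound $m-1\le\sum_{p\in\tilde W}(e_p-1)$ yourself from the Euler characteristic, and you make explicit why a bad component must contain a ramification point, which the paper leaves implicit. Both routes give the same constant. The degenerate case $U_i=\P^1$, where $\chi(U_i)=2$ and your formula does not apply as written, is trivial.
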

		\begin{proof}
			Let $p:\hat{\Gamma}\to \Gamma$ be the normalization of $\Gamma$, hence $\hat{\Gamma}$ is a complex smooth algebraic curve of genus $g$. Considering the branched covering $\pi\circ p:\hat{\Gamma}\to \P^1(\C)$, by Riemann-Hurwitz formula we have 
			
			$$2g-2=-2\deg(\pi\circ p)+\sum_{x\in R} (e_x-1),$$
			where $R$ is the critical set of $\pi\circ p:\hat{\Gamma}\to \P^1(\C)$, and $e_x$ is the critical multiplicity at $x$. 
			
			Since $\deg(\pi\circ p)=\deg (\pi|_\Gamma)=\vol (\Gamma)$ (both are equal to the number of the intersection points of $\Gamma$ with a generic hyperplane), and since $e_x\geq 2$ when $x\in R$,  we have
			$$\sum_{x\in R} e_x\leq 4\vol(\Gamma)+4g-4.$$
			
			If $\pi:W\to U_i$ is a bad component, then the multiplicity $m(W,U_i)$  is the same as $\deg ((\pi\circ p)|_{\hat{W}})$, where $\hat{W}:=p^{-1}(W)$.  Since $U_i$ is simply connected,  we have
			
			$$\deg ((\pi\circ p)|_{\hat{W}})\leq \sum_{x\in R\cap \hat{W}} e_x.$$
			For the proof, see the last paragraph in page 750 of the proof of \cite[Lemma 3.2]{dujardin2003laminar}.
			
			Since $U_i$, $1\leq i\leq l$ are disjoint, we have 
			\begin{equation*}
				\sum_{i=1}^l b(\Gamma,U_i)=	\sum_{i=1}^l \sum_W  m(W, U_i)\leq \sum_{x\in R} e_x \leq 4\vol(\Gamma)+4g-4,
			\end{equation*}
			where, in the second term, the sum is taken over all irreducible components $W$ of $\Gamma\cap\pi^{-1}(U_i)$. This completes the proof.
		\end{proof}
		\medskip

		\begin{defi}[Affine coordinate]\label{defi:affine}
			An affine coordinate is the following data:
			
			\begin{points}
				\item $H$ is a hyperplane of $\P^N(\C)$;
				\item $I_1,\dots, I_N$ are  $N$  linear subspaces  in $H$  of dimension $N-2$, which are in general positions.
				
				\item For each $i$, $\pi_i: \P^N(\C)\setminus I_i\to \P^1(\C)$ is a projection as in Subsection \ref{subsection:3.1} such that $\pi_i^{-1}(\infty)\in H$. 
			\end{points} 
		\end{defi}
		
		In the above definition,  the map 
		$$(\pi_1,\dots,\pi_N):\P^N(\C)\setminus H \to \C^N$$
		is an isomorphism, which gives an affine coordinate via those $\pi_i$.  In particular, for each $z\in \C$, the fiber  $\pi_i^{-1}(z)$ is isomorphic to $\C^{N-1}$.
		
		\medskip

		Let $r>0$. For each $1\leq i\leq N$, let $\sQ_i$ be a subdivision of $\C$ into {\em affine cubes} of size $r$ (with respect to the Euclidean metric $\omega_\C$).  Here by an affine cube we mean an open affine cube, and by subdivision we mean that the union of affine cubes form an open dense subset of $\C$, and two different affine cubes have empty intersection.  An affine cube in $\C^N$ is by definition the set $\cap_{1\leq i\leq N}\; \pi_i^{-1}(Q_i)$, where $Q_i$ is an affine cube in $\C$ subordinate to the subdivision $\sQ_i$. Let $\sQ$ be the subdivion of $\C^N$ given by the above affine cubes in $\C^N$. Let $\Omega_\sQ$ be the union of all affine cubes in $\C^N$  subordinate to the subdivision $\sQ$, whcih is an open and dense subset of $\C^N$.  Let $\partial \sQ:=\C^N\setminus \Omega_\sQ$ be the boundary of $\sQ$. 
		
		
		
		\medskip
		
		Let $B$ be an open  ball of $\C^N$. We fix the ball $B$, an affine coordinate, subdivisions $\sQ_i$ of $\C$ of size $r$ and   subdivision $\sQ$ of $\C^n$ of size $r$. 
		
		\medskip
		
		The following lemma is crucial. 
		\begin{lem}\label{lem: estimate}
			Let  $(V_n)_{n\geq 1}$ be a sequence of  effective algebraic cycles of pure dimension one in $\P^N$ satisfying the genus-volume relation.  Assume  that $S_n:=[V_n]/\vol (V_n)\to S$  in the sense of currents.   Assume that for each $i$ and $n$, the projection center $I_i\cap V_n=\emptyset$ and $S$ puts no mass on $\partial \sQ$, i.e.
			$$(S\wedge \omega_{\P^N})(\partial\sQ)=0.$$
			Then there is a uniformly woven current $S_{\sQ}$ on $\Omega_{\sQ}$  such that  $S_{\sQ}\leq S$ and the mass of the measure $(S-S_{\sQ})\wedge \omega_{\P^N}$ on $B$ satisfies
			\begin{equation}\label{eqn: estimate1}
				\Ma_B((S-S_{\sQ})\wedge \omega_{\P^N})\leq Cr^2,
			\end{equation}
			where $C$ is a constant  independent of $\sQ$. 
			
			
			%
		\end{lem}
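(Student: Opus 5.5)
This follows Dujardin's strategy for limits of rational curves (\cite{dujardin2003laminar}, \cite{Dujardin2012}), with Lemma \ref{bad components} replacing the rational-curve bound. For each cube $Q=\cap_i\pi_i^{-1}(Q_i)$ of $\sQ$ meeting $B$, and each irreducible component $\Gamma$ of $V_n$, call a component of $\Gamma\cap Q$ \emph{good} if, for every $i$, it lies inside a good component of $\Gamma\cap\pi_i^{-1}(Q_i)$, i.e.\ a graph over $Q_i$ in the $i$-th projection. Let $V_n^{\sQ}$ be the sum, with multiplicities, of these good pieces over all cubes. Set $S_n^{\sQ}:=[V_n^{\sQ}]/\vol(V_n)\le S_n$ on $\Omega_\sQ$.

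\emph{Step 1: counting bad pieces.} For fixed $i$, the squares $Q_i\in\sQ_i$ are disjoint and simply connected. Lemma \ref{bad components}, applied to each component of $V_n$ and summed, gives
$$\sum_{Q_i} b(V_n,Q_i)\le 4\vol(V_n)+4\genus(V_n)\le C'\vol(V_n)$$
by the genus-volume relation. Only $O(r^{-2})$ squares $Q_i$ meet $\pi_i(B)$. A bad piece above $Q_i$ inside a cube has mass controlled by $\omega_{\C}$-area, so on each cube $Q\cap B$ its contribution should be $O(r^2)$ per unit multiplicity. I will make this precise by bounding the mass of $[\Gamma\cap\pi_i^{-1}(Q_i)\cap B]$ through $\pi_i^*\omega_\C$. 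The delicate point is that mass against $\omega_{\P^N}$ must be compared with $\sum_i \pi_i^*\omega_{\C}$ on $B$; on a bounded ball these are uniformly comparable. After normalizing by $\vol(V_n)$, the total normalized mass of bad pieces in $B$ is at most $C'' r^2$, with $C''$ depending only on the genus-volume constant, the affine coordinate and $B$, not on $\sQ$.

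\emph{Step 2: uniform wovenness in the limit.} Good pieces in a cube $Q$ are graphs over every coordinate square $Q_i$. Their mass is therefore uniformly bounded: a graph over $Q_j$ in each projection has area $\lesssim\sum_j \mathrm{area}(Q_j)\lesssim r^2$. Hence they belong to a bounded volume space $\sV_1$ on $\Omega_\sQ$. Write
$$S_n^{\sQ}=\int [V_a]\,d\nu_n(a),$$
where $\nu_n$ is the normalized counting measure on good pieces. Its total mass is bounded: the number of good pieces per cube is at most $\deg\pi_i|_{V_n}=\vol(V_n)$, and only finitely many cubes meet a neighborhood of $\overline B$. I will restrict to cubes meeting $B$, and if needed to a slightly larger compact region, using an exhaustion. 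By Theorem \ref{thm: Bishop} and weak-* compactness, after a subsequence $\nu_n\to\nu$. The limits of good graphs are again graphs, or zero when the graph escapes $\overline Q$. Set
$$S_\sQ:=\int [V_a]\,d\nu(a).$$
Weak continuity of $V\mapsto\langle[V],\Phi\rangle$ for test forms $\Phi$ compactly supported in $\Omega_\sQ$ gives $S_n^\sQ\to S_\sQ$ on $\Omega_\sQ$, and $S_\sQ\le S$ follows from $S_n^\sQ\le S_n$.

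\emph{Step 3: the estimate.} On $B$ we have $S_n-S_n^\sQ=(\text{bad part})+(\text{part on }\partial\sQ)$. The second term is negligible in the limit because $S$ puts no mass on $\partial\sQ$. Concretely, take cutoffs $\chi_\epsilon$ equal to $1$ outside the $\epsilon$-neighborhood of $\partial\sQ$. Then
$$\Ma_B\bigl((S-S_\sQ)\wedge\omega\bigr)\le\liminf_n \Ma\bigl(\chi_\epsilon(S_n-S_n^\sQ)\wedge\omega\bigr)+\Ma_B(S)\bigl(\epsilon\text{-nbhd of }\partial\sQ\bigr),$$
and letting $\epsilon\to0$ yields \eqref{eqn: estimate1} with $C=C''$. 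I expect the main obstacle to be Step 1: the bound on the mass of bad pieces must use that each bad component contributes to a single simply connected $Q_i$, with mass $O(r^2)$, and not merely to the count $b$.
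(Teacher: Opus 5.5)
Your proposal has a genuine gap in Step 1, and it comes from your definition of a good piece: you require it to be good for \emph{every} $i$. Lemma \ref{bad components} only controls the $\pi_i^*\omega_\C$-mass of components that are bad for $\pi_i$; a bad component $W$ over $Q_i$ carries exactly $m(W,Q_i)r^2$ of that mass. Your bad part, however, consists of pieces that are bad for \emph{some} $i$. To compare with $\omega_{\P^N}\sim\sum_k\pi_k^*\omega_\C$ on $B$ you therefore also need their $\pi_k^*\omega_\C$-mass for $k\neq i$, and the count $b$ does not see this.

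This cross term is genuinely large. Near a simple branch point of $\pi_i$ that is not a branch point of $\pi_k$, the curve is locally $z_i-c=a(z_k-e)^2+\cdots$. The bad component over the square $Q_i$ of side $r$ around $c$ is then essentially $\{z_k:(z_k-e)^2\in (Q_i-c)/a\}$. Its $\pi_k^*\omega_\C$-mass is $\asymp r$, spread over $\asymp r^{-1}$ squares $Q_k$. All of it is declared bad by your convention, even though this part of the curve is a graph in the $z_k$-coordinate. Genus-$0$ examples already exhibit this, for instance sums of many translates of a fixed conic whose $\pi_i$-branch points fill $B$. There your normalized bad mass is $\asymp r$, not $O(r^2)$. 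So the claim that a bad piece contributes $O(r^2)$ per unit multiplicity is false for $\omega_{\P^N}$-mass. An $O(r)$ bound is useless downstream, since in Theorem \ref{thm: strongly approximable} it gets multiplied by $\|\psi\|_{C^2}\lesssim r^{-2}$.

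The paper uses the opposite convention. A piece of $V_n$ in a cube is kept if it has volume $\le 1$ and lies in a good component for \emph{at least one} $i$. Then $S_{n,\sQ}\ge S_{n,\sQ_i}$ on $B'=B\setminus\partial\sQ$ for every $i$, so $(S_n-S_{n,\sQ})\wedge\pi_i^*\omega_\C\le(S_n-S_{n,\sQ_i})\wedge\pi_i^*\omega_\C$. The right-hand side has mass at most $r^2$ times the sum of $b(V_n,\sQ_i)$ and the number of good graphs of volume $\ge 1$; there are at most $\vol(V_n)$ of the latter, each carrying $\pi_i^*\omega_\C$-mass $r^2$. After normalization this is $\le C_2r^2$. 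Each directional mass is thus controlled separately, and one sums over $i$ using comparability with $\omega_{\P^N}$ on $B$.

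The price of this convention is that good pieces no longer have the automatic $O(r^2)$ volume bound you use in Step 2. That is why the paper truncates at volume $\le 1$ and counts the discarded large graphs as above. With this change of definition, your Steps 2 and 3 go through essentially as you wrote them.
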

		Let $T$ be a positive closed current on an open subset  $\Omega\subset \P^N(\C)$. The zero extension of $T$ is a positive current on $\P^N(\C)$. By abuse of notation, we also denote this extension by $T$. In general $T$ is not closed.  So in Lemma \ref{lem: estimate}, $S_\sQ$  can be seen as  a positive but not necessarily closed current on $\P^N(\C)$.

		For any open subset $\Omega\subset \P^N(\C)$,  we denote by $\sV_{\Omega}$  the space of all effective analytic cycles of pure dimension one with volume bounded by $1$ from above. 
		\proof[Proof of Lemma \ref{lem: estimate}]

		Consider one projection $\pi_i: \P^N(\C)\setminus I_i\to \P^1(\C)$.  Let $Q\in \sQ_i$ be an affine cube. For each $n\geq 1$, let $V_n=\sum_{j=1}^l V_{n,j}$ be the decomposition of $V_n$ into irreducible components. Here we allow $V_{n,j}$  to be repeated.  Let $b(V_{n,j},Q)$  be the number of bad components of $\pi_i: V_{n,j}\to \P^1(\C)$ over $Q$, counted with multiplicity.  Let $b(V_n,Q):=\sum_{j=1}^l b(V_{n,j},Q)$, and let $b(V_n,\sQ_i):=\sum_{Q\in \sQ_i} b(V_n,Q)$.  Since $(V_n)_{n\geq 1}$ satisfy  the genus-volume relation, by Lemma \ref{bad components}, there is a constant $C_1>0$ independent of $n$ and $\sQ$ such that  
		\begin{equation}\label{3.2}
			b(V_n,\sQ_i)\leq C_1\vol (V_n).
		\end{equation}

		Next we shall construct the uniformly woven current $S_{\sQ}$  using good components of $\pi_i:V_n\to \P^1(\C)$, $1\leq i\leq N$.   Let $V_{n,\sQ_i}$ be the sum of all good components $\Gamma$ with respect to $\pi_i$ with volume bounded by $1$ from above; moreover $\Gamma\cap \overline{B}\neq \emptyset$.  There exists a compact subset $K\subset \C$ independent of $n$ such that $\pi_i(\Gamma)\subset K$ for every such $\Gamma$.

		Let $\Omega_i:=\cup_{Q\in \sQ_i} \pi_i^{-1}(Q)$. Let $S_{n,\sQ_i}$ be the closed current on $\Omega_i$,
		$$S_{n,\sQ_i}:=\frac{1}{\vol(V_{n})} [V_{n,\sQ_i}].$$  
		
		We can write $S_{n,\sQ_i}$ as a uniformly woven current
		$$S_{n,\sQ_i}=\int_{\sV_{\Omega_i}} [\Gamma_a] \; d\nu_{n,i}(a),$$
		and each $\Gamma_a$ is a graph with respect to $\pi_i$ over an  affine cube $Q\subset \C$, $\nu_{n,i}$ is a finite average of Dirac masses on $\sV_{\Omega_i}$. The condition $\Gamma\cap \overline{B}\neq \emptyset$ implies that the volume of $\Gamma_a$ is bounded uniformly from below by a constant independent of $n$, since such $\Gamma$ is a graph over a cube $Q\subset K$.   Thus the mass of $\nu_{n,i}$  is uniformly bounded from above by a constant independent of $n$.  
		It is clear that $S_{n,\sQ_i}\leq S_n$ on $\Omega_i$.

		Let $V_{n,\sQ}$ be the sum of all  irreducible component $\Gamma$ of $V_n|_{\Omega_\sQ}$, such that $\Gamma$ has volume bounded by $1$, and there exists at least  $1\leq i\leq N$  such that $\Gamma$ is contained in a component of $V_{n,\sQ_i}$.

		Let  $S_{n,\sQ}$ be the closed  current on $\Omega_\sQ$,
		$$S_{n,\sQ}:=\frac{1}{\vol(V_{n})} [V_{n,\sQ}].$$
		Define  $B':=B\setminus \partial \sQ$, which is an open dense subset of $B$ .  It is  clear that for each $1\leq i\leq N$,
		$$S_{n,\sQ}\leq S_n \;\text{on} \;B, \;\text{and} \;S_{n,\sQ}\geq S_{n,\sQ_i} \;\text{on} \;B'.$$
		
		The total number of  good components $\Gamma$ of  $\pi_i:V_n\to \P^1(\C)$ over  some $Q\in \sQ_i$ such that $\vol(\Gamma)\geq 1$ is at most  $\vol(V_n)$.  Together with (\ref{3.2}), this implies that  there is a constant $C_2>0$ independent of $n$ and $\sQ$ such that 
		\begin{equation*}
			\Ma_B ((S_n-S_{n,\sQ_i})\wedge \pi_i^*(\omega_\C))\leq C_2r^2.
		\end{equation*}
		Since $S_{n,\sQ}\geq S_{n,\sQ_i}$ on $B'$ and $S_{n,\sQ}\leq S_n$ on $B$, we have
		\begin{equation*}
			\Ma_{B'} ((S_n-S_{n,\sQ})\wedge \pi_i^*(\omega_\C))\leq C_2r^2.
		\end{equation*}
		Since $\sum_{i=1}^N\pi_i^*(\omega_\C)$ restricted on $B$ is equivalent to $\omega_{\P^N}$ restricted on $B$,  there is a constant $C_3>0$ independent of $n$ and $\sQ$ such that 
		\begin{equation}\label{eqn: 3.4}
			\Ma_{B'} \left(\left(S_n-S_{n,\sQ}\right)\wedge \omega_{\P^N} \right)\leq C_3r^2.
		\end{equation}
		
		\medskip
		
		Write $$S_{n,\sQ}=\int_{\sV_{\Omega_\sQ}} [\Gamma_a] \; d\nu_{n}(a),$$
		where $\nu_{n}$ is a finite  measure on $\sV_{\Omega_\sQ}$ with mass $\Ma(\nu_n)\leq \sum_{i=1}^N \Ma(\nu_{n,i})$, which is uniformly bounded from above by a constant independent of $n$. 
		
		The space $\sV_{\Omega_\sQ}$ is a compact Hausdorff  space by Theorem \ref{thm: Bishop}.   Since the  masses of all $\nu_{n}$ are bounded by $N$,  by passing to a subsequence we may assume $\nu_{n}\to \nu$ in the weak-* sense, where $\nu$ is a finite  measure on $\sV_{\Omega_\sQ}$.  Let 
		$$S_\sQ:=\int_{\sV_{\Omega_\sQ}} [\Gamma_a] \; d\nu(a).$$
		
		

		By Proposition \ref{prop: limit}, we have $S_{n,\sQ}\to S_\sQ$ in the sense of currents. Since $S_{n,\sQ}\leq S_n$ and $S_n\to S$ in the sense of currents, we have $S_\sQ\leq S$.  By (\ref{eqn: 3.4}) we  have 
		
		\begin{equation*}
			\Ma_{B'} \left(\left(S-S_{\sQ}\right)\wedge \omega_{\P^N} \right)\leq C_3r^2.
		\end{equation*}
		
		Since $0\leq S-S_{\sQ}\leq S$ and $S\wedge \omega_{\P^N}$ gives no mass to $B\setminus B'$, we have 
		\begin{equation*}
			\Ma_{B} \left(\left(S-S_{\sQ}\right)\wedge \omega_{\P^N} \right)\leq C_3r^2.
		\end{equation*}
		This implies (\ref{eqn: estimate1}), which finishes the proof.
		\qed
		

		\medskip

		\medskip
		Let $\sQ_0$ be a fixed subdivision of $\C^N$ into affine cubes of size $r$.  Let $\tilde{Q}\in \sQ_0$ be one of the affine cubes.    Let $x\in \tilde{Q}$, let $\sQ_x:= \sQ_0+x$ be the translation of $\sQ_0$, which is also a subdivision of $\C^N$ of size $r$.  Let $0<\la<1$. If $Q$ is an affine cube of size $r$, we let $\la Q$ be the cube with the same center as $Q$, homothetic to it by the factor $\la$.  Let $m$ be the Lebesgue measure on $\C^N$.
		
		\begin{lem}\label{lem: translation}
			Let $\mu$ be a probability measure on $\C^N$ and $0<\la<1$, $0<\delta<1$.  Then there exists a Borel measurable subset $A\subset \tilde{Q}$ such that $m(A)\geq (1-\delta)  m(\tilde{Q})$,  and  for $x\in A$, 
			$$\mu\left(\C^N\setminus \cup_{Q\in\sQ_x}  \la Q \right)\leq 1-\delta \la^{2N},$$
			moreover $\mu$ put no mass on $\partial \sQ_x$.
		\end{lem}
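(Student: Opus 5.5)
Averaging over translations: I will average the quantity $\mu(\bigcup_{Q\in\sQ_x}\la Q)$ over $x\in\tilde Q$ with respect to normalized Lebesgue measure, and then use a Markov-type inequality to extract the good set $A$. The no-mass condition on $\partial\sQ_x$ is handled separately by a countability argument.

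\emph{Step 1: the averaging identity.} Define $F(x):=\mu\big(\bigcup_{Q\in\sQ_x}\la Q\big)=\int \mathbf 1_{E_x}(y)\,d\mu(y)$, where $E_x=\bigcup_{Q\in\sQ_x}\la Q$. The set $\{(x,y): y\in E_x\}$ is open, so $F$ is Borel measurable and Fubini applies. For fixed $y$, the condition $y\in E_x$ says that $y-x$ lies in $\bigcup_{Q\in\sQ_0}\la Q$. The union $\bigcup_{Q\in\sQ_0}\la Q$ is periodic with respect to the lattice generated by $\sQ_0$, and $\tilde Q$ is a fundamental domain for that lattice. Hence, as $x$ ranges over $\tilde Q$, the proportion of $x$ for which $y\in E_x$ equals the volume fraction of $\la\tilde Q$ in $\tilde Q$. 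Since $\tilde Q$ is a real $2N$-dimensional cube, this fraction is exactly $\la^{2N}$. Therefore
\[
\frac{1}{m(\tilde Q)}\int_{\tilde Q}F(x)\,dm(x)=\la^{2N}\mu(\C^N)=\la^{2N}.
\]

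\emph{Step 2: Markov-type extraction.} Let $G(x):=1-F(x)=\mu\big(\C^N\setminus E_x\big)\in[0,1]$. By Step 1 its average over $\tilde Q$ is $1-\la^{2N}$. We want the bad set $\{x: G(x)>1-\delta\la^{2N}\}$ to have proportion at most $\delta$. Applying Markov's inequality to $G$ directly gives only the bound $(1-\la^{2N})/(1-\delta\la^{2N})$, which is too weak. Instead apply Markov to $F=1-G$, using $F\le 1$: if $F\ge \delta\la^{2N}$ fails on a set of proportion $p$, then
\[
\la^{2N}=\mathrm{avg}(F)\le (1-p)\cdot 1+p\,\delta\la^{2N},
\]
so $p(1-\delta\la^{2N})\le 1-\la^{2N}$. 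This bound is not $\le\delta$ in general. The resolution, which I expect to be the main obstacle, is to rethink the target: the inequality $G(x)\le 1-\delta\la^{2N}$ is equivalent to $F(x)\ge\delta\la^{2N}$. Apply the reverse Markov bound to $F\in[0,1]$ with mean $\la^{2N}$. The set where $F<\delta\la^{2N}$ has proportion $p$ satisfying $\la^{2N}\le p\,\delta\la^{2N}+(1-p)$, i.e. $p\le(1-\la^{2N})/(1-\delta\la^{2N})$. This is at most $1-\la^{2N}+\dots$, still not $\delta$. So the lemma as literally stated likely needs either the bound $1-\delta\la^{2N}$ to be read as tolerating a worse proportion, or a cleaner argument. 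The cleanest route I would try is to define $A:=\{x\in\tilde Q: F(x)\ge\delta\la^{2N}\}$ and show $m(\tilde Q\setminus A)\le\delta\, m(\tilde Q)$. Failing that in general, I would adjust constants: replace the threshold by $(1-\delta)\la^{2N}$ style bounds, or use $\mu(E_x)\ge \la^{2N}-\epsilon$ on most $x$ only via concentration. I will check carefully which threshold the Fubini identity actually supports and then carry that through, since the later application only needs some uniform positive lower bound on $F$ on a set of large measure.

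\emph{Step 3: the boundary condition.} For each fixed $y$, the set of $x$ with $y\in\partial\sQ_x$ is a finite union of real hyperplane pieces in $\tilde Q$, hence Lebesgue-null. By Fubini, $\int_{\tilde Q}\mu(\partial\sQ_x)\,dm(x)=0$. Therefore $\mu(\partial\sQ_x)=0$ for $m$-a.e. $x\in\tilde Q$. Removing this null set from $A$ does not change its measure, and this completes the construction of $A$.
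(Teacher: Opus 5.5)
Your Steps 1 and 3 are the paper's argument: Fubini on $m\otimes\mu$ gives $\int_{\tilde Q}\mu(\bigcup_{Q\in\sQ_x}\la Q)\,dm(x)=\la^{2N}m(\tilde Q)$, and $\mu(\partial\sQ_x)=0$ for $m$-a.e.\ $x$. The gap is Step 2. You never obtain $m(A)\geq(1-\delta)m(\tilde Q)$, and the proposal ends by hedging between several possible targets.

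This gap cannot be closed, because your suspicion is correct: the lemma is false as stated. Take $\mu=\delta_y$. Then $F(x)=\mu(\bigcup_{Q\in\sQ_x}\la Q)$ takes only the values $0$ and $1$. Your Step 1 identity gives $m(\{F=1\})=\la^{2N}m(\tilde Q)$. Since $1-\delta\la^{2N}<1$, the required inequality $\mu(\C^N\setminus\bigcup_{Q\in\sQ_x}\la Q)\le 1-\delta\la^{2N}$ forces $F(x)=1$ for every $x\in A$. Hence $m(A)\le\la^{2N}m(\tilde Q)$, which is $<(1-\delta)m(\tilde Q)$ as soon as $\la^{2N}<1-\delta$ (for instance $N=1$, $\la=\delta=1/2$).

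The paper's proof defines $A'=\{F\ge\delta\la^{2N}\}$ and asserts $m(A')\ge(1-\delta)m(\tilde Q)$ directly from the integral identity. That is exactly the Markov step you found unjustified. The same example rules out your \emph{cleanest route}, i.e.\ proving $m(\tilde Q\setminus A)\le\delta\,m(\tilde Q)$ for this $A$.

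What the argument does prove, and what you should commit to, is your own reverse-Markov bound. Since $0\le F\le 1$ with average $\la^{2N}$,
\[
\frac{m(\{x\in\tilde Q:\,F(x)\ge\delta\la^{2N}\})}{m(\tilde Q)}\;\ge\;\frac{(1-\delta)\la^{2N}}{1-\delta\la^{2N}}\;\ge\;(1-\delta)\la^{2N}\;>\;0 .
\]
Removing the null set from Step 3 then gives a valid $A$ of positive measure.

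This weaker statement is enough for the only use of the lemma, in Theorem \ref{thm: strongly approximable}. There one needs, for each $r$, just one translate $x$ with $\mu(\partial\sQ_x)=0$ and $\mu(\C^N\setminus\bigcup_{Q\in\sQ_x}\la Q)$ small independently of $r$. In that setting $\mu$ is a finite measure and $F$ has average $\la^{2N}\mu(\C^N)$. So the set $\{F\ge\la^{2N}\mu(\C^N)\}$ has positive measure. Any $x$ in it outside the Step 3 null set gives $\mu(\C^N\setminus\bigcup_{Q\in\sQ_x}\la Q)\le(1-\la^{2N})\mu(\C^N)$, which is $<\ep$ once $\la$ is close to $1$.

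So state and prove this corrected lemma rather than the literal one.
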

		\begin{proof}
			Consider the product space $ \C^N \times \C^N$, and the product meassure $m \tensor \mu$ on it.  Let $W\subset \C^N \times \C^N$ be the subset containing points $(x,y)$ such that $x\in \tilde{Q}$ and $y\in \cup_{Q\in \sQ_x} \la Q$.  By Fubini's theorem
			$$\int_{\tilde{Q}} \mu\left(\cup_{Q\in\sQ_x}\la Q\right)\;dm(x)=\int_{\C^N \times \C^N} \ch_W \; d(m \tensor \mu)(x,y).$$
			The slicing of $W$ with respect to $y\in \C^N$ gives the set $y+\la \tilde{Q}$. Again by Fubini's theorem
			$$\int_{\C^N \times \C^N} \ch_W \; d(m \tensor \mu)(x,y) = \int_{\C^N} m(y+\la \tilde{Q})\;d\mu(y)=\la^{2N}m(\tilde{Q}).$$
			
			Let $A'\subset \tilde{Q}$ be the subset containing points $x$ such that $\mu\left(\cup_{Q\in\sQ_x}\la Q\right)\geq \delta \la^{2N}$. Since $\int_{\tilde{Q}} \mu\left(\cup_{Q\in\sQ_x}\la Q\right)\;dm(x)=\la^{2N}m(\tilde{Q})$,  we have  $m(A')\geq (1-\delta) m(\tilde{Q})$. 
			
			Let $A''\subset \tilde{Q}$ be the subset containing points $x$ such that $\mu\left(\cup_{Q\in\sQ_x} Q\right)=1$. Since $\int_{\tilde{Q}} \mu\left(\cup_{Q\in\sQ_x} Q\right)\;dm(x)=m(\tilde{Q})$ and $\mu\left(\cup_{Q\in\sQ_x} Q\right)\leq 1$, we have $m(A'')= m(\tilde{Q})$.  For $x\in A''$, $\mu$ put no mass on $\partial \sQ_x$.
			
			The set $A:=A'\cap A''$ is what we want. 
		\end{proof}
		\medskip

		Let $X$ be a complex quasi-projective variety, embedded in $\P^N(\C)$. Let $\omega_X$ be the restriction of  the Fubini-Study form $\omega_{\P^N}$ on  $\P^N(\C)$ to $X$. The volumes and diameters on $X$ are computed with respect to $\omega_X$.    Let $B$ be a ball inside an affine coordinate $\C^N$ such that $B_X:=B\cap X$ is compactly contained in $X$.

		The following is the main result of this section.

		\begin{thm}\label{thm: strongly approximable}
			Let  $(V_n)_{n\geq 1}$ be a sequence of  effective algebraic cycles of pure dimension one in $X$ satisfying the genus-volume relation. Assume that $[V_n]/\vol (V_n)\to S$  in the sense of currents. Let $R$ be a  positive closed $(1,1)$-current on $X$ with continuous local  potential. Then there exists an affine coordinate (Definition \ref{defi:affine}) such that for each $r>0$, there exists a subdivision $\sQ$ of $\C^N$ of size $r$,  a uniformly woven current $S_{r}$ on $\Omega_{\sQ}$ such that  $S_{r}\leq S$ and 
			\begin{equation}\label{eqn: sa2}
				\Ma_{B_X}((S-S_{r})\wedge R)\to 0
			\end{equation}
			when $r\to 0$.  
			
			
			%
		\end{thm}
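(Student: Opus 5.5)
Since $X\subset\P^N(\C)$ and each $V_n$ is a cycle in $X$, I would view the $V_n$ as cycles in $\P^N$ and apply Lemma \ref{lem: estimate} there. The limit $S$ is supported on $\overline{X}$, and near $B_X$ it is a positive closed current on $X$.

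First I fix the affine coordinate. Each $V_n$ has countably many irreducible components, so for generic choices of $H$ and the centers $I_1,\dots,I_N\subset H$ we get $I_i\cap V_n=\emptyset$ for all $i,n$. The union of these $V_n$ meets a generic $(N-2)$-plane only in a meagre/null set of choices, since each curve avoids a generic codimension-$2$ linear subspace. I also take $H$ disjoint from $\overline{B}$, so that $B$ lies in the affine chart $\C^N$.

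Next I choose the subdivisions. Fix $r>0$ and a reference subdivision $\sQ_0$. Apply Lemma \ref{lem: translation} to the normalized trace measure $\mu:=(S\wedge\omega_{\P^N})|_{\overline B}/\Ma$ (or simply use its second conclusion). This gives translates $\sQ=\sQ_x$ such that $S\wedge\omega_{\P^N}$ puts no mass on $\partial\sQ$. Lemma \ref{lem: estimate} then yields a uniformly woven current $S_r:=S_{\sQ}$ on $\Omega_\sQ$ with $S_r\le S$ and
$$\Ma_B((S-S_r)\wedge\omega_{\P^N})\le Cr^2,$$
with $C$ independent of $r$. Here uniform wovenness on $\Omega_\sQ$ is understood relative to $X$: the curves $\Gamma_a$ are pieces of the $V_n$, hence lie in $X$. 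Their limits in $\sV_{\Omega_\sQ}$ therefore lie in $\overline X$, and by Theorem \ref{thm: Bishop} they are analytic cycles in $X\cap\Omega_\sQ$ near $B_X$.

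The main obstacle is to pass from the trace mass estimate to the estimate against $R$, whose potential is only continuous. The difference $S-S_r$ is positive but not closed: it has boundary on $\partial\sQ$, so Chern--Levine--Nirenberg does not apply directly. I would argue locally. On a slightly larger ball $B''\Supset \overline{B}$ with $B''\cap X\Subset X$, I regularize: write $R=dd^cu$ with $u$ continuous, and choose smooth psh approximations $u_\ep$ with $\|u-u_\ep\|_{\infty}\le\ep$ on $B''$. This is possible after extending $u$ via Proposition \ref{prop:extendpsh} and its continuous analogue, followed by convolution on the ambient chart (Richberg for continuous psh functions). Then $R_\ep=dd^cu_\ep\le C_\ep\,\omega_{\P^N}$, so
$$\Ma_{B_X}((S-S_r)\wedge R_\ep)\le C_\ep C r^2.$$

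It remains to control the error term $(S-S_r)\wedge(R-R_\ep)$ uniformly in $r$. For this I use that $S_r$ is a sum over cubes $Q$ of closed woven currents $S_r|_{Q}$, each built from graphs over the full cube. On each cube I apply a cutoff $\chi_Q$ that equals $1$ on $\lambda Q$. Integration by parts gives
$$\int (S|_Q\text{ or } S_r|_Q)\wedge dd^c(u-u_\ep)\,\chi_Q=\int (u-u_\ep)\,S\wedge dd^c\chi_Q,$$
bounded by $\ep\, r^{-2}$ times the mass on $Q$. Choosing $\ep=\ep(r)$ with $\ep(r)r^{-2}\to0$ controls this on $\cup_Q\lambda Q$. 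The leftover region $\cup_Q(Q\setminus\lambda Q)$ is handled by Lemma \ref{lem: translation} with $\lambda\to1$, which keeps the $S\wedge R$ mass there small. To make this last step work I also treat $S\wedge R$ as a measure, use that it puts no mass on $\partial\sQ$, and pick the translation $x$ so that the trace measure of $S\wedge R$ is also small on the collar. Choosing $\lambda=\lambda(r)\to1$ and then $\ep(r)\to0$ gives $\Ma_{B_X}((S-S_r)\wedge R)\to0$.
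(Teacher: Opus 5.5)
Your architecture matches the paper's. You take $H$ disjoint from $\overline B$ with centers $I_i\subset H$ avoiding the countably many components of the $V_n$, and choose a translate $\sQ$ via Lemma \ref{lem: translation}. You take $S_r:=S_{\sQ}$ from Lemma \ref{lem: estimate}, with $\Ma_B((S-S_r)\wedge\omega_{\P^N})\le Cr^2$. You use cutoffs equal to $1$ on $\lambda Q$, integrate by parts cube by cube, and handle the collar $\cup_Q(Q\setminus\lambda Q)$ by $\lambda\to1$.

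\textbf{The gap is in how you close the estimate.} You bound the main term by $\Ma_{B_X}((S-S_r)\wedge R_\ep)\le C_\ep Cr^2$. In the integration by parts, however, you treat $S|_Q$ and $S_r|_Q$ separately. The error is then only $\lesssim\ep r^{-2}$ times the $O(1)$ mass of $S$, which is why you need $\ep(r)r^{-2}\to0$. These two requirements are incompatible for a merely continuous potential. Suppose $\|u-u_\ep\|_\infty\le\ep$ and $dd^cu_\ep\le C_\ep\,\omega$. Comparing circle means of $u$ and $u_\ep$ at a point of $\supp dd^cu$ gives, already for the Lipschitz potential $u=|{\rm Re}\,z_1|$, the inequality $\rho\lesssim C_\ep\rho^2+\ep$ for all small $\rho$. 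Taking $\rho\approx\ep$ forces $C_\ep\gtrsim1/\ep$, and the bound is worse for H\"older or non-H\"older $u$. This matters in the application too: $R=\pi_1^*T_\bif$ has only H\"older potentials. So with $\ep(r)=o(r^2)$ your main-term bound is at least of order $r^2/\ep(r)\to+\infty$. Letting $\lambda=\lambda(r)\to1$ only makes it worse, since $\|\chi_Q\|_{C^2}\sim((1-\lambda)r)^{-2}$. As written, the argument does not give (\ref{eqn: sa2}).

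\textbf{The fix is to use the $O(r^2)$ defect inside the error term.} On each cube, $S-S_r$ is positive and closed, because each curve of the woven decomposition of $S_r$ is closed in $\Omega_\sQ$ and hence lies in a single cube. Therefore $\int\chi_Q(S-S_r)\wedge(R-R_\ep)=\int(u-u_\ep)(S-S_r)\wedge dd^c\chi_Q$. Summing over $Q$ gives an error $\lesssim\ep(1-\lambda)^{-2}r^{-2}\cdot r^2$, which is uniform in $r$. Fix $\lambda$ first, then $\ep$, then let $r\to0$.

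Once you do this, the global regularization is unnecessary, and what remains is exactly the paper's proof. On each cube the paper subtracts the constant $u(c_Q)$ instead of $u_\ep$ and obtains $\int_{B_X}\psi(S-S_r)\wedge R=\sum_Q\int(u-u(c_Q))(S-S_r)\wedge dd^c\psi\lesssim r^{-2}\,\omega(u,2r)\,r^2$. Here $\omega(u,\cdot)$ is the modulus of continuity of $u$ on $\overline{B_X}$, so no smoothing of psh functions on the possibly singular $X$ is needed.

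One point in your favour: you choose the translate so that both $S\wedge\omega_{\P^N}$ and $S\wedge R$ put no mass on $\partial\sQ$. That is correct, and Lemma \ref{lem: estimate} needs the former.
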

		\begin{proof}
			We choose the affine coordinate in Definition \ref{defi:affine} after $B$ has been fixed. More precisely, choose a hyperplane $H\subset \P^N(\C)$ such that $H\cap \overline{B}=\emptyset$ and no irreducible component of any $V_n$ is contained in $H$. Then choose $N$ linear subspaces $I_1,\dots,I_N\subset H$ in general position such that $I_i\cap V_n=\emptyset$ for every $i$ and every $n\geq 1$; this is possible by avoiding a countable union of proper algebraic subsets in the corresponding parameter spaces.  We choose $N$ projections $(\pi_i)_{1\leq i\leq N}$ associated to these $I_i$. In the following affine cubes in $\C^N$ are with respect to $(\pi_i)_{1\leq i\leq N}$. To show the theorem, it suffices to show for every $\ep>0$,  there exists $r_0=r_0(\ep)>0$ such that for $0<r<r_0$, there exists a subdivision $\sQ$ of $\C^N$ of size $r$ and currents $T_r, (T_i)_{1\leq i\leq N}$ satisfying conditions in Theorem \ref{thm: strongly approximable} such that $	\Ma_{B_X}((S-S_{r})\wedge R)<2\ep$. 
			
			Let $\mu_0$ be the restriction of the measure $S\wedge R$ on $B_X$, which is a finite measure. Let $\mu$ be the zero extension of  $\mu_0$ on $\C^N$.  By Lemma \ref{lem: translation},  for every $\ep>0$,  there exists  $0<\la=\la(\ep)<1$  sufficiently close to $1$ such that for every  $r>0$, there is a subdivision $\sQ$ of $\C^N$ of size $r$ such that:
			\begin{points}
				\item $\mu$ put no mass on $\partial \sQ$.;
				\item $\mu\left(\C^N\setminus \cup_{Q\in\sQ}  \la Q \right)<\ep$.
			\end{points}
			
			A crucial point here is that (ii) holds uniformly in $r$. Note that the subdivision $\sQ$ satisfies the conditions in Lemma \ref{lem: estimate}. Let $S_r$ be the current $S_\sQ$ in Lemma  \ref{lem: estimate}.  It is clear that $S_r\leq S$. It remains to show $	\Ma_{B_X}((S-S_{r})\wedge R)<2\ep$ when $0<r<r_0$, where $r_0=r_0(\ep)>0$ is a constant to be defined later.
			\medskip
			
			For each $r>0$, let $\sQ$ be the subdivision of $\C^N$ of size $r$ constructed above. The problem is local, so we can write $R=dd^c u$ on $B_X$, where $u$ is a continuous p.s.h. function on a neighborhood of $\overline{B_X}$.  Let $\psi$ be a smooth cut-off function on  $\cup_{Q\in\sQ}   Q$, such that $0\leq \psi_\sQ\leq 1$, $\psi=1$ on $\la Q$ for each $Q\in \sQ$, and we have $||\psi||_{C^2}\lesssim r^{-2}$.  Here $\lesssim $ means that $||\psi||_{C^2}\leq C r^{-2}$ for a constant independent of $r$. Hence we can write $dd^c \psi\wedge [X]=S^{+}-S^{-}$, where $S^{\pm}$ are positive closed currents such that $S^{\pm}\wedge [X]\lesssim r^{-2} \omega_{\P^N}\wedge [X]$ on $B$. On each affine cube $Q\in \sQ$,  let $c_Q\in Q$ be the center of $Q$. By Lemma \ref{lem: estimate} (\ref{eqn: estimate1}), we have 
			
			\begin{align*}
				\int_{B_X} \psi(S-S_r)\wedge R &=\int_{B_X} \psi(S-S_r)\wedge dd^c u
				\\ &=\int_{B_X}[u-u(c_Q)] (S-S_r)\wedge dd^c \psi\wedge [X]
				\\ &\lesssim r^{-2}\omega(u,2r)\Ma_B((S-S_r)\wedge \omega_{\P^N})
				\\  &\lesssim \omega(u,2r),
			\end{align*}
			where $$\omega(u,2r):=\sup_{z_1,z_2\in \overline{B_X},|z_1-z_2|\leq 2r}{|u(z_1)-u(z_2)|}$$ is the modulus of continuity of $u$ of size $2r$.  Since $u$ is continuous  on a neighborhood of $\overline{B_X}$, there is a constant $r_0=r_0(\ep)>0$ such that $ \int_{B_X} \psi(S-S_r)\wedge R<\ep$ when $0<r<r_0$.
			
			Since $\mu\left(\C^N\setminus \cup_{Q\in\sQ}  \la Q \right)<\ep$, we have 
			\begin{align*}
				\Ma_{B_X}((S-S_r)\wedge R)&\leq \Ma_{B_X\setminus \cup_{Q\in\sQ}\la Q} \;S\wedge R+ \int_{B_X} \psi(S-S_r)\wedge R \\&= \mu\left(\C^N\setminus \cup_{Q\in\sQ}  \la Q \right)+ \int_{B_X} \psi(S-S_r)\wedge R
				\\&< 2\ep,
			\end{align*}
			when $0<r<r_0$. This completes the proof.
		\end{proof}
		\medskip
		
		\subsection{Currents that are approximated by algebraic curves with constant gonality}
		In this subsection, we consider positive closed bidimension $(1,1)$ currents on a quasi-projective variety $X$ over $\C$  that can be approximated by  algebraic curves with constant gonality.  Our result is of independent interest in the theory of woven currents: we relax the bounded genus assumption to bounded gonality.  Our result is new even in dimension two.
		
		\medskip

		Let $X$ be a complex quasi-projective  variety.

		\begin{defi}
			Let $V=\sum_W n_W W$ be an effective algebraic cycle of pure dimension one in $X$. We define
			$$\gonality(V):=\max_W \gonality(W),$$
			where the maximum is taken over the irreducible components appearing in $V$.
		\end{defi}
		
		In our main applications $V$ is a Galois orbit, and then all components have the same gonality.
		
		\medskip
		
		Let $m\geq 1$.   Let $X^m$ be the $m$-th product space. The permutation group $S_m$ acts on $X^m$ by permuting different coordinates. Let $X^m/S_m$ be the quotient variety. Let  $\pi:V^m\to V^m/S_m$ be the canonical  projection and let  $p:X^m\to X$ be the projection onto the first  coordinate.
		
		The following Lemma is crucial.
		\begin{lem}\label{lem:lift}
			Let $V$ be an irreducible complex quasi-projective curve with $\gonality (V)=m$.  Let $p:V^m\to V$ be  the projection onto the first  coordinate. Then
			
			(1)  There exists an algebraic curve  $\Gamma$ in  $V^m/S_m$  which is birational to $\P^1$;
			
			(2) Let $W:=\pi^{-1}(\Gamma)$. Then $p$ is dominant when restricted to any irreducible component of $W$, and, as cycles on $V$,
			$$p_*[W]=(m-1)![V].$$

		\end{lem}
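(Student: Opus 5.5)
We use a gonal map to produce a rational curve in the symmetric product, then compute the pushforward of its preimage directly. Let $\hat V$ be the smooth projective model of $V$ and let $\phi:\hat V\dashrightarrow \P^1$ be a dominant map of degree $m=\gonality(V)$; it is a morphism on $\hat V$. Let $U\subset\P^1$ be the Zariski open dense set of $t$ over which two conditions hold: $\phi$ is unramified, and the fiber $\phi^{-1}(t)$ consists of $m$ distinct points all lying in $V$. Such a $U$ exists because only finitely many points of $\hat V$ fail to lie in $V$, and $\phi$ has finitely many branch values.

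For part (1), define $\sigma:U\to V^m/S_m$ by $t\mapsto \pi(x_1,\dots,x_m)$, where $\{x_1,\dots,x_m\}=\phi^{-1}(t)$. This is the classifying map of the fiber divisor $\phi^*(t)$, and it is algebraic: $t\mapsto \phi^*(t)$ is a morphism $\P^1\to \mathrm{Sym}^m\hat V=\hat V^m/S_m$, and over $U$ its image lies in $V^m/S_m$. The map $\sigma$ is injective, since distinct $t$ have disjoint fibers. Let $\Gamma$ be the Zariski closure of $\sigma(U)$ in $V^m/S_m$. Then $\sigma:U\to\Gamma$ is injective and dominant, hence generically of degree one, and therefore birational in characteristic $0$. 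So $\Gamma$ is birational to $\P^1$.

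For part (2), consider $W^\circ:=\pi^{-1}(\sigma(U))=\{(x_1,\dots,x_m): \phi(x_i)=t \text{ all } i,\ x_i \text{ distinct},\ t\in U\}$. Its closure $W=\pi^{-1}(\Gamma)$ agrees with $W^\circ$ up to finitely many points, which do not affect one-dimensional cycles. Since $\sigma(U)$ consists of points with trivial stabilizer (the coordinates are distinct), $\pi$ is étale of degree $m!$ over $\sigma(U)$. In the sense of Definition \ref{defi:pullbacksubvariety}, $\pi^*\Gamma=[W]$ with multiplicity $\eta=1$.

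Now fix $x\in V$ generic with $t=\phi(x)\in U$. The points of $W$ over $x$ under $p$ are the tuples $(x,x_2,\dots,x_m)$, where $(x_2,\dots,x_m)$ is an ordering of $\phi^{-1}(t)\setminus\{x\}$. There are exactly $(m-1)!$ of them, and each is a smooth point of $W$ at which $p$ is a local isomorphism. To see this, note that locally $W$ is parametrized by $x$ through the $m$ local inverse branches of $\phi$. Hence $p_*[W]=(m-1)![V]$.

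Dominance of $p$ on every irreducible component of $W$ follows because each component $W'$ of $W^\circ$ is a curve. Its projection to $V$ cannot be a point: fixing $x_1$ fixes $t$, and over a fixed $t$ there are only finitely many tuples. So $p|_{W'}$ is non-constant on an irreducible curve and therefore dominant onto $V$.

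The main point needing care is the algebraicity and birationality of $\sigma$, together with verifying that $W$ has no extra components over $\Gamma\setminus\sigma(U)$. The first is handled by the standard morphism $\P^1\to\mathrm{Sym}^m\hat V$. The second holds because any such components would be zero-dimensional, since $\Gamma\setminus\sigma(U)$ is finite and $\pi$ is finite.
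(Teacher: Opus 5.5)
Your proof is correct and follows the same route as the paper. A gonal map $\phi$ gives an injective classifying map $t\mapsto \phi^{-1}(t)$ into $V^m/S_m$ whose image closure $\Gamma$ is rational, and $p_*[W]=(m-1)![V]$ comes from counting the $(m-1)!$ orderings of $\phi^{-1}(\phi(x))\setminus\{x\}$ over a generic $x\in V$.

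You also supply details the paper leaves implicit: algebraicity via $\P^1\to \mathrm{Sym}^m\hat V$, the multiplicity $\eta=1$ so that $\pi^*\Gamma=[W]$, and dominance on each component. One small point to tighten: when $V$ is singular, also delete from $U$ the images under $\phi$ of the points of $\hat V$ lying over $\mathrm{Sing}(V)$. This keeps the coordinates of $\sigma(t)$ distinct points of $V$ and keeps $\sigma$ injective.
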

		\begin{proof}
			Since $\gonality (V)=m$, there exists a degree $m$ rational map $\phi: V\dashrightarrow \P^1$.  Let $U\subset \P^1$ be the Zariski open set containing all $t\in \P^1$ such that $\phi^{-1}(t)$ has $m$ points counted with multiplicity.  Let $\psi:\P^1\dashrightarrow V^m/S_m$ be the rational map given by the following: for $t\in U$, $\phi^{-1}(t)$ has $m$ points  counted with multiplicity,  define $\psi(t):=\left\{x_1,\dots, x_m\right\}\in V^m/S_m$ be the unordered list of elements in $\phi^{-1}(t)$.  It is clear that $\psi$ is injective on $U$, hence the Zariski closure $\Gamma:=\overline{\psi(U)}^{\zar}\subset V^m/S_m$ is birational to $\P^1$. 
			

			
			For the second assertion, observe that $W$ is invariant under the action of $S_m$. Let $x\in W$ be a generic point such that all coordinates of $x$ take different values.  Then $y\in W$ such that  $p(x)=p(y)$ if and only if $\pi(y)=\pi(x)$ and $p(x)=p(y)$, if and only if $y=\sigma(x)$ for $\sigma\in G$,   where $G$ is the subgroup of $S_m$ that  fix the first coordinate.  Thus a generic point of $V$ has exactly $|G|=(m-1)!$ preimages in $W$, counted in the cycle $[W]$. Hence $p_*[W]=(m-1)![V]$.
		\end{proof}
		
		\medskip
		Let $X$ be a complex  quasi-projective variety. We  embedd $X$, $X^m$ and $X^m/S_m$ into some $\P^N$ with $N$ large.  Volume of subvarieties  are  computed with respect to the Fubini-Study form on $\P^N(\C)$.
		
		\begin{thm}\label{thm:sa gonality}
			Let $V_n$ be a sequence of  effective algebraic cycles of pure dimension one in $X$ such that every irreducible component of $V_n$ has gonality $m$ for every $n\geq 1$.  Assume that $[V_n]/\vol (V_n)\to S$  in the sense of currents. Then there exists a positive closed bidimension $(1,1)$ current $S'$ on $X^m$ such that the following hold:  
			
			\begin{points}
				\item $p_*(S')=S$;
				
				\item Let  $R$ be a  positive closed $(1,1)$-current on $X^m$ with continuous local  potential and $B$ be  any  ball compactly contained in $X^m$.  Then for each $r>0$, there is a uniformly woven current $S'_{r}$ on an open subset $\Omega_r\subset X^m$, such that  $S'_{r}\leq S'$ and 
				\begin{equation*}
					\Ma_{B}((S'-S'_{r})\wedge R)\to 0
				\end{equation*}
				when $r\to 0$.  
			\end{points}
		\end{thm}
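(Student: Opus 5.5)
Each component of $V_n$ will be lifted, via Lemma \ref{lem:lift}, to a curve in $X^m$ whose image in $X^m/S_m$ is rational. The genus-zero approximation theory of Section \ref{subsection:3.1}, namely Lemma \ref{lem: estimate} and Theorem \ref{thm: strongly approximable}, is applied there, and the woven structure is carried back to $X^m$ using the pullback machinery for finite quotients (Proposition \ref{prop:continuouspullback}, Definition \ref{defi:pullbackwoven}).

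\emph{Step 1 (lifting).} Write $V_n=\sum_W n_W W$. For each component $W$, Lemma \ref{lem:lift} gives a curve $\Gamma_W\subset W^m/S_m\subset X^m/S_m$ birational to $\P^1$ and $\tilde W:=\pi^{-1}(\Gamma_W)\subset X^m$ with $p_*[\tilde W]=(m-1)![W]$. Set $\Gamma_n:=\sum n_W\Gamma_W$ and $\tilde V_n:=\pi^*\Gamma_n=\sum n_W\tilde W$, up to the multiplicities $\eta$ of Definition \ref{defi:pullbacksubvariety}, which are harmless constants. Then $p_*[\tilde V_n]=(m-1)![V_n]$.

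\emph{Step 2 (mass control).} We need $\vol(\tilde V_n)\lesssim\vol(V_n)$. With the product Fubini--Study form $\sum_j p_j^*\omega$ on $X^m$, the symmetry of $\tilde W$ gives
$$\vol(\tilde W)=m\int_{\tilde W}p_1^*\omega=m\,(m-1)!\,\vol(W).$$
Replacing this by the ambient $\P^N$ form only costs a constant comparison.

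We then set $S'$ to be a subsequential limit of $[\tilde V_n]/(m!\,\vol(V_n))$, and likewise $S''$ a limit of $[\Gamma_n]/\vol(V_n)$ on $X^m/S_m$, suitably normalized. Both masses are bounded: $\vol(\Gamma_n)\lesssim\vol(\tilde V_n)$ by the pushforward of a Kähler form from Lemma \ref{lem:volumeestimate} read in reverse, or simply by comparing the degree of $\pi$. Continuity of $p_*$ and the normalization then give (i): $p_*S'=S$, after fixing the constant $1/m$.

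\emph{Step 3 (woven approximation downstairs).} The cycles $\Gamma_n$ have all components of genus $0$, so they satisfy the genus-volume relation of Definition \ref{defi:genus-volume} trivially. Apply Theorem \ref{thm: strongly approximable} on $X^m/S_m$ to $S''$ with the current $T:=\pi_*R$. As in the proof of Proposition \ref{prop:continuouspullback}, $\pi_*u$ is a continuous potential, so $T$ has continuous local potential. This yields uniformly woven $S''_r=\int[V_a]\,d\mu_r(a)\le S''$ with $\Ma_{\pi(B)}((S''-S''_r)\wedge T)\to0$.

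\emph{Step 4 (lifting back).} Define $S'_r:=\pi^*_{\mu_r}S''_r$ via Definition \ref{defi:pullbackwoven}, on $\Omega_r:=\pi^{-1}(\Omega_\sQ)$. Then $\Omega_r$ is open and $S'_r$ is uniformly woven. Strictly this is done on $\pi^{-1}$ of the regular part, or using the Hermitian-manifold version on the smooth locus of $X^m$ after restricting to $X$ smooth near $B$.

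Two facts remain to check: $S'_r\le S'$, and $\Ma_B((S'-S'_r)\wedge R)=\Ma_{\pi(B)}((S''-S''_r)\wedge T)$ up to constants. The second follows from $\pi_*((S'-S'_r)\wedge R)\ge$ the relevant piece, using $\pi_*(\pi^*A\wedge R)=A\wedge\pi_*R$ for cycles, which holds by the projection formula on each $V_a$.

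For $S'_r\le S'$, revisit the proof of Lemma \ref{lem: estimate}: $S''_r$ is a weak limit of currents $S''_{n,\sQ}$ built from good components of $\Gamma_n$, and these satisfy $S''_{n,\sQ}\le[\Gamma_n]/\vol$. Pulling back at the level of $n$ gives $\pi^*S''_{n,\sQ}\le[\tilde V_n]/\vol$. By continuity of $\Pi$ (Proposition \ref{prop:continuouspullback}) and weak-* convergence of the measures $\Pi_*\nu_n\to\Pi_*\nu$, the pulled-back currents converge to $S'_r$ and the inequality passes to the limit.

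The main obstacle is the singular quotient $X^m/S_m$. Theorem \ref{thm: strongly approximable} was stated for a quasi-projective $X\subset\P^N$, and it applies to $X^m/S_m$ embedded projectively, so that is fine. The delicate part is making the pullback compatible with the approximation at finite $n$, which is needed for the inequality $S'_r\le S'$, and making the mass identity hold on $B$ rather than on $\pi^{-1}\pi(B)$. The plan for the latter is to bound $\Ma_B\le\Ma_{\pi^{-1}\pi(B)}$, which equals the downstairs quantity by $G$-invariance.
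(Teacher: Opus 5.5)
Your proposal is correct and follows essentially the same route as the paper. Each component is lifted via Lemma~\ref{lem:lift} to rational curves $\Gamma_n\subset X^m/S_m$, the genus-zero woven approximation is run there against $\pi_*R$ (which has continuous local potential by Proposition~\ref{prop:pushforwardcontinuous}), the good components are pulled back with the continuous operator $\Pi$ of Proposition~\ref{prop:continuouspullback}, and the mass bound is transferred through the projection formula and $\Ma_B\le\Ma_{\pi^{-1}(\pi(B))}$. The only difference is bookkeeping. The paper extracts the subcycles $\Gamma_{n,r}$ and the bound $\limsup_n\Ma_{\tilde B}(\cdots)\le\varepsilon(r)$ at finite level and passes to the limit once, so that both $S'_r\le S'$ and the mass estimate come from $\pi^*\Gamma_{n,r}\le\pi^*\Gamma_n$. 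You instead apply Theorem~\ref{thm: strongly approximable} to the limit $S''$ and use the finite level only for $S'_r\le S'$. This is equally valid, since $\pi_*(S'\wedge R)=S''\wedge\pi_*R$ follows by passing the cycle-level identity to the limit.
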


		\begin{proof}
			We may assume $S\neq 0$, since otherwise the result is trivial.  Write $V_n=\sum_{j=1}^l V_{n,j}$ as a sum of irreducible components. For each $j$, let $\Gamma_{n,j}\subset X^m/S_m$ be the curve associated to $V_{n,j}$ by Lemma \ref{lem:lift}, and set
			$$\Gamma_n:=\sum_{j=1}^l\Gamma_{n,j},\qquad V'_n:=\pi^*\Gamma_n.$$
			By Lemma \ref{lem:lift}, $p_*[V'_n]=(m-1)![V_n]$.  Passing to a subsequence, assume
			$$\frac{[V'_n]}{\vol(V'_n)}\to S''.$$
			Since $[V_n]/\vol(V_n)\to S\neq 0$, after passing to a further subsequence there is $\alpha>0$ such that
			$$\frac{\vol(V_n)}{\vol(V'_n)}\to \alpha.$$
			Then $p_*S''=(m-1)!\alpha S$.  Put
			$$c:=\frac{1}{(m-1)!\alpha},\qquad S':=cS''.$$
			Then $p_*S'=S$.
			
			It remains to prove (ii).  Let $R$ be as in the statement and let $B\Subset X^m$ be a ball.  Set $\tilde R:=\pi_*R$, and choose a relatively compact subset $\tilde B\subset X^m/S_m$ with $B\subset \pi^{-1}(\tilde B)$.  The current $\tilde R$ has continuous local potential by Proposition \ref{prop:pushforwardcontinuous}.  By Lemma \ref{lem:volumeestimate}, the masses of $[\Gamma_n]/\vol(V'_n)$ are uniformly bounded on compact subsets. Since every irreducible component of $\Gamma_n$ is birational to $\P^1$, the proof of Theorem \ref{thm: strongly approximable} applies, with the normalization $\vol(V'_n)$, to the sequence $\Gamma_n$, the test current $\tilde R$, and the compact set $\tilde B$. Thus, for every $r>0$, after passing to a subsequence, we get an open set $\tilde\Omega_r\subset X^m/S_m$ and subcycles
			$$\Gamma_{n,r}\leq \Gamma_n|_{\tilde\Omega_r}$$
			whose components have uniformly bounded volume on compact subsets, such that the cluster limit of $[\Gamma_{n,r}]/\vol(V'_n)$ is uniformly woven on $\tilde\Omega_r$, and
			\begin{equation}\label{eqn:revised-gonality-good}
				\limsup_{n\to\infty}
				\Ma_{\tilde B}\left(\frac{[\Gamma_n]-[\Gamma_{n,r}]}{\vol(V'_n)}\wedge \tilde R\right)
				\leq \varepsilon(r),
			\end{equation}
			where $\varepsilon(r)\to0$ as $r\to0$.
			
			Let $V'_{n,r}:=\pi^*\Gamma_{n,r}$.  Since $0\leq V'_{n,r}\leq V'_n$, after passing to a subsequence we may assume
			$$\frac{[V'_{n,r}]}{\vol(V'_n)}\to S''_r$$
			with $S''_r\leq S''$.  Moreover, by the bounded-volume property of the curves $\Gamma_{n,r}$ and Proposition \ref{prop:continuouspullback}, the current
			$$S'_r:=cS''_r$$
			is uniformly woven on $\Omega_r:=\pi^{-1}(\tilde\Omega_r)$, and $S'_r\leq S'$.
			
			Finally, using the projection formula for analytic cycles at finite level and then letting $n\to\infty$, we get
			\begin{align*}
				\Ma_B((S'-S'_r)\wedge R)
				&\leq c\liminf_{n\to\infty}
				\Ma_{\pi^{-1}(\tilde B)}
				\left(\frac{[V'_n]-[V'_{n,r}]}{\vol(V'_n)}\wedge R\right)\\
				&=c\liminf_{n\to\infty}
				\Ma_{\tilde B}
				\left(\frac{[\Gamma_n]-[\Gamma_{n,r}]}{\vol(V'_n)}\wedge \tilde R\right)\leq c\,\varepsilon(r)
			\end{align*}
			by (\ref{eqn:revised-gonality-good}).  Since $\varepsilon(r)\to0$ as $r\to0$, this proves (ii), and the theorem follows.
		\end{proof}

		\medskip
		\section{Bifurcation theory for families of endomorphisms of \texorpdfstring{$\P^N$}{projective space}}\label{section:bifur}
		
		\subsection{The bifurcation current and bifurcation set}\label{sec:bifur current}
		Let $f$  be a   holomorphic  family of endomorphisms on $\P^N(\C)$ of degree $d\geq 2$,  parametrized by a Hermitian manifold  $(\La,\omega_\La)$, i.e. $f$ is a holomorphic map
		\begin{align*}
			f:\La\times \P^N(\C)&\to \La\times \P^N(\C)\\
			(t,z)&\mapsto (t,f_t(z)),
		\end{align*}
		such that  for every $t\in \La$, $f_t$ is a degree $d$ endomorphism of $\P^N$ over $\C$.
		Let $\pi_1:\La\times\P^N(\C)\to \La$ and $\pi_2:\La\times\P^N(\C)\to \P^N(\C)$ be the canonical projections. Let $\omega_{\P^N}$ be the Fubini-Study form on $\P^N(\C)$.  Let  $\omega_1:=\pi_1^\ast (\omega_{\La})$ and $\omega_2:=\pi_2^\ast (\omega_{\P^N})$.  The {\em relative Green current}  of $f$ is defined by 
		
		\begin{equation*}
			T_f:=\lim_{n\to+\infty} d^{-n} (f^n)^\ast (\omega_2)=\omega_2+dd^c u,
		\end{equation*}
		where $u$ is a H\"older continuous quasi-p.s.h. function \cite[Lemma 1.19]{dinh2010dynamics}.
		\par For every $t\in \La$, we have $T_f^N\wedge [\pi_1^{-1}(t)]=\mu_{t}$, where $T_f^N$ is the $N$-times  
		self intersection of $T_f$ and $\mu_t$ is the unique maximal entropy measure of $f_t$.

		\medskip
		
		Let $[C_f]$ be  the current of integration on the critical set $C_f$ of $f$ taking into
		account the multiplicities.  In \cite{bassanelli2007bifurcation}, Bassanelli-Berteloot define the {\em bifurcation current} $T_\bif:=(\pi_1)_*(T_f^N\wedge [C_f])$, which generalized DeMarco's definition when $N=1$ \cite{demarco2003dynamics}.  Pham \cite{pham2005lyapunov} define the bifurcation current in the more general setting of polynomial-like maps.  If $\La$ is quasi-projective and $f$ is an algebraic family, then $T_\bif$ has finite mass on $\La$ \cite[Proposition 3.3]{Gauthier2019} 
		%
		
		The current  $T_\bif $ has  a global potential function $L(t)$,  which is the sum of the Lyapunov exponents of the measure $\mu_{t}$ \cite[Corollary 4.6]{bassanelli2007bifurcation}. We have $dd^c L=T_\bif$. The  function $L$  is H\"older continuous   \cite[Theorem 2.47]{dinh2010dynamics}. 
		
		\begin{defi}
			The {\em bifurcation set } $\Bif(f)$ is the support of the bifurcation current $T_\bif$. The {\em stable set} is $\Stab (f):=\La\setminus \Bif(f)$.
		\end{defi}
		
		Let $g$ be an endomorphism of $\P^N$ over $\C$ of degree $d\geq 2$. Let $\mu_g$ be the unique maximal entropy measure, and let $J_g$ be the support of $\mu_g$.  A periodic point $x$ of $g$ is called {\em $J$-repelling} if $x$  is repelling and $x\in J_g$.

		When $N=1$, a repelling periodic point is automatically $J$-repelling, counterexamples appear when $N\geq 2$ \cite[Proposition 6.4]{hubbard1994superattractive}.
		
		\medskip
		
		Let $s\in \La$ and let $x$ be a $J$-repelling periodic point of $f_{s}$. Let $U\subset \La$ be  an open set  containing $s$. We say that $x$ {\em moves holomorphically} on $U$ if there exists a holomorphic map $p:U\to \P^N(\C)$ such that:
		
		(1) $p(s)=x$;
		
		(2) For every $t\in U$,  $p(t)$ is a $J$-repelling periodic point of $f_t$.

		\medskip

		%
		%
		%
		%
		\subsection{The equilibrium web and equilibrium  graph}

		In  \cite{berteloot2018dynamical}, Berteloot--Bianchi--Dupont established the equivalence of various stability definitions, which is the major progress in high-dimensional bifurcation theory in recent years. They in particular introduced a key notion called {\em equilibrium web}, which will be very useful for our purpose.  We now give the definition.  
		
		Let $U\subset \La$ be an open set.    Let
		$$\sG:=\left\{\gamma:U\to \P^N: \gamma \;\text{is holomorphic}\right\}$$
		be the set of graphs over $U$. We endow $\sG$ with the topology
		of local uniform convergence.  The family $f$ induces a natural continuous map $\sF:\sG\to \sG$ given by $(\sF\gamma)(t):=f_t(\gamma(t))$. For each $t\in U$, there is a canonical continuous projection $\pi_t$ from $\sG$ to $\P^N(\C)$ given by $\pi_t(\gamma):=\gamma(t)$.
		
		In \cite{berteloot2018dynamical}, the authors considered the following closed subset of $\sG$: let 
		$$\sJ:=\left\{\gamma:U\to \P^N: \gamma \;\text{is holomorphic and}\; \gamma(t)\in J_t \;\text{for every} \;t\in U\right\}.$$ 
		
		
		\begin{defi}
			An equilibrium web (over $U$) is a probability measure $\sigma$ on $\sJ$  such that: 
			
			(1) $\sigma$ is $\sF$-invariant and has compact support;
			
			(2) For each $t\in U$, the probability measure $(\pi_t)_* \sigma$ is equal to $\mu_t$.
		\end{defi}

		Note that the assumption  that $\sigma$ is $\sF$-invariant and has compact support implies that for every $\gamma\in \supp \sigma$,  the orbit $\{\sF^n(\gamma)\}_{n\geq 0}$ is contained in a compact subset of $\sG$. Hence $\left\{f^n|_\Gamma\right\}_{n\geq 1}$ is a normal family, where $\Gamma$ is the graph of $\gamma$; equivalently, $[\Gamma]\wedge T_f=0$. For a proof, see \cite[Remark 2.13]{Gauthier2019} or \cite[Theorem 2.8]{dujardin2014bifurcation}.
		
		\medskip
		
		Next we introduce the notion of {\em  equilibrium graph}. We will show that the existence of  equilibrium graph implies the existence of equilibrium web. 
		
		Let $g$ be an endomorphism of $\P^N$ over $\C$ of degree $d\geq 2$.  A point $x\in \P^N(\C)$ is called {\em typical} if the Birkhoff average $$\frac{1}{n}\sum_{i=0}^{n-1} \delta_{g^i(x)}\to \mu_g$$ in weak-* topology, where $\delta_y$ is the Dirac mass at $y$ and $\mu_g$ is the maximal entropy measure of $g$.
		
		Since $\mu_g$ is ergodic \cite[Theorem 1.82]{dinh2010dynamics}, by Birkhoff ergodic theorem, $\mu_g$-a.e. points are typical for $g$.
		\begin{defi}\label{defi: graph}
			An equilibrium graph (over $U$) is a holomorphic map $\gamma:U\to \P^N(\C)$ such that: 
			
			(1) There is a dense subset $E\subset U$ such that for every $t\in E$, $\gamma(t)$ is a typical point of $f_t$;
			
			(2) $[\Gamma]\wedge T_f=0$, where $\Gamma\subset U\times \P^N(\C)$ is the graph of $\gamma$.
		\end{defi}
		
		\begin{lemma}\label{lem:equigraph}
			Let $f$ be a holomorphic family of endomorphisms of $\P^N(\C)$ of degree $d\geq 2$, parametrized by a Hermitian manifold $\La$. Let $U\subset \La$ be an open subset. If there exists an equilibrium graph $\gamma$ over $U$, then there exists an equilibrium web $\sigma$ over $U$.
		\end{lemma}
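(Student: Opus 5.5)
We build the web as a limit of Birkhoff averages along the forward orbit of $\gamma$ in the graph space $\sG$. Define
$$\sigma_n:=\frac{1}{n}\sum_{i=0}^{n-1}\delta_{\sF^i(\gamma)},$$
a sequence of probability measures on $\sG$. Any weak-$*$ cluster value $\sigma$ of $(\sigma_n)$ will be shown to be an equilibrium web over $U$.

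\textbf{Step 1: compactness.} Since $[\Gamma]\wedge T_f=0$, the family $\{f^n|_\Gamma\}_{n\geq 1}$ is normal (the equivalence recalled after the definition of equilibrium web; see \cite[Remark 2.13]{Gauthier2019}, \cite[Theorem 2.8]{dujardin2014bifurcation}). Hence the orbit $\{\sF^i(\gamma)\}_{i\geq 0}$ is relatively compact in $\sG$ with the topology of local uniform convergence. Let $\sK$ be its closure, which is compact. All $\sigma_n$ are supported in $\sK$, so by Prokhorov (or Riesz on the compact metrizable space $\sK$) a subsequence $\sigma_{n_k}$ converges weak-$*$ to a probability measure $\sigma$ supported on $\sK$.

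\textbf{Step 2: invariance.} We have $\sF_*\sigma_n-\sigma_n=\frac1n(\delta_{\sF^n\gamma}-\delta_\gamma)$, whose mass tends to $0$. Since $\sF$ is continuous on $\sG$, passing to the limit gives $\sF_*\sigma=\sigma$.

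\textbf{Step 3: marginals.} For each $t\in U$, the projection $\pi_t$ is continuous, so
$$(\pi_t)_*\sigma=\lim_k\frac{1}{n_k}\sum_{i<n_k}\delta_{f_t^i(\gamma(t))}.$$
For $t\in E$ the point $\gamma(t)$ is typical, so the right-hand side equals $\mu_t$. To pass from $E$ to all of $U$, we use that both sides depend continuously on $t$. The map $t\mapsto(\pi_t)_*\sigma$ is weak-$*$ continuous: for a continuous $\phi$, the quantity $\int\phi(\eta(t))\,d\sigma(\eta)$ is continuous in $t$ by dominated convergence, because $\sigma$ lives on the compact set $\sK$ of holomorphic maps and evaluation $(t,\eta)\mapsto\eta(t)$ is continuous there. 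The map $t\mapsto\mu_t$ is also weak-$*$ continuous, since $\mu_t=T_f^N\wedge[\pi_1^{-1}(t)]$ and $T_f$ has continuous potential, so slices vary continuously. By density of $E$, $(\pi_t)_*\sigma=\mu_t$ for every $t\in U$.

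\textbf{Step 4: support in $\sJ$.} From Step 3, for every $\eta\in\supp\sigma$ and every $t$ we get $\eta(t)\in\supp\mu_t=J_t$. To see this, note that $\sigma(\{\eta:\eta(t)\notin J_t\})=\mu_t(\P^N\setminus J_t)=0$ for each $t$. Taking a countable dense set of $t$'s shows that $\sigma$-a.e. $\eta$ satisfies $\eta(t)\in J_t$ on a dense set of $t$. Since $\supp\sigma$ is closed, this extends to every $\eta$ in the support. Passing from a dense set of $t$ to all $t$ requires some lower semicontinuity of $t\mapsto J_t$. I would avoid that by simply restricting $\sigma$ to $\sJ$, which is a closed set of full measure: full measure follows from the countable-dense argument combined with continuity, using that $\{(t,z):z\in J_t\}$ is closed, because $t\mapsto\mu_t$ is continuous.

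\textbf{Main obstacle.} The main obstacle is the continuity of $t\mapsto\mu_t$ together with the resulting closedness needed in Step 4. The weak continuity of $\mu_t$ follows from the H\"older continuity of the Green function $u$. Closedness of $\{z\in J_t\}$ in $(t,z)$ is the delicate point, since weak limits of measures only give $\supp\mu_{t_0}\subset\liminf\supp\mu_t$. I would handle it by noting that Step 3 already gives $(\pi_t)_*\sigma=\mu_t$ for all $t$, and then arguing fibrewise: the set $\{\eta:\eta(t)\in J_t\ \forall t\in D\}$ for a countable dense $D\subset U$ has full $\sigma$-measure. If needed, I would fall back on the fact, from \cite{berteloot2018dynamical}, that $J$-membership of limits of holomorphically moving graphs persists.
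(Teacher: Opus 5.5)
Steps 1--3 follow the paper's proof essentially verbatim: Birkhoff averages of $\sF^i(\gamma)$, compactness of the orbit from $[\Gamma]\wedge T_f=0$, invariance, and the identification of $(\pi_t)_*\sigma$ with $\mu_t$, first on $E$ and then on all of $U$ by continuity in $t$. The problem is Step 4. Its first sentence states the right conclusion, but the justification you give does not establish it.

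Your fallback relies on the claim that $\{(t,z): z\in J_t\}$ is closed because $t\mapsto\mu_t$ is weak-$*$ continuous, and this is false in general. Weak continuity of $\mu_t$ only gives lower semicontinuity of $t\mapsto J_t$, as you note yourself under ``Main obstacle''. Closedness of the graph is upper semicontinuity. Upper semicontinuity fails, for instance, in the family $z^2+c$ at the parabolic parameter $c=1/4$ (parabolic implosion), even though $\mu_c$ varies continuously there. The countable-dense-set argument only gives $\eta(t)\in J_t$ for $t$ in the dense set, and passing from that set to all of $U$ needs exactly this upper semicontinuity. Likewise, restricting $\sigma$ to $\sJ$ requires $\sigma(\sJ)=1$, which is the same statement.

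The missing observation is that you need neither an almost-everywhere statement nor a countable intersection. Step 3 gives $(\pi_t)_*\sigma=\mu_t$ for \emph{every} $t\in U$; fix such a $t$.
\begin{itemize}
\item The set $\pi_t^{-1}(\P^N(\C)\setminus J_t)$ is open in $\sG$, since $\pi_t$ is continuous and $J_t$ is closed.
\item Its $\sigma$-measure is $\mu_t(\P^N(\C)\setminus J_t)=0$.
\item An open null set is disjoint from $\supp\sigma$, so every $\eta\in\supp\sigma$ satisfies $\eta(t)\in J_t$.
\end{itemize}
This is a statement about each point of the support, not a full-measure statement. It therefore holds for all $t\in U$ simultaneously, i.e.\ $\supp\sigma\subset\sJ$. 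This is precisely the paper's step $\pi_t(\supp\sigma)=\supp\mu_t=J_t$. It requires neither semicontinuity of $t\mapsto J_t$ nor any input from Berteloot--Bianchi--Dupont.
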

		\begin{proof}
			For each $n\geq 0$, let $\gamma_n:U\to \P^N(\C)$ be the holomorphic map defined by $\gamma_n(t):=f_t^n(\gamma(t))$. We define a probability measure $\sigma_n$ on $\sG$:
			$$\sigma_n:=\frac{1}{n}\sum_{i=0}^{n-1}\delta_{\gamma_i}.$$
			It is well known that  $[\Gamma]\wedge T_f=0$ on $U$  implies that $\left\{ \gamma_n\right\}_{n\geq 1}$ is a normal family over $U$, see \cite[Remark 2.13]{Gauthier2019}.  This implies that there is a compact subset $\sK\subset \sG$ such that $\supp \sigma_n\subset \sK$ for every $n\geq 1$.  After passing to a subsequence, we may assume that $\sigma_n$ converges to a probability measure $\sigma$ supported on $\sK$. Moreover $\sigma$ is $\sF$-invariant. We will show that $\sigma$ is an equilibrium web over $U$.  It remains to show that  for every $t\in U$ we have 
			
			(1) $(\pi_t)_*(\sigma)=\mu_t$;
			
			(2) For every $\delta\in \supp \sigma$, $\pi_t(\delta)\in \sJ_t$.
			
			\medskip
			
			For every $t\in U$ and $n\geq 1$, we have $(\pi_t)_*(\sigma_n)=\frac{1}{n}\sum_{i=0}^{n-1}\delta_{\gamma_i(t)}$. Since $\pi_t$ is continuous, $\sigma_n\to \sigma$ implies that $\frac{1}{n}\sum_{i=0}^{n-1}\delta_{\gamma_i(t)}\to (\pi_t)_*(\sigma)$.  When $t\in E$, we have $\frac{1}{n}\sum_{i=0}^{n-1}\delta_{\gamma_i(t)}\to \mu_t$. This implies  $(\pi_t)_*(\sigma)=\mu_t$ when $t\in E$.  Since $\mu_t$ is the intersection of $T_f^N$ with the fiber of $t$, the fact that $T_f$ has continuous local potentials implies that the map $t\mapsto \mu_t$ from $U$ to $\sP(\P^N(\C))$ is continuous, where $\sP(\P^N(\C))$ denotes the space of probability measures on $\P^N(\C)$ equipped with the weak-* topology. On the other hand, since $\sigma$ has compact support, the map $t\mapsto (\pi_t)_*(\sigma)$ from $U$ to $\sP(\P^N(\C))$ is also continuous.  Since $E$ is dense in $U$,  and these two maps coincide on $E$, the two maps are equal. This proves (1). 
			
			Since for every $t\in U$, $\pi_t: \supp \sigma\to \P^N(\C)$  is continuous and $(\pi_t)_*(\sigma)=\mu_t$, we have $\pi_t(\supp \sigma)=\supp(\mu_t)=\sJ_t$.  This proves (2), and the proof is complete.
		\end{proof}
		
		
		\medskip
		
		Let $f$ be a holomorphic family of endomorphisms of $\P^N(\C)$ of degree $d\geq 2$, parametrized by a connected Hermitian manifold $\La$. We have the following result in  \cite[Main Theorem]{berteloot2018cycles}:
		
		\begin{lem}\label{lem: ber}
			Let $s\in \La$. Assume that there exists an open set $U$ containing $s$ such that, for every $n\geq 1$, there is a subset $P_n$ of $n$-periodic $J$-repelling points of $f_s$ satisfying $\limsup_{n\to +\infty} \frac{\# P_n}{d^{Nn}}>0$, and every point $x\in P_n$ moves holomorphically over $U$. Then $s\in \Stab(f)$.
		\end{lem}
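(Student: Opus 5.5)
This statement is the main theorem of \cite{berteloot2018cycles}; the plan below follows the route suggested by the equilibrium-web framework recalled above.

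The idea is to turn the holomorphically moving cycles into an invariant measure on graphs and then show that $L$ is pluriharmonic near $s$. After shrinking $U$ to a relatively compact ball around $s$, each $x\in P_n$ gives a holomorphic map $p_x:U\to\P^N(\C)$ with $p_x(t)$ a $J$-repelling $n$-periodic point of $f_t$. Let $\Gamma_x$ be its graph. Since $\sF^n(p_x)=p_x$, the iterates $f^k|_{\Gamma_x}$ form a finite set of maps, hence a normal family. Therefore $[\Gamma_x]\wedge T_f=0$ (cf. \cite[Remark 2.13]{Gauthier2019}).

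The first step is compactness of the graphs. Write $T_f=\omega_2+dd^c u$ with $u$ H\"older continuous. Then on $\Gamma_x$ we have $\omega_2|_{\Gamma_x}=-dd^c u|_{\Gamma_x}$. A Chern--Levine--Nirenberg type estimate in the spirit of Lemma \ref{lem:volumeestimate}, with $u$ bounded, then bounds the volume of $\Gamma_x$ on compact subsets of $U\times\P^N$ uniformly in $x$ and $n$. By Theorem \ref{thm: Bishop}, the graphs $\{p_x\}$, together with all their $\sF$-iterates, therefore lie in a compact subset of $\sJ$ (limits of graphs over $U$ remain graphs, since the $\pi_1$-degree is $1$ and is preserved). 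Set
$$\sigma_n:=\frac{1}{\#P_n}\sum_{x\in P_n}\delta_{p_x}.$$
This is $\sF$-invariant, because $P_n$ may be taken to be a union of cycles. Choose a subsequence along which $\#P_n/d^{Nn}\ge c>0$, and extract a weak limit $\sigma$, which is $\sF$-invariant with compact support in $\sJ$.

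The second step is to identify the slices $\sigma_t:=(\pi_t)_*\sigma$. At $t=s$, the measure $\sigma_s$ is a limit of equidistributed measures on a set of $n$-periodic repelling points of cardinality at least $c\,d^{Nn}$. By the equidistribution of repelling cycles toward $\mu_s$ (Briend--Duval) and a Misiurewicz-type entropy argument, one expects $\sigma_s$ to be an invariant measure of entropy $N\log d$, and hence $\sigma_s=\mu_s$ by uniqueness of the measure of maximal entropy. I would then propagate this to all $t\in U$: the graphs in $\supp\sigma$ conjugate $\sigma_s$ to $\sigma_t$ along an invariant set on which $\pi_s$ is injective (distinct $J$-repelling points stay distinct under a holomorphic motion), so entropy is preserved. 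This would give $\sigma_t=\mu_t$, making $\sigma$ an equilibrium web.

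The third step is pluriharmonicity. Since each $p_x(t)$ stays repelling, $\det Df^n_t(p_x(t))$ never vanishes on $U$, so
$$L_n(t):=\frac{1}{n\#P_n}\sum_{x\in P_n}\log\left|\det Df_t^n(p_x(t))\right|$$
is pluriharmonic on $U$. The goal is $L_n\to L$ locally uniformly, or at least in $L^1_{\loc}$, which forces $dd^cL=T_\bif=0$ near $s$. I would try to obtain this from the equilibrium web $\sigma$, by showing that $\int\log|\det Df_t(\gamma(t))|\,d\sigma(\gamma)$ is pluriharmonic in $t$ and equals $L(t)$. Here the integrand is pluriharmonic in $t$ as long as $\sigma$-a.e. graph avoids the critical set $C_f$. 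This is the main obstacle: one needs $\sigma$ to be acritical in the sense of \cite{berteloot2018dynamical}, i.e. that $\sigma$ gives no mass to graphs hitting $C_f$. I would first prove this using the uniform repelling (Lyapunov) estimates along the cycles: a positive-density family of repelling cycles whose multipliers vary holomorphically cannot accumulate on critical graphs with positive mass. Failing that, I would invoke directly the equivalence ``existence of an acritical equilibrium web $\Leftrightarrow$ $s\in\Stab(f)$'' from \cite{berteloot2018dynamical}.
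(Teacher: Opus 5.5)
The paper gives no argument for this lemma: it is quoted as the Main Theorem of \cite{berteloot2018cycles}. Your reconstruction has a workable architecture: build an $\sF$-invariant measure $\sigma$ on graphs from the moving cycles, show it is an equilibrium web whose graphs avoid $C_f$, and write $L$ as an integral of pluriharmonic functions. However, both load-bearing steps are either argued incorrectly or left open.

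\emph{The identification $(\pi_t)_*\sigma=\mu_t$.} At $t=s$ the entropy route is not justified. Misiurewicz's lower bound needs the points of $P_n$ to be $(n,\ep)$-separated for a fixed $\ep$, and without expansivity on $J_s$ nothing provides this.

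The transfer to $t\neq s$ fails for $N\geq 2$. The approximating graphs $p_x$ are pairwise disjoint: two motions of repelling $n$-periodic points that meet must coincide, by the implicit function theorem. Their limits in $\supp\sigma$ need not be disjoint, because Hurwitz only rules out new collisions when the fibres are one-dimensional. Compare $t\mapsto(t,1/k)$ and $t\mapsto(0,0)$ in $U\times\C^2$. Moreover, even granting injectivity of $\pi_s$, the map $\pi_t$ is only a factor map and can lower entropy.

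A simpler argument works at every $t\in U$ at once, and it is exactly where the positive-proportion hypothesis enters. The points $p_x(t)$, $x\in P_n$, are $\#P_n\geq c\,d^{Nn}$ distinct fixed points of $f_t^n$. By Briend--Duval \cite{Briend1999} together with the count of fixed points, $d^{-Nn}$ times the sum of Dirac masses at all fixed points of $f_t^n$ tends to $\mu_t$. Hence $(\pi_t)_*\sigma\leq c^{-1}\mu_t$. Since $(\pi_t)_*\sigma$ is $f_t$-invariant and $\mu_t$ is ergodic, $(\pi_t)_*\sigma=\mu_t$.

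\emph{Acriticality and pluriharmonicity.} You flag acriticality as the main obstacle and do not resolve it. The cycles in $P_n$ carry no uniform expansion estimate that would control their distance to $C_f$, and your fallback to \cite{berteloot2018dynamical} presupposes exactly this property.

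What works is Hurwitz. A point of a repelling cycle is not critical, so in local coordinates $t\mapsto\det Df_t(p_x(t))$ never vanishes. Hence for $\gamma\in\supp\sigma$ the function $t\mapsto\det Df_t(\gamma(t))$ is either nowhere zero or identically zero on the connected set $U$. Graphs of the second kind form a set of $\sigma$-measure at most $\mu_s(C_{f_s})=0$.

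The integrand must also be normalized. Take lifts $F_t$, Green functions $G_t$, and the continuous function $g_t(z):=\log\|Z\|-G_t(Z)$. Then, up to an additive constant, $\log|\det Df_t|_{\mathrm{FS}}=\Phi_t-(N+1)(g_t\circ f_t-g_t)$, where $\Phi_t(z):=\log|\det DF_t(Z)|-(N+1)(d-1)G_t(Z)$.
\begin{itemize}
\item The coboundary term integrates to zero against $\mu_t$.
\item For $\gamma\in\supp\sigma$ we have $[\Gamma_\gamma]\wedge T_f=0$, so $t\mapsto G_t(\tilde\gamma(t))$ is pluriharmonic for a local holomorphic lift $\tilde\gamma$. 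Hence $t\mapsto\Phi_t(\gamma(t))$ is pluriharmonic whenever $\Gamma_\gamma$ avoids $C_f$.
\item These functions are locally uniformly bounded above and $L$ is finite. Tonelli therefore gives the mean-value property for $L(t)=\int\Phi_t(\gamma(t))\,d\sigma(\gamma)+\mathrm{const}$, i.e.\ $T_\bif=0$ near $s$.
\end{itemize}

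Finally, in your first step, Bishop limits of graphs could a priori contain vertical curves. These are excluded because $\cdot\wedge T_f$ is continuous under such limits and $[C]\wedge T_f>0$ for any curve $C$ contained in a fibre.

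With these repairs your route proves the lemma in every dimension, with no genericity assumption.
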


		Let $U\subset \La$ be a connected open subset and assume there exists an equilibrium web $\sigma$ over $U$.  Let $s\in U$ and let $x$ be a $J$-repelling $n$-periodic point of $f_{s}$.  The following is \cite[Lemma 2.5 (3)]{berteloot2018dynamical}:
		
		\begin{lemma}\label{lem:repellingmove}
			There exists a unique  holomorphic map $\gamma:U\to \P^N(\C)$ such that $\gamma\in\supp\sigma$ and  $\gamma(s)=x$. Moreover, $\gamma(t)$ is a $n$-periodic point of $f_t$ for each $t\in U$. 
		\end{lemma}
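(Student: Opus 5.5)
The plan is to get uniqueness first, from an expansion-versus-normality argument at the repelling point, and then obtain existence and periodicity from the $\sF$-invariance of $\sigma$.

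\textbf{Existence.} Since $\sigma$ has compact support and $\pi_s$ is continuous, $\pi_s(\supp\sigma)$ is compact. Since $(\pi_s)_*\sigma=\mu_s$, this image equals $\supp\mu_s=J_s$, as in the proof of Lemma \ref{lem:equigraph}. As $x\in J_s$, there is some $\gamma\in\supp\sigma$ with $\gamma(s)=x$. Moreover $\sF$ is continuous and $\sigma$ is $\sF$-invariant, so $\sF(\supp\sigma)\subset\supp\sigma$. Hence the whole orbit $\{\sF^{k}\gamma\}_{k\geq 0}$ lies in the compact set $\supp\sigma\subset\sG$. By Cauchy estimates, all derivatives at $s$ of the maps in $\supp\sigma$, written in a fixed chart near $x$, are therefore uniformly bounded on a small disc around $s$.

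\textbf{Local rigidity at the repelling point.} Since $x$ is repelling, $df^n_s(x)$ has no eigenvalue $1$. The implicit function theorem then gives a holomorphic $p:D\to\P^N$ on a neighborhood $D$ of $s$ with $p(s)=x$ and $f^n_t(p(t))=p(t)$. I will show that every $\gamma\in\supp\sigma$ with $\gamma(s)=x$ coincides with $p$ near $s$.
\begin{itemize}
\item Work in local coordinates centered at $p(t)$ and set $g(t):=\gamma(t)-p(t)$. Suppose $g\not\equiv 0$. Restrict to a generic complex line through $s$ in $\La$, and let $m\geq 1$ be the vanishing order of $g$ along it, with leading coefficient $g_m\neq 0$.
\item Write $A(t):=df^n_t(p(t))$. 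Then
\[
f^n_t(p(t)+g)=p(t)+A(t)g+O(|g|^2).
\]
\item Iterating, the order-$m$ Taylor coefficient at $s$ of $h_k(t):=f^{kn}_t(\gamma(t))-p(t)$ equals $A(s)^k g_m$. This holds because $A(t)-A(s)=O(|t-s|)$ and the quadratic terms only affect orders $>m$.
\item All eigenvalues of $A(s)$ have modulus $>1$, so $\|A(s)^{-k}\|\to 0$, Jordan blocks included. Hence $|A(s)^k g_m|\geq |g_m|/\|A(s)^{-k}\|\to\infty$.
\item But $h_k$ comes from $\sF^{kn}\gamma\in\supp\sigma$, whose derivatives at $s$ are uniformly bounded by the previous step. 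This is a contradiction.
\end{itemize}
One technical point: we need $\sF^{kn}\gamma(t)$ to stay in the coordinate chart near $s$. This follows because $h_k(s)=0$ and the family is equicontinuous, so for each $k$ the point $\sF^{kn}\gamma(t)$ lies in the chart for $t$ in a uniform neighborhood of $s$. Hence $\gamma=p$ near $s$.

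\textbf{Uniqueness and periodicity.} Uniqueness follows: two such $\gamma_1,\gamma_2$ both agree with $p$ near $s$, so they agree on the connected set $U$ by the identity principle. For periodicity, $\sF^n\gamma\in\supp\sigma$ and $(\sF^n\gamma)(s)=f^n_s(x)=x$, so uniqueness gives $\sF^n\gamma=\gamma$. That is, $f^n_t(\gamma(t))=\gamma(t)$ for every $t\in U$.

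The main obstacle is the derivative-growth step: checking carefully that the order-$m$ coefficient transforms exactly by $A(s)^k$, and that the uniform bound coming from compactness of $\supp\sigma$ applies to the iterates $\sF^{kn}\gamma$ on a fixed neighborhood of $s$.
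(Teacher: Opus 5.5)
Your proof is correct. The paper gives no proof of this lemma; it quotes it from \cite[Lemma 2.5 (3)]{berteloot2018dynamical}, so your argument is a self-contained replacement for a citation rather than a variant of an argument in the text.

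You isolate the right input. The inclusion $\sF(\supp\sigma)\subset\supp\sigma$ and the compactness of $\supp\sigma$ make $\{\sF^{kn}\gamma\}_{k\ge 0}$ equicontinuous at $s$, and all these maps take the value $x$ at $s$. By Arzel\`a--Ascoli there is a single neighborhood $D$ of $s$, independent of $k$, on which all of them stay in a fixed chart ball around $x$. Your first Cauchy-estimate sentence should be read with this restriction, which your later technical remark supplies.

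A more common way to use this equicontinuity is through inverse branches. For $t$ near $s$, $f^n_t$ has an inverse branch $g_t$ on a small ball around $x$ that contracts towards $p(t)$, in a norm adapted to $A(s)$. Equicontinuity then gives $\gamma(t)=g_t^k(\sF^{kn}\gamma(t))\to p(t)$ on $D$.

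Your jet argument instead follows the leading Taylor coefficient along a generic line. It transforms exactly by $A(s)^k$, because $2m>m$ removes the quadratic terms. Compared with the inverse-branch route, your approach:
\begin{itemize}
\item uses only the linear part at the single point $(s,x)$;
\item handles Jordan blocks directly through $\|A(s)^{-k}\|\to 0$;
\item needs no injectivity or contraction estimates uniform in $t$, at the cost of the vanishing-order bookkeeping.
\end{itemize}

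Uniqueness on all of $U$ uses the connectedness of $U$, which the paper assumes. Periodicity then follows from uniqueness as you say, where ``$n$-periodic'' means $f^n_t(\gamma(t))=\gamma(t)$.
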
 
		

		We have the following result \cite[Proposition 5.6]{berteloot2018dynamical}:
		\begin{lem}\label{lem: bbd}
			Let $\gamma$ be the holomorphic map given in Lemma \ref{lem:repellingmove}, and let $\Gamma$ be the graph of $\gamma$. Assume that the resonant points and the non-diagonalizable points in $\Gamma$ form proper analytic subsets of $\Gamma$. Then $x$ moves holomorphically on $U$, i.e. all periodic points in $\Gamma$ are $J$-repelling.
		\end{lem}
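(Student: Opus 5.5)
The plan is to follow the strategy of Berteloot--Bianchi--Dupont: study the multipliers of the periodic graph $\gamma$ as holomorphic functions of $t$, show they can never be strictly inside the unit disk, and then upgrade ``$\geq 1$'' to ``$>1$'' by the minimum principle.

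Let $\gamma$ be as in Lemma \ref{lem:repellingmove}. Then $\gamma(t)$ is $n$-periodic for every $t\in U$. Since $\gamma\in \supp\sigma\subset \sJ$, we also have $\gamma(t)\in J_t$ for every $t$. So the only thing to prove is that $\gamma(t)$ is repelling for every $t\in U$.

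\emph{Step 1: the bad set.} Let $Z\subset U$ be the proper analytic subset of parameters where $\gamma(t)$ is resonant or non-diagonalizable. Off $Z$ the eigenvalues of $df_t^n(\gamma(t))$ are distinct, so locally on $U\setminus Z$ they can be written as holomorphic functions $\la_1(t),\dots,\la_N(t)$. The functions $\log|\la_i|$ are then harmonic there.

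\emph{Step 2: the key claim.} I would show that for every $t\in U\setminus Z$ and every $i$ we have $|\la_i(t)|\geq 1$.

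Suppose instead that $|\la_i(t_1)|<1$ for some $t_1\notin Z$. At $t_1$ the point $\gamma(t_1)$ is non-resonant and diagonalizable. By Poincar\'e--Dulac, applied to a small parameter neighbourhood, $f_t^n$ can be conjugated near $\gamma(t)$ to a map whose $i$-th coordinate is a contraction, holomorphically in $t$. Now use the web. By $\sF$-invariance and compactness of $\supp\sigma$, every graph in $\supp\sigma$ has normal forward orbit. Moreover $(\pi_t)_*\sigma=\mu_t$, and $\mu_t$ has no atoms while charging every neighbourhood of $\gamma(t_1)\in J_{t_1}$. Hence there are graphs $\gamma'\in\supp\sigma$, distinct from $\gamma$, passing arbitrarily close to $\gamma$. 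In the contracting direction their $\sF^{n}$-iterates are pulled toward $\gamma$, while in the remaining directions normality keeps them in a compact family. Passing to limits, this produces either a second graph in $\supp\sigma$ through the point $\gamma(s)$, or a mass concentration of $\mu_t$ along a local stable disc. The first contradicts the uniqueness part of Lemma \ref{lem:repellingmove}. The second contradicts the fact that $\mu_t$ gives no mass to pluripolar sets, or alternatively that its Lyapunov exponents are $\geq \frac12\log d$.

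\emph{Step 3: conclusion.} With Step 2 in hand, each $\log|\la_i|$ is a nonnegative harmonic function on $U\setminus Z$. It extends across $Z$ as a continuous function, because the symmetric functions of the eigenvalues are holomorphic on all of $U$. Hence it is superharmonic and $\geq 0$ on the connected set $U$. At $t=s$ all $|\la_i(s)|>1$, since $x$ is repelling. If $|\la_i(t_0)|=1$ at some $t_0$, the minimum principle would force $\log|\la_i|\equiv 0$ on the component through $t_0$. Because $U\setminus Z$ is connected, that component reaches $s$, which is a contradiction. Therefore every $\gamma(t)$ is repelling, and hence $J$-repelling, so $x$ moves holomorphically on $U$.

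The main obstacle is Step 2: turning an attracting eigen-direction into a contradiction with the equilibrium-web structure. I would first try the uniqueness argument via Lemma \ref{lem:repellingmove}, and fall back on the no-mass-on-pluripolar-sets property of $\mu_t$ if that does not close.
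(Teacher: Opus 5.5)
\textbf{There is a genuine gap at Step 2.} Your framing is fine: $\gamma(t)\in J_t$ follows from $\gamma\in\supp\sigma\subset\sJ$, and Step 3 correctly upgrades ``all multipliers have modulus $\geq 1$ off $Z$'' to ``repelling on all of $U$'' by the minimum principle. (It is cleaner to apply the minimum principle to the continuous function $\min_i\log|\la_i|$, or to the characteristic polynomial evaluated at a fixed unimodular number, than to individually labelled eigenvalues. Also, non-resonance and diagonalizability do not make the eigenvalues distinct, e.g.\ $\la\,\mathrm{Id}$ with $|\la|\neq 1$.) The problem is that, up to this easy upgrade, Step 2 is equivalent to the lemma itself, and it is not proved. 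The paper does not prove this lemma either. It quotes it from \cite[Proposition 5.6]{berteloot2018dynamical}, and the substance of that proposition is exactly your Step 2.

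The mechanism you propose for Step 2 does not produce a contradiction.
\begin{itemize}
\item \emph{Alternative (a) is not reached.} The contraction happens at $t_1$, but the only uniqueness you have (Lemma \ref{lem:repellingmove}) holds where the cycle is repelling, such as at $s$. There $\sF^n$ pushes every graph $\gamma'\neq\gamma$ of $\supp\sigma$ away from $\gamma$, so limits of $\sF^{nj}\gamma'$ have no reason to pass through $\gamma(s)$.
\item \emph{At $t_1$ nearby graphs are also expelled.} The cycle is not attracting there, since it lies in $J_{t_1}$, so it has expanding or neutral directions along which nearby graphs leave. If you run the limiting argument honestly (stop each orbit when it first leaves an $\epsilon$-neighbourhood of $\gamma$, then extract), you get graphs of $\supp\sigma$ with a whole backward $\sF^n$-orbit staying $\epsilon$-close to $\gamma$. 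Their values lie in the local unstable set of $\gamma(t)$, which contradicts nothing.
\item \emph{Alternative (b) is unsupported.} Nothing forces $\mu_t$ to charge a stable disc. That $\mu_t$ gives no mass to pluripolar sets is fully compatible with a saddle cycle lying in $\supp\mu_t$. Likewise, the bound $\frac12\log d$ on Lyapunov exponents concerns $\mu_t$-generic points and says nothing about a single cycle, which has measure zero.
\end{itemize}
For $N=1$ Step 2 is trivial, since attracting points are not in $J$. For $N\geq 2$ no general principle excludes saddle cycles from $J_t$, and excluding them along a periodic leaf of an equilibrium web is the whole difficulty.

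A telling symptom is that your argument never genuinely uses the hypothesis on resonant and non-diagonalizable points. Poincar\'e--Dulac only gives a formal normal form, which need not converge in the Siegel domain. The stable manifold theorem would give you the contracting direction without any non-resonance assumption. So if your sketch worked, it would prove the statement with no genericity hypothesis at all. The available theory does not know how to remove that hypothesis for $N\geq 3$, which is why periodic genericity appears in Assumption A.
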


		\medskip

		As a corollary of Lemma \ref{lem: bbd} and  Lemma \ref{lem: ber}, we have:
		
		\begin{prop}\label{prop: high-dim stab}
			Let $f$ be a holomorphic family of endomorphisms of $\P^N(\C)$ of degree $d\geq 2$, parametrized by a Hermitian manifold $\La$. When $N\geq 3$, assume in addition that $f$ is periodically generic.  Let $s\in \La$. Then the following are equivalent:
			
			(1) $s\in \Stab(f)$;
			
			(2) There exists an open set $U$ containing $s$ and an equilibrium web over $U$.
		\end{prop}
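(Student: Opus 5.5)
The proof splits into the two implications, and essentially assembles results of Berteloot--Bianchi--Dupont together with Lemma \ref{lem: ber} and Lemma \ref{lem: bbd}.

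For (1)$\Rightarrow$(2), I would quote the main theorem of \cite{berteloot2018dynamical}. It states that on the stable set the family admits an equilibrium web, so any connected neighborhood $U\Subset\Stab(f)$ of $s$ carries an equilibrium web. This direction does not need periodic genericity. One can also see it as follows: stability means $T_\bif=0$ near $s$, hence $dd^cL=0$ on $U$. The existence of an equilibrium web over $U$ is one of the equivalent characterizations of stability proved in \cite{berteloot2018dynamical}; there the web is constructed as a limit of Ces\`aro averages of graphs of repelling cycles, much as in Lemma \ref{lem:equigraph}.

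For (2)$\Rightarrow$(1), let $\sigma$ be an equilibrium web over a connected open $U\ni s$. The goal is to verify the hypothesis of Lemma \ref{lem: ber} at a suitable point, and then use the fact that the stable set is open to conclude at $s$ itself.

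\emph{Case $N\le 2$.} By Lemma \ref{lem:repellingmove}, every $J$-repelling $n$-periodic point $x$ of $f_s$ lies on a unique $n$-periodic graph $\gamma_x\in\supp\sigma$. The resonant and non-diagonalizable conditions are analytic along $\Gamma_x$, so each locus is either a proper analytic subset or all of $\Gamma_x$. At $x$ the cycle is repelling, so its multipliers have modulus $>1$. Resonance $w_1^{m_1}\cdots w_N^{m_N}=w_j$ with $\sum m_i\ge2$ is then impossible when $N=1$. In dimension $2$ one must rule out a persistent resonance or persistent non-diagonalizability along the whole graph; here I would use the $N\le2$ part of \cite[Proposition 5.6]{berteloot2018dynamical}, or its proof, which handles dimension $2$ unconditionally. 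Lemma \ref{lem: bbd} then shows that $x$ moves holomorphically on $U$. Since $f_s$ has asymptotically $d^{Nn}$ repelling $n$-periodic points in $J_s$ (equidistribution of repelling cycles toward $\mu_s$, by Briend--Duval), Lemma \ref{lem: ber} gives $s\in\Stab(f)$.

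\emph{Case $N\ge3$.} Periodic genericity provides sets $\sP_n$ with $\limsup d^{-Nn}\sum_{V\in\sP_n}\deg V>0$. The key step, and the main obstacle, is to transfer this density statement about generic periodic points to a positive proportion of $J$-repelling cycles of some nearby $f_{s'}$ whose graphs in $\supp\sigma$ are not persistently resonant or non-diagonalizable. My first attempt is a counting argument:
\begin{enumerate}
\item the $n$-periodic graphs in $\supp\sigma$ through $J$-repelling points account for almost all $d^{Nn}$ periodic points over a generic $s'\in U$;
\item the components in $\sP_n$ account for a positive proportion of all periodic points over such an $s'$;
\item combining (i) and (ii), infinitely often a positive proportion of repelling cycles of $f_{s'}$ lie in components of $\sP_n$.
\end{enumerate}
For a component in $\sP_n$, the bad locus is a proper analytic subset of the component, hence of each graph contained in it. Lemma \ref{lem: bbd} then applies to these graphs, and Lemma \ref{lem: ber} gives $s'\in\Stab(f)$. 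Since such $s'$ can be chosen arbitrarily close to $s$ and the same argument works at every point of $U$, we get $U\subset\Stab(f)$, in particular $s\in\Stab(f)$. The delicate point is the bookkeeping between irreducible periodic subvarieties $V\subset\La\times\P^N$ and the local graphs over $U$, including the multiplicities $\deg V$.
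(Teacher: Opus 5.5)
Your proposal follows the paper's proof. For $N\le 2$ the paper simply cites \cite[Theorem 1.1]{berteloot2018dynamical} for both directions, which is cleaner than rerunning the cycle argument there: in dimension $2$, Lemma \ref{lem: bbd} as stated still needs persistent resonance and non-diagonalizability to be excluded, so you end up relying on BBD anyway. For $N\ge 3$ the paper uses \cite[Theorem 1.6]{berteloot2018dynamical} for (1)$\Rightarrow$(2) and exactly your counting argument with Lemmas \ref{lem:repellingmove}, \ref{lem: bbd} and \ref{lem: ber} for the converse, but run directly at $s$. The detour through a generic $s'$ is unnecessary, and openness of $\Stab(f)$ by itself would not transfer stability from nearby $s'$ back to $s$; what actually closes your proof is your final remark that the count works at every point of $U$, including $s$.
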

		\begin{proof}
			If $N\leq 2$, this is \cite[Theorem 1.1]{berteloot2018dynamical}. If $N\geq 3$, (1) implies (2) was proved in \cite[Theorem 1.6]{berteloot2018dynamical}. It remains to show (2) implies (1) when $N\geq 3$. 	Let $R_n$ be the set of $J$-repelling $n$-periodic points of $f_s$. For each $x\in R_n$, we let $\gamma_x:U\to \P^N(\C)$ be the unique holomorphic map given in Lemma \ref{lem:repellingmove}. Let $\Gamma_x$ be the graph of $\gamma_x$. We define a subset $P_n\subset R_n$  as follows:  a point $x\in R_n$ belongs to $P_n$ if the points in $\Gamma_x$ that are resonant or non-diagonalizable form proper analytic subsets of $\Gamma_x$. The assumption that $f$ is periodically generic, together with the facts that  $\lim_{n\to+\infty}\frac{\#R_n}{d^{nN}}=1$ and repelling periodic points are simple periodic points give the following:

			(1)  $$\limsup_{n\to +\infty} \frac{\# P_n}{d^{Nn}}>0.$$
			
			By Lemma \ref{lem: bbd}, we have:

			(2) For every  $x\in P_n$, $x$ moves holomorphically on $U$.
			
			%
			%

			Since  (1) and (2) hold simultaneously,  by  Lemma \ref{lem: ber}, we have  $s\in \Stab(f)$, which finishes the proof. 
		\end{proof}
		

		\medskip
		
		\subsection{Uniformly woven currents and bifurcation theory}
		Let $f$ be a holomorphic family of endomorphisms of $\P^N(\C)$ of degree $d\geq 2$, parametrized by the unit disk $\D$.  For every $t\in \D$, let $R_t$ be the current of integration on the fiber $\pi_1^{-1}(t)$. Let $\Omega\subset \D\times \P^N(\C)$ be an open subset.   
		
		
		\begin{thm}\label{thm:wovenbifur}
			Let $f$ be a holomorphic family of endomorphisms of $\P^N(\C)$ of degree $d\geq 2$, parametrized by the unit disk $\D$. If $N\geq 3$, assume in addition that $f$ is periodically generic.   Let $S$ be a bidimension $(1,1)$ uniformly woven current  on $\Omega$ such that
			
			(1) $S\wedge T_f=0$;
			
			(2) For every $t\in \D$ such that $S\wedge R_t$ is admissible, $S\wedge R_t\leq \mu_t$, where $\mu_t$ is the maximal entropy measure of $f_t$;
			
			(3) There exists $s\in \D$ such that $S\wedge R_s$ is admissible and $S\wedge R_s>0$.
			
			\smallskip
			
			Then $s\in \Stab(f)$.
			
		\end{thm}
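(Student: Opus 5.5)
The plan is to produce an equilibrium graph over a neighbourhood of $s$. Lemma \ref{lem:equigraph} then gives an equilibrium web, and Proposition \ref{prop: high-dim stab} gives $s\in\Stab(f)$. The periodic-genericity hypothesis for $N\geq 3$ is exactly what the last step needs.

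First, localize the woven structure near the fiber over $s$. Choose a point $(s,z_0)$ in the support of the measure $S\wedge R_s$ and a small compact neighbourhood $K=\overline{D}\times \overline{B}$ of it inside $\Omega$. By Proposition \ref{prop:irreducible} and the remark after it, we can write $S|_{D\times B}=\int_{\sV^*_{1,K}}[V_a]\,d\mu(a)$, where each $V_a$ is an irreducible curve with bounded volume.

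Next, apply Proposition \ref{prop: uniform intersection} to $S$ and $T_f$, and to $S$ and $R_t$. Since $T_f$ has continuous potential, we get $\int [V_a]\wedge T_f\,d\mu(a)=0$, so $[V_a]\wedge T_f=0$ for $\mu$-a.e. $a$. For $\mu$-a.e. $a$ we also get $S\wedge R_t=\int [V_a]\wedge R_t\,d\mu(a)$ for a.e. $t$ (Lebesgue, via slicing) and for $t=s$. Since $S\wedge R_s>0$, a positive $\mu$-measure set of curves $V_a$ meet the fiber over $s$ near $z_0$. Vertical components contained in fibers are irrelevant: a component inside $\pi_1^{-1}(t)$ would carry positive mass, but $[V]\wedge T_f=0$ is impossible there because $T_f|_{\text{fiber}}$ is the Green current of $f_t$. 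So $\pi_1|_{V_a}$ is a nonconstant proper map onto a neighbourhood of $s$.

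To get graphs, pass to a smaller disk $D'\ni s$ and discard ramification. The set of curves with a branch point of $\pi_1$ over a given small disk $D'$ has $\mu$-measure that can be controlled, though I am unsure of this. Concretely, I would replace $V_a$ by its normalization and use that $f^n|_{V_a}$ is a normal family, because $[V_a]\wedge T_f=0$ and the Ahlfors/Gauthier criterion applies on the normalization. Composing with a local inverse of $\pi_1$ away from finitely many branch values produces holomorphic graphs $\gamma_a:U\to\P^N$ over some disk $U$ near $s$, with $\{f^n\circ\gamma_a\}$ normal.

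The main obstacle is showing that some $\gamma_a$ is an equilibrium graph, i.e. that $\gamma_a(t)$ is $f_t$-typical for a dense set of $t$. The idea is to use condition (2): $S\wedge R_t\leq\mu_t$ for a.e. $t$. For each such $t$, the points $\{\gamma_a(t)\}$, weighted by $\mu$, form a measure dominated by $\mu_t$. Since $\mu_t$-a.e. point is typical, for a.e. $t$ and $\mu$-a.e. $a$ the point $\gamma_a(t)$ is typical, provided the weighted measure is nonzero near $s$, which condition (3) and continuity of the mass give. Fubini over $(a,t)$ in (product measure $\mu\otimes\Leb$) then yields a single $a$ for which $\gamma_a(t)$ is typical for Lebesgue-a.e. $t\in U$, hence for a dense set of $t$. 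Together with $[\Gamma_a]\wedge T_f=0$, this makes $\gamma_a$ an equilibrium graph over $U\ni s$. If $s$ is only in the closure of the good disks, I would instead choose $U$ to contain $s$ by taking $D'$ centered at $s$ and avoiding branch values off $s$; handling a branch point exactly at $s$ (via a base change $t=\tau^k$, noting stability is invariant under such a base change) is the delicate point I expect to need care.
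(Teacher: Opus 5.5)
Your skeleton matches the paper's. You use Proposition \ref{prop:irreducible} and Proposition \ref{prop: uniform intersection} to select one curve $\Gamma_a$ with three properties: it meets $\pi_1^{-1}(s)$ properly, it satisfies $[\Gamma_a]\wedge T_f=0$, and its points over a dense set of parameters are $f_t$-typical (this comes from $S\wedge R_t\leq\mu_t$). You then conclude with Lemma \ref{lem:equigraph} and Proposition \ref{prop: high-dim stab}.

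Three side remarks:
\begin{itemize}
\item The step you were unsure about, controlling the measure of ramified curves, is not needed. One curve suffices. The paper takes all curves in a small neighbourhood $\sB$ of $a$ in $\sV^*_{1,K}$; by stability of transverse intersections, these are all graphs over a small disk.
\item The paper gets typicality on a countable dense set $E$ rather than by Fubini in $(a,t)$. This avoids measurability questions.
\item Your reason for discarding vertical components is false. A vertical curve lying in the Fatou set of $f_t$ does satisfy $[V]\wedge T_f=0$. What is true is that admissibility of $S\wedge R_s$ excludes, up to a null set, vertical curves inside $\pi_1^{-1}(s)$, and vertical curves in other fibers do not meet $\pi_1^{-1}(s)$. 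So simply choose curves meeting $\pi_1^{-1}(s)$ properly, as the paper does.
\end{itemize}

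The genuine difference is at the point you flagged: a curve that is singular or ramified over $s$ itself.

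The paper argues by contradiction. If $s\in\Bif(f)$, then $\Bif(f)$, being the support of $dd^cL$ with $L$ continuous, has no isolated points. So there is $s'\in\Bif(f)$ arbitrarily close to $s$ that is a transverse parameter for $\Gamma_a$. Working at $s'$ gives an equilibrium graph and hence $s'\in\Stab(f)$, a contradiction.

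Your base change $t=s+\tau^k$ along a branch of the normalization is also a valid route. However, you must check three things that you only asserted or omitted:
\begin{itemize}
\item[(i)] Stability descends. This holds: if $L\circ\phi$ is harmonic near $0$, then $dd^cL=c\,\delta_s$ near $s$. This pulls back to $ck\,\delta_0$, which forces $c=0$.
\item[(ii)] For $N\geq 3$, the base-changed family must still be periodically generic, as Proposition \ref{prop: high-dim stab} requires. You did not mention this. It is true: the new continuation of a repelling cycle of $f_s$ is the old one composed with $\phi$, so persistent resonance and non-diagonalizability are unchanged.
\item[(iii)] The lifted branch must be an equilibrium graph. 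Normality and typicality on $\phi^{-1}(E)$ pass through the finite map $\phi\times\id$.
\end{itemize}

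The paper's route is lighter. In fact, your own fallback already closes the argument without any base change. You get stability at every transverse parameter in $D$, so $\Bif(f)\cap D$ is finite. A bifurcation measure with continuous potential has no atoms, so $\Bif(f)$ has no isolated points, and therefore $s\in\Stab(f)$.
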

		
		\begin{proof}
			We write $S$ as $$S=\int_{\sV} [\Gamma_a]\;d\nu(a),$$
			where $\sV$ is a bounded volume space, and $\nu$ is a finite measure on $\sV$, see Definition \ref{defi:uniformwoven}.
			
			Let $\Omega'\subset \Omega$ be a relatively compact open subset  such that the restriction $S':=S|_{\Omega'}$ still satisfies the three conditions in Theorem \ref{thm:wovenbifur}.   Let $K:=\overline{\Omega'}$, which is compact. 
			
			
			Since we only consider curves in this paragraph, write $\sV^*_K:=\sV^*_{1,K}$. By Proposition \ref{prop:irreducible}, there exists a finite measure $\nu_1$ on $\sV^*_{K}$  such that
			$$S'=\int_{\sV^*_{K}} [\Gamma_a]\;d\nu_1(a).$$

			Since $\nu_1$ is a  finite measure on a locally compact separable metrizable space,  $\nu_1$ is a Radon measure. Replacing $\nu_1$ by its restriction to a compact subset of $\sV^*_K$ of positive $\nu_1$-measure, chosen so that $S'\wedge R_s$ remains non-zero,  we may assume $\nu_1$ has compact support in $\sV^*_{K}$ and  $S'$ still satisfies the three conditions in Theorem \ref{thm:wovenbifur}. Now for each $a\in \supp \nu_1$, $\Gamma_a$ is a  closed irreducible analytic curve in $\Omega'$.
			
			Since $S'\wedge R_s>0$,  there is a positive $\nu_1$-measure subset $\sW\subset \sV^*_{K}$ such that for every $a\in \sW$,  $\Gamma_a$ intersects $ \pi_1^{-1}(s)$ properly.   Pick $a\in \sW$. Let $D\subset \D$ be a small disk containing $s$ such that for every $t\in D$, $\Gamma_a$ intersects $ \pi_1^{-1}(t)$ properly.   A parameter $t\in D$ is called {\em bad} if either there exists   a singular point $x\in \Gamma_a$ such that $\pi_1(x)=t$, or there exists   a ramification point  $x$ of the map $\pi_1|_{\Gamma_a}:\Gamma_a\to D$ such that $\pi_1(x)=t$.  Bad parameters form a finite set in $D$.
			
			Assume by contradiction that $s\in \Bif(f)$.  Since $\Bif(f)$ is the support of the bifurcation measure, which has continuous local potentials, $\Bif(f)$  has no isolated point.  Thus there exists $s'\in \Bif(f)\cap D$ such that $s'$ is a good parameter, i.e. $\Gamma_a$  intersects $\pi_1^{-1}(s')$ transversely. By the stability of transverse intersections,   replacing $D$ by a smaller disk centered at $s'$,  we can choose $\sB$ to be a small neighborhood of $a$ such that for every $b\in \sB$ and every $t\in D$,  $\Gamma_b$ intersects $ \pi_1^{-1}(t)$ transversely.  By shrinking $\Omega'$, we may further assume the intersection $\Gamma_b\cap \pi_1^{-1}(t)$ is always unique for $b\in \sB$ and $t\in D$.  Replacing $\nu_1$ by its restriction to $\sB$ and replacing $s$ by $s'$, the new uniformly woven current $S'$ and the point $s'$ still satisfy the three conditions in Theorem \ref{thm:wovenbifur}.  Moreover $S'\wedge R_t>0$ is admissible for every $t\in D$.  For each $a\in \supp \nu_1$, $\Gamma_a$ is the graph of a holomorphic map $\gamma_a:D\to \P^N(\C)$. 
			
			Let $E$ be a countable dense subset of $D$.  For every $t\in E$,  the set of typical points for $f_t$ has full $\mu_t$ measure. By condition (2), $0<S'\wedge R_t\leq \mu_t$.  Hence there exists a full $\nu_1$-measure subset $Y_t\subset \sV^*_{K}$ such that for every $a\in Y_t$,  $\gamma_a(t)$ is a typical point for $f_t$.  Since $E$ is countable, there exists a full $\nu_1$-measure subset $Y\subset \sV^*_{K}$ such that for every $a\in Y$ and $t\in E$,  $\gamma_a(t)$ is a typical point for $f_t$.   By Proposition \ref{prop: uniform intersection} and condition (1), there exists a full $\nu_1$-measure subset $Z\subset \sV^*_{K}$ such that $[\Gamma_a]\wedge T_f=0$ for every $a\in Z$.  We pick a $\Gamma\in Y\cap Z$. Let $\gamma:D\to \P^N(\C)$ be the holomorphic map such that $\Gamma$ is the graph of $\gamma$.  Then $\gamma$ is an equilibrium graph. By  Lemma \ref{lem:equigraph} and Proposition \ref{prop: high-dim stab}, $s'\in \Stab(f)$, which contradicts $s'\in \Bif\cap D$. This completes the proof.
		\end{proof}

		\section{Review of slicing theory}\label{sec:slicing}
		Let $X$ be a Hermitian manifold of dimension $m$ and  $(B,\omega_B)$ be a Hermitian manifold of dimension $l$.  Let $\pi:X\to B$ be a surjective   holomorphic map.  Let $\pi':X\times \P^N(\C)\to B$  be the surjective holomorphic map given by $\pi'(t,x):=\pi(t)$.  We recall Federer's slicing theory \cite[Section 4.3]{MR257325} applied in our setting, see also Dinh-Sibony \cite[Page 280]{dinh2010dynamics}.  Let $y$ denote the coordinates in a chart of $B$.  Let $S$ be a positive closed bidimension $(m,m)$ current on $X\times \P^N(\C)$.  Let $\psi$ be a positive smooth function on this chart with compact support  such that $\int \psi \;\omega_B^l=1$.  Define $\psi_\ep(y):=\ep^{-2l}\psi(\ep^{-1}y)$ and $\psi_{\theta,\ep}(y):=\psi_\ep(y-\theta)$. The measure $\psi_{\theta,\ep}\omega_B^l$ converges in the weak-* sense to the Dirac mass at $\theta$.  We say that the {\em slicing} $S_\theta$ at $\theta$ with respect to $\pi'$ is well defined as a positive closed bidimension $(m-l,m-l)$ current on $X\times \P^N(\C)$, if for every function $\psi$ satisfying above properties and every smooth test form $\Phi$  of bidegree $(m-l,m-l)$  on $X\times \P^N(\C)$,  we have
		$$\left\langle S_\theta, \Phi\right\rangle=\lim_{\ep\to 0}\left\langle S\wedge (\pi')^*(\psi_{\theta,\ep}\omega_B^l), \Phi\right\rangle.$$
		By slicing theory, $S_\theta$ is well defined for Lebesgue-a.e. point $\theta\in B$.  Slicing satisfies the following two properties. 
		
		(1) Assume  the slicing $S_\theta$ is well defined at $\theta$. If $\Omega$ is a smooth form on $X\times \P^N(\C)$, then the slicing $(S\wedge \Omega)_\theta$ is also well defined and equals $S_\theta\wedge \Omega_\theta$ as a current on $(\pi')^{-1}(\theta)=\pi^{-1}(\theta)\times \P^N(\C)$, where $\Omega_\theta$ is the restriction of $\Omega$ to $\pi^{-1}(\theta)\times \P^N(\C)$.
		
		(2)   Let $\rho$ be a smooth probability measure on $B$.  Then
		$$S\wedge (\pi')^*\rho=\int S_\theta \;d\rho(\theta).$$
		

		\section{Background on height theory}\label{sec:height}
		Throughout this subsection, we follow the language and conventions of \cite{yuan2021}.
		Let $F$ be a number field.
		For any quasi-projective variety $V$ over $F$ and $v\in M_F$, we denote by $V_v^{\an}$ the analytification of $V$ over $F_v.$
		When $v$ is archimedean, $V_v^{\an}$ is the complex analytic variety $V(\C)$ with the euclidean topology induced by $v.$ When $v$ is non-archimedean, we use Berkovich's analytification. In our paper, we will only concern the archimedean case.
		

		Let $\pi: U\to X$ be a flat and projective morphism of relative dimension $n$
		of quasi-projective varieties over $F$. Set $m:=\dim X+1$ and  $K=F(X)$.
		Let $Y\to \Spec K$ be the generic fiber of $\pi$.
		
		\subsection{Moriwaki height and geometric height}\label{Sec_mwheight}
		Let $\overline{H}$ be any element in $\widehat{\mathrm{Pic}}(K/\Z)_{\mathrm{nef,\Q}}$. For any point $x \in Y(\overline{K})$ and $\overline{L}\in \widehat{\mathrm{Pic}}(U/\Z)_{\mathrm{nef,\Q}}$, define the \textit{Moriwaki height} of $x$ for $\overline{L}$ and $\overline{H}$ by
		\[
		h_{\overline{L}}^{\overline{H}}(x) := h_{\overline{L}}^{\overline{H}}(x') := \frac{\overline{L}|_{x'} \cdot \overline{H}^{m-1}}{\deg(x')} \in \mathbb{R}.
		\]
		Here $x'$ is the scheme theoretic image of $x$. This gives a height function
		\[
		h_{\overline{L}}^{\overline{H}}: Y(\overline{K}) \longrightarrow \mathbb{R}.
		\]
		More generally, for any closed
		$\overline{K}$-subvariety $Z$ of $Y$, the  \textit{Moriwaki height}  of $Z$ for $\widetilde{L}$ and $\widetilde{H}$ is
		\[
		h_{\overline{L}}^{\overline{H}}(Z) := h_{\overline{L}}^{\overline{H}}(Z') := \frac{\left(\overline{L}|_{Z'}\right)^{\dim Z + 1} \cdot \overline{H}^{m-1}}{(\dim Z + 1)\deg_{\widetilde{L}}(Z'/K)}.
		\]
		Here $Z'$ is the scheme theoretic image of $Z$.
		
		\medskip
		
		Next we recall the geometric height.
		Let $\widetilde{H}$ be the image of $\overline{H}$ in $\widehat{\Pic}(X/F)_{\mathrm{nef}}$ and $\widetilde{L}$ be the image of $\overline{L}$ in $\widehat{\Pic}(U/F)_{\mathrm{nef}}.$
		We define the \textit{geometric height} of $x$ for $\widetilde{L}$ and $\widetilde{H}$ by
		\[
		h_{\widetilde{L}}^{\widetilde{H}}(x) := h_{\widetilde{L}}^{\widetilde{H}}(x') := \frac{\widetilde{L}|_{x'} \cdot \widetilde{H}^{m-2}}{\deg(x')} \in \mathbb{R}.
		\]
		Here $x'$ is the scheme theoretic image of $x$. This gives a geometric height function
		\[
		h_{\widetilde{L}}^{\widetilde{H}}: Y(\overline{K}) \longrightarrow \mathbb{R}.
		\]
		
		More generally, for any closed
		$\overline{K}$-subvariety $Z$ of $Y$, the \textit{geometric height} of $Z$ for $\overline{L}$ and $\overline{H}$ is
		\[
		h_{\widetilde{L}}^{\widetilde{H}}(Z) := h_{\widetilde{L}}^{\widetilde{H}}(Z') := \frac{\left(\widetilde{L}|_{Z'}\right)^{\dim Z + 1} \cdot \widetilde{H}^{m-2}}{(\dim Z + 1)\deg_{\widetilde{L}}(Z'/K)}.
		\]
		Here $Z'$ is the scheme theoretic image of $Z$.

		\medskip

		We have the following comparison between the Moriwaki and the geometric heights.
		\begin{prop}\label{prop:two height}
			Assume that $\overline{H}$ is big and nef.
			There exists a constant $C>0$ such that  for every $\overline{K}$-subvariety $Z$, we have
			$$h_{\widetilde{L}}^{\widetilde{H}}(Z)\leq Ch_{\overline{L}}^{\overline{H}}(Z).$$
		\end{prop}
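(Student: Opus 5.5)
We compare the two intersection numbers directly by reducing to a bound on the arithmetic polarization. Recall that $m=\dim X+1$ and $K=F(X)$. The Moriwaki height pairs $\overline{L}|_{Z'}^{\dim Z+1}$ with $\overline{H}^{m-1}$, while the geometric height pairs $\widetilde{L}|_{Z'}^{\dim Z+1}$ with $\widetilde{H}^{m-2}$. Both are normalized by the same degree $\deg_{\widetilde L}(Z'/K)$, so it suffices to compare the numerators.

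The key step uses the bigness of $\overline{H}$. Since $\overline{H}$ is big and nef in $\widehat{\Pic}(K/\Z)_{\rm nef,\Q}$, there should be a positive rational $\epsilon$ such that $\overline{H}-\epsilon\,\overline{\mathcal O}(1)$ is effective (or pseudo-effective). Here $\overline{\mathcal O}(1)$ is the adelic line bundle pulled back from $\Spec \Z$ with trivial underlying bundle and constant metric $|1|=e^{-1}$. This is Yuan--Zhang's analogue of Kodaira's lemma for adelic line bundles, obtained from a nonzero small section of $N\overline{H}$ after twisting the metric by a constant.

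The intersection with $\overline{\mathcal O}(1)$ computes the geometric part: for any integrable $\overline{M}_1,\dots,\overline{M}_{k}$ one has
$$\overline{\mathcal O}(1)\cdot \overline{M}_1\cdots \overline{M}_{k}=\widetilde M_1\cdots \widetilde M_{k},$$
the geometric intersection over $F$ (up to the normalization by $[F:\Q]$). Taking $\overline{H}^{m-1}=\overline{H}\cdot\overline{H}^{m-2}$, we write
$$\overline{L}|_{Z'}^{\dim Z+1}\cdot \overline{H}^{m-1}=\epsilon\,\overline{L}|_{Z'}^{\dim Z+1}\cdot\overline{\mathcal O}(1)\cdot \overline{H}^{m-2}+\overline{L}|_{Z'}^{\dim Z+1}\cdot(\overline H-\epsilon\overline{\mathcal O}(1))\cdot\overline{H}^{m-2}.$$
The first term equals $\epsilon\,\widetilde{L}|_{Z'}^{\dim Z+1}\cdot\widetilde H^{m-2}$. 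The second term is nonnegative, because we intersect nef classes with an effective class. Dividing by $(\dim Z+1)\deg_{\widetilde L}(Z'/K)$ gives the claim with $C=\epsilon^{-1}$, which is independent of $Z$.

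The main obstacle is the positivity of the second term when $\overline{H}-\epsilon\overline{\mathcal O}(1)$ is merely effective and adelic on a quasi-projective base rather than a model line bundle. We would handle it by choosing a nonzero effective section $s$ and writing the intersection as the intersection restricted to $\div(s)$ plus $\int -\log\|s\|$ against the Monge--Amp\`ere measures. Both pieces are nonnegative by nefness, once $\|s\|\le 1$ is arranged by the choice of $\epsilon$. This is then extended from model adelic bundles to general ones by the limiting procedure in Yuan--Zhang. The case where $Z'$ lies in $\div(s)$ can be avoided by using several sections with empty common base locus, which is possible after taking multiples since the underlying $\widetilde H$ on the base is big.
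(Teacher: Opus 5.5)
The paper gives no argument of its own for this proposition. It calls the result well known and refers to \cite[Proposition 2.7]{ji2026geometric}, remarking that the abelian-scheme hypothesis there is not used. Your proposal is the standard self-contained proof, and it is correct in outline. The ingredients are:
\begin{itemize}
\item a Kodaira-type lemma giving $\overline{H}\geq \epsilon\,\overline{\mathcal O}(1)$ on the base;
\item the fact that pairing with $\overline{\mathcal O}(1)$ computes the geometric intersection number, up to the harmless factor $[F:\Q]$, which only changes the value of $C$;
\item nonnegativity of an effective class intersected with nef classes.
\end{itemize}
Compared with the bare citation, your argument shows why the generality costs nothing: it uses only nefness of $\overline{L}$, bigness and nefness of $\overline{H}$, and dominance of $Z'\to X$.

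Two points should be corrected.

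First, the ``main obstacle'' you single out cannot occur. The effective section $s$ of a multiple of $\overline{H}-\epsilon\overline{\mathcal O}(1)$ lives on (an open subset of) the base $X$. Its pullback therefore vanishes only over a proper closed subset of $X$. On the other hand, $Z'$ is the Zariski closure of a subvariety of the generic fiber $Y$, so it dominates $X$. Hence $\pi^*s|_{Z'}\neq 0$ automatically, and the decomposition ``restriction to $\div(s)$ plus $\int -\log\|s\|$'' applies directly on $Z'$. This matters because your proposed remedy is not available in general. Bigness of $\widetilde{H}$, even together with nefness, does not give sections of some multiple with empty common base locus.

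Second, be careful with ``take a small section of $N\overline{H}$ and twist the metric by a constant'' in the quasi-projective adelic setting. A section with $\|s\|_v<1$ pointwise on the non-compact space $X_v^{\mathrm{an}}$ need not satisfy $\sup\|s\|_v\leq e^{-\delta}$ for any $\delta>0$. The clean route is through continuity of the arithmetic volume. Approximate $\overline{H}$ from below by model adelic line bundles, up to a small multiple of the boundary divisor, and use continuity of the volume on projective models. This gives $\widehat{\mathrm{vol}}(\overline{H}-\epsilon\overline{\mathcal O}(1))>0$ for small $\epsilon>0$, and then some multiple has a nonzero section of norm at most $1$. With these adjustments the argument goes through as you describe.
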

		This result is  well-known for experts. 
		A proof can be found in \cite[Proposition 2.7]{ji2026geometric}. In \cite[Proposition 2.7]{ji2026geometric}, $\pi: U\to X$ is an abelian scheme, however its proof works in general. 
		
		\medskip
		
		The following result gives an analytic interpretation  of the geometric height. 
		The version we need was proved by Yuan-Zhang in \cite[Lemma 5.4.4]{yuan2021}, see also \cite{Gauthier2019,Guo2025}. 
		In our paper, we only need the case where $v$ is archimedean.
		\begin{thm}\label{thmgeomheiganaly} For any place $v$ of $F$,
			We have 
			
			\[
			h_{\widetilde{L}}^{\widetilde{H}}(Z)=
			\frac{\int_{(Z')^{\an}_v}
				c_1(\overline{L})_v^{\dim Z+1}\wedge c_1(\overline{H})_v^{m-2}}
			{(\dim Z + 1)\deg_{\widetilde{L}}(Z'/K)}.
			\]
			
		\end{thm}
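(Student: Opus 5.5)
The plan is to prove the formula first for model adelic line bundles, where both sides are classical intersection numbers, and then to pass to general elements of $\widehat{\Pic}(U/F)_{\rm nef}$ and $\widehat{\Pic}(X/F)_{\rm nef}$ by the limiting procedure that defines these groups in \cite{yuan2021}.

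\emph{Dimension count.} Over $F$, the closure $Z'$ of $Z$ has dimension $\dim Z+\dim X=\dim Z+m-1$. This is exactly the total degree
$$(\dim Z+1)+(m-2)$$
of the integrand $c_1(\overline{L})_v^{\dim Z+1}\wedge c_1(\overline{H})_v^{m-2}$, so the right-hand side is a well-defined top-degree Chern measure integral over $(Z')^{\an}_v$. It therefore suffices to prove
$$\widetilde{L}|_{Z'}^{\dim Z+1}\cdot\widetilde{H}^{m-2}=\int_{(Z')^{\an}_v}c_1(\overline{L})_v^{\dim Z+1}\wedge c_1(\overline{H})_v^{m-2}.$$
The normalization by $(\dim Z+1)\deg_{\widetilde{L}}(Z'/K)$ is the same on both sides.

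\emph{Model case.} Suppose $\widetilde{L}$ and $\widetilde{H}$ come from nef line bundles on projective models $\mathcal U\supset U$ and $\mathcal X\supset X$ over $F$, with $\mathcal U\to\mathcal X$ compatible. Equip them with smooth (or continuous semipositive) metrics at $v$. The left side is then the intersection number of nef line bundles on the closure $\overline{Z'}\subset\mathcal U$. At an archimedean place, this intersection number equals the integral of the wedge product of first Chern forms over $\overline{Z'}(\C)$. For smooth metrics this is Chern--Weil theory on a resolution of $\overline{Z'}$. For continuous semipositive metrics one uses Bedford--Taylor theory and the independence of the Monge--Amp\`ere mass from the continuous metric chosen. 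The boundary $\overline{Z'}\setminus Z'$ is a proper analytic subset, and Monge--Amp\`ere measures with continuous potentials put no mass on it. Hence the integral over $(Z')^{\an}_v$ equals the integral over $\overline{Z'}(\C)$, and the model case follows.

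\emph{General case.} A nef adelic line bundle is a limit of nef model bundles $\overline{L}_j$ in the boundary topology. Concretely, $\overline{L}_j-\overline{L}$ is bounded by $\epsilon_j$ times a fixed boundary divisor, with $\epsilon_j\to0$. By \cite{yuan2021}, the intersection numbers $\widetilde{L}_j|_{Z'}^{\dim Z+1}\cdot\widetilde{H}_j^{m-2}$ converge to those of the limit; this is the definition of the intersection pairing. On the analytic side, the local potentials of $c_1(\overline{L}_j)_v$ converge locally uniformly on $(Z')^{\an}_v$, with a domination near the boundary coming from the Green function of the boundary divisor. By Bedford--Taylor continuity, the measures $c_1(\overline{L}_j)^{\dim Z+1}_v\wedge c_1(\overline{H}_j)^{m-2}_v$ converge weakly on $(Z')^{\an}_v$ to the limit measure.

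\emph{Main obstacle.} The delicate step is showing that no mass escapes to the boundary in this limit, i.e.\ that the total masses converge and not only the measures on compact sets. I would argue that the total masses equal the model intersection numbers, which converge. The limiting measure has total mass given by the adelic intersection number, obtained as a limit of masses of nonincreasing or nondecreasing sequences bounded by $\epsilon_j$-perturbations. Together with weak convergence and lower semicontinuity of mass, this gives equality of the total integrals. This is precisely \cite[Lemma 5.4.4]{yuan2021}, which I would follow in detail here.
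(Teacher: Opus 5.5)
Your proposal lands where the paper does: the paper gives no argument of its own for this statement and simply invokes Yuan--Zhang \cite[Lemma 5.4.4]{yuan2021} (see also \cite{Gauthier2019,Guo2025}), and your model-case-plus-limit outline also closes by citing that lemma, so both proofs rest on the same citation. Be aware of two limits of your own sketch. First, the no-escape-of-mass paragraph is circular as written: vague convergence on $(Z')^{\an}_v$ plus lower semicontinuity only gives $\int_{(Z')^{\an}_v}c_1(\overline{L})_v^{\dim Z+1}\wedge c_1(\overline{H})_v^{m-2}\leq \widetilde{L}|_{Z'}^{\dim Z+1}\cdot\widetilde{H}^{m-2}$, and the reverse inequality needs a uniform bound on the mass of the approximating measures near the boundary, which is exactly what the cited lemma supplies. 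Second, your sketch only covers archimedean $v$; that is all the paper uses, but it is less than the stated ``for any place $v$''.
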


		\subsection{The relative equidistribution theorem}\label{Sec_equi}
		A sequence of points $x_i, i\geq 0$ in $Y(\overline{K})$ is called \emph{generic} if $x_i, i\geq 0$ tends to the generic point of $X$ in the Zariski topology.
		We recall Yuan-Zhang's relative equidistribution theorem.
		\begin{thm}\cite[Theorem 6.1]{Moriwaki2000}\cite[Theorem 5.4.6]{yuan2021}\cite{CM24}\label{thmequiyuanzhang}
			Let $\overline{H}$ be an element of $\widehat{\Pic}(X/\Z)_{\mathrm{nef}}$ that $\overline{H}$
			is nef and $\widetilde{H}^{m-1}>0$.
			Here $\widetilde{H}$ is the image of $\overline{H}$ in $\widehat{\Pic}(X/F)_{\mathrm{nef}}$.
			Let $\overline{L}$ be an element of $\widehat{\Pic}(U/\Z)_{\mathrm{nef}}$ such that $\deg_{\widetilde{L}}(Y/K)>0$. Here $\widetilde{L}$ is the
			image of $\overline{L}$ under the canonical composition
			\[
			\widehat{\mathrm{Pic}}(U/\Z)_{\mathrm{nef}} \longrightarrow \widehat{\mathrm{Pic}}(Y/\Z)_{\mathrm{nef}} \longrightarrow \widehat{\mathrm{Pic}}(Y/F)_{\mathrm{nef}}.
			\]
			
			Let $x_i, i\geq 0$ be a generic sequence in $Y(\overline{K})$ such that $h_{\overline{L}}^{\overline{H}}(x_i)$ converges to
			$h_{\overline{L}}^{\overline{H}}(Y)$. Then for any place $v$ of $F$, there is a weak convergence
			\[
			\frac{1}{\deg(x_i)} \delta_{\Delta(x_i),v} c_1(\pi^* \overline{H})_v^{m-1} \longrightarrow \frac{1}{\deg_{\widetilde{L}}(Y/F)} c_1(\overline{L})_v^n c_1(\pi^* \overline{H})_v^{m-1}
			\]
			of measures on $U_v^{\mathrm{an}}$. Here $\Delta(x_i) \subset U$ denotes the Zariski closure of the image
			of $x_i$ in $U$, and $\delta_{\Delta(x_i),v}$ denotes the Dirac current of $\Delta(x_i)_{K_v}^{\mathrm{an}}$ in $U_v^{\mathrm{an}}$.
		\end{thm}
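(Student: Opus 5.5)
The plan is to follow the variational method of Szpiro--Ullmo--Zhang, in the form adapted by Yuan and Moriwaki and carried over to adelic line bundles on quasi-projective varieties by Yuan--Zhang. Fix an archimedean place $v$ (the only case used in this paper). By density it suffices to test the convergence against a smooth function $\phi$ with compact support on $U_v^{\an}$. For real $\ep$ small, I would twist the metric of $\overline{L}$ at $v$ by $e^{-\ep\phi}$, obtaining $\overline{L}(\ep\phi)\in\widehat{\Pic}(U/\Z)$. Since the twist only changes the $v$-adic metric, the geometric image $\widetilde L$ is unchanged. The key identity, for every $i$, is
$$h^{\overline H}_{\overline{L}(\ep\phi)}(x_i)=h^{\overline H}_{\overline L}(x_i)+\frac{\ep}{\deg(x_i)}\int \phi\,\delta_{\Delta(x_i),v}\,c_1(\pi^*\overline H)_v^{m-1}.$$
This follows from the definition of the Moriwaki height, because $\overline{L}|_{x'}$ changes by the trivial bundle with metric $e^{-\ep\phi}$. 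The goal is therefore to compare the $\ep$-linear terms on both sides.

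Step 1, the lower bound for points. I would prove a fundamental inequality for the perturbed bundle along the generic sequence:
$$\liminf_{i} h^{\overline H}_{\overline{L}(\ep\phi)}(x_i)\ \ge\ \frac{\widehat{\vol}\big(\overline{L}(\ep\phi)|\,\overline H\big)}{(n+1)\deg_{\widetilde L}(Y/K)}.$$
Here the right side is the appropriate $\overline H$-weighted arithmetic volume, which for nef bundles equals $h^{\overline H}_{\overline L}(Y)$. The argument would produce small sections: if the volume is positive, then for large $k$ there are nonzero sections of $k\overline{L}(\ep\phi)$ minus a slightly shifted constant with small norm. Their common zero locus is a proper subvariety, and genericity of $(x_i)$ pushes the $x_i$ off it, which bounds their heights from below. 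To handle the fact that $\overline L$ is only nef, I would add $\eta$ times an ample bundle and let $\eta\to0$. The quasi-projective setting is handled by the Yuan--Zhang limit of model bundles, where the model errors are uniform in $i$.

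Step 2, differentiability of the volume. I would invoke a Siu-type inequality for arithmetic volumes (Yuan's inequality, extended by Yuan--Zhang to adelic bundles and $\overline H$-weighted intersections):
$$\widehat{\vol}\big(\overline{L}(\ep\phi)\big)\ \ge\ \overline{L}^{\,n+1}\cdot\overline H^{m-1}+(n+1)\ep\int\phi\,c_1(\overline L)_v^n c_1(\pi^*\overline H)_v^{m-1}-O(\ep^2).$$
To obtain this, I would write the perturbation $\ep\phi$ as a difference of two nef perturbations; this is possible for $\ep$ small by smoothness of $\phi$.

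Step 3, conclusion. Combining the identity, the hypothesis $h^{\overline H}_{\overline L}(x_i)\to h^{\overline H}_{\overline L}(Y)$, and Steps 1--2, dividing by $\ep>0$ and letting $\ep\to0$ gives
$$\liminf_i \frac{1}{\deg x_i}\int\phi\,\delta_{\Delta(x_i),v}c_1(\pi^*\overline H)_v^{m-1}\ \ge\ \frac{1}{\deg_{\widetilde L}(Y/F)}\int\phi\,c_1(\overline L)^n_vc_1(\pi^*\overline H)^{m-1}_v.$$
I would then note that the two normalizations $\deg_{\widetilde L}(Y/K)$ in the height and $\deg_{\widetilde L}(Y/F)$ in the measure match, using $\widetilde H^{m-1}>0$ to normalize. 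Applying the same inequality to $-\phi$ gives the reverse inequality, hence the weak convergence. The main obstacle is Step 2 in the relative, quasi-projective setting. Because $\overline H$ enters only through intersection numbers, the volume must be replaced by an $\overline H$-weighted arithmetic volume, and one must check that its differentiability holds with an error uniform under the model approximations. I would first try to establish this by slicing, reducing to Yuan's projective inequality on models via Moriwaki's argument for the case $\overline H$ is arithmetically ample, and then passing to nef $\overline H$ with $\widetilde H^{m-1}>0$ by a limiting argument.
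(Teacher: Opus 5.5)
The paper does not prove this statement; it quotes it from \cite{Moriwaki2000,yuan2021,CM24}. Your variational outline (twist the metric at $v$, prove a fundamental inequality, expand the volume in $\ep$) is the right architecture. However, Step~1 does not work as you describe it, and it is Step~1, not Step~2, where the relative setting is hard.

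Suppose the small sections are global sections of $k\overline L(\ep\phi)$ over $U$. Their existence is then governed by the absolute arithmetic volume on $U$, an intersection of $n+m$ classes that has nothing to do with $\overline L^{n+1}\cdot\pi^*\overline H^{m-1}$. When the underlying bundle of $\overline L$ is pulled back from the fibre direction (as for $\overline{L_2}_f$ in this paper), it is not big on $U$ once $\dim X\geq 1$, so that volume is zero and no sections are produced.

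The natural repair is to use sections of $k(\overline L+t\pi^*\overline H)(-c)$ and let $t\to\infty$. For $x$ off their zero locus this gives $h^{\overline H}_{\overline L}(x)\geq c\,\widetilde H^{m-1}-t\,\overline H^{m}$. But the Hilbert--Samuel/Siu lower bound for the volume only guarantees such sections for $c\,\widetilde H^{m-1}$ up to about $\frac{t}{m}\overline H^{m}+h^{\overline H}_{\overline L}(Y)$. The uncancelled term $-(1-\frac1m)\,t\,\overline H^{m}$ destroys the bound unless $\overline H^{m}=0$, which is exactly Moriwaki's condition.

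For the same reason your fallback points the wrong way. An arithmetically ample $\overline H$ has $\overline H^{m}>0$, so Moriwaki's argument does not cover it. Moreover, the smallness hypothesis $h^{\overline H}_{\overline L}(x_i)\to h^{\overline H}_{\overline L}(Y)$ concerns only the given $\overline H$. It therefore does not transfer to $\overline H+\eta\overline A$ in a limiting argument.

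What is missing is a genuinely relative volume. You must take sections in $H^0(Y,kL_K)$ on the generic fibre and measure them at all places of $K$, namely the prime divisors of a model of $X$ and the points of $X_v^{\an}$, weighted by $\overline H^{m-1}$. In other words, regard $(K,\overline H)$ as an adelic curve satisfying the product formula. Three inputs are then needed:
\begin{itemize}
\item a Hilbert--Samuel formula identifying the volume of a nef $\overline L$ with $\overline L^{n+1}\cdot\pi^*\overline H^{m-1}$;
\item a way to turn positive volume into a nonzero section whose norms have negative weighted average, which is no longer a lattice-point count on $U$;
\item the Siu-type inequality for this weighted volume, since the inequality of Yuan that you cite in Step~2 concerns absolute volumes.
\end{itemize}
Your phrase ``the appropriate $\overline H$-weighted arithmetic volume, which for nef bundles equals $h^{\overline H}_{\overline L}(Y)$'' assumes the first of these, which is essentially the content of \cite{CM24}. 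Given these inputs, your Steps~1--3 go through. Without them, the proposal only reduces the theorem to an unproved statement of the same depth.
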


		\begin{rem}
			The equidistribution theorem in this relative setting was first proved in \cite[Theorem 6.1]{Moriwaki2000} under an additional hypothesis called \emph{Moriwaki condition} i.e. 
			$\overline{H}^m=0$.  A more general result for quasi-projective varieties was proved by Yuan--Zhang in \cite[Theorem 5.4.6]{yuan2021}. Chen--Moriwaki removed the Moriwaki condition in \cite[Theorem F]{CM24}, proving a more general form of the theorem in the setting of adelic curves. In this article, we only require the above special case, corresponding to a polarized adelic structure in the sense of Chen--Moriwaki. 
		\end{rem}

		The following definition is given by Yuan-Zhang in \cite[Section 5.4.1]{yuan2021}.
		\begin{defi}The sequence $x_i, i\geq 0$ is called \emph{small} if for any  $\overline{H}\in\widehat{\mathrm{Pic}}(K/\Z)_{\mathrm{nef,\Q}}$,  $h_{\overline{L}}^{\overline{H}}(x_i)$ converges to $h_{\overline{L}}^{\overline{H}}(Y).$
		\end{defi}

		\subsection{Polarized dynamical systems}\label{subsectionpolarizedend}
		Let $X$ be a quasi-projective variety over $F$. Let $K:=F(X)$. Let $f: \P^N_K\to \P^N_K$ be an endomorphism of algebraic degree $d\geq 2$.
		After shrinking $X$, we may extend $f$ to a finite endomorphism $X\times\P^N\to X\times \P^N$, we still denote it by $f$. Let $\pi_1:X\times\P^N\to X$ and $\pi_2:X\times\P^N\to \P^N$ be the first and the second projections.
		Let $L_2:=\pi_2^*(O_{\P^N}(1))$. After shrinking $X$, we may assume that $f^*L_2=dL_2.$

		\medskip

		The triple $(\pi_1: X\times\P^N\to X, f, L_2)$ is a \emph{$d$-polarized dynamical system over $X$} in the sense of Yuan-Zhang \cite{yuan2021}.
		In \cite[Theorem 6.1.1]{yuan2021}, by Tate's limiting argument, Yuan-Zhang constructed an adelic line bundle $\overline{L_2}_f\in \widehat{\Pic}(X\times \P^N/F)_{\rm nef}$ extending $
		L_2$ such that $f^*\overline{L_2}_f=d\overline{L_2}_f.$

		\medskip
		
		Let $V$ be an irreducible closed subvariety of $X\times\P^N$ over $F$ (resp. $\overline F$) of dimension $m=\dim X$. We call $V$ \emph{horizontal} if $\pi_1:V\to X$ (resp. $\pi_1:V\to X_{\overline F}$) is generically finite and surjective. The {\em degree} of $V$ is by definition the topological degree of $\pi_1|_V$.
		Let $x_V\in \P^N(\overline{K})$ be any geometric generic point of $V.$
		\begin{defi}[Canonical height]
			The {\em canonical height} of a horizontal subvariety $V$ is defined as 
			$$\hat{h}^{\overline{H}}_f(V):=h^{\overline{H}}_{\overline{L_2}_f}(x_V).$$
		\end{defi}
		
		\medskip
		
		\begin{defi}[Small horizontal subvarieties]
			A sequence of horizontal subvarieties $V_n, n\geq 0$ (over $F$ or $\overline{F}$)
			is called \emph{small for $f$} if for any  $\overline{H}\in\widehat{\mathrm{Pic}}(K/\Z)_{\mathrm{nef,\Q}}$, we have $\hat{h}_{f}^{\overline{H}}(V_n)\to 0$ as $n\to \infty.$
		\end{defi}
		
		
		\medskip

		Now we fix an embedding $F\hookrightarrow \C$. It gives an archimedean place $v\in M_F.$
		Let $T_f$ be the relative Green current of $f: (X\times \P^N)(\C)\to (X\times \P^N)(\C)$, see Section \ref{section:bifur}.
		We have $T_f=c_1(\overline{L_2}_f)_v.$
		
		\medskip
		
		Let $V$ be a horizontal subvariety over $F$ or $\overline{F}$. Let $x_V$ be any geometric generic point of $V$.
		Let $\overline{H}\in\widehat{\mathrm{Pic}}(K/\Z)_{\mathrm{nef,\Q}}.$
		\begin{defi}[Canonical geometric height]
			The {\em canonical geometric height} of $V$ is defined as 
			$$\hat{h}^{\widetilde{H}}_\geom(V):=h^{\widetilde{H}}_{\widetilde{L_2}_f}(x_V).$$
		\end{defi}

		After shrinking $X$, assume that $\overline{H}\in \widehat{\Pic}(X/F)_{\rm nef}.$
		Let $\omega_X:=c_1(\overline{H})_v$. It is a positive closed $(1,1)$-current on $\overline{X}(\C)$ with continuous potential.
		Set $\omega_1:=\pi_1^*\omega_X.$
		By Theorem \ref{thmgeomheiganaly}, we have 
		
		\begin{cor}\label{cor: geometricheightanalytic}
			
			The following holds:
			$$\hat{h}^{\widetilde{H}}_\geom(V)=\frac{1}{\deg V} \int_{X\times \P^N(\C)} [V(\C)]\wedge T_f\wedge \omega_1^{m-1}.$$ 
		\end{cor}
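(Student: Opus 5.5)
This is a direct specialization of Theorem \ref{thmgeomheiganaly} to $Z=x_V$, with $\overline{L}=\overline{L_2}_f$ and $\dim Z=0$; the only work is to identify the ingredients. By definition, $\hat{h}^{\widetilde{H}}_\geom(V)=h^{\widetilde{H}}_{\widetilde{L_2}_f}(x_V)$. The scheme-theoretic image $x'$ of the geometric generic point $x_V$ in the generic fiber $Y=\P^N_K$ is the closed point corresponding to $V$. Its closure in $U=X\times\P^N$ is $V$ itself when $V$ is defined over $F$. When $V$ is defined over $\overline F$, it is the Galois orbit of $V$, and we treat that case at the end. The degree $\deg(x')$ over $K$ equals the degree of $\pi_1|_V$, namely $\deg V$, so the denominator $(\dim Z+1)\deg_{\widetilde L}(Z'/K)$ becomes $\deg V$.

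Next, apply Theorem \ref{thmgeomheiganaly} at the archimedean place $v$ fixed by $F\hookrightarrow\C$, with $\overline{L}=\overline{L_2}_f$. The integrand is
$$c_1(\overline{L_2}_f)_v\wedge c_1(\overline H)_v^{m-2},$$
where $\overline H$ is pulled back by $\pi_1$, and it is integrated over the analytification of the closure of $x'$. Two identifications finish this step. First, $c_1(\overline{L_2}_f)_v=T_f$, as stated above. Second, $c_1(\pi_1^*\overline H)_v=\pi_1^*\omega_X=\omega_1$. One also has to match indices: the $m$ of Theorem \ref{thmgeomheiganaly} is $\dim X+1$, while the $m$ in this section is $\dim X$. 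The theorem's exponent $m-2$ is therefore $\dim X-1$, which is the $m-1$ appearing in the corollary. Integrating over $V(\C)$ is the same as pairing the current $[V(\C)]$ with $T_f\wedge\omega_1^{m-1}$ on $X\times\P^N(\C)$, which gives the stated formula.

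The one point needing care is the case $V$ over $\overline F$. There $x'$ lives over a finite extension $F'$, and the closure of $x'$ in $U$ is the union of the Galois conjugates of $V$. Handle this by base changing to $F'$: the heights are invariant under finite base extension, and one takes the embedding $F'\hookrightarrow\C$ that extends $v$ and sends the relevant component to $V(\C)$. The conjugates all have the same integral and the same degree, so the formula for $V$ alone follows. Shrinking $X$ does not affect either side, since the currents have continuous potentials and put no mass on proper Zariski closed subsets. I expect this bookkeeping of conjugates and index conventions to be the only obstacle; the analytic content is entirely in Theorem \ref{thmgeomheiganaly}.
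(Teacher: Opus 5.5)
Your proposal is correct and follows the paper's own argument: the paper presents the corollary as an immediate specialization of Theorem \ref{thmgeomheiganaly}, with $Z=x_V$, $\overline{L}=\overline{L_2}_f$, $c_1(\overline{L_2}_f)_v=T_f$ and $c_1(\pi_1^*\overline H)_v=\omega_1$. You have also handled the index shift correctly: the theorem's exponent $m-2$ equals $\dim X-1$, which is the $m-1$ of the corollary. One small simplification concerns the $\overline F$ case. Base changing to a field of definition $F'$ of $V$, and using the place of $F'$ induced by $\overline F\hookrightarrow\C$, already makes the closure of $x'$ equal to $V$ itself. You therefore do not need the claim that all conjugates carry the same integral. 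If you did use that claim, it would have to be justified by the place-independence in Theorem \ref{thmgeomheiganaly}, not by any symmetry of $T_f$.
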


		\subsection{Adelic bifurcation classes and stable families}\label{subsec:adelic-bifurcation}
		
		We recall here an algebro-geometric interpretation of stability in terms of the geometric part of the bifurcation adelic line bundle. We keep the notation and terminology of Yuan-Zhang \cite{yuan2021}; in particular, we use adelic line bundles, their geometric parts, and their intersection theory without recalling the general construction.
		
		Let $\sM_{d,N}$ be the moduli space of degree $d$ endomorphisms of $\P^N$. On the parameter space $\End_{d,N}$ there is a tautological polarized dynamical system; the bifurcation adelic line bundle obtained from it by Deligne pairing is $\PGL_{N+1}$-invariant and descends to a nef adelic line bundle
		$$\overline{L}_{\bif}\in \widehat{\Pic}(\sM_{d,N}/\C)_{\rm nef,\Q}$$
		whose archimedean curvature is the bifurcation current on $\sM_{d,N}(\C)$. Let $\widetilde{L}_{\bif}$ be the geometric part of $\overline{L}_{\bif}$.
		
		We will use the following facts: $\widetilde{L}_{\bif}$ is nef in the sense of Yuan-Zhang, and the support of the archimedean curvature current of $\overline{L}_{\bif}$ is the bifurcation locus. Moreover, by Yuan-Zhang \cite[Theorem 6.3.6 and Lemma 5.4.4]{yuan2021}, the top self-intersection of $\widetilde{L}_{\bif}$ is the total mass of the bifurcation measure; see also Guo \cite[Theorem 1.2]{Guo2025} for a local integration formula in the quasi-projective setting. This mass is positive by \cite[Proposition 2.8]{GauthierTaflinVigny2026}. Hence $\widetilde{L}_{\bif}$ is big. For an irreducible algebraic curve $C\subset \sM_{d,N}$, the adelic intersection number
		$$\deg_{\widetilde{L}_{\bif}}(C):=\widetilde{L}_{\bif}\cdot C$$
		is well-defined. If a one-parameter algebraic family $F$ lies over $C$, in the sense that the image of its moduli map is a Zariski open subset of $C$, then $F$ is stable if and only if $\deg_{\widetilde{L}_{\bif}}(C)=0$. Indeed, stability is equivalent to the vanishing of the pullback bifurcation current on $C$, and by Yuan--Zhang \cite[Theorem 6.3.6 and Lemma 5.4.4]{yuan2021} (or Guo \cite[Theorem 1.2]{Guo2025}) this degree is the total mass of that positive current.
		
		\begin{prop}\label{prop:general-not-stable}
			There exists a proper Zariski closed subset $Z\subset \sM_{d,N}$ such that if $C\subset \sM_{d,N}$ is an irreducible algebraic curve not contained in $Z$, then every non-isotrivial one-parameter algebraic family lying over $C$ has non-empty bifurcation locus. In particular, every non-isotrivial one-parameter algebraic family whose moduli curve passes through a general point of $\sM_{d,N}$ is not stable.
		\end{prop}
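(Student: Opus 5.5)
We would derive Proposition \ref{prop:general-not-stable} from the bigness of $\widetilde{L}_{\bif}$ together with the criterion recalled just above: a one-parameter family lying over $C$ is stable if and only if $\deg_{\widetilde{L}_{\bif}}(C)=0$. The plan is to take $Z$ to be an augmented base locus of $\widetilde{L}_{\bif}$. Then we show that every curve $C\not\subset Z$ has strictly positive degree against $\widetilde{L}_{\bif}$. For such $C$, any non-isotrivial family lying over $C$ is not stable, so its bifurcation locus is non-empty.

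\textbf{Step 1: a decomposition of $\widetilde{L}_{\bif}$.} Since $\widetilde{L}_{\bif}$ is nef and big, with positive top self-intersection as recalled above, we would apply the Yuan--Zhang analogue of Kodaira's lemma for adelic line bundles on quasi-projective varieties. Concretely, we choose a projective compactification $\overline{\sM}$ of $\sM_{d,N}$ and an ample line bundle $A$ on it. Bigness should give an integer $k\geq 1$ and a nonzero section $s$ of $k\widetilde{L}_{\bif}-\epsilon A$, interpreted in $\widehat{\Pic}(\sM_{d,N}/\C)$ as a limit of models, for some small rational $\epsilon>0$. We then set $Z:=\div(s)\cap \sM_{d,N}$, which is a proper Zariski closed subset. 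To stay inside the Yuan--Zhang framework, it may be cleaner to work with a model $\overline{\sM}_i$ in the defining Cauchy sequence of $\widetilde{L}_{\bif}$. We would pick $i$ such that $\widetilde{L}_{\bif}\geq L_i-\delta_i D_0$, where $D_0$ is the boundary divisor, and apply the classical Kodaira lemma to the big $\Q$-divisor $L_i$.

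\textbf{Step 2: positivity on curves.} Let $C\not\subset Z$ and let $\overline{C}$ be its closure. Then $s|_{\overline{C}}$ is a nonzero section, so
$$k\,\widetilde{L}_{\bif}\cdot C\;\geq\;\epsilon\,A\cdot\overline{C}\;>\;0.$$
The left-hand inequality uses that intersecting an effective adelic divisor with a curve not contained in its support gives a nonnegative number; this is a limit of the model-level statement. Hence $\deg_{\widetilde{L}_{\bif}}(C)>0$. A non-isotrivial family lying over $C$ is therefore unstable, which means its bifurcation locus is non-empty. For the ``in particular'' clause, note that if the moduli curve passes through a general point, that point lies outside $Z$, so $C\not\subset Z$.

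\textbf{Main obstacle.} We expect the hardest step to be the adelic Kodaira lemma and its compatibility with restriction to curves. The issue is that $\widetilde{L}_{\bif}$ is only a limit of model line bundles, and its boundary error terms could a priori concentrate along curves. We would handle this with the Yuan--Zhang definition of effective sections and bigness, namely $\widehat{\rm vol}>0$, combined with the Siu-type inequality $\mathrm{vol}(L-\epsilon A)\geq L^n-n\epsilon L^{n-1}A$. This inequality yields a section of the model $L_i-\epsilon A$ that is uniform in $i$ once $\delta_i$ is small. We then take $Z$ to be the support of that section restricted to $\sM_{d,N}$. Finally, we must justify the identification of the adelic degree with the mass of the bifurcation current on $C$ via Guo's formula, as recalled above.
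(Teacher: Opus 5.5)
Your proposal is correct and follows essentially the paper's argument. The paper likewise writes $\widetilde{L}_{\bif}\geq A+E$ with $A$ ample on a projective model and $E$ effective, which is exactly the Kodaira-type decomposition you describe. It takes $Z$ to be the image of the support of $E$, and for $C\not\subset Z$ it concludes $\deg_{\widetilde{L}_{\bif}}(C)\geq A\cdot C'>0$, hence non-stability by the degree criterion. Your extra step of extracting the decomposition from a nef model $L_i$ via the Siu inequality fills in a point the paper simply attributes to the definition of bigness.
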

		
		\begin{proof}
			Since $\widetilde{L}_{\bif}$ is big, by the definition of bigness for adelic line bundles on quasi-projective varieties, after passing to a projective model $\rho:\sM'\to \sM_{d,N}$ we may choose an ample $\Q$-line bundle $A$ on $\sM'$ and an effective geometric adelic class $E$ such that $\widetilde{L}_{\bif}\geq A+E$ in the sense of Yuan--Zhang. Choose an effective representative $D$ of $E$ on a projective model dominating $\sM'$, and let $Z\subset \sM_{d,N}$ be the image of $\Supp(D)$ together with the corresponding exceptional locus. Then $Z$ is contained in a proper Zariski closed subset of $\sM_{d,N}$; replacing $Z$ by this closed subset, we may assume that $Z$ is proper and Zariski closed.
			
			Let $C\subset \sM_{d,N}$ be an irreducible algebraic curve not contained in $Z$, and let $C'$ be its strict transform on the model carrying $D$. Then $C'$ is not contained in $\Supp(D)$, hence $D\cdot C'\geq 0$. Since the pullback of $A$ to this model is still big and nef and has positive degree on $C'$, we have $A\cdot C'>0$. Therefore
			
			$$\deg_{\widetilde{L}_{\bif}}(C)=\widetilde{L}_{\bif}\cdot C\geq A\cdot C'>0.$$
			Thus the bifurcation degree of $C$ is positive. If a one-parameter algebraic family lies over $C$, the bifurcation current restricted to the base has positive mass, so the family has non-empty bifurcation locus. This proves the proposition.
		\end{proof}
		
		\medskip

		In dimension $N=1$, McMullen's rigidity theorem \cite{McMullen1987} implies that the only non-isotrivial stable families of rational maps are the flexible Latt\`es families. It is natural to ask:
		\begin{que}
			Classify the irreducible algebraic curves $C\subset \sM_{d,N}$ such that $\deg_{\widetilde{L}_{\bif}}(C)=0$, or equivalently, such that one-parameter algebraic families lying over $C$ are stable. More generally, which positive-dimensional subvarieties of $\sM_{d,N}$ support stable algebraic families?
		\end{que}
		

		%
		%

		\section{Genus and gonality of small curves: proof of Theorem \ref{thm: 1-dim Gonality} and \ref{thm: high-dim Genus}}\label{sec:gonality-proof}
		\label{sec: finitely generated}
		Let $f$ be a one-parameter algebraic family of endomorphisms of $\P^N$ of algebraic degree $d\geq 2$ over $\C$, parametrized by a smooth quasi-projective curve $\La$.  Let $\overline{\La}$ be a smooth projective compactification of $\La$. 
		Let $K$ be a subfield of $\C$ which is finitely generated over $\Q$ such that $f$ is over $K$.

		Let $m-1$ be the transcendental degree of the field extension $K/\Q$, $m\geq 1$. There exist a number field $F$, a smooth irreducible projective variety $\overline{B}$ defined over $F$ of dimension $m-1$,   a  surjective flat morphism $\pi:\overline{X}\to \overline{B}$  between smooth projective varieties over $F$  with relative dimension $1$, and $\theta_0\in \overline{B}(\C)$ such that the specialization of $\overline{X}_\C$ at $\theta_0$ is $\overline{\La}$.     Moreover $\theta_0$  is {\em fully transcendental} with respect to $\overline{B}/F$ in the sense that the image of the generic point of $\Spec \C$ under 
		$$\Spec \C\xrightarrow{\;\theta_0\;} \overline{B}_\C\to \overline{B}$$
		is the generic point of $\overline{B}$.     Let $\pi':\overline{X}\times \P^N\to \overline{B}$  be the flat morphism  given by $\pi'(t,x):=\pi(t)$.

		The set of  non fully-transcendental  points in $\overline{B}(\C)$  is contained in a countable union of proper subvarieties, in particular it has Lebesgue measure zero. 
		
		The generic fiber of $\pi$ is connected  and smooth.    Let  $B$ be a Zariski open subset of $\overline{B}$ such that $\theta_0\in B(\C)$,  and for every $\theta\in B(\C)$, $\pi^{-1}(\theta)$ is connected  and smooth.  
		
		By abuse of notation, we let $f$ be the  family of endomorphisms of $\P^N$ over $F$, parametrized by a Zariski open subset  $X\subset \overline{X}$.  Let $\pi_1:\overline{X}\times \P^N\to \overline{X}$ and $\pi_2:\overline{X}\times \P^N\to \P^N$ be the two projections.   We have $\pi'=\pi\circ \pi_1$.
		
		\subsection{Relative arithmetic equidistribution}\label{sec: arithmetic equidistribution}
		The composition of embedding $F\hookrightarrow K\hookrightarrow \C$ gives a place $v\in M_F.$

		Let $L_X$ be a very ample line bundle on $\overline{X}$. Let $\overline{L_X}$ be a big and nef adelic line bundle in  $\widehat{\Pic}(X/F)_{\rm nef}$ whose underlying line bundle is $L_X$ such that $\omega_X:=c_1(\overline{L_X})_v$ is a K\"ahler form on $\overline{X}.$
		Let $L_2:=\pi_2^*(O_{\P^N}(1))$, $L_1:=\pi_1^*L_{X}$, $\omega_1:=\pi_1^*\omega_{X}$ and $\omega_2:=\pi_2^*\omega_{\P^N}.$ Then $\omega:=\omega_1+\omega_2$ is a K\"ahler form on $\overline{X}\times \P^N$ in $c_1(L_1+L_2)$.  
		For every horizontal subvariety $V$, the volume of $V$ with respect to $\omega$ is  $$\vol(V):=\int_{V(\C)}\omega^m=(V\cdot (L_1+L_2)^m).$$

		\begin{prop}\label{prop: deg-vol}
			There exists a constant $C>0$ such that for every irreducible horizontal subvariety $V$ (over $F$ or $\overline{F}$), we have
			$$\vol(V)\geq  \deg(V)\geq \frac{C}{(1+\hat{h}^{L_X}_\geom(V))^m}\vol (V).$$
		\end{prop}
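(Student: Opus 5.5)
The plan is to expand $\vol(V)$ into mixed intersection numbers and bound each one. Put
$$a_k:=\bigl(V\cdot L_1^{m-k}\cdot L_2^{k}\bigr),\qquad 0\le k\le m,$$
so that
$$\vol(V)=\bigl(V\cdot (L_1+L_2)^m\bigr)=\sum_{k=0}^m \binom{m}{k} a_k .$$
Since $L_1$ and $L_2$ are nef, every $a_k\geq 0$.

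The first term is computed by the projection formula: $a_0=(V\cdot \pi_1^*L_X^m)=\deg(V)\,(L_X^m)$. Because $L_X$ is very ample on the $m$-dimensional variety $\overline X$, the integer $(L_X^m)$ is at least $1$. This gives $\vol(V)\ge a_0\ge \deg(V)$, which is the left inequality.

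For the right inequality I will use the Khovanskii--Teissier log-concavity of mixed intersection numbers of nef classes on the (normalized) $m$-dimensional variety $V$, namely $a_k^2\ge a_{k-1}a_{k+1}$. It implies
$$a_k\le a_0\,(a_1/a_0)^k .$$
It therefore suffices to prove a bound $a_1\le C_1\deg(V)\,(1+\hat h^{L_X}_\geom(V))$ with $C_1$ independent of $V$. Granting this, and using $a_0=\deg(V)(L_X^m)$, we get $a_k\le C_2\deg(V)(1+\hat h)^k$, and summing over $k$ gives
$$\vol(V)\le C_3\deg(V)(1+\hat h^{L_X}_\geom(V))^m .$$

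To bound $a_1$, I will compare $L_2$ with the invariant adelic line bundle $\overline{L_2}_f$ of Yuan--Zhang. The claim is that there is a constant $c>0$ with
$$L_2\le \widetilde{L_2}_f+c\,L_1$$
as geometric adelic classes on $X\times\P^N$; that is, $\widetilde{L_2}_f+cL_1-L_2$ is effective, or equivalently its Green function is bounded below. This would follow from Tate's limiting construction. Over $X$ the correction $\log\|\cdot\|_f/\|\cdot\|$ is bounded on compact sets, and its growth near $\overline X\setminus X$ is controlled by a fixed multiple of the boundary divisor, because $f$ extends with poles of bounded order. Since $L_X$ is ample, that boundary divisor is dominated by $c\,L_X$, and pulling back along $\pi_1$ gives the claim. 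Intersecting the claim with $V\cdot L_1^{m-1}$, which is nef, gives
$$a_1\le \bigl(V\cdot \widetilde{L_2}_f\cdot L_1^{m-1}\bigr)+c\,a_0=\deg(V)\bigl(\hat h^{L_X}_\geom(V)+c\,(L_X^m)\bigr).$$
The middle identity can equally be read off analytically from Corollary \ref{cor: geometricheightanalytic}, since $\int[V]\wedge T_f\wedge\omega_1^{m-1}=\deg(V)\,\hat h^{L_X}_\geom(V)$.

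The main obstacle is this comparison between $L_2$ and $\widetilde{L_2}_f$ near the boundary of $X$. If the family had good reduction everywhere it would be immediate, since $T_f=\omega_2+dd^cu$ with $u$ bounded. In general one must check, within the Yuan--Zhang formalism for adelic line bundles on quasi-projective varieties, that the Green function $u$ is bounded below up to $c\log$ of the boundary, i.e. that $u$ has at worst logarithmic poles along $\pi_1^{-1}(\overline X\setminus X)$. I would first try to derive this from the explicit Tate telescoping series, using that $f$ extends with poles of bounded order along the boundary.
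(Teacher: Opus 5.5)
Your proposal is essentially the paper's proof: it uses the same expansion of $\vol(V)$ into the $a_k$, the same identity $a_0=\deg(V)(L_X^m)\geq \deg(V)$, the same Khovanskii--Teissier bound $a_k\leq a_1^k/a_0^{k-1}$, and the same reduction to a linear bound on $a_1$ in terms of $\hat{h}^{L_X}_\geom(V)$. The paper proves the comparison you flag as the main obstacle purely algebraically, with no Green-function analysis: on a smooth compactification $Y$ of $X\times\P^N$ where $\mathrm{id}$ and $f$ extend to $p_1,p_2:Y\to\overline{X}\times\P^N$, one has $p_2^*L_2=dp_1^*L_2+E$ with $E$ lying over a proper closed subset of $\overline{X}$, and $C_1\phi^*D+E\geq 0$ for an effective $\Q$-divisor $D$ representing $L_X$ (with $\phi:=\pi_1\circ p_1$); this yields $h_2(f(V))\geq d\,h_2(V)-C_2$ for $h_2(V):=a_1/\deg(V)$, and then Tate's limit gives $a_1\leq \deg(V)\bigl(\hat{h}^{L_X}_\geom(V)+C_2/(d-1)\bigr)$, which is the height-level form of your inequality $L_2\leq \widetilde{L_2}_f+cL_1$.
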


		\begin{proof}
			Only need to treat the case over $F$.
			Pick a smooth projective compactification $Y$ of $X\times \P^N$ such that endomorphisms $\id$ and $f$ on $X\times \P^N$ extend to morphisms $p_1,p_2: Y\to \overline{X}\times \P^N.$ Denote by $\phi:=\pi_1\circ p_1: Y\to \overline{X}.$ 
			We have $p_2^*L_1=p_1^*L_1$ and $p_2^*L_2=dp_1^*L_2+E$, where $E$ is a divisor on $Y$ such that $\phi(\supp E)\neq \overline{X}.$ 
			As $L_{X}$ is ample on $\overline{X}$, there is an effective $\Q$-divisor $D$ on $\overline{X}$ representing $L_{X}$ such that $\phi(\supp E)\subseteq \supp D$.
			There is $C_1>0$ such that $C_1\phi^*D+E$ is effective.
			
			Let $V$ be a horizontal subvariety.  Denote by $\overline{V}$ the Zariski closure of $V$ in $\overline{X}\times\P^N$ and by $V'$ the Zariski closure of $V$ in $Y.$
			Write $$h_2(V):=(\overline{V}\cdot L_2\cdot L_1^{m-1})/\deg V.$$ 
			Tate's limiting argument shows that 
			\begin{equation}\label{equgeolim}\hat{h}^{L_X}_\geom(V)=\lim_{n\to \infty}h_2(f^n(V))/d^n.
			\end{equation}
			Observe that 
			$$(\overline{V}\cdot L_1^{m})/\deg V=\deg (L_X^m)\geq 1.$$
			As $L_2$ is nef, we have 
			$$\vol(V)=(\overline{V}\cdot (L_1+L_2)^m)\geq (\overline{V}\cdot L_1^m)\geq \deg V.$$
			This proves the first inequality in Proposition \ref{prop: deg-vol}.

			Denote by $f_*V$ the cycle $$f_*V:=\deg(f|_{V}: V\to f(V))f(V).$$ We have $$\deg (f_*V):=\deg(f|_{V}: V\to f(V))\deg(f(V))=\deg(V).$$
			We have
			\[
			h_2(f(V))=\deg(f_*V)^{-1}(\overline{f_*(V)}\cdot L_2\cdot L_1^{m-1})
			=\deg(V)^{-1}(V'\cdot p_2^*L_2\cdot p_1^*L_1^{m-1}).
			\]
			Since $C_1\phi^*D+E$ is effective, this gives
			\begin{align*}
				h_2(f(V))
				&\geq \deg(V)^{-1}\bigl(V'\cdot (d p_1^*L_2-C_1\phi^*D)\cdot p_1^*L_1^{m-1}\bigr)\\
				&=dh_2(V)-C_1\deg(V)^{-1}\bigl(V'\cdot \phi^*D\cdot p_1^*L_1^{m-1}\bigr).
			\end{align*}
			By the projection formula, the last intersection number equals
			\[
			(\overline V\cdot \pi_1^*D\cdot L_1^{m-1})=\deg(V)(D\cdot L_X^{m-1}),
			\]
			hence
			$$h_2(f(V))\geq dh_2(V)-C_2$$
			where $C_2:=C_1\deg L_{X}^m.$ It follows that 
			$$h_2(f(V))-\frac{C_2}{d-1}\geq d\left(h_2(V)-\frac{C_2}{d-1}\right).$$
			If $h_2(V)>\frac{C_2}{d-1}$, then $$h_2(f^n(V))\geq d^n\left(h_2(V)-\frac{C_2}{d-1}\right)+\frac{C_2}{d-1}.$$
			By (\ref{equgeolim}), we get $$\hat{h}^{L_X}_\geom(V)\geq h_2(V)-\frac{C_2}{d-1}.$$
			It follows that 
			$$h_2(V)\leq \frac{C_2}{d-1}+\hat{h}^{L_X}_\geom(V).$$
			In other words, $$(\overline{V}\cdot L_2\cdot L_1^{m-1})\leq \left(\frac{C_2}{d-1}+\hat{h}^{L_X}_\geom(V)\right)\deg V.$$
			By the Khovanskii--Teissier inequality \cite[Theorem 1.6.1]{Lazarsfeld}, applied after resolving the irreducible variety $\overline V$, the sequence
			$(\overline{V}\cdot L_2^i\cdot L_1^{m-i}), i=0,\dots,m$
			is log-concave. So for every $i=1,\dots, m$, we have $$(\overline{V}\cdot L_2^i\cdot L_1^{m-i})\leq \frac{(\overline{V}\cdot L_2\cdot L_1^{m-1})^i}{(\overline{V}\cdot L_1^m)^{i-1}}\leq \left(\frac{\frac{C_2}{d-1}+\hat{h}^{L_X}_\geom(V)}{(L_1^m)}\right)^i(L_1^m)\deg V.$$
			Then we have 
			$$\vol(V)=(\overline{V}\cdot (L_1+L_2)^{m})\leq \left(1+\frac{\frac{C_2}{d-1}+\hat{h}^{L_X}_\geom(V)}{(L_1^m)}\right)^m(L_1^m)\deg V.$$
			This concludes the proof.
		\end{proof}

		\medskip
		
		A sequence of irreducible horizontal subvariety  $V_n$  is called {\em generic} if for any proper subvariety $H\subset \La\times \P^N$,  there are only finitely many $V_n$ contained in $H$.     If a horizontal subvariety $V$ is over $\overline{k}$, we let  $O(V)$ be the {\em Galois orbit} of $V$ under $\Gal(\overline{k}/ k)$, which is a (maybe reducible)  horizontal subvariety. Whenever a numerical estimate above is applied to $O(V)$, it is applied componentwise to the irreducible Galois conjugates of $V$ and then summed. The following result is a direct consequence of Theorem \ref{Sec_equi} in our setting.
		\begin{thm}[Relative arithmetic equidistribution]\label{thm: equidistribution}
			In the above setting, let $(V_n)_{n\geq 1}$ be a generic sequence of  irreducible horizontal subvarieties over $\overline{k}$ which is small for $f$,  and let $\phi:(X\times \P^N)(\C)\to \R$ be a continuous function with compact support. Then 
			$$\lim_{n\to +\infty} \int_{X\times \P^N(\C)} \phi \;\frac{[O(V_n)]\wedge \omega_1^m}{\deg(O(V_n))}= \int_{X\times \P^N(\C)} \phi \; T_f^N\wedge \omega_1^m.$$
		\end{thm}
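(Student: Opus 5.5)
We will deduce the statement from the Yuan--Zhang relative equidistribution theorem (Theorem \ref{thmequiyuanzhang}) applied to the polarized dynamical system of Section \ref{subsectionpolarizedend}. Concretely, we take $U:=X\times\P^N$ over the base $X$ (so the relative dimension is $n=N$ and the generic fiber is $Y=\P^N_K$), with $\overline{L}:=\overline{L_2}_f$ and $\overline{H}:=\overline{L_X}$. Each horizontal $V_n$ over $\overline{k}$ corresponds to a point $x_{V_n}\in Y(\overline{K})$, namely its geometric generic point. The Zariski closure $\Delta(x_{V_n})$ of the image of $x_{V_n}$ in $U$ is exactly the Galois orbit $O(V_n)$, and $\deg(x_{V_n})=\deg(O(V_n))$.

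We then check the hypotheses one by one.
\begin{enumerate}
\item $\overline{H}$ is nef with $\widetilde H^{m-1}>0$. This holds because $L_X$ is ample on $\overline X$ of dimension $m$.
\item $\deg_{\widetilde L}(Y/K)=1>0$, since $L_2$ restricts to $\sO(1)$ on $\P^N$.
\item $h^{\overline H}_{\overline L}(Y)=0$. This follows from $f^*\overline{L_2}_f=d\,\overline{L_2}_f$ and $f(Y)=Y$, which give $h(Y)=d\,h(Y)$.
\item Smallness of $(V_n)$ is by definition $h^{\overline H}_{\overline L}(x_{V_n})=\hat h^{\overline H}_f(V_n)\to 0=h(Y)$.
\item Genericity of $(V_n)$ in the sense that only finitely many lie in any proper subvariety is equivalent to $x_{V_n}$ tending to the generic point. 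Since $V_n$ is horizontal, no $V_n$ lies over a proper subvariety of the base, so genericity in $X\times\P^N$ and in $Y$ match.
\end{enumerate}

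Applying the theorem at the archimedean place $v$ given by $F\hookrightarrow\C$ yields weak convergence on $U_v^{\an}=(X\times\P^N)(\C)$:
\[
\frac{1}{\deg O(V_n)}[O(V_n)]\wedge \pi_1^*\omega_X^{m-1}\to T_f^N\wedge\pi_1^*\omega_X^{m-1},
\]
using $c_1(\overline{L_2}_f)_v=T_f$ and $c_1(\overline{L_X})_v=\omega_X$.

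The only mismatch with the claimed formula is the exponent: the statement has $\omega_1^m$, while the theorem gives $\omega_1^{m-1}$. Here $\dim X=m$ is the complex dimension of the total base $X(\C)$, which fibers over $B(\C)$ with the curves $\pi^{-1}(\theta)$ as fibers. The relevant $m$ in Theorem \ref{thmequiyuanzhang} is ``$\dim X+1$'' with $X$ the arithmetic base. To match the statement, we identify the statement's $\omega_1^m$ (on horizontal subvarieties of dimension $m$) with the theorem's factor $c_1(\pi^*\overline H)^{\dim X}$. With $\dim_\C X=m$, the theorem provides exactly $m$ copies of $\omega_1$ against $\delta$-currents of $m$-dimensional $O(V_n)$, which gives a measure. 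So after rewriting the dimension bookkeeping consistently with the present section's $m$, the exponent is $m$ and the measures match.

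Finally, we pass from weak convergence of measures, tested against continuous compactly supported functions on $U_v^{\an}$, to the stated limit for $\phi\in C_c((X\times\P^N)(\C))$. This is immediate, since the Berkovich analytification at an archimedean place is just the complex points.

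The main obstacle is this bookkeeping of dimensions, of the base, and of Galois orbits versus $\overline F$-components. In particular, the degree normalization $\deg(x)$ must equal $\deg O(V_n)$ computed with respect to $F$-rationality. We would handle it by working over $F$ throughout, replacing $V_n$ by its $F$-Zariski closure, which is $O(V_n)$.
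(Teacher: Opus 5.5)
Your proposal is correct and follows the paper's route: the paper records this theorem as a direct consequence of the Yuan--Zhang relative equidistribution theorem (Theorem \ref{thmequiyuanzhang}) applied with $U=X\times\P^N$, $\overline L=\overline{L_2}_f$ and $\overline H=\overline{L_X}$, and your check of the hypotheses spells out what the paper leaves implicit. The exponent mismatch you flag is only notational: the $m$ of Theorem \ref{thmequiyuanzhang} is $\dim X+1$, so its factor $c_1(\pi^*\overline H)_v^{m-1}$ is exactly $\omega_1^{m}$ in the present section, and your positivity hypothesis should correspondingly read $\widetilde H^{\dim X}>0$.
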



		\medskip
		
		\subsection{Genus and gonality of small curves}
		Come back to the setting of Theorem \ref{thm: equidistribution}.
		Let $(V_n)_{n\geq 1}$ be a generic sequence of  irreducible horizontal subvarieties over $\overline{k}$ which is small for $f$.

		Define $$S_n:=\frac{[O(V_n)]}{\deg(O(V_n))},$$ which is a positive closed current of bidimension $(m,m)$ on $X\times \P^N(\C)$. For every $s\in X(\C)$, let $R_s$ be the current of integration on the fiber $\pi_1^{-1}(s).$  For a positive closed current $S$ of bidimension $(m,m)$ on $\overline{X}\times \P^N(\C)$, the slicing $S_\theta$ (once it exists) is a positive closed current of bidimension $(1,1)$ supported in $(\pi')^{-1}(\theta)=\pi^{-1}(\theta)\times \P^N(\C)$.   See Definition \ref{def:admissible} for the meaning of admissibility  of $S_\theta\wedge R_s$ when $s\in \pi^{-1}(\theta)$ and $\pi^{-1}(\theta)$ is smooth. 
		
		%
		
		For a positive closed bidimension $(m,m)$ current $S$ on $\overline{X}\times \P^N(\C)$, the slicing $S_\theta$ with respect to $\pi'$ (once it exists) is a positive closed bidimension $(1,1)$ current supported in $(\pi')^{-1}(\theta)=\pi^{-1}(\theta)\times \P^N(\C)$.

		\medskip
		
		\begin{prop}\label{prop: converges}
			In the above setting, by passing to a subsequence, $S_n$ converges in the sense of current to a positive closed bidimension $(m,m)$ current on $X\times \P^N(\C)$.

		\end{prop}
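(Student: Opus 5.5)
The plan is to show that the masses of $S_n$ are uniformly bounded on every compact subset of $X\times\P^N(\C)$. Weak-* compactness of positive closed currents with locally bounded mass then yields a convergent subsequence. The limit is again positive and closed, since both properties pass to weak limits. In fact I would bound the total mass of $S_n$ on $\overline{X}\times\P^N(\C)$ with respect to the K\"ahler form $\omega=\omega_1+\omega_2$, which is stronger than local bounds.

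The mass of $S_n$ against $\omega$ is
$$\int S_n\wedge\omega^m=\frac{\vol(O(V_n))}{\deg(O(V_n))}.$$
Applying Proposition \ref{prop: deg-vol} componentwise to the Galois conjugates of $V_n$ and summing gives
$$\frac{\vol(O(V_n))}{\deg(O(V_n))}\leq C^{-1}\bigl(1+\hat{h}^{L_X}_\geom(V_n)\bigr)^m .$$
This uses that the Galois conjugates all share the same geometric height and the same degree. Since the ratio of volume to degree is constant over the orbit, the inequality holds for $O(V_n)$.

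It remains to show that $\hat{h}^{L_X}_\geom(V_n)$ is bounded. By smallness, the Moriwaki height $\hat{h}^{\overline{H}}_f(V_n)\to0$ for every $\overline{H}\in\widehat{\Pic}(K/\Z)_{\rm nef,\Q}$. I take $\overline{H}$ big and nef whose geometric part is $\widetilde{L_X}$; the choice of $\overline{L_X}$ with K\"ahler curvature form is arranged so that such a lift exists. Proposition \ref{prop:two height} then gives $\hat{h}^{\widetilde{H}}_\geom(V_n)\leq C'\hat{h}^{\overline{H}}_f(V_n)$, which tends to $0$ and is in particular bounded. This gives a uniform bound $\sup_n \Ma(S_n)<\infty$, and the standard compactness theorem (Banach--Alaoglu for currents of order zero, or Proposition \ref{prop:metrizable} to use sequential compactness) extracts the convergent subsequence.

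I expect the main obstacle to be the matching of height normalizations. Specifically, one must check that the $\overline{H}$ entering the definition of smallness can be chosen so that its geometric part is $\widetilde{L_X}$, or at least dominates a multiple of it. Then Proposition \ref{prop:two height} controls the geometric height $\hat{h}^{L_X}_\geom$ that appears in Proposition \ref{prop: deg-vol}. If an exact lift is unavailable, I would instead use nefness and monotonicity of intersection numbers: pick a big nef $\overline{H}$ whose geometric part dominates $\epsilon\widetilde{L_X}$, and compare the two geometric heights up to a constant.
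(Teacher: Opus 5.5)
Your proposal is correct and follows the paper's argument. You bound the mass $\vol(O(V_n))/\deg(O(V_n))$ of $S_n$ by applying Proposition \ref{prop: deg-vol} to each Galois conjugate and summing, then extract a convergent subsequence by compactness of positive closed currents of bounded mass. You also spell out why $\hat{h}^{L_X}_\geom(V_n)$ stays bounded, namely smallness plus Proposition \ref{prop:two height} for a big nef lift of $L_X$; the paper leaves this step implicit here and only states it in the proof of Theorem \ref{thm: high-dim Genus}.
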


		\begin{proof}
			Applying Proposition \ref{prop: deg-vol} to each irreducible component of the Galois orbit $O(V_n)$ and summing, the mass of $S_n\wedge \omega^m$ is uniformly bounded from above.  The compactness of the space of positive closed  currents with uniformly bounded  mass implies the result. 
		\end{proof}
		\medskip

		\begin{prop}\label{prop: stable}
			Let $S$ be such a limit as in Proposition \ref{prop: converges}. Let $\sigma$ be a smooth positive bidegree $(m-1,m-1)$  form on $X(\C)$.  Then
			$$S\wedge T_f\wedge \pi_1^*(\sigma)=0$$
			on $X\times \P^N(\C).$
			
		\end{prop}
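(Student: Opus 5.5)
We show that the positive measure $S\wedge T_f\wedge\pi_1^*(\sigma)$ has zero mass. The idea is that this mass is controlled by canonical geometric heights of the $V_n$, and these tend to zero by smallness.

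First, the reduction to a height. Since $\sigma$ is a smooth positive $(m-1,m-1)$-form, on any relatively compact open set $U\Subset X(\C)$ it is dominated by $C_U\,\omega_X^{m-1}$, because $\omega_X$ is Kähler on $\overline{X}$. So it suffices to prove that the positive measure $S\wedge T_f\wedge\omega_1^{m-1}$ is zero. Since $T_f$ has continuous local potential (Section \ref{section:bifur}), the wedge product $S_n\wedge T_f$ is continuous under weak convergence of $S_n$, by the standard argument used in the proof of Proposition \ref{prop: limit} (integrating $u\,dd^c\Phi$ against $S_n$). Fix a nonnegative compactly supported continuous test function $\chi\le 1$. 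Then
$$\int \chi\, S\wedge T_f\wedge\omega_1^{m-1}=\lim_{n\to\infty}\int\chi\, S_n\wedge T_f\wedge\omega_1^{m-1}\le\liminf_{n\to\infty}\frac{1}{\deg O(V_n)}\int [O(V_n)]\wedge T_f\wedge\omega_1^{m-1}.$$

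Second, identify the right-hand side. Choose $\overline H=\overline{L_X}$, which is big and nef. By Corollary \ref{cor: geometricheightanalytic}, applied to each Galois conjugate and summed, the right-hand side equals $\hat h^{\widetilde{L_X}}_{\geom}(V_n)$; the geometric height is Galois invariant, so the average over the orbit is the same.

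Third, bound the geometric height. Proposition \ref{prop:two height} gives $\hat h_\geom(V_n)\le C\,\hat h^{\overline{L_X}}_f(V_n)$, and the right-hand side tends to $0$ because the sequence is small. Hence the integral is $0$ for every such $\chi$, and the measure vanishes. Since the measure is positive and $\sigma\le C_U\omega_X^{m-1}$ on $U$, the current $S\wedge T_f\wedge\pi_1^*\sigma$ is zero.

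The main obstacle is the continuity of the wedge product with $T_f$ in the first step. On the noncompact space $X\times\P^N(\C)$ we only have local uniform mass bounds, so the test function $\chi$ must be compactly supported, and we must check that no mass escapes in a way that would break the inequality. Both points are handled by working with compactly supported $\chi$ and using that the limit of positive measures is lower semicontinuous on open sets. A secondary point is to make sure $\omega_X=c_1(\overline{L_X})_v$ is the form that enters Corollary \ref{cor: geometricheightanalytic}; this holds by the choice made in Section \ref{sec: arithmetic equidistribution}.
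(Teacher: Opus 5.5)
Your proposal is correct and follows essentially the same route as the paper. Smallness together with Proposition~\ref{prop:two height} and Corollary~\ref{cor: geometricheightanalytic} gives $S_n\wedge T_f\wedge\omega_1^{m-1}\to 0$, continuity of wedging with $T_f$ (which has continuous potentials) passes this to $S$, and the local domination $\pi_1^*\sigma\le C\,\omega_1^{m-1}$ on $\Omega\times\P^N(\C)$ finishes; the only differences are that you do the domination reduction first and spell out the compactly supported test-function argument that the paper leaves implicit.
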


		\begin{proof}
			Since the Moriwaki height is invariant under Galois conjugation,  $\hat{h}_f(V_n)\to 0$ implies that  $\hat{h}_f(O(V_n))\to 0$. By Proposition \ref{prop:two height},  we also have $\hat{h}_\geom(O(V_n))\to 0$.  By Corollary \ref{cor: geometricheightanalytic} we have $S_n\wedge T_f\wedge  \omega_1^{m-1}\to 0$. Since $T_f$ has continuous local potential and $S_n\to S$, we have $$S\wedge T_f\wedge  \omega_1^{m-1}=0.$$
			
			Let $\Omega$ be any relatively compact open subset of  $X(\C)$. Since $\omega_X$ is K\"ahler, there exists a constant $C=C(\Omega,\sigma)>0$ such that $\sigma\leq C\omega_X^{m-1}$ on $\Omega$. Hence $ \pi_1^*(\sigma)\leq C\omega_1^{m-1}$ on $\Omega\times \P^N(\C)$.  Thus $S\wedge T_f\wedge  \omega_1^{m-1}=0$ implies $S\wedge T_f\wedge \pi_1^*(\sigma)=0$ on $\Omega\times \P^N(\C)$. Since $\Omega$  is arbitrary, the conclusion holds.
		\end{proof}

		\medskip
		We refer to Definition \ref{def:admissible} for the meaning of admissibility  of $S_\theta\wedge R_s$ when $s\in \pi^{-1}(\theta)$ and $\pi^{-1}(\theta)$ is smooth. 
		\begin{prop}\label{prop: equidistribution}
			Let $S$ be such a limit as in Proposition \ref{prop: converges}.  If $\pi^{-1}(\theta)$ is smooth and the slicing $S_\theta$ with respect to $\pi'$ is well defined at $\theta$, then
			
			(1) $S_\theta\wedge T_f=0$ on $X\times \P^N(\C)$;
			
			(2) Assume  $S_\theta \wedge R_s$ is admissible for $s\in \pi^{-1}(\theta)$. Then 
			$$S_\theta \wedge R_s=\mu_s,$$
			on $X\times \P^N(\C)$, where $\mu_s$ is the maximal entropy measure of $f_s$. 
			
		\end{prop}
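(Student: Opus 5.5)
For (1), I will combine Proposition \ref{prop: stable} with property (1) of slicing. Fix a smooth probability density $\rho$ on $B$ and consider $S\wedge T_f\wedge (\pi')^*(\rho)$. Since $\pi'=\pi\circ\pi_1$, the form $(\pi')^*\rho$ is $\pi_1^*$ of the smooth positive $(m-1,m-1)$-form $\pi^*\rho$ on $X(\C)$. Proposition \ref{prop: stable} then gives $S\wedge T_f\wedge (\pi')^*\rho=0$.

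Next I will approximate the Dirac mass at $\theta$ by $\psi_{\theta,\ep}\omega_B^{m-1}$, as in the definition of slicing. For each $\ep$, the current $S\wedge (\pi')^*(\psi_{\theta,\ep}\omega_B^{m-1})$ is a positive closed current, and its wedge with $T_f$ vanishes by the same argument. Locally $T_f=\omega_2+dd^c u$ with $u$ continuous. Testing against a form $\Phi$ gives
\[
\langle S\wedge (\pi')^*(\psi_{\theta,\ep}\omega_B^{m-1}), (\omega_2+dd^c u)\wedge\Phi\rangle=0,
\]
where the $dd^c u$ part is handled by moving $dd^c$ onto $u\Phi$. Letting $\ep\to0$ and using that $S_\theta$ is well defined, I get $\langle S_\theta\wedge T_f,\Phi\rangle=0$. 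Passing to this limit requires applying the slicing definition to forms that are only continuous ($u\,dd^c\Phi$). I would handle this by uniformly approximating $u$ with smooth functions. This is legitimate because the slices $S\wedge(\pi')^*(\psi_{\theta,\ep}\omega_B^{m-1})$ have locally uniformly bounded mass near $(\pi')^{-1}(\theta)$, which follows from the convergence to $S_\theta$ tested on a positive cutoff. That gives (1).

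For (2), I will use the equidistribution theorem (Theorem \ref{thm: equidistribution}), which gives the identity
\[
S\wedge\omega_1^m=T_f^N\wedge\omega_1^m.
\]
Its restriction to the smooth curve $C_\theta:=\pi^{-1}(\theta)$ should give $S_\theta\wedge \pi_1^*\omega_X=T_f^N\wedge[C_\theta\times\P^N]\wedge\pi_1^*\omega_X$. My first attempt is to replace $\omega_1^m$ by $\omega_1\wedge (\pi')^*(\psi_{\theta,\ep}\omega_B^{m-1})$. The issue is that the theorem is stated only for $\omega_1^m$, so I need $\pi_1^*$ of an arbitrary smooth positive top form on $X$. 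I would obtain this by noting that the equidistribution of measures with density allows multiplying by any continuous function of $t$. Indeed, both sides are measures of the form $\pi_1^*(\text{volume})$ times fiber data, so $S\wedge\pi_1^*\eta=T_f^N\wedge\pi_1^*\eta$ for every smooth top form $\eta$ on $X$.

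Slicing then gives, for a.e. $\theta$ and in fact for those where $S_\theta$ is defined (after checking the limit passage as in part (1)),
\[
S_\theta\wedge\pi_1^*\alpha=\int_{C_\theta}\mu_s\,\alpha(s)
\]
for every smooth $(1,1)$-form $\alpha$ on $C_\theta$. Because the current on the right has continuous local potentials, the same identity, read through the fiber disintegration, shows that the fiber measures of $S_\theta$ over $C_\theta$ coincide with $\mu_s$. Finally, when $S_\theta\wedge R_s$ is admissible, I would identify it with the disintegration fiber at $s$. To do this I will write $R_s=\lim dd^c\log|t-s|$-type potentials, use the continuity of $s\mapsto\mu_s$ (Lemma \ref{lem:equigraph}'s argument), and apply the monotone convergence of $\log\max(|t-s|,\delta)$ as $\delta\to0$.

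The main obstacle is this last step: identifying the admissible wedge $S_\theta\wedge R_s$ at every $s$, not just almost every $s$. I expect it to follow from upper semicontinuity combined with the mass equality $\Ma(S_\theta\wedge R_s)=1$, which comes from cohomology on the fiber $\{s\}\times\P^N$.
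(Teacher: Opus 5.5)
Your proposal is correct and follows essentially the paper's route. Part (1) is the same argument: slice, apply Proposition~\ref{prop: stable}, and pass to the limit using continuous potentials. Part (2) likewise upgrades the equidistribution identity to $S\wedge\pi_1^*\eta=T_f^N\wedge\pi_1^*\eta$, slices to get $S_\theta\wedge\pi_1^*\alpha=T_{f,\theta}^N\wedge\pi_1^*\alpha$ on the fiber, and reaches $R_s$ by monotone convergence of decreasing psh approximants of $\log|t-s|$. The only difference is the approximants: the paper uses smooth $\phi_l\downarrow\log|t|$ and the Dinh--Sibony monotone convergence result on both sides, while you use $\log\max(|t-s|,\delta)$ together with continuity of $s\mapsto\mu_s$.

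Your closing worry is unnecessary. The identity holds for each fixed approximant, so the monotone-convergence step you already describe gives $S_\theta\wedge R_s=\mu_s$ directly at any admissible $s$. No a.e.-to-everywhere upgrade, disintegration, or mass/semicontinuity argument is needed.
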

		\begin{proof}
			
			(1) Since the slicing $S_\theta$ with respect to $\pi'$ is well defined at $\theta$,  we can pick a sequence of smooth probability measure $\nu_n$ on $B(\C)$ such that $\nu_n\to \delta_\theta$ in the weak-* sense,  and $S\wedge (\pi')^* \nu_n\to S_\theta$ in the sense of currents.  Since $\sigma_n:=\pi^* \nu_n$ is a smooth positive  closed bidegree $(m-1,m-1)$ form on $X(\C)$, by Proposition \ref{prop: stable} we have 
			$$S\wedge (\pi')^*\nu_n\wedge T_f=S\wedge T_f\wedge (\pi')^*\nu_n=S\wedge T_f\wedge \pi_1^*\sigma_n=0.$$ 
			
			Since $S\wedge (\pi')^* \nu_n\to S_\theta$  and $T_f$ has continuous local potential, let $n\to+\infty$ we get $S_\theta\wedge T_f=0.$
			
			\medskip
			
			(2) By passing to a coordinate change, in a local chart $\D^m\subset X$, where $\D$ is the unit disk in the complex plane
			we can assume that $\pi|_{\D^m}$ is the projection $$\D^m\to \D^{m-1}$$ to the first $m-1$ coordinates. Let $s=0\in \D^m$, $\theta=0\in \D^{m-1}$. Then $\pi^{-1}(\theta)\cong \D$,  and $(\pi')^{-1}(0)\cong \D\times \P^N(\C)$.   Define $u(t,z):=\log |t|$. Then $u$ is a p.s.h. function on $(\pi')^{-1}(0)=\D\times \P^N(\C)$ and we have $R_0=dd^c  u$.  Using standard convolution of $\log |t| $ with some radial function $\rho_l$, we obtain a sequence of smooth p.s.h. functions $\phi_l$ on $\D$ that decrease to $\log |t|$. Let $u_l(t,z):=\phi_l(t)$, then we obtain a sequence of smooth p.s.h. functions $u_l$ on $(\pi')^{-1}(0)$ that decrease to $u$.  For fixed $l\geq 1$, choose $\Phi_l$ to be a bidegree $(m-1,m-1)$ smooth form on $\D^m$ such that the restriction of $\Phi_l$ to the fiber  $\pi^{-1}(0)$ is equal to $dd^c \phi_l$.  Let $\Omega_l:=\pi_1^*\Phi_l$, which is a bidegree $(m-1,m-1)$ smooth form on $\D^m\times \P^N(\C)$ such that the restriction of $\Omega_l$ to the fiber  ($\pi')^{-1}(0)$ is equal to $dd^c u_l$. Moreover $\Omega_l=\beta_l \omega_1^{m-1}$, where $\beta_l$ is a non-negative smooth function. 
			
			Apply Theorem \ref{thm: equidistribution} to  test functions of the form $\phi=\beta_l \tilde{ \phi} $ such that $\tilde{\phi}$ is smooth and has  compact support in $\D^m\times \P^N(\C)$, we get $$S_n\wedge \Omega_l\to T_f^N\wedge \Omega_l,$$ as $n\to +\infty$, for every  fixed $l\geq 1$. 
			
			Since $S_n\to S$ in the sense of current as $n\to +\infty$ and since $\Omega_l$ is a smooth form,  we have $S_n\wedge \Omega_l\to S\wedge \Omega_l$ as $n\to +\infty$.  This implies  
			\begin{equation}\label{eqn:4.1}
				S\wedge \Omega_l=T_f^N\wedge \Omega_l
			\end{equation}
			for every $l\geq 1$.

			Since $T_f$ has continuous local potential, the slicing $T_{f,\theta}$ is well defined  with respect to $\pi'$ for every $\theta\in B$, which is exactly the relative Green current for  $f$ restricted on $\pi^{-1}(t)\times \P^N(\C)$, viewed as a one-parameter holomorphic family.  By our assumption, the slicing $S_\theta$ exists. Since the slicing of the smooth form  $\Omega_l$ at the fiber $(\pi')^{-1}(\theta)$ is $dd^c u_l$, by (\ref{eqn:4.1}) we have 
			
			\begin{equation}\label{eqn:4.2}
				S_\theta \wedge dd^c u_l=T_{f,\theta}^N\wedge dd^c u_l \;\;\;\text{on $(\pi')^{-1}(\theta)$ },
			\end{equation}
			for every $l\geq 1$. 
			
			Since $u_l$ decreases to $u$, and  since by our assumption $dd^c u\wedge S_\theta$ is admissible, by \cite[Proposition A.30]{dinh2010dynamics}, we have 
			\begin{equation}\label{eqn:4.3}
				S_\theta \wedge dd^c u_l\to S_\theta \wedge dd^c u \;\;\;\text{on $(\pi')^{-1}(\theta)$ },
			\end{equation}
			when $l\to +\infty$. 
			
			Since $T_{f,\theta}$ has continuous local potential, $dd^c u\wedge T_{f,\theta}^N$ is also admissible, using \cite[Proposition A.30]{dinh2010dynamics} again, we have 
			
			\begin{equation}\label{eqn:4.4}
				T_{f,\theta}^N\wedge dd^c u_l\to T_{f,\theta}^N\wedge dd^c u \;\;\;\text{on $(\pi')^{-1}(\theta)$ },
			\end{equation}
			when $l\to +\infty$. 
			
			Recall that $R_s=dd^c u$ and $\mu_s=T_{f,\theta}^N\wedge R_s$. Combining (\ref{eqn:4.2}), (\ref{eqn:4.3}), and (\ref{eqn:4.4}) we get the desired result. This completes the proof.
		\end{proof}
		%
		
		\medskip

		For $\alpha\in B(\C)$, let $O(V_n)_\alpha$ be the specialization of $O(V_n)$ at $\alpha$, which is a pure dimension one Zariski closed subvariety supported in $(\pi')^{-1}(\alpha)$. Define $$S_{n,\alpha}:=\frac{[O(V_n)_\alpha]}{\deg(O(V_n))}.$$
		\begin{prop}\label{prop: sequence limit}
			In the setting of Proposition \ref{prop: equidistribution}.   For each $n\geq 1$, there exists $l_n\geq 1$ and a finite set $E_n\subset B(\C)$ consisting of fully transcendental  points, such that
			$$\frac{1}{\# E_n}\sum_{\alpha\in E_n}S_{l_n,\alpha }\to S_\theta$$
			as $n\to +\infty$.
			
		\end{prop}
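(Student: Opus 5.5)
The plan is to combine the two smoothing procedures already available: the slicing definition of $S_\theta$ (Section \ref{sec:slicing}), and the fact that for each fixed $n$ the current $S_n$ is the integral over $\alpha\in B(\C)$ of its algebraic slices $S_{n,\alpha}$. A diagonal argument then produces $l_n$ and the finite sets $E_n$. Fix a metric $\mathrm{dist}$ that induces the weak-* topology on the currents at hand. This is possible because, by Proposition \ref{prop: deg-vol} and Proposition \ref{prop:metrizable}, all relevant currents have uniformly bounded mass on compacts (the sliced currents as well, which follows by applying Proposition \ref{prop: deg-vol} fiberwise or simply by the bound on $S_n\wedge(\pi')^*\nu$). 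It therefore suffices to show that for each $n$ there exist $l_n$ and $E_n$ with $\mathrm{dist}\big(\frac{1}{\#E_n}\sum_{\alpha\in E_n}S_{l_n,\alpha},S_\theta\big)<1/n$.

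\textbf{Step 1 (smoothing in $\theta$).} By the definition of the slice, choose $\varepsilon_n\to0$ with
\[
\mathrm{dist}\big(S\wedge(\pi')^*(\psi_{\theta,\varepsilon_n}\omega_B^{m-1}),\,S_\theta\big)<1/(3n).
\]
Write $\rho_n:=\psi_{\theta,\varepsilon_n}\omega_B^{m-1}$; this is a smooth probability measure on $B(\C)$.

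\textbf{Step 2 (approximating $S$ by $S_l$).} Since $S_{l}\to S$ along the chosen subsequence and $(\pi')^*\rho_n$ is a smooth form, we have $S_l\wedge(\pi')^*\rho_n\to S\wedge(\pi')^*\rho_n$. Pick $l_n$ so that the distance between these two is less than $1/(3n)$.

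\textbf{Step 3 (discretizing $\rho_n$).} Because $O(V_{l_n})$ is an algebraic cycle that is flat over a dense Zariski open subset of $B$, property (2) of slicing gives
\[
S_{l_n}\wedge(\pi')^*\rho_n=\int S_{l_n,\alpha}\,d\rho_n(\alpha),
\]
where for $\rho_n$-a.e. $\alpha$ the slice of the integration current is the current of the specialized cycle. The map $\alpha\mapsto S_{l_n,\alpha}$ is continuous on the Zariski open set $B^\circ$ where the family is flat, since the fibers of a flat family vary continuously as currents, and the masses are uniformly bounded there. Take i.i.d. samples from $\rho_n$ (or use a Riemann-sum discretization of $\rho_n$ restricted to a compact subset of $B^\circ$ carrying almost all of its mass). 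By the law of large numbers, tested against a countable dense family of forms, there is a finite set $E_n$ whose empirical measure is close to $\rho_n$ in the sense needed. Since $\rho_n$ is absolutely continuous with respect to Lebesgue measure and non fully-transcendental points form a Lebesgue-null set, almost surely all samples are fully transcendental and lie in $B^\circ$. This gives $\mathrm{dist}\big(\frac{1}{\#E_n}\sum_{\alpha\in E_n} S_{l_n,\alpha},\,S_{l_n}\wedge(\pi')^*\rho_n\big)<1/(3n)$, and the triangle inequality finishes the proof.

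The main obstacle is Step 3. One must justify that the slice of $[O(V_{l_n})]$ at a generic $\alpha$ equals the specialized cycle $[O(V_{l_n})_\alpha]$, and that this slice depends continuously on $\alpha$, so that sampling converges weakly. I would handle this by restricting to the flat locus $B^\circ$ and discarding a set of small $\rho_n$-mass, controlling its contribution by the uniform mass bound.
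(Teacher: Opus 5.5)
Your proposal is correct and follows essentially the same route as the paper. It uses the slicing definition to get $S\wedge(\pi')^*\nu_n\to S_\theta$, picks $l_n$ so that $S_{l_n}\wedge(\pi')^*\nu_n$ is close to $S\wedge(\pi')^*\nu_n$, and then discretizes $\nu_n$ on fully transcendental points (a Lebesgue-full set) via $S_l\wedge(\pi')^*\nu=\int S_{l,\alpha}\,d\nu(\alpha)$ and continuity of fiber currents. Your restriction to the flat locus $B^\circ$ and the sampling version of the discretization are a more careful rendering of the paper's one-line appeal to flatness of $\pi'$.
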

		\begin{proof}
			Since $S_\theta$ is well defined, we can pick a sequence of smooth probability measure $\nu_n$ on $B(\C)$ such that $\nu_n\to \delta_\theta$ in the weak-* sense,  and $S\wedge (\pi')^* \nu_n\to S_\theta$.  Thus it suffices to show that  if $\nu$ is a fixed smooth probability measure on $B(\C)$,  for each $n\geq 1$, there exist $l_n\geq 1$ and a finite set $E_n\subset B(\C)$ containing fully transcendental  points, such that
			$$\frac{1}{\# E_n}\sum_{\alpha\in E_n}S_{l_n,\alpha }\to S\wedge (\pi')^* \nu$$
			as $n\to +\infty$.
			
			Since $S_n\to S$ when $n\to+\infty$, we have $S_n\wedge (\pi')^* \nu\to S\wedge (\pi')^* \nu$. In applying the following argument to the sequence $\nu_n$, we first choose $l_n$ large enough so that $S_{l_n}\wedge(\pi')^*\nu_n$ is close to $S\wedge(\pi')^*\nu_n$, and then approximate $\nu_n$ by discrete measures supported on fully transcendental points. Thus it suffices to show that  if $\nu$ is a fixed smooth probability measure on $B(\C)$ and $l\geq 1$ is a fixed integer,   for each $n\geq 1$,  there exists a finite set $E_n\subset B(\C)$ containing fully transcendental  points, such that
			$$\frac{1}{\# E_n}\sum_{\alpha\in E_n}S_{l,\alpha }\to S_l\wedge (\pi')^* \nu=\int S_{l,\alpha} \;d\nu(\alpha)$$
			as $n\to +\infty$.
			
			Since $\pi'$ is flat,  $S_{l,\alpha}$ varies continuously with respect to $\alpha$ in the sense of currents, see \cite[Page 152-153]{MR1111477}.  We can approximate $\nu$ by a sequence of discrete measures $\frac{1}{\# E_n}\sum_{\alpha\in E_n} \delta_\alpha$,  where $E_n$ is a finite set.  Since  the set of fully transcendental  points  has full Lebesgue measure, we can further choose $E_n$ such that every point in  $E_n$ is fully transcendental.  Thus we have 
			$$\frac{1}{\# E_n}\sum_{\alpha\in E_n}S_{l,\alpha }=\int S_{l,\alpha } \;d\left(\frac{1}{\# E_n}\sum_{\alpha\in E_n} \delta_\alpha\right)\to \int S_{l,\alpha} \;d\nu(\alpha)$$
			as $n\to +\infty$, which finishes the proof.
		\end{proof}
		
		%
		%

		\medskip

		\subsection{Proof of Theorem \ref{thm: high-dim Genus} and  \ref{thm: 1-dim Gonality}}
		\proof[Proof of Theorem \ref{thm: high-dim Genus}]  
		Let $f$ be a one-parameter algebraic family of endomorphisms of $\P^N$ of degree $d\geq 2$ over $\C$ parametrized by a smooth quasi-projective curve $\La$ satisfying Assumption A.  As in the beginning of Section \ref{sec: finitely generated}, we view $f$ as  the  family of endomorphisms of $\P^N$ over a number field $k$, parametrized by a smooth irreducible quasi-projective variety $X$ of dimension $m$. Let $\theta_0\in B(\C)$ such that the specialization of $X_\C$ at $\theta_0$ is $\La$.   Let $V_n$ be the horizontal subvariety in $X\times \P^N$ induced by $\Gamma_n$. Then $V_n$ is a generic sequence. Let  $$S_n:=\frac{[O(V_n)]}{\deg(O(V_n))}, \;S'_n:=\frac{[O(V_n)]}{\vol(O(V_n))}.$$
		Since $(V_n)$ is small, the geometric heights of all irreducible components of $O(V_n)$ are bounded. Applying Proposition \ref{prop: deg-vol} componentwise and summing, the ratios $\vol(O(V_n))/\deg(O(V_n))$ are bounded above and below by positive constants. By Proposition \ref{prop: sequence limit}, by passing to a subsequence, $S_n\to S$, $S'_n\to S'$, and there exists $c>0$ such that $S=cS'$. 
		
		For each $\theta\in B(\C)$, let $f_\theta$ be the one-parameter algebraic family parametrized by $\pi^{-1}(\theta)$.    Let $T_\bif$ be the bifurcation current of $f$,  which is a positive closed bidegree $(1,1)$ current on $X(\C)$ with continuous local potential, see Section \ref{sec:bifur current} for the definition.  By our assumption $\Bif(f_{\theta_0})\neq \emptyset$, which is equivalent to  say $$T_{\bif}\wedge [\pi^{-1}(\theta_0)]>0.$$
		Since $T_\bif$ has continuous local potential, there exists a neighborhood $U\subset B(\C)$ of $\theta_0$ such that for every $\theta\in U$,  $$T_{\bif}\wedge [\pi^{-1}(\theta)]>0,$$
		hence $\Bif(f_\theta)\neq \emptyset.$
		
		By our assumption $f$ is periodically generic.  The periodic subvarieties and the non-resonance and diagonalizability conditions are defined over the finitely generated ground field; hence a fully-transcendental specialization cannot fall into the corresponding exceptional algebraic conditions. For every $\theta\in B(\C)$ that is  fully-transcendental, $f_\theta$ is periodically generic.  Thus for every $\theta\in U$ fully transcendental, $f_\theta$ satisfies Assumption A. We pick such a point $\theta$ such that the slicing of $S$ with respect to $\pi'$ at $\theta$ is well defined.   We fix this $\theta$.  
		
		For $\alpha\in B(\C)$, let $O(V_n)_\alpha$ be the specialization of the Galois orbit $O(V_n)$ at $\alpha$, which is a pure dimension one Zariski closed subvariety supported in $(\pi')^{-1}(\alpha)$.   For each $n\geq 1$, let $E_n$ be the finite set given in Proposition \ref{prop: sequence limit}, we have 
		\begin{equation}\label{eqn:4.5}
			\frac{1}{\# E_n}\sum_{\alpha\in E_n}S_{l_n,\alpha }\to S_\theta.
		\end{equation}
		
		Let $R:=(\pi_1)^*T_{\bif}$.  Let $\mu_{\bif,\theta}:=T_{\bif}\wedge [\pi^{-1}(\theta)]$, which is a non-vanishing positive measure on $\pi^{-1}(\theta)$ since $\Bif(f_\theta)\neq \emptyset$.   Let $\Omega\subset \pi^{-1}(\theta)\times \P^N(\C)$ be an open subset, and let $T$ be any positive closed bidimension $(1,1)$ current on $\Omega$.  Since $R$ has continuous local potential, $T\wedge R$ is admissible. For $s\in X(\C)$, recall $R_s$ is  the current of integration on the fiber $\pi_1^{-1}(s)\cong \P^N(\C)$. We need the following
		\begin{lem}\label{lem:positive intersection}
			Assume $R\wedge T>0$. Then there exists $s\in \supp \mu_{\bif,\theta}$ such that  $T\wedge R_s$ is admissible and $T\wedge R_s>0.$  
			
		\end{lem}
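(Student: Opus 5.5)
We work on the fiber $\pi^{-1}(\theta)$, a smooth curve, and use a local coordinate $t$ on it near any point, so that locally $R=(\pi_1)^*T_\bif$ restricted to $\pi^{-1}(\theta)\times\P^N(\C)$ is $dd^c(L\circ\pi_1)$ with $L$ the (H\"older) continuous potential restricted to the fiber. The plan is to write the pullback current as a superposition of fiber currents,
$$R|_{\pi^{-1}(\theta)\times\P^N}=\int_{\pi^{-1}(\theta)} R_s\,d\mu_{\bif,\theta}(s),$$
which holds because $dd^c(L\circ \pi_1)=\pi_1^*(dd^c L)$ and $dd^c L|_{\pi^{-1}(\theta)}=\mu_{\bif,\theta}$; concretely, $\pi_1^*\mu=\int [\pi_1^{-1}(s)]\,d\mu(s)$ for a measure $\mu$ on a curve, by testing against forms and using Fubini.

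Next we apply Proposition~\ref{prop: uniform intersection} with $S:=T$ (a trivial decomposition, $\nu=\delta_T$) and $R$ decomposed as above, with $\la=\mu_{\bif,\theta}$ and $R_b=R_s$. Since $T\wedge R$ is admissible, the proposition gives that $T\wedge R_s$ is admissible for $\mu_{\bif,\theta}$-a.e.\ $s$, and
$$T\wedge R=\int (T\wedge R_s)\,d\mu_{\bif,\theta}(s).$$
This step is local, so we work on small coordinate charts $\D\times\P^N(\C)$ (or balls in affine charts of $\P^N$) inside $\Omega$, where the proposition is stated, and glue: admissibility is local, and a countable cover suffices to get a full-measure set of $s$.

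Finally, since $T\wedge R>0$, the integral of the positive measures $T\wedge R_s$ against $\mu_{\bif,\theta}$ is non-zero, so the set of $s$ with $T\wedge R_s$ admissible and $T\wedge R_s>0$ has positive $\mu_{\bif,\theta}$-measure. Any such $s$ in that set (which can be taken inside $\supp\mu_{\bif,\theta}$, since the complement has measure zero) works.

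The main obstacle is justifying the disintegration formula rigorously in the form required by Proposition~\ref{prop: uniform intersection}, that is, producing local potentials compatible with the canonical-potential construction there. To handle it, we take the potential of $R_s$ to be $\log|t-s|$ (pulled back by $\pi_1$) and note that $\int \log|t-s|\,d\mu_{\bif,\theta}(s)$ differs from $L$ by a harmonic function on the chart. Equivalently, we may bypass the proposition: $\int |\log|t-s||\,d\sigma_T\,d\mu_{\bif,\theta}(s)<\infty$ follows by Fubini from the local integrability of $L$ with respect to $\sigma_T$, which in turn comes from continuity.
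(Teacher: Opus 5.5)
Your proposal is correct and follows the paper's proof. You disintegrate $R$ along the fibers as $\int R_s\,d\mu_{\bif,\theta}(s)$ and apply Proposition~\ref{prop: uniform intersection} with the trivial decomposition of $T$, which gives $\mu_{\bif,\theta}$-a.e.\ admissibility and $T\wedge R=\int T\wedge R_s\,d\mu_{\bif,\theta}(s)$; you then choose $s\in\supp\mu_{\bif,\theta}$ from the resulting positive-measure set, and your Tonelli argument with the potentials $\log|t-s|$ simply spells out the fiberwise disintegration that the paper uses without comment.
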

		\begin{proof}\label{lem:positive}
			By Proposition \ref{prop: uniform intersection},  $T\wedge R_s$ is admissible for $\mu_{\bif,\theta}$-a.e. $s\in \pi^{-1}(\theta)$, moreover
			\begin{equation*}
				0<T\wedge R=\int T\wedge R_s\;d\mu_{\bif,\theta}(s).
			\end{equation*}
			
			Thus there exists a positive $\mu_{\bif,\theta}$-measure subset such that $T\wedge R_s>0 $  when $s$ is contained in this set. In particular we can choose a point $s$ such that $s\in \supp\mu_{\bif,\theta}$.
		\end{proof}

		By Proposition \ref{prop: equidistribution},  we have
		\begin{points}
			\item $S_\theta\wedge T_f=0$;
			\item $S_\theta\wedge R_s=\mu_s$ for $\mu_{\bif,\theta}$-a.e. $s\in \pi^{-1}(\theta)$, where $\mu_s$ is the maximal entropy measure of $f_s$. 
		\end{points}
		
		\medskip
		
		{\bf We first show  $\genus(\Gamma_n)/\deg(\Gamma_n) \to +\infty$. } Assume by contradiction there is a constant $C>0$ such that  $\genus(\Gamma_n)\leq C\deg(\Gamma_n)$. Since genus and degree are  invariant under Galois conjugation, for every fully transcendental $\alpha$, $$\genus(O(V_n)_\alpha)\leq C\deg(O(V_n)_\alpha).$$
		Since $\vol(O(V_n)_\alpha)$ is a constant with respect to $\alpha$, applying Proposition \ref{prop: deg-vol} componentwise to the irreducible components of $O(V_n)_\alpha$,  there exists  $C'>0$ independent of $\alpha$ and $n$  such that 
		$$\genus(O(V_n)_\alpha)\leq C'\vol(O(V_n)_\alpha).$$

		The sequence of algebraic curves  $\cup_{\alpha\in E_n} O(V_{l_n})_\alpha$ satisfies the genus-volume relation in Definition \ref{defi:genus-volume}.   By (\ref{eqn:4.5}), we can apply  Theorem \ref{thm: strongly approximable} to $S_\theta$ and the test $(1,1)$-current $R$.  There is a uniformly woven current $S_r\leq S_\theta$  on an open subset $\Omega\subset \pi^{-1}(\theta)\times \P^N(\C)$ such that $S_r\wedge R>0$. By Lemma \ref{lem:positive intersection}, there exists $s\in\Bif(f_\theta):=\supp \mu_{\bif,\theta}$ such that
		\begin{points}
			\item $S_r\wedge R_s>0$.
			\item $S_r\wedge T_f=0$;
			\item $S_r\wedge R_t\leq \mu_t$ for every $t\in \pi^{-1}(\theta)$ such that $S_r\wedge R_t$  is admissible.
		\end{points}
		
		By Theorem \ref{thm:wovenbifur}, $s\in \Stab(f_\theta)$, which is a contradiction.
		
		\medskip
		
		{\bf Finally, we show $\gonality(\Gamma_n) \to +\infty$.} Assume by contradiction there is a constant $C>0$ such that  $\gonality(\Gamma_n)\leq C$.  After passing to a subsequence, we may assume that $\gonality(\Gamma_n)=l$ for every $n\geq 1$.  Since  gonality is   invariant under Galois conjugation, for every fully transcendental $\alpha$, every irreducible component of $O(V_n)_\alpha$ has gonality $l$. By (\ref{eqn:4.5}), we can apply  Theorem \ref{thm:sa gonality} to $S_\theta$ and the test $(1,1)$-current $R$.   Let $p:(\pi^{-1}(\theta)\times \P^N)^l\to \pi^{-1}(\theta)\times \P^N$ be the projection to the first coordinate.  By  Theorem \ref{thm:sa gonality},  there exists a positive closed current $S'$ of bidimension $(1,1)$ on $(\pi^{-1}(\theta)\times \P^N(\C))^l$ such that $p_*S'=S_\theta$,    and a uniformly woven current $S'_r\leq S'$ on an  open subset $\Omega\subset (\pi^{-1}(\theta)\times \P^N(\C))^l$ such that $S'_r\wedge p^*R>0$.  By Proposition \ref{prop:irreducible},  after shrinking $\Omega$ slightly, we may assume  $S'_r=\int_{\sV^*_K}[\Gamma_a]\;d\nu(a),$ where $K:=\overline{\Omega}$ is compact.   See Definition \ref{defi:irreduciblespace} for the definition of $\sV^*_K$. For each $a\in \sV^*_K$,  $\Gamma_a$ is an irreducible closed analytic curve in $\Omega$.

		Let $a\in \supp \nu$ such that $[\Gamma_a]\wedge p^*R>0$.    Let $U$ be a small ball such that $[\Gamma_a]\wedge p^*R>0$ in $U$ and $p: \Gamma_b\cap U\to p(U)$ is proper  for every $b$ in a small neighborhood $\sW$ of $a$.   Let $\nu'$ by the restriction of $\nu$ on $\sW$,  let $S'_\sW:=\int_{\sV^*_K}[\Gamma_a]\;d\nu'(a)$, which is a uniformly woven current on $\Omega$. Let $\tilde{S}$ be the restriction of $S'_\sW$ on $U$, which is a uniformly woven current on $U$. Then $p_*\tilde{S}$ is a uniformly woven current on $p(U)$ such that $R\wedge p_*\tilde{S}=p^*R\wedge \tilde{S}>0$. Since $\tilde{S}\leq S'$, we have $p_*\tilde{S}\leq p_*S'=S_\theta$.
		
		By Lemma \ref{lem:positive intersection}, there exists $s\in\Bif(f_\theta):=\supp \mu_{\bif,\theta}$ such that
		\begin{points}
			\item $p_*\tilde{S}\wedge R_s>0$.
			\item $p_*\tilde{S}\wedge T_f=0$;
			\item $p_*\tilde{S}\wedge R_t\leq \mu_t$ for every $t\in \pi^{-1}(\theta)$ such that $p_*\tilde{S}\wedge R_t$  is admissible.
		\end{points}
		
		By Theorem \ref{thm:wovenbifur}, $s\in \Stab(f_\theta)$, which is a contradiction. This completes the proof.
		\endproof

		%

		\proof[Proof of Theorem \ref{thm: 1-dim Gonality}]  
		By McMullen's rigidity theorem \cite{McMullen1987}, see also \cite{ji2023homoclinic}, if $f$ is not the  flexible Latt\`es family, then the bifurcation set $\Bif(f)$ is non-empty. Then $f$ satisfies Assumption A. Since on $\La\times \P^1$, a sequence of distinct horizontal curves is automatically a generic sequence, the conclusion 	$\gonality(\Gamma_n) \to +\infty$ and $\genus(\Gamma_n)/\deg(\Gamma_n) \to +\infty$ hold by Theorem \ref{thm: high-dim Genus}. 
		\endproof

		\medskip
		
		\section{Dynamical uniform boundedness: proof of Theorem \ref{thm:1-dim preimages}, \ref{thm:high-dim preimages}, \ref{thm: 1-dim UBC},  and \ref{thm: high-dim UBC}}\label{sec:uniform-boundedness-proof}
		
		%
		%
		We first recall:
		\begin{lem}[Frey]\label{lem:frey}
			Let $C$ be an irreducible curve over a number field $K$, and let $B\geq 1$ be an integer. If the smooth projective model of $C$ contains infinitely many points $x\in C(\overline{K})$ with $[K(x):K]\leq B$, then
			$$\gonality(C)\leq 2B.$$
		\end{lem}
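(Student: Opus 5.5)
The plan is the classical Abel--Jacobi argument combined with Faltings' theorem on rational points of subvarieties of abelian varieties. First I would reduce to the case where $C$ is smooth and projective. Replacing $C$ by its smooth projective model changes neither the gonality nor, up to finitely many points, the set of algebraic points of bounded degree. I may also assume $C(K)\neq\emptyset$ after a finite extension of $K$, and that the genus is positive, since otherwise $\gonality(C)=1$. Since there are infinitely many points $x$ with $[K(x):K]\leq B$, pigeonhole gives an integer $1\le d\le B$ such that infinitely many points have degree exactly $d$. The Galois orbit of each such $x$ is an effective divisor of degree $d$ defined over $K$, that is, a point of $C^{(d)}(K)$. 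Distinct orbits give distinct points, so $C^{(d)}(K)$ is infinite.

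Next, argue by contradiction and assume $\gonality(C)>2B$. Fix a base point and let $u:C^{(d)}\to J=\mathrm{Jac}(C)$ be the Abel--Jacobi map, with image $W_d$. If two distinct effective divisors of degree $d$ were linearly equivalent, their pencil would give a nonconstant map $C\to\P^1$ of degree at most $d\le B$. Hence $u$ is injective on $C^{(d)}$, and $W_d(K)$ is infinite.

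By Faltings' theorem (the Mordell--Lang statement for subvarieties of abelian varieties), $W_d(K)$ is contained in a finite union of translates $x_i+A_i\subset W_d$ of abelian subvarieties. Since $W_d(K)$ is infinite, some $A=A_i$ has positive dimension, so there is a point $x$ with $x+A\subset W_d$.

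Now I would use the symmetrization trick of Abramovich--Harris. For $a\in A$, both $x+a$ and $x-a$ lie in $W_d$, so we can write $x+a=u(D_a^+)$ and $x-a=u(D_a^-)$ with $D_a^\pm$ effective of degree $d$. Then $E_a:=D_a^++D_a^-$ is effective of degree $2d$, and its class $2x$ is independent of $a$. Since $u$ is injective and $a\mapsto x+a$ is injective with $A$ positive dimensional, the divisors $E_a$ form a positive-dimensional family inside a single complete linear system. Hence $h^0\ge 2$ for that linear system of degree $2d$. Removing base points yields a nonconstant morphism $C\to\P^1$ of degree at most $2d\le 2B$, contradicting $\gonality(C)>2B$.

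I expect the main obstacle to be the last step. One has to check carefully that the family $\{E_a\}$ is genuinely nonconstant; injectivity of $u$ from the first step should give this. One also has to handle the base-field issues in applying Faltings' theorem. Since $W_d$ and the relevant points are defined over a number field, this should be routine after a finite extension of $K$, which only shrinks the degree bounds.
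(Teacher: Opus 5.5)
Your argument is correct. It is essentially the proof in the reference the paper cites for this lemma (Frey, Proposition 2), which combines Faltings' theorem on $W_d$ with the Abramovich--Harris symmetrization $a\mapsto D^+_a+D^-_a$; your remark that $E_a$ cannot be constant, since it would then contain infinitely many distinct $D^+_a$, correctly closes the nonconstancy step. The paper gives no argument beyond the citation, and since it only uses geometric gonality, you do not need Frey's extra care to make the degree-$2d$ pencil defined over $K$.
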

		
		This follows from \cite[Proposition 2]{Frey1994}. We will use the contrapositive form: if $\gonality(C)>2B$, then $C$ has only finitely many points of degree at most $B$ over $K$.
		
		\medskip
		
		\proof[Proof of Theorem \ref{thm:1-dim preimages}]
		If $f$ is the flexible Latt\`es family, the theorem follows from Ingram's uniform boundedness result for the corresponding one-parameter Latt\`es family \cite{MR2870098}. We may therefore assume that $f$ is not the flexible Latt\`es family.
		
		Let $K$ be a number field over which $\La$, $f$, and the marked point $a$ are defined. Let $\Gamma_a\subset \La\times \P^1$ be the horizontal curve which is the graph of $a$. For every $m\geq 0$, set
		$$Y_m:=f^{-m}(\Gamma_a)\subset \La\times \P^1.$$
		Let $\sE_m$ be the union of the irreducible components $\Gamma$ of $Y_m$ which are not contained in $\cup_{j<m}Y_j$. We call such components exact preimage curves of depth $m$.
		
		Fix any polarization $\overline{H}\in \widehat{\Pic}(K/\Z)_{\rm nef,\Q}$. If $\Gamma$ is an irreducible component of $\sE_m$, then $f^m(\Gamma)=\Gamma_a$, and hence
		$$\hat{h}^{\overline{H}}_f(\Gamma)=d^{-m}\hat{h}^{\overline{H}}_f(\Gamma_a).$$
		In particular, any sequence $\Gamma_m\subset \sE_m$ with $m\to \infty$ is small for $f$. By Theorem \ref{thm: 1-dim Gonality}, the gonalities of any distinct sequence of such exact preimage curves tend to infinity.
		
		Fix $D\geq 1$. Put
		$$B:=D^2.$$
		If $t,z\in \overline{\Q}$ satisfy $[\Q(t):\Q]\leq D$ and $[\Q(z):\Q]\leq D$, then the point $(t,z)\in \La\times \P^1$ has degree at most $B$ over $K$.
		Indeed, $[K(t,z):K]\leq [\Q(t,z):\Q]\leq D^2$.
		
		Assume by contradiction that the theorem is false for this $D$. For $m\geq 0$, let $\mathcal{S}_m$ be the set of points $(t,z)\in Y_m(\overline K)$ such that
		\begin{points}
			\item $[K(t,z):K]\leq B$;
			\item $m$ is the minimal non-negative integer such that $f_t^m(z)=a_t$.
		\end{points}
		Thus the sets $\mathcal{S}_m$ parametrize iterated preimages of exact depth $m$ and bounded degree. Notice that $\mathcal{S}_m\subset \sE_m(\overline K)$.
		
		For every $M\geq 0$, the number of points $z$ satisfying $f_t^m(z)=a_t$ for some $m<M$ is bounded above by $1+d+\cdots+d^{M-1}$, independently of $t$. Since the desired uniform bound is assumed to fail, it follows that for every $M\geq 0$, the union
		$$\bigcup_{m\geq M}\mathcal{S}_m$$
		is infinite.
		
		Fix $M\geq 0$. For each $m\geq M$, the map $f^{m-M}$ sends $\mathcal{S}_m$ to $\mathcal{S}_M$: the degree bound is preserved because $f^{m-M}(t,z)$ is defined over $K(t,z)$, and the exactness of the depth is preserved by minimality. We claim that the image of
		$$\bigcup_{m\geq M}\mathcal{S}_m\longrightarrow \mathcal{S}_M$$
		is infinite. Indeed, fix a point $(t,y)\in \mathcal{S}_M$. The points in its fiber are iterated preimages of $y$ under the single endomorphism $f_t:\P^1\to \P^1$, and all have degree at most $B$ over $K$. By Northcott's theorem for the canonical height of $f_t$, there are only finitely many such points. Since $\bigcup_{m\geq M}\mathcal{S}_m$ is infinite, the image in $\mathcal{S}_M$ must be infinite.
		
		Consequently $\mathcal{S}_M$ is infinite for every $M$. Since $\sE_M$ has only finitely many irreducible components, there is an irreducible component $\Gamma_M$ of $\sE_M$ whose smooth projective model contains infinitely many points of degree at most $B$ over $K$. By Lemma \ref{lem:frey},
		$$\gonality(\Gamma_M)\leq 2B.$$
		Moreover, the components $\Gamma_M$ obtained for distinct $M$ are distinct by the definition of $\sE_M$. For every polarization $\overline{H}$, we have
		$$\hat{h}^{\overline{H}}_f(\Gamma_M)=d^{-M}\hat{h}^{\overline{H}}_f(\Gamma_a)\to 0.$$
		This gives a sequence of distinct exact preimage curves with bounded gonality which is small for $f$, contradicting Theorem \ref{thm: 1-dim Gonality}. This proves the theorem.
		\endproof
		
		\medskip
		
		\proof[Proof of Theorem \ref{thm:high-dim preimages}]
		Let $K$ be a number field over which $\La$, $f$, and $a$ are defined, and set $F:=K(\La)$. We work on the base change $\P^N_{\overline F}$ of the generic fiber. The graph $\Gamma_a$ induces a point $a_F\in \P^N_F(F)\subset \P^N_{\overline F}(\overline F)$. We identify irreducible horizontal curves in $\La\times \P^N$ over $\overline K$ with the corresponding closed points of $\P^N_{\overline F}$.
		
		For every $m\geq 0$, set
		$$Y_m:=f^{-m}(\Gamma_a).$$
		Let $\sE_m$ be the union of the irreducible components $\Gamma$ of $Y_m$ which are not contained in $\cup_{j<m}Y_j$; we call them exact preimage curves of depth $m$. Equivalently, after the above identification, the irreducible components of $\sE_m$ are the closed points $x\in \P^N_{\overline F}$ whose exact hitting time to $a_F$ is $m$.
		
		If $\Gamma$ is an irreducible component of $\sE_m$, then $f^m(\Gamma)=\Gamma_a$. Hence for every polarization $\overline H\in \widehat{\Pic}(K/\Z)_{\rm nef,\Q}$,
		$$\hat h_f^{\overline H}(\Gamma)=d^{-m}\hat h_f^{\overline H}(\Gamma_a).$$
		In particular, any sequence of exact preimage curves whose depths tend to infinity is small for $f$.
		
		Fix $D\geq 1$ and put $B:=D^2$. For $m\geq 1$, let $\sB_m$ be the finite set of irreducible components $\Gamma$ of $\sE_m$ such that the smooth projective model of $\Gamma$ contains infinitely many points of degree at most $B$ over $K$. We call the elements of $\sB_m$ bad curves of depth $m$. By Lemma \ref{lem:frey}, every $\Gamma\in \sB_m$ satisfies
		$$\gonality(\Gamma)\leq 2B.$$
		Moreover, if $\Gamma\in \sB_m$ and $m\geq 2$, then $f(\Gamma)\in \sB_{m-1}$. Indeed, the image $f(\Gamma)$ is an exact preimage curve of depth $m-1$, and the image of infinitely many degree at most $B$ points again contains infinitely many degree at most $B$ points.
		
		We claim that $\sB_m=\emptyset$ for all sufficiently large $m$. Assume the contrary. Since each $\sB_m$ is finite and $f(\sB_m)\subset \sB_{m-1}$ for $m\geq 2$, we may apply K\"onig's lemma \cite[Lemma 8.1.2]{Reinhard10} to the locally finite rooted tree whose level $m$ is $\sB_m$ and whose edges are induced by the map $f:\sB_m\to \sB_{m-1}$. We get an infinite path, namely a sequence
		$$x_m\in \sB_m,\qquad m\geq 1,$$
		such that $f(x_m)=x_{m-1}$ for all $m\geq 2$. Put $x_0:=a_F$.
		
		We endow $\P^N_{\overline F}$ with the constructible topology. Recall that this topology is generated by constructible subsets; it is compact and Hausdorff, and every Zariski closed subset is open and closed for it \cite[Tag 0901]{stacks-project}. Since $\overline F$ is countable, this topology is second countable, hence compact metrizable. Consequently, the space of probability measures on it is sequentially compact. Consider the Birkhoff averages
		$$\mu_n:=\frac{1}{n}\sum_{i=0}^{n-1}\delta_{x_i}.$$
		After passing to a subsequence, assume $\mu_n$ converges weakly to a probability measure $\mu$. Since
		$$f_*\mu_n-\mu_n=\frac{1}{n}(\delta_{x_0}-\delta_{x_n}),$$
		we have $f_*\mu=\mu$.
		
		Identifying a scheme-theoretic point with the corresponding irreducible closed subset of the generic fiber, we apply \cite[Theorem 1.12 and Lemma 5.2]{Xie2021} to $\P^N_{\overline F}$ with the constructible topology. The measure $\mu$ is a convex combination of probability measures supported on periodic orbits of scheme-theoretic points. Let $\xi$ be a point in the support of $\mu$, and let $r\geq 1$ be its period. Set
		$$Z:=\overline{\{\xi\}}\cup \overline{\{f(\xi)\}}\cup\cdots\cup \overline{\{f^{r-1}(\xi)\}},$$
		where the closures are Zariski closures in $\P^N_{\overline F}$. Then $Z$ is a Zariski closed subset of $\P^N_{\overline F}$ satisfying $f(Z)=Z$.
		
		Since $Z$ is open and closed for the constructible topology and $\mu(Z)>0$, we have $\mu_n(Z)>0$ for all large $n$. Hence $x_i\in Z$ for some $i\geq 1$. Since $f(Z)=Z$ and $f^i(x_i)=a_F$, we get $a_F\in Z$. Therefore the whole forward orbit of $a_F$ is contained in $Z$. By the assumption that the forward orbit of $\Gamma_a$ is Zariski dense in $\La\times \P^N$, the forward orbit of $a_F$ is Zariski dense in $\P^N_{\overline F}$. Thus $Z=\P^N_{\overline F}$. This implies that $\xi$ is the generic point of $\P^N_{\overline F}$.
		
		It follows that every weak limit of the measures $\mu_n$ is the Dirac mass at the generic point of $\P^N_{\overline F}$. Consequently, for every proper subvariety $H\subsetneq \P^N_{\overline F}$ over $\overline F$, we have
		$$\mu_n(H)\to 0.$$
		Using the standard diagonal extraction argument over the countable set of proper subvarieties defined over $\overline F$, we can choose a subsequence of the bad curves $x_{n_j}$ which is generic in $\P^N_{\overline F}$. This gives a generic sequence of exact preimage curves with depths tending to infinity, small for $f$, and with gonality bounded above by $2B$, contradicting Theorem \ref{thm: high-dim Genus}. This proves the claim.
		
		We now finish the proof of the uniform boundedness statement. Assume by contradiction that the conclusion fails for this $D$. For $m\geq 0$, let $\mathcal S_m$ be the set of points $(t,z)\in Y_m(\overline K)$ such that
		\begin{points}
			\item $[K(t,z):K]\leq B$;
			\item $m$ is the minimal non-negative integer such that $f_t^m(z)=a_t$.
		\end{points}
		Then $\mathcal S_m$ parametrizes bounded-degree preimages of exact depth $m$.
		
		Since the number of points of depth $<M$ in a fiber is at most $1+d^N+\cdots+d^{N(M-1)}$, the failure of the uniform bound implies that for every $M\geq 0$ the union
		$$\bigcup_{m\geq M}\mathcal S_m$$
		is infinite. Fix $M$ so large that $\sB_M=\emptyset$. For $m\geq M$, the map $f^{m-M}$ sends $\mathcal S_m$ to $\mathcal S_M$: the degree bound is preserved because the image is defined over $K(t,z)$, and exactness of the depth is preserved by minimality.
		
		The image of $\cup_{m\geq M}\mathcal S_m$ in $\mathcal S_M$ is infinite. Indeed, the fiber over a point $(t,y)\in \mathcal S_M$ consists of iterated preimages of $y$ under the fixed endomorphism $f_t:\P^N\to \P^N$, all of degree at most $B$ over $K$; by Northcott's theorem for the canonical height of $f_t$, this fiber is finite. Hence $\mathcal S_M$ is infinite. Since $\sE_M$ has only finitely many irreducible components, one component of $\sE_M$ contains infinitely many points of degree at most $B$ over $K$, i.e. $\sB_M\neq \emptyset$. This contradicts the choice of $M$ and proves the theorem.
		\endproof
		
		\medskip
		
		\proof[Proof of Theorem \ref{thm: 1-dim UBC}]
		Fix an integer $D\geq 1$. Let $\pi:\End_{d,1}\to \sM_{d,1}$ be the canonical projection. Choose an irreducible algebraic curve $\La\subset \End_{d,1}$ such that $\pi(\La)$ contains a non-empty Zariski open subset of $S$. After normalizing $\La$, the restriction of $\pi$ induces a generically finite morphism
		$$\rho:\La\to S.$$
		Let $q_0:=\deg \rho$. Let
		$$F:\La\times \P^1\to \La\times \P^1$$
		be the corresponding one-parameter algebraic family.
		
		By Levy's uniform bound for stabilizers \cite[Theorem 3.1]{Levy2011}, there is a constant $M_d$, depending only on $d$, such that for every degree $d$ rational map $g$ on $\P^1$, the group $\Aut(g)$ has order at most $M_d$. Since the set of conjugacies between two conjugate rational maps is a torsor under such an automorphism group, $M_d$ is also an upper bound for the number of conjugacies between two conjugate degree $d$ rational maps on $\P^1$. Set
		$$q:=q_0M_d.$$
		Set
		$$D_S:=qD.$$
		By Theorem \ref{thm: 1-dim Gonality}, and by Nguyen-Saito's theorem in the flexible Latt\`es case \cite{nguyen1996d}, the gonalities of distinct dynatomic curves of $F$ tend to infinity. Hence the set
		$$
		\mathcal D_{S,D}:=\left\{\Gamma\subset \La\times \P^1:
		\begin{array}{l}
			\Gamma\text{ is a dynatomic curve of }F,\\
			\gonality(\Gamma)\leq D_S
		\end{array}
		\right\}
		$$
		is finite.
		
		Now let $f$ be a non-isotrivial endomorphism of $\P^1$ over $k=\C(B)$ which lies over $S$. If $\Preper(f,D)=\emptyset$, there is nothing to prove. Otherwise, by Fact \ref{fact:gonalityimage}, the existence of a point in $\Preper(f,D)$ implies $\gonality(B)\leq D$.
		
		Let $B_1$ be the normalization of an irreducible component of $B\times_S\La$ dominating $B$. Then $\deg(B_1\to B)\leq q_0$, and we get a morphism $\beta_1:B_1\to \La$. The two families over $B_1$, namely the pullback of the family associated to $f$ and the pullback $\beta_1^*F$, have the same moduli map. They are therefore conjugate over the geometric generic point of $B_1$.
		
		To choose such a conjugacy over a function field, consider the relative conjugacy scheme over $B_1$ whose generic fiber is
		$$
		\left\{\phi\in \PGL_2:\phi\circ f_{B_1}\circ \phi^{-1}=\beta_1^*F\right\}.
		$$
		This generic fiber is non-empty and finite, of cardinality at most $M_d$. Taking the normalization of an irreducible component dominating $B_1$, we obtain a finite cover $B'\to B_1$ of degree at most $M_d$. Hence $\deg(B'\to B)\leq q$. After this base change, the family associated to $f$ is conjugate to the pullback of $F$ by the induced morphism
		$$\beta:B'\to \La.$$
		Conjugacy does not change preperiodicity, so we may use this model after the base change.
		
		Let $x\in \Preper(f,D)$. Let $C$ be the smooth projective curve with function field $k(x)$. Then $\gonality(C)\leq D$. Let $C'$ be an irreducible component of the normalization of $C\times_B B'$ which dominates $C$. Since $\deg(C'\to C)\leq q$, Fact \ref{fact:gonalityimage} gives
		$$\gonality(C')\leq q\,\gonality(C)\leq D_S.$$
		The point $x$, after the base change $B'\to B$, determines a horizontal curve in $B'\times \P^1$. Its image under the morphism $\beta\times {\rm id}:B'\times \P^1\to \La\times \P^1$ is contained in a dynatomic curve, denoted by $\Gamma_x$, of $F$. Since $C'$ dominates $\Gamma_x$, Fact \ref{fact:gonalityimage} gives
		$$\gonality(\Gamma_x)\leq \gonality(C')\leq D_S.$$
		Thus $\Gamma_x\in \mathcal D_{S,D}$.
		
		It remains to bound the number of possible points $x$. For each $\Gamma\in \mathcal D_{S,D}$, the pullback $(\beta\times{\rm id})^{-1}(\Gamma)$ has degree at most $\deg(\Gamma/\La)$ over $B'$. Hence it contributes at most $\deg(\Gamma/\La)$ points to the generic fiber over $\overline{k}=\overline{\C(B')}$. Therefore
		$$
		\#\Preper(f,D)\leq \sum_{\Gamma\in \mathcal D_{S,D}}\deg(\Gamma/\La).
		$$
		The right-hand side depends only on $D$ and $S$. This proves that the GUBC holds along $S$ when $N=1$.
		\endproof
		
		\medskip
		
		The proof of Theorem \ref{thm: high-dim UBC} follows the same finite base change and conjugacy reduction as the proof above. The main difference is that, in higher dimension, one has to allow an exceptional subvariety. This exceptional subvariety is produced by taking the Zariski closure of all low-gonality dynatomic curves in the fixed family over $\La$; Theorem \ref{thm: high-dim Genus} ensures that this closure is proper.
		
		\medskip
		
		\proof[Proof of Theorem \ref{thm: high-dim UBC}]
		Fix an integer $D\geq 1$. Let $\pi:\End_{d,N}\to \sM_{d,N}$ be the canonical projection. Choose an irreducible algebraic curve $\La\subset \End_{d,N}$ such that $\pi(\La)$ contains a non-empty Zariski open subset of $S$. After normalizing $\La$, the restriction of $\pi$ induces a generically finite morphism
		$$\rho:\La\to S.$$
		Let $q_0:=\deg\rho$, and let
		$$F:\La\times \P^N\to \La\times \P^N$$
		be the corresponding one-parameter algebraic family. Since Assumption $A$ is invariant under conjugacy and finite base change, the family $F$ satisfies Assumption $A$.
		
		By Levy's uniform bound for stabilizers \cite[Theorem 3.1]{Levy2011}, there is a constant $M_{d,N}$, depending only on $d$ and $N$, such that for every degree $d$ endomorphism $g$ of $\P^N$, the group $\Aut(g)$ has order at most $M_{d,N}$. Since the set of conjugacies between two conjugate endomorphisms is a torsor under such an automorphism group, $M_{d,N}$ is also an upper bound for the number of conjugacies between two conjugate degree $d$ endomorphisms of $\P^N$. Set
		$$q:=q_0M_{d,N},\qquad D_S:=qD.$$
		
		Let $W_{S,D}$ be the Zariski closure in $\La\times \P^N$ of the union of all dynatomic curves $\Gamma$ of $F$ such that
		$$\gonality(\Gamma)\leq D_S.$$
		We claim that $W_{S,D}\subsetneq \La\times \P^N$. Indeed, if $W_{S,D}=\La\times \P^N$, then by a standard diagonal extraction argument one can choose a generic sequence of distinct dynatomic curves $\Gamma_n$ of $F$ with $\gonality(\Gamma_n)\leq D_S$ for all $n$. Such a sequence is small for $F$, contradicting Theorem \ref{thm: high-dim Genus}. Thus $W_{S,D}$ is a proper subvariety.
		
		Now let $f$ be a non-isotrivial endomorphism of $\P^N$ over $k=\C(B)$ which lies over $S$. Let $B_1$ be the normalization of an irreducible component of $B\times_S\La$ dominating $B$. Then $\deg(B_1\to B)\leq q_0$, and we get a morphism $\beta_1:B_1\to\La$. As in the proof of Theorem \ref{thm: 1-dim UBC}, by considering the relative conjugacy scheme between the pullback of $f$ to $B_1$ and $\beta_1^*F$, after a further finite cover $B'\to B_1$ of degree at most $M_{d,N}$, the family associated to $f$ becomes conjugate to the pullback of $F$ by the induced morphism
		$$\beta:B'\to\La.$$
		In particular, $\deg(B'\to B)\leq q$.
		
		We define $V_{f,D}\subset \P^N_{\overline{k}}$ as follows. Pull back $W_{S,D}$ to $B'\times \P^N$ by $\beta\times{\rm id}$, take its generic fiber over $B'$, and then transport it back by the above conjugacy. Since $W_{S,D}$ is a proper subvariety of $\La\times\P^N$ and $\beta$ is dominant, $V_{f,D}$ is a proper subvariety of $\P^N_{\overline{k}}$.
		
		We show that
		$$\Preper(f,D)\subset V_{f,D}(\overline{k}).$$
		Let $x\in\Preper(f,D)$. Let $C$ be the smooth projective curve with function field $k(x)$; then $\gonality(C)\leq D$. Let $C'$ be an irreducible component of the normalization of $C\times_B B'$ which dominates $C$. Since $\deg(C'\to C)\leq q$, Fact \ref{fact:gonalityimage} gives
		$$\gonality(C')\leq q\,\gonality(C)\leq D_S.$$
		After the base change $B'\to B$ and the above conjugacy, the point $x$ determines a horizontal curve in $B'\times \P^N$. Its image in $\La\times \P^N$ is contained in a dynatomic curve $\Gamma_x$ of $F$. Since $C'$ dominates $\Gamma_x$, Fact \ref{fact:gonalityimage} gives
		$$\gonality(\Gamma_x)\leq D_S.$$
		Hence $\Gamma_x\subset W_{S,D}$, and therefore $x\in V_{f,D}(\overline{k})$. This proves the inclusion. Thus the required inequality holds with $C(D,S)=0$, and the GUBC holds along $S$.
		\endproof
		
		\medskip

		\medskip

		\section{The Assumption A: proof of Theorem \ref{thm: example2} and Proposition \ref{prop:very-general-assumptionA}}
		\label{sec:assumptionA}
		

		\proof[Proof of  Theorem \ref{thm: example2}.]
		
		We first show that  if $f_t$ is a Latt\`es  map and $f$ is not entirely contained in the Latt\`es locus,  then the bifurcation set $\Bif(f)\neq \emptyset$. Let $L(t)$ be the sum of Lyapunov exponents of $\mu_t$, where $\mu_t$ is the maximal entropy measure of $f_t$. Then $\Bif(f)= \emptyset$ if and only if $L$ is a  harmonic function on $\La(\C)$. By  \cite{MR2142250},  $L(t)\geq \frac{N\log d}{2}$, where the equality holds if and only if $f_t$ is a Latt\`es map.  Since $f_{t}$ is a Latt\`es map and $f$ is not contained in the Latt\`es locus, the function $L$ is non-constant and achieves its  minimum at $t$. By the minimum principle of harmonic functions, $L$ is not harmonic on $\La(\C)$.  This implies  $\Bif(f)\neq \emptyset$. 
		
		It remains to show that  if $f_t$ is split Latt\`es, then $f$  is periodically generic. It suffices to show that the density of $n$-periodic points of $f_t$  that are not resonant and diagonalizable is equal to one as $n\to +\infty$.  Let $g:(\P^1)^N\to (\P^1)^N$ be the endomorphism  such that $g$ and $f_t$ are semi-conjugate by a finite morphism $\pi: (\P^1)^N\to \P^N$. The endomorphism $g$ is split, $g=(h,\dots, h)$, where $h:\P^1\to \P^1$ is  a one-dimensional Latt\`es map.    Let $C$ be the ramification locus of $\pi$ and let $B:=\pi(C)\subset \P^N$ be the branch locus, which is contained in a hypersurface. Let $P_n$ be the set of $n$-periodic points of $f_t$ whose orbit has no intersection with $B$.  Let $Z:=\cup_{n\geq 0} f_t^n(B)$, which is a pluripolar set. Then  $P_n$ is also the set of $n$-periodic points of $f_t$ whose orbit has no intersection with $Z$. For each point $x\in P_n$, there exists a $n$-periodic point $y\in (\P^1)^N$ of $g$ such that the differential $df_t^n(x)$ and $dg^n(y)$ are conjugate.  Since the differential of all periodic points $y$ of $g$  are diagonalizable and non-resonant, $df_t^n(x)$ is diagonalizable and non-resonant.  Finally, since the periodic points of $f_t$ are equidistributed with respect to the maximal entropy measure $\mu_{f_t}$ \cite{Briend1999}, and $\mu_{f_t}$ puts no mass on pluripolar sets \cite[Proposition 1.18]{dinh2010dynamics}, we have $\lim_{n\to +\infty} \#P_n/ d^{nN}=1$. Hence $f$  is periodically generic, and the proof is complete.
		\endproof
		
		\proof[Proof of Proposition \ref{prop:very-general-assumptionA}.]
		Let $C\subset \sM_{d,N}$ be the image of the moduli map of the family. Since $C$ passes through a very general point of $\sM_{d,N}$, it is not contained in the proper closed subset $Z$ in Proposition \ref{prop:general-not-stable}. Hence Proposition \ref{prop:general-not-stable} implies that the bifurcation locus of the family is non-empty. This proves the first condition in Assumption A.
		
		It remains to verify periodic genericity when $N\geq 3$. Fix a rational number $0<\eta<1$. For each $n\geq 1$, consider the locus of maps in $\sM_{d,N}$ for which the total number, counted with multiplicities, of fixed points of $f^n$ that are diagonalizable and non-resonant is smaller than $\eta d^{Nn}$. After passing to the finite cover which labels the fixed points of $f^n$, the multipliers are algebraic functions. This locus is contained in a countable union of algebraic subsets: non-diagonalizability is algebraic, and each resonance relation
		$$w_1^{m_1}\cdots w_N^{m_N}=w_j,\qquad m_1+\cdots+m_N\geq 2,$$
		is algebraic after adding the periodic point and multiplier variables and then eliminating them. These algebraic subsets are proper for infinitely many $n$. Indeed, by Theorem \ref{thm: example2} and its proof, there exists a one-parameter family satisfying Assumption A, and at the split Latt\`es parameter used there the proportion of fixed points of $f^n$ that are diagonalizable and non-resonant tends to one as $n\to+\infty$. Thus the exceptional locus obtained by taking the union over $n$ and over all resonance relations is a countable union of proper algebraic subsets.
		
		Hence, outside a countable union of proper algebraic subsets of $\sM_{d,N}$, there are infinitely many $n$ for which at least $\eta d^{Nn}$ fixed points of $f^n$ are diagonalizable and non-resonant. Since $C$ passes through a very general point, we can choose a parameter $t\in \La(\C)$ with this property. Every irreducible component of the $n$-periodic locus passing through one of these good fixed points belongs to the set $\sP_n$ in the definition of periodic genericity. Therefore
		$$\limsup_{n\to +\infty}d^{-Nn}\sum_{V\in \sP_n}\deg V>0.$$
		Hence the family is periodically generic. This proves Assumption A.
		\endproof

		\medskip

\newcommand{\etalchar}[1]{$^{#1}$}

	\end{document}